\newtheorem{thm}{Theorem}[section]
\newtheorem{conj}[thm]{Conjecture}
\newtheorem{prop}[thm]{Proposition}
\newtheorem{definition}[thm]{Definition}
\newtheorem{lemma}[thm]{Lemma}
\newtheorem{lem}[thm]{Lemma}
\newtheorem{cor}[thm]{Corollary}
\newtheorem{remark}{Remark}
\theoremstyle{remark}
\newcommand*{\Oo}{\mathcal{O}}
\newcommand*{\cC}{\mathcal{C}}
\newcommand*{\Aa}{\mathbb{A}}
\newcommand*{\Q}{\mathbb{Q}}
\newcommand*{\E}{\mathbb{E}}
\newcommand*{\N}{\mathbb{N}}
\newcommand*{\cH}{\mathbb{cH}}
\newcommand*{\Z}{\mathbb{Z}}
\newcommand*{\C}{\mathbb{C}}
\newcommand*{\F}{\mathbb{F}}
\newcommand*{\PP}{\mathbb{P}}
\newcommand*{\Gal}{\textrm{Gal}}
\newcommand*{\spec}{\textrm{Spec\,}}
\newcommand*{\Cl}{\textrm{Cl}}
\newcommand*{\ra}{\rightarrow}
\newcommand*{\Aut}{\textrm{Aut}}
\newcommand*{\ol}{\overline}
\newcommand*{\inv}{\mathrm{inv}}
\newcommand*{\ZZ}{\mathbb{Z}}
\def\Aut{{\rm Aut}}
\def\stab{{\rm stab}}
\def\sym{{\rm Sym}}
\def\sur{{\rm Surj}}
\def\tr{{\rm tr}}
\def\rk{{\rm rk}}
\def\Pic{{\rm Pic}}
\def\Sp{{\rm Sp}}
\def\GSP{{\rm GSp}}
\def\Div{{\rm Div}}
\def\Pic{{\rm Pic}}
\def\coker{{\rm Coker}}
\def\Hom{{\rm Hom}}
\def\ker{{\rm Ker}}
\def\im{{\rm Im}}
\def\Frob{{\rm Frob}}
\author{Michael Lipnowski, Will Sawin, Jacob Tsimerman}
\begin{document}
\title[Cohen Lenstra roots of unity]{Cohen-Lenstra heuristics and bilinear pairings in the presence of roots of unity}
\maketitle

\begin{abstract}Let $L/K$ be a quadratic extension of global fields.  We study Cohen-Lenstra heuristics for the $\ell$-part of the relative class group $G_{L/K} := \Cl(L/K)$ when $K$ contains $\ell^n$th roots of unity. While the moments of a conjectural distribution in this case had previously been described, no method to calculate the distribution given the moments was known. We resolve this issue by introducing new invariants associated to the class group, $\psi_{L/K}$ and $\omega_{L/K},$ and study the distribution of $(G_{L/K}, \psi_{L/K}, \omega_{L/K})$ using a linear random matrix model.  Using this linear model, we calculate the distribution (including our new invariants) in the function field case, and then make local adjustments at the primes lying over $\ell$ and $\infty$ to make a conjecture in the number field case, which agrees with some numerical experiments. 
\end{abstract}
\section{Introduction}

In this paper, we study the distribution of the $\ell$-part of the relative class groups $\mathrm{Cl}(L/K)$ of a quadratic extension of global fields $L/K$ where $K$ contains the $\ell^n$th roots of unity for an odd prime $\ell$. Malle observed in \cite{Malle} that the usual Cohen-Lenstra heuristics for the $\ell$-part of the class group of quadratic extensions of $\mathbb Q$ do not match numerical data in this setting (already when $\ell=3, n=1, K = \mathbb Q(\mu_3)$). We give a modified prediction which is compatible with both numerical data and the function field model.

In fact, we have found it fruitful to study the class group together with two extra invariants, $\omega_{L/K} \in ( \wedge^2 \mathrm{Cl}(L/K) ) [\ell^n] $ and $\psi_{L/K} :  \mathrm{Cl}(L/K)^\vee [\ell^n] \to \mathrm{Cl}(L/K) [\ell^n]$, which we define using class field theory and Galois cohomology in Definitions \ref{psi-nt-defi} and \ref{omega-nt-defi}. They reveal, in two different ways, a bilinear structure on the class group closely related to the Weil pairing on abelian varieties and the Cassels-Tate pairing on Tate-Shafarevich groups.

We define a set $\mathcal C_{\ell,n}$ of triples of a {finite abelian $\ell$-}group $G$, an element of $(\wedge^2 G)[\ell^n]$, and a homomorphism $G^\vee [\ell^n] \to G[\ell^n]$; we construct a measure $Q^t\mu$ on $\mathcal C_{\ell,n}$ which has several natural characterizations - as the unique measure with certain moments, as the limit of two different random matrix models as the matrix size goes to infinity, and by an explicit formula.

We conjecture that, for a fixed field $K$ containing the $\ell^n$th roots of unity, distribution of the $\ell$-part of the relative class group $ \mathrm{Cl}(L/K)$ together with these two invariants converges to $Q^t\mu$ as the discriminant of the extension $L/K$ goes to $\infty$, where $t$ is half the degree of $K$ over $\mathbb Q$. We prove a weaker form of this conjecture in the function field case. (The strength of this statement is exactly analogous to the strength of the form of Cohen-Lenstra over function fields proved by Ellenberg, Venkatesh, and Westerland \cite{EVW}, and the work in their paper is the key input in our proof.)

In addition, we perform some numerical experiments in the case $K = \mathbb Q(\mu_3)$. In this case the modifications needed to the Cohen-Lenstra predictions for the distribution of the class group were well-understood, so we focus on checking that our new invariants have the expected distribution, which they do. 

\subsection{A main result}

We think the invariants $\psi$ and $\omega$ we define are interesting, and, for reasons discussed in the next subsection, they are helpful in our proofs. But their definitions will have to wait until after the introduction (see Definitions \ref{psi-nt-defi}, \ref{omega-nt-defi}, and \ref{relativepsiomega}).  For now, we state a corollary of our main result in the function field setting making no mention of the $\psi$ and $\omega$ invariants.  
\medskip 

Fix an odd prime $\ell$. For each finite field $\mathbb F_q$ and positive integer $g$, let $\cH_{g, \mathbb F_q} $ denote the set of smooth, projective hyperelliptic curves over $\F_q$ of genus $g$. 
To a hyperelliptic curve $C$, we may associate the Picard group $\Pic^0(C)$ and its $\ell$-power part $\Pic^0(C)_{\ell}$.

The next theorem describes the probability that this $\ell$-power part takes a given value, in the limit where $g$ and $q$ both go to infinity, when $q$ is congruent to $1$ mod $\ell$. This formula uses the Pochhammer symbol $$(a;q)_k = \prod_{j=0}^{k-1} (1- a q^j) .$$

\begin{thm}\label{intro-simplified} For any pair of sequences $g_i$ and $q_i$ such that $g_i \to \infty, q_i \to \infty$, where the $q_i$ are all odd prime powers congruent to $1$ modulo $\ell^n$ but not modulo $\ell^{n+1}$, the limit
\[ \lim_{i \to \infty} \frac{ \left| \left \{ C \in \cH_{g, \mathbb F_q} \mid \Pic^0(C)_{\ell} \cong G \right\} \right| } { | \cH_{g, \mathbb F_q} | }   \]
exists and is equal to
$$\frac{\prod_{i=1}^{\infty}(1+\ell^{-i})^{-1}}{|\Aut(G)|}\cdot |(\wedge^2G)[\ell^n]|\cdot  A_n \left( G[\ell^n] \right)$$ where 
$$ A_n \left(   \oplus_{i=1}^{n} (\Z/\ell^i\Z)^{m_i} \right) = (\ell^{-1};\ell^{-1})_{m_n}\cdot \prod_{i=1}^{n-1} (\ell^{-1};\ell^{-2})_{\lceil m_i/2\rceil}=  \prod_{i=1}^n \prod_{ \substack{ 0 \leq j < m_i \\ 2\mid j\textrm{  or }i=n }} (1 -\ell^{-j-1}). $$

\end{thm}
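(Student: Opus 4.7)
The plan is to derive this corollary from the main function field result of the paper---which identifies the limiting joint distribution of $(G_{L/K},\psi_{L/K},\omega_{L/K})$ for random quadratic extensions of $K=\F_q(\PP^1)$ with the measure on $\cC_{\ell,n}$---and then to marginalize over $\omega$ and $\psi$ using the explicit formula for the measure.

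First I would translate the hyperelliptic setting into the language of the paper. A smooth projective hyperelliptic curve $C/\F_q$ of genus $g$ is a double cover of $\PP^1_{\F_q}$ and so corresponds to a quadratic extension $L=\F_q(C)$ of $K=\F_q(\PP^1)$; for odd $\ell$, the group $\Pic^0(C)_\ell$ is canonically the hyperelliptic-minus part of the Jacobian and hence coincides with the relative class group $\Cl(L/K)_\ell$. The hypothesis that $q\equiv 1 \pmod{\ell^n}$ but $q\not\equiv 1\pmod{\ell^{n+1}}$ is equivalent to $K$ containing $\mu_{\ell^n}$ but not $\mu_{\ell^{n+1}}$, which is precisely the regime of the paper; the function field setting corresponds to $t=0$, so the relevant measure on $\cC_{\ell,n}$ is $\mu$ itself rather than $Q^t\mu$ with positive $t$.

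Second, I would invoke the main function field theorem: under the stated hypotheses the distribution of $(G_{L/K},\psi_{L/K},\omega_{L/K})$ converges to $\mu$, so the limit in the theorem equals the $G$-marginal $\mu(\{(G',\omega,\psi):G'\cong G\})$. Using the explicit formula for $\mu$ from the construction in the paper, I would split this marginal as a product of three factors: the basic Cohen--Lenstra density $\prod_i(1+\ell^{-i})^{-1}/|\Aut(G)|$, a sum over $\omega\in(\wedge^2 G)[\ell^n]$, and a weighted sum over admissible $\psi:G^\vee[\ell^n]\to G[\ell^n]$. The $\omega$-sum is straightforward and reduces to the count $|(\wedge^2 G)[\ell^n]|$, since $\mu$ depends on the triple only through $G$ and $\psi$ once an admissible $\omega$ has been chosen.

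Third, and most importantly, I would compute the $\psi$-sum. Writing $G=\oplus_{i=1}^n(\Z/\ell^i)^{m_i}$ and using the canonical identification $G^\vee[\ell^n]\cong G[\ell^n]$, the space of $\psi$'s decomposes along the Smith normal form. The symmetry constraint imposed on $\psi$ by its construction from the Weil and Cassels--Tate-like pairings splits by level: on the top-level piece $(\Z/\ell^n)^{m_n}$ the induced pairing is essentially unconstrained, producing the factor $(\ell^{-1};\ell^{-1})_{m_n}$ (the density of nondegenerate matrices), while on each lower level $i<n$ the pairing becomes alternating, so only an even-dimensional part contributes and one obtains $(\ell^{-1};\ell^{-2})_{\lceil m_i/2\rceil}$. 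The main obstacle is precisely this combinatorial identification of the two symmetry types of $\psi$ at each level and the evaluation of the resulting $q$-Pochhammer products; once these are in hand, combining the three factors yields the stated formula and the two equivalent expressions for $A_n$.
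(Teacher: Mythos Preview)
Your overall strategy is correct and matches the paper's: Theorem~\ref{intro-simplified} is obtained by combining the weak-$*$ convergence in the second part of Theorem~\ref{ffmain} (where both $g,q\to\infty$) with the explicit computation of the group marginal $\mu(G)$ carried out in Proposition~\ref{Gartonbign}. Your framing of the $t=0$ function field case and the identification of $\Pic^0(C)_\ell$ with the relative class group are fine.

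However, your account of the marginalization contains two concrete errors.

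First, the factor $|(\wedge^2 G)[\ell^n]|$ does not arise as a naive sum over $\omega$. After orbit--stabilizer, one is summing the constant $c_\ell/(|\Aut(G)|\cdot|\sym^2 G[\ell^n]|)$ over all pairs $(\omega,\psi)$ forming a BEG with $\psi$ invertible; but for a fixed admissible $\psi$, the number of compatible $\omega$'s is \emph{not} $|(\wedge^2 G)[\ell^n]|$ --- the compatibility relation~\eqref{psiomegacomp} cuts it down to $\ell^{R_n(G)}$. The factor $|\wedge^2 G[\ell^n]|$ emerges instead from the identity
\[
\ell^{R_n(G)}\cdot\#\{\text{allowable antisymmetric parts }\psi^-\}=|\wedge^2 G[\ell^n]|,
\]
established in the proof of Proposition~\ref{Gartonbign} after decomposing $\psi=\psi^++\psi^-$. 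The count $|\sym^2 G[\ell^n]|$ of symmetric parts $\psi^+$ is what cancels the $|\sym^2 G[\ell^n]|$ in the denominator of $\mu(G,\omega,\psi)$.

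Second, and more seriously, your identification of the symmetry type at levels $i<n$ is backwards: the $m_i\times m_i$ block of $\psi\bmod\ell$ is \emph{symmetric}, not alternating (the allowability constraint forces $\psi^-$ to vanish mod $\ell$ on those blocks). Your version would make that block singular whenever $m_i$ is odd, giving $A_n=0$ there, contradicting the nonzero value $(\ell^{-1};\ell^{-2})_{\lceil m_i/2\rceil}$ you quote. That product is precisely the density of invertible \emph{symmetric} $m_i\times m_i$ matrices over $\F_\ell$, not alternating ones; for the top block $i=n$ there is no symmetry constraint at all, yielding $(\ell^{-1};\ell^{-1})_{m_n}$.
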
 

Accounting for $\psi$ and $\omega$, the formula simplifies considerably: the probability of a group $G$ together with invariants $\omega$ and $\psi$, equals the moment associated to $(G, \omega, \psi)$ (which happens to equal $\frac{1}{ \operatorname{Sym}^2 G[\ell^n]}$ ) divided by the number of automorphisms of $G$ fixing $\omega$ and $\psi$, times a constant independent of $G$, if $\psi$ is invertible, and $0$ otherwise. This shape of formula, where the measure equals the moment divided by the number of automorphisms times a constant, is completely analogous to what occurs for the classical Cohen-Lenstra heuristics without roots of unity, and thus serves to partially explain the more complicated Theorem \ref{intro-simplified}.

We make an analogous conjecture in the number field case:
 
 \begin{conj}\label{ntmain} Let $\ell$ be an odd prime and $n$ a natural number. Let $K$ be a number field which contains the $\ell^n$th roots of unity but not the $\ell^{n+1}$st roots of unity. Let $t = \frac{1}{2} [ K : \mathbb{Q} ].$  
 
Let $G$ be a finite abelian $\ell$-group and fix $\omega_G \in( \wedge^2 G)[\ell^n]$ and $\psi_G \in \Hom ( G^\vee [\ell^n], G[\ell^n] )$. Let $S_{K,X}$ denote those quadratic extensions $L/K$ for which $\left| \mathrm{Norm}_{K/\mathbb{Q}}( \mathrm{Disc}(L/K) ) \right| \leq X.$ Then
  
\[ \lim_{X \to \infty} \frac{1} {  | S_{K,X} | } \left| \left\{ L/K \in S_{K,X} \mid  ( \mathrm{Cl}(L/K)_{\ell} , \omega_{L/K}, \psi_{L/K}  )  \cong (G,\omega_G, \psi_G)  \right\} \right|  =Q^t \mu (G,\omega_G, \psi_G).\] \end{conj}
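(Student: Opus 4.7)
The plan is to derive this conjecture from the characterization of $Q^t\mu$ as the unique probability measure with specified moments, together with a computation of the mixed moments of $(\Cl(L/K)_\ell,\omega_{L/K},\psi_{L/K})$ averaged over $S_{K,X}$. Specifically, for each $(H,\omega_H,\psi_H)\in\mathcal C_{\ell,n}$, I would first evaluate
\[
M(H,\omega_H,\psi_H)\ :=\ \lim_{X\to\infty}\frac{1}{|S_{K,X}|}\sum_{L/K\in S_{K,X}}\#\sur\bigl((\Cl(L/K)_\ell,\omega_{L/K},\psi_{L/K}),(H,\omega_H,\psi_H)\bigr),
\]
check that $M$ agrees with the corresponding moment of $Q^t\mu$ furnished by the moment characterization of $Q^t\mu$ already established in the paper, and then invoke moment uniqueness to conclude the distributional identity.

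The moment $M(H,\omega_H,\psi_H)$ is to be computed via class field theory and Galois cohomology: the surjections being counted correspond to certain $H$-extensions of $L$ that are suitably compatible with the $\Gal(L/K)$-action, and a Selmer-style local-global decomposition writes the count as a sum of products of local factors, one at each place $v$ of $K$. Away from $\ell$ and $\infty$ these local factors should agree with those in the function field setting and combine to reproduce the base measure $\mu$ provided by Theorem \ref{intro-simplified} (and its enhanced form incorporating $\psi$ and $\omega$). At primes $v\mid\ell$, the local factor involves $\Oo_{K_v}^\times / (\Oo_{K_v}^\times)^{\ell^n}$; the invariant $\psi_{L/K}$ of Definition \ref{psi-nt-defi} is designed precisely so that this $\ell$-adic distortion is absorbed into the moment expressed in terms of $\psi$, and is the main reason for introducing $\psi$. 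At the archimedean places, since $\ell$ is odd and $K\supseteq \mu_{\ell^n}$, the field $K$ is totally imaginary, so it has exactly $t=\frac{1}{2}[K:\Q]$ complex places; each contributes a nontrivial local Galois cohomology of $\mu_{\ell^n}$ that, by Poitou-Tate duality, amounts to a single application of the operator $Q$. Compounding over the $t$ complex places produces precisely the factor $Q^t$ predicted in the conjecture.

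The main obstacle is exactly the obstruction that blocks any unconditional number-field Cohen-Lenstra theorem: controlling error terms in $M$ uniformly in the conductor and interchanging $\lim_{X\to\infty}$ with the sum over $L/K$. In the function field case, this is circumvented by appealing to the homological stability theorem of Ellenberg-Venkatesh-Westerland \cite{EVW}, but no number-field analogue is currently known. A rigorous proof of Conjecture \ref{ntmain} would therefore require either such a stability result over number fields or a new analytic input in the style of Bhargava's orbit-counting methods, enriched to track the pairings $\omega$ and $\psi$. In the absence of such input, the proposal is to treat Conjecture \ref{ntmain} as a heuristic consequence of the function field result combined with independent local adjustments at the primes above $\ell$ and $\infty$, supported by the numerical experiments for $K=\Q(\mu_3)$ reported in the paper.
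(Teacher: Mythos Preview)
The statement is a conjecture, not a theorem; the paper does not prove it, and you correctly acknowledge this and name the central obstruction (no number-field analogue of \cite{EVW}). Where your proposal goes astray is in the heuristic justification you offer for the shape $Q^t\mu$, which diverges from the paper's actual motivation in two concrete ways.

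First, your claim that each of the $t$ complex places ``contributes a nontrivial local Galois cohomology of $\mu_{\ell^n}$ that, by Poitou--Tate duality, amounts to a single application of the operator $Q$'' is false: for $\ell$ odd and $K_v\cong\C$, all groups $H^i(K_v,\mu_{\ell^n})$ vanish. The paper's reason for the exponent $t$ is instead the analogy with punctured curves developed in \S\ref{relativeclassgrouppuncturedcurves} (Conjecture~\ref{eqconjffaffine}): the $t=\tfrac12[K:\Q]$ complex places of $K$ all split in $L$, so the relative situation is modeled by quotienting by $t$ random elements, exactly as $\Pic(D-T/C-S)$ is a quotient of $\Pic(D/C)$ by $u=t-s$ generators. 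Equivalently (and this is what the paper actually verifies in Lemma~\ref{number-field-rank-bound}), $\Oo_{L/K}^\times\otimes\Z/\ell^n$ has $\ell$-rank $t$, bounding $\ker\psi_{L/K}$.

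Second, your assertion that $\psi$ is ``designed precisely so that this $\ell$-adic distortion is absorbed'' at primes $v\mid\ell$ misrepresents its role. The paper introduces $\psi$ and $\omega$ because without them the moments do not determine the measure in the presence of $\ell^n$th roots of unity; with them, Theorem~\ref{measure-exists-unique} and Lemma~\ref{Qmomimpmeas} give uniqueness, and the formula for $\mu$ on $\cC_{\ell,n}$ becomes clean. There is no Selmer-style local decomposition or special treatment of primes above $\ell$ anywhere in the paper's argument for the conjecture.

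The paper's actual support for Conjecture~\ref{ntmain} is: the function-field result Theorem~\ref{ffmain}, the consistency check Proposition~\ref{class-group-support} that the triple always lies in the support of $Q^t\mu$, and the numerical data of \S\ref{data}. Your proposed moment computation via local factors is a plausible line of attack in principle, but it is not the paper's route, and the specific local mechanisms you invoke at $v\mid\ell$ and $v\mid\infty$ are not correct as stated.
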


The invariants $\psi_{L/K}$ and $\omega_{L/K}$ and the measure $Q^t \mu$ needed to interpret this conjecture will be defined in Sections \ref{s-beg} and \ref{number-field-invariants}.

\subsection{Prior work}

We now explain some of the prior work on this problem, which will also clarify why we use these new invariants.

Cohen and Lenstra \cite{CL} made predictions for the distribution of the $\ell$-part of the class groups of a random quadratic number field. Cohen and Martinet \cite{CM} generalized these predictions to the $\ell$-part of the class groups of random quadratic extensions of a fixed number field, or even more generally to $\Gamma$-extensions of a fixed number field for a group $\Gamma$. Malle \cite{Malle} found numerical evidence which suggested that these generalized heuristics fail when the base number field contains the $\ell$th roots of unity. In \cite{Malle10}, he proposed a modified conjecture when the base field contains the $\ell$th roots of unity, but not the $\ell^2$th roots of unity. He left open what the distribution should be for higher powers of $\ell$.

One fruitful approach to study class groups of number fields is to first study the analogous problem over function fields.  The rich structure of function fields sometimes suggests hidden structures not directly apparent from the number field perspective. 

Friedman and Washington suggested in \cite{FW} that as $K$ varies through imaginary quadratic function fields over $\mathbb F_q$, $\mathrm{Cl}(K)_\ell $ should behave statistically as $ \mathrm{coker}\left(1 - F \right)$, where $F$ is a random matrix in $\mathrm{GSp}^{(q)}_{2g} (\mathbb Z_\ell),$ the coset of $\mathrm{Sp}_{2g} (\mathbb Z_\ell)$ inside $\mathrm{GSp}_{2g} (\mathbb Z_\ell)  $ consisting of symplectic similitudes of similitude factor $q.$  The key motivation for this is that $\mathrm{Cl}(K)_\ell$ is isomorphic to the cokernel of $1-F$ for some element $F$ in $\mathrm{GSp}^{(q)}_{2g}(\mathbb{Z}_\ell)$. More specifically, $F$ is the matrix by which Frobenius acts on the $\ell$-adic Tate module of the Jacobian of the curve underlying $K$. Friedman and Washington guessed that Frobenius should behave like a random element of that group.

In fact, Yu proved in \cite{Yu} that Frobenius does behave like a random element of that group in the limit as $q \to \infty$, so Friedman and Washington's suggestion is valid in that regime.  The key step in Yu's method is showing that the monodromy of the covering of the moduli space of hyperelliptic curves of genus $g$ defined by the Tate modules of their Jacobians is exactly $\Sp_{2g} (\mathbb Z_\ell)$ which, through Deligne's equidistribution theorem, shows that as $q \to \infty$, Frobenius is suitably equidistributed in the appropriate coset of this group.

However, Friedman and Washington did not calculate the distribution of $\coker (1-F)$ for $F$ in $\mathrm{GSp}^{(q)}_{2g}(\mathbb{Z}_\ell)$. Instead, they calculated the distribution of the cokernel of a random matrix in $M_{2g}(\mathbb Z_\ell)$. They conjectured that these two distributions agree in the limit as $g$ goes to $\infty$.  Achter showed that this is false when $q \not \equiv 1 \mod \ell$ \cite{Achter}. This raised the question of what the true limit $\coker (1-F)$ is as $g$ goes to $\infty$ (or if this limit even exists), and how it depends on $q$.

For the analogy to number fields, one should observe that the roots of unity of $K$ are exactly $\mathbb F_q^\times$, and therefore $K$ contains the $\ell^n$th roots of unity if and only if $\ell^n$ divides $|\mathbb F_q^\times| = q-1$. So the analogue of a number field that contains the $\ell^n$th roots of unity but not the $\ell^{n+1}$th is the case when $q \equiv 1 \mod \ell^n$ but $q \not \equiv 1 \mod \ell^{n+1}.$ If the distribution of $\mathrm{coker}\left(1 - F \right)$ converges, as $g$ goes to $\infty$, to the same value for all $q$ in this congruence class, then we could conjecture that class groups of such number fields have the same distribution.

If Friedman and Washington's conjecture were correct, this limit would match the limit as $g$ goes to $\infty$ of the distribution of cokernels of random matrices, which is the classical Cohen-Lenstra distribution. Thus, the fact that this conjecture fails is compatible with the numerical evidence that the Cohen-Lenstra conjecture fails in the presence of $\ell$-power roots of unity. 

%However, Friedman and Washington were not able to show this convergence, let alone to calculate the limiting distribution, except in the case where $q \not \equiv 1\mod \ell$, where it converges to the Cohen-Lenstra distribution. They expressed hope that the same might be true for arbitrary $q$. But just as numerical data over number fields shows differences between the cases with $\ell$-power roots of unity and without, Achter showed in \cite{Achter} that when $q\equiv 1\mod \ell$, the distribution does not converge to Cohen-Lenstra. This raises the question of whether this distribution converges, and to what.

Garton was able to make progress towards calculating the limit. In the case $\ell || q-1,$ Garton gave in \cite{Garton} a formula for the pointwise $g \to \infty$ limit of these random matrix measures. In fact, the formula he derives is identical to Malle's conjectured limiting distribution for the class group of quadratic extensions of number fields containing $\ell$th roots of unity, justifying Malle's function-field motivation for his conjecture \cite{Malle10}. He also found the $g\to\infty$ limit in the case when $\ell^2 \mid \mid q-1$. When $\ell^n \mid\mid q-1$ for $n>2$, he was not able to show that the limit existed, but was able to show that any limit of a subsequence has the expected moments. 

We prove the convergence when $\ell^n \mid \mid q-1$ for any $n$, and we give an explicit formula for the distribution.  Our approach is indirect: we calculate the distribution of the group $\mathrm{coker}\left(1 - F \right)$ together with the two extra invariants $\psi,\omega$ then sum over all possibilities for $\psi$ and $\omega.$ The definitions of the $\psi$ and $\omega$ invariants in this setting are purely group-theoretic, and may seem more motivated than the arithmetic definition. Because the $\psi$ and $\omega$ invariants are helpful for this proof, and dramatically simplify the formula for the measure, we believe they are of greater importance, and so we study them together with the class group throughout this work.

Ellenberg, Venkatesh, and Westerland proved in \cite{EVW} a form of Cohen-Lenstra for the $\ell$-parts of class groups of quadratic function fields over $\mathbb F_q$ where $q \not \equiv 1 \mod \ell$. The desired statement is that the distribution of the class group converges to the expected distribution for fixed $q$, as the degree of the discriminants grows to $\infty$. Ellenberg, Venkatesh, and Westerland obtain this convergence when $q$ goes to $\infty$ arbitrarily slowly with the degree of the discriminant. This is much more difficult than the $q \to\infty$ case and requires all the tools from the $q \to \infty$ case, in particular the monodromy computations of \cite{Yu} plus sophisticated \'{e}tale cohomology and topological arguments. However, they did not calculate the distribution where $q \equiv 1 \mod \ell$, instead only calculating the moments.

In \cite{LT}, two of us made progress on the $q \equiv 1 \mod \ell$ case by defining the $\omega$ invariant in the function field context.

In this paper, using both the $\omega$ and $\psi$ invariants and other new tricks, we prove a result exactly analogous to Ellenberg, Venkatesh, and Westerland's. This relies heavily on the upper bounds for cohomology groups of certain spaces proven in \cite{EVW}. 

Because class field theory describes the class group as the Galois group of the maximal unramified abelian extension, some prior work has tried to generalize the Cohen-Lenstra heuristics to Galois groups of non-abelian unramified extensions. Venkatesh and Ellenberg defined \cite{VE}, and Wood and Wood generalized \cite{WW}, a ``lifting invariant" associated to such unramified extensions, and conjectured values for the corresponding moments. We expect that the lifting invariant specializes to our $\omega$ invariant in the abelian case. However, neither work gave a distribution for the group together with this invariant, while we do give a precise distribution.

\subsection{Plan of the paper}

In Section \ref{s-beg}, we define the set $\mathcal C_{\ell,n}$ of groups with extra invariants $\omega, \psi$ that we consider for the rest of the paper. In fact, we define a category of such groups. The measures we study in this paper will be on the set of isomorphism classes of this category. We state Theorem \ref{measure-exists-unique}, which describes a measure $\mu$ on this set which is uniquely characterized by its moments. The proof of this theorem is contained in Section \ref{linearrandommodels}.

In Section \ref{s-ffdef}, we associate an element of $\mathcal C_{\ell,n}$ to a curve over a finite field, or more abstractly, to a symplectic similitude $F$ of a free $\mathbb Z_\ell^{2g}$-module with symplectic structure. In these cases the group $G$ is the $\ell$-part of the Picard group and the cokernel of $1-F$ respectively, and $\omega$ and $\psi$ are defined in a relatively straightforward manner. We define measures $\mu_g^q$ by averaging these elements over all hyperelliptic curves of genus $g$. We state a conjecture that these measures converge to $\mu$ as $g$ goes to $\infty$, as well as the slightly weaker convergence result Theorem \ref{ffmain}, which we prove in Section \ref{functionfieldproof}.

In Section \ref{number-field-invariants}, we begin to study the number field case. We define extra invariants $\omega$ and $\psi$ on the class group of a number field, or on the relative class group of a number field extension. We conjecture that, averaging over quadratic extensions $L/K$ of a fixed number field $K$ containing the $\ell^n$th-roots of unity but not the $\ell^{n+1}$th, the distribution of the relative class group, with these extra invariants, converges to the measure $Q^t \mu$, obtained from $\mu$ by iteratively applying a quotienting operation $Q$ (Conjecture \ref{ntmain}). The next three sections are devoted to fleshing out this conjecture.  

In Section \ref{s-alternate-psi}, we give alternate, equivalent, definitions of the invariant $\psi$; one definition uses torsors, and the other uses Hilbert symbols.

In Section \ref{s-internal-consistency}, we check that the triples $(G, \omega, \psi)$ obtained from number fields always lie in the support of the measure $Q^t \mu$.

In Section \ref{s-NT-FF-compatible}, we check that the definitions of $\omega$ and $\psi$ in the number field case, when transferred to the function field case, match our definitions in Section \ref{s-ffdef} for curves.

In Section \ref{linearrandommodels}, we give the main analytic arguments of the paper. The most important result is that the measure $\mu$ matches the measure arising from the random matrix model studied by Friedman and Washington. To prove this, we first introduce a different random matrix model, which is linear in the sense that it involves cokernels of random elements of an affine subspace of $M_{2g}(\mathbb Z_\ell)$. We then show that $\mu$ matches the measure arising from this random matrix model. To complete the proof, we show that our two random matrix models have the same moments, and that the measure $\mu$ is uniquely determined by its moments, demonstrating that $\mu$ matches the measure from the Friedman-Washington matrix model instead. Without the extra invariants, that the moments determine the measure follows from work of Wood \cite[Theorem 3.1]{Wood}, but we use a different strategy to account for the invariants.  

Finally, in this section, we use our control of the measure $\mu$ to describe the measure $Q^t \mu$ as well.  

%We show that the measure $\mu$ is uniquely determined by its moments, and that those moments match the moments of two random matrix models, allowing us to conclude that those random matrix models converge to $\mu$. One of the random matrix models is the one studied by Friedman and Washington. The other is linear in the sense that it involves cokernels of random elements of an affine subspace of $M_{2g}(\mathbb Z_\ell)$. Having access to this linear random model is crucial to our proofs - we use it both to prove the uniqueness, and to obtain a reasonable formula for the measure when it is unique. We also use our control of the measure $\mu$ to describe the measure $Q^t \mu$ as well.  

In Section \ref{functionfieldproof}, we prove the function field equidistribution result Theorem \ref{ffmain}. To do this, we define spaces whose $\mathbb F_q$-point counts are related to the moments of the distribution of $ \Pic^0(C)_\ell, \omega_C,$ and $\psi_C$. Counting the connected components of these spaces is a purely group-theoretic exercise once the monodromy result of \cite{Yu} is used. To count the points of these spaces in the $g \to\infty ,q\to\infty$ limit, it suffices to bound their Betti numbers. We show these spaces are covered by certain spaces defined in \cite{EVW}, allowing us to show their Betti numbers are at most the Betti numbers of the spaces in \cite{EVW}, which were already bounded in \cite{EVW}.

In Section \ref{data}, we give numerical evidence for Conjecture \ref{ntmain}.

\subsection{Linearization}

We want to highlight the key idea behind the construction of the linear random model, as we think it might clarify broader work in arithmetic statistics.

We imagine that the distribution of $\mathrm{coker}\left(1 - F \right)$, for $F \in \mathrm{GSp}^{(q)}_{2g} (\mathbb Z_\ell),$ is closely related to the distribution of $\mathrm{coker} (\log F)$, where $\log F$ lies in the logarithm of $ \mathrm{GSp}^{(q)}_{2g} (\mathbb Z_\ell)$, i.e. in a coset of the Lie algebra $\mathfrak{sp}_{2g} (\mathbb Z_\ell)$ inside the larger Lie algebra $\mathfrak{gsp}_{2g} (\mathbb Z_\ell)$ (with the coset taken as additive groups). If $F$ were congruent to $1$ modulo $\ell$, this heuristic could be made rigorous using the  $\ell$-adic convergence of the logarithm power series.  Because $F$ is almost certainly not congruent $1$ to modulo $\ell$, there seems to be no hope of relating the non-linear random matrix model to its linearization directly.  Nonetheless, we show by an indirect argument that these distributions are the same.

Thus, we hope that further comparison results could be proven between random matrix models involving random elements of $\ell$-adic groups, which are often closely related to function field distributions, and random matrix models involving random elements of their Lie algebras, which can be much easier to work with. 

The first example of this phenomenon was the comparison result discovered by Friedman and Washington \cite{FW} between the cokernels of $1-F$ for random $F$ in $\mathrm{GL}_n(\mathbb Z_\ell)$ and the cokernel of random $n\times n$ matrices over $\mathbb Z_\ell$.

An example where this could be applied is the work of Poonen-Rains \cite{PR} and the subsequent work of Bhargava-Kane-Lenstra-Poonen-Rains \cite{BKLPR} modelling Selmer groups of elliptic curves.  The second of these works gives a model for Selmer groups (and therefore also the ranks and Tate-Shafarevich groups) of elliptic curves as cokernels of random alternating matrices. In the function field setting, Selmer groups are known to be cokernels of random orthogonal matrices (see \cite[Theorem 4.4]{Landesmann} and \cite{FLR}). The fact that the alternating matrices are the Lie algebra of the orthogonal groups might explain the effectiveness of their heuristic from the function field perspective.

\subsection{Acknowledgments}
We would like to thank Melanie Wood for helpful conversations regarding this paper.

While working on this paper, W.S. served as a Clay Research Fellow.

\tableofcontents

\section{Bilinearly Enhanced Groups}\label{s-beg}

\subsection{Elements of $\wedge^2$ and a bilinear pairings} \label{wedge2bilinear}

We fix an odd prime $\ell$ and a positive integer $n$. For a finite, abelian group $G$ of odd order, we define $G^\vee = \Hom(G, \mathbb Q/\mathbb Z)$ and $\wedge^2G$ to be the subgroup of $G\otimes G$ spanned by
$x^y = x\otimes y-y\otimes x$ for all $x,y \in G.$ For an element $\omega\in\wedge^2G$, an integer $r\geq 0$, we define a bilinear form $\omega_r$ on $G^\vee [\ell^r]$, valued in $\frac{1}{\ell^r} \mathbb{Z} / \mathbb{Z}$, as follows:
By identifying $G^\vee[\ell^r]\cong \Hom(G,\Z/\ell^r\Z)$ we obtain a natural map
\begin{align*}
\omega_r: G^\vee[\ell^r]\otimes G^\vee[\ell^r] &\ra \Hom(G\otimes G, \Z/\ell^r\Z\otimes \Z/\ell^r\Z) \\
&\cong \Hom(G\otimes G,\Z/\ell^r\Z) \\
&\cong(G\otimes G)^\vee[\ell^r] \\
&\xrightarrow{\text{evaluate at } \omega} \frac{1}{\ell^r} \mathbb{Z} / \mathbb{Z}.
\end{align*}

\subsection{The Category of $\ell^n$-BEGs}

\begin{definition}\label{beg}

Consider a triple $(G,\omega,\psi)$: 
\begin{itemize}
\item $G$ is a finite abelian $\ell$-group
\item $\omega\in\wedge^2G[\ell^n]$
\item $\psi:G^\vee[\ell^n]\ra G[\ell^n]$ 
\end{itemize}

For $\gamma \in G^\vee[\ell^n]$ and $\delta \in G^\vee$ define
$$\langle \gamma,\delta \rangle := \delta( \psi(\gamma) ).$$

We say the triple $(G,\omega,\psi)$ is an $\ell^n$-Bilinearly Enhanced Group ($\ell^n$-BEG) if $\psi,\omega$ satisfy the following compatibility condition: for all $r \geq 0$ and all $\alpha,\beta\in G^\vee[\ell^{n+r}],$

\begin{equation}\label{psiomegacomp}
 \langle \ell^r\alpha,\beta\rangle = \langle \ell^r\beta,\alpha \rangle + 2 \cdot \omega_{G,n+r}(\alpha,\beta).
\end{equation}

\end{definition}

\begin{definition}\label{begcategory}

We denote by $\cC_{\ell,n}$ the following category:

\begin{itemize}
 \item The objects of $\cC_{\ell,n}$ consist of all $\ell^n$-BEGs.
 \item A morphism between two objects $(G,\omega_G,\psi_G)$ and $(H,\omega_H,\psi_H)$ consists of a group homomorphism $f:G\ra H$ such that
 $f_*\omega_G=\omega_H$ and $f_*\psi_G=\psi_H$.
 \end{itemize}
 
 \end{definition} 
 
 Note that $\cC_{\ell,n}$ is not an abelian category. Moreover, a morphism in $\cC_{\ell,n}$  is an epimorphism iff the map on abelian groups is surjective, whereas
 there are more monomorphisms then one might initially expect.

\subsection{Random Measures on $\cC_{\ell,n}$}

We shall be interested in studying measures on $\cC_{\ell,n}$. Given that it is a category, it is natural to study measures by considering their \emph{moments}, i.e. by integrating along them the test functions $\#\sur(*,G^{\bullet})$ for various $\ell^n$-BEGs $G^\bullet$. This is very convenient since in the number field and function field settings we are trying to model, the moments are what we have direct access to.

In Section \ref{linearrandommodels} we define a `universal' measure $\mu$  on $\cC_{\ell,n}$ with various natural properties. We justify this measure by showing that it arises as the limiting measure in two random matrix models. Perhaps most importantly, we prove that it is determined by its moments.

\begin{thm}\label{measure-exists-unique}

There is a unique probability measure $\mu$ on $\cC_{\ell,n}$ satisfying
$$\E_{\mu}\#\sur(*,(G,\omega_G,\psi_G))  = \frac{1}{| \sym^2G[\ell^n] |}.$$ 

Moreover, the support of $\mu$ consists precisely of those $(G,\omega_G,\psi_G)$ such $\psi_G$ is an isomorphism, in which case  
$$\mu(G,\omega_G,\psi_G) = \frac{c_\ell}{| \Aut(G,\omega_G,\psi_G) | \cdot | \sym^2G[\ell^n] |}$$ 
where $c_\ell=\prod_{i=0}^{\infty} (1-\ell^{-(2i+1)}).$

\end{thm}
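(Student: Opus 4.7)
The plan is to verify the theorem in three stages: construct $\mu$ by the explicit formula, verify it is a probability measure with the prescribed surjection moments, and then establish uniqueness. The candidate measure is simply the function $\mu(G,\omega_G,\psi_G) = \frac{c_\ell}{|\Aut(G,\omega_G,\psi_G)|\cdot|\sym^2G[\ell^n]|}$ when $\psi_G$ is an isomorphism and $0$ otherwise; the theorem then reduces to elementary (if somewhat intricate) combinatorics of bilinear structures on $\ell$-groups.

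For the total mass, I would group isomorphism classes of $\ell^n$-BEGs by the underlying abelian group $G$. An orbit-stabilizer argument converts the sum over $\Aut(G)$-orbits of pairs $(\omega,\psi)$ into $|\Aut(G)|^{-1}$ times the total count $N_G$ of pairs $(\omega,\psi)$ satisfying \eqref{psiomegacomp} with $\psi$ an isomorphism. Reading \eqref{psiomegacomp} as a statement about the bilinear form $(\alpha,\beta)\mapsto \beta(\psi(\alpha))$ on $G^\vee[\ell^n]$, the antisymmetric part is prescribed by $\omega$ while the symmetric part is free, so for fixed $\omega$ the admissible $\psi$'s form a torsor under $\sym^2G[\ell^n]$; adding the invertibility constraint refines this count. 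After dividing by $|\sym^2G[\ell^n]|$ the sum over $G$ should reduce to a Cohen--Lenstra style Euler product evaluating to $c_\ell^{-1}$, cancelling the prefactor.

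For the moments, I would expand $\E_\mu\#\sur(*,(G,\omega_G,\psi_G))$ as a double sum over isomorphism classes $(H,\omega_H,\psi_H)$ weighted by $\mu$ and $\#\sur((H,\omega_H,\psi_H),(G,\omega_G,\psi_G))$. Grouping surjections by their kernel $K$ and summing first over extensions of $(G,\omega_G,\psi_G)$ by $K$ produces a sum of exactly the same shape as in the total-mass computation, one step higher in a natural filtration; the recursion telescopes to give exactly $\frac{1}{|\sym^2G[\ell^n]|}$.

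Uniqueness is the subtlest piece. Wood's theorem does not apply directly because $\cC_{\ell,n}$ carries more data than the groups alone, and in particular the restriction of the support to isomorphism-$\psi$'s must be \emph{derived} from the moments rather than assumed. The plan is M\"obius inversion adapted to $\cC_{\ell,n}$: the surjection-moment relations form an upper-triangular system when objects are filtered by $|G|$, and the inversion is rigorous provided $\mu$ is a probability measure (which controls the tails through the polynomial-in-$|H|$ bound on $\#\sur(H,Y)$ together with finite total mass). I expect the main obstacle to be the bookkeeping, since the inversion must explain why the support collapses onto isomorphism-$\psi$'s. The cleanest route is to observe that $\#\sur(X,(G,\omega_G,\psi_G))$ vanishes whenever $\psi_X$ is ``less invertible'' than $\psi_G$ in a precise sense dictated by surjectivity in $\cC_{\ell,n}$, which forces the triangular inversion to land on the claimed support. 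Once the inversion is set up, the remainder parallels the classical Cohen--Lenstra uniqueness proof, with the bilinear data carried along.
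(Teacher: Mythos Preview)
Your approach is genuinely different from the paper's, and the differences matter.

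The paper does \emph{not} verify the total mass or the moments of the formula measure directly.  Instead it introduces a random matrix model (the ``linear model'' of \S\ref{linearrandommodelcalculations}): for each $g$ one samples a Haar-random skew-symplectic $M \in \mathfrak{sp}_{2g}(\Z_\ell)$ and pushes forward to $\cC_{\ell,n}$ via $M \mapsto (\coker(M+\ell^n),\omega_M,\psi_M)$, obtaining probability measures $\mu_g$.  Theorem~\ref{totmommeas} computes both the moments and the pointwise values of $\mu_g$ by literally counting matrices in cosets of $\{N : f\circ N = 0\}$, and Theorem~\ref{univmeas} uses Fatou's lemma (together with the uniform bound of Lemma~\ref{upperbound}) to pass to the limit.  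Thus existence, the explicit formula, the support description, and the moment identity all fall out simultaneously from the matrix model; the paper never proves the identity $\sum_G \frac{|\wedge^2 G[\ell^n]|\, h_G}{|\Aut(G)|} = c_\ell^{-1}$ or the moment recursion by hand.  Your proposed ``Cohen--Lenstra style Euler product'' for the mass and ``the recursion telescopes'' for the moments are precisely the steps that the random matrix scaffolding is there to avoid, and you have not indicated how you would actually carry them out.

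There is also a concrete error in your uniqueness plan.  You assert that $\#\sur(X,(G,\omega_G,\psi_G))$ vanishes whenever $\psi_X$ is ``less invertible'' than $\psi_G$, and want to use this to force the support onto isomorphism-$\psi$'s.  This is false: take $X=(\Z/\ell^n)^2$ with $\psi_X(e_1^\vee)=e_1$, $\psi_X(e_2^\vee)=0$, $\omega_X=0$, and let $f:X\twoheadrightarrow G=\Z/\ell^n$ kill $e_2$; then $\psi_X$ is not invertible but $f_*\psi_X$ is.  So the triangular structure you are hoping for does not hold in this form.  The paper's uniqueness argument (Lemma~\ref{momimpmeas}) instead uses an $L^\infty$ operator-norm bound: the moment condition immediately gives $\mu'(G,\omega_G,\psi_G) \le |\Aut(G,\omega_G,\psi_G)|^{-1}|\sym^2 G[\ell^n]|^{-1}$, and then one shows that the normalized surjection matrix $MD^{-1}$ satisfies $\|MD^{-1}-I\|_{L^\infty\to L^\infty} = c_\ell^{-1}-1 < 1$, making it invertible.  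This uses the \emph{already-established} formula for $\mu$ to compute the row sums, which is another place the matrix-model construction feeds back into the argument.
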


Moreover, we prove in Lemma \ref{momimpmeas} that $\mu$ is determined by its moments in a strong sense, meaning that if another measure $\nu$ has moments which are close to the moments of 
$\mu$, then $\nu$ itself is close to $\mu$

\subsection{Generalized Random Measures on $\cC_{\ell,n}$}

It will be necessary for us to define a slight generalization of the universal measure $\mu$. To motivate this, consider the classical case of Cohen-Lenstra setting. In the case
of imaginary quadratic fields, the $\ell$-part of the class group is well modelled by the cokernel of a large square matrix. However, if one is interested
in the case of real quadratic fields then this amounts to quotienting out a random abelian group by `one additional random element' (which might be 0), as was done in \cite{CL} and \cite{CM}.
We thus define an operator $Q$ to formalize the idea of quotienting out by a random element:

\begin{definition}\label{Qtmu} Let $\nu$ be a measure on triples $\mathcal C_{\ell,n} $. Define the measure $Q\nu$ as follows:

$$Q\nu(G,\omega_G,\psi_G):= \int_{\mathcal C_{\ell,n }}   \frac{\#\{f:\Z_\ell\rightarrow H\mid (H,\omega_H,\psi_H)/\im f\sim (G,\omega_G,\psi_G)\}}{|H|} d\nu(H,\omega_H,\psi_H).$$

\end{definition}

This gives us a one parameter family of generalizations $Q^t\mu$ of $\mu$. We prove that these measures are also determined by their moments in Lemma \ref{Qmomimpmeas}, and compute them as well as their supports and moments explicitly.

\section{Definitions, conjectures, and statements of results in the function field setting }\label{s-ffdef}

Let $C/\mathbb{F}_q$ be a curve. Our goal in this section is to introduce additional invariants with which to adorn the group $\Pic^0(C)(\mathbb{F}_q)_\ell$; these additional invariants are non-trivial only when the function field $\mathbb{F}_q(C)$ contains $\ell^n$-power roots of unity or equivalently when $\ell^n | q-1.$ To this effect, we pass to the Jacobian of the curve, which we think of simply as a principally polarized abelian variety. The group $\Pic^0(C)(\mathbb{F}_q)_\ell$ can be recovered purely from the data of the Tate module of the Jacobian and the action of Frobenius thereon.  The Weil pairing funishes the Tate module with a symplectic pairing, and Frobenius acts as a symplectic similitude with respect to this pairing.  This is the setting in which we will define our two additional invariants. 
We work in this setting both to achieve the greatest possible generality, and because it will be paramount for defining our random models. Afterwards, we will specialize to the case
of Abelian varieties, and then even further to Jacobians of curves.

\subsection{The $\omega$ and $\psi$ invariants attached to a symplectic similitude} \label{omegapsisymplecticsimilitude}
Suppose $\omega: T \times T \rightarrow \mathbb{Z}_\ell$ is a perfect symplectic pairing for $T$ a free $\mathbb Z_\ell$-module of rank $2g$.  We will suggestively refer to $\omega$ as the \emph{Weil pairing}. 

Let $V = T_{\mathbb{Q}_\ell} =T \otimes \mathbb Q_\ell.$  Let $F \in \mathrm{GSp}^{(q)}(T,\omega),$ i.e.
$$\omega(Fx,Fy) = q \cdot \omega(x,y) \text{ for all } x,y \in T.$$

Let $\omega_T := \sum_{i = 1}^g  \omega(e_i,f_i)^{-1} \cdot (e_i \wedge f_i)  \in \wedge^2 T,$ where $e_i,f_i$ runs over a symplectic basis of $T,$ i.e. a basis $\{e_i, f_i: i = 1,\ldots,g \}$ for which 
\begin{itemize}
\item
every $e_i$ or $f_i$ is orthogonal to every $e_j$ or $f_j$ if $i \neq j$
\item
$\omega(e_i,f_j)$ is non-zero in $\mathbb{Z}_\ell / \ell.$
\end{itemize} It is easy to check that $\omega_T$ does not depend on the choice of symplectic basis.

By the recipe from \S \ref{wedge2bilinear}, $\omega_{T}$ defines the sequence of alternating bilinear pairings

\begin{align*}
\omega_m: \Hom(T/\ell^m, \frac{1}{\ell^m} \mathbb{Z}_\ell / \mathbb{Z}_\ell ) \times \Hom(T / \ell^m, \frac{1}{\ell^m} \mathbb{Z}_\ell / \mathbb{Z}_\ell ) &\rightarrow \frac{1}{\ell^m} \mathbb{Z}_\ell / \mathbb{Z}_\ell \\
\left( \frac{1}{\ell^m} \omega(\bullet, s), \frac{1}{\ell^m} \omega(\bullet, t) \right) &\mapsto \frac{1}{\ell^m} \omega(s,t).
\end{align*}

Define $H := \frac{T}{(1-F)T}.$  The element $\omega_T$ pushes forward to $\omega^o_H \in \wedge^2 H.$  As explained in $\S \ref{wedge2bilinear},$ these induce alternating bilinear pairings $\omega^o_{H,m}$ on $\Hom \left(H, \frac{1}{\ell^m} \mathbb{Z}_\ell / \mathbb{Z}_\ell \right).$  More concretely, 
$$\Hom \left(H, \frac{1}{\ell^m} \mathbb{Z}_\ell / \mathbb{Z}_\ell \right) = \ker \left( 1 - F^\vee | \Hom \left(T / \ell^m, \frac{1}{\ell^m} \mathbb{Z}_\ell / \mathbb{Z}_\ell \right) \right) \subset \Hom \left( T / \ell^m, \frac{1}{\ell^m} \mathbb{Z}_\ell / \mathbb{Z}_\ell \right),$$ where $F^\vee: \Hom \left(T / \ell^m, \frac{1}{\ell^m} \mathbb{Z}_\ell / \mathbb{Z}_\ell \right) \to \Hom \left(T / \ell^m, \frac{1}{\ell^m} \mathbb{Z}_\ell / \mathbb{Z}_\ell \right)$ is the transpose of $F$, and 
\begin{equation*}
\omega^o_{H,m} = \omega_m |_{H^\vee [\ell^m] \times H^\vee[\ell^m]}.
\end{equation*}

Note that, because $F$ acts on $\omega^o_H$ by the identity and by multiplication by $q$, $\omega^o_H$ is $\ell^n$-torsion.

\bigskip

Now suppose $\ell^n || q-1.$ The snake lemma for the diagram 
$$\begin{CD}
0 @>>> T @>>>  V @>>> V/T @>>> 0\\
@. @V{1-F}VV @V{1-F}VV @V{1-F}VV @. \\
0 @>>> T @>>> V @>>> V/T @>>> 0
\end{CD}$$ 

defines an isomorphism $\mathrm{snake}: \ker(1-F | \; V/T) \cong \coker(1-F| \; T) = H.$  
%Let $F^\vee: \Hom(T, \mathbb{Q}_\ell / \mathbb{Z}_\ell ) \rightarrow \Hom(T, \mathbb{Q}_\ell / \mathbb{Z}_\ell )$ denote the dual map $\Hom(\bullet, \mu)(F).$  
For all $m > 0,$ the Weil pairing identification of $\Hom( T/\ell^m, \mathbb{Z}_\ell / \ell^m)$ with $T / \ell^m$ identifies $F^\vee$ with $qF^{-1}.$  So there are isomorphisms  
\begin{align*}
\Hom(H, \mathbb{Z}_\ell / \ell^n) &\rightarrow \ker( 1 - F^\vee | \; \Hom(T, \mathbb{Z}_\ell / \ell^n) ) \\
&= \ker(1 - F^\vee | \; \Hom(T / \ell^n, \mathbb{Z}_\ell / \ell^n) ) \\
&\xrightarrow{\text{Weil pairing}} \ker( 1 - qF^{-1} |\; T / \ell^n ) \\ 
&= \ker( 1 - F^{-1} |\; T / \ell^n ) \hspace{2.0cm} \text{because } q \equiv 1 \mod \ell^n \\
&=^{ \cdot 1/\ell^n} \ker\left(1-F | \; (V/T) [\ell^n] \right) \\
&= \ker\left(1-F | \; V/T \right) [\ell^n] \\
&=^{\mathrm{snake}} H[\ell^n]. 
\end{align*}

\begin{definition}\label{def-psi-G} Define the invariant $\psi_H : \Hom(H, \frac{1}{\ell^n} \mathbb{Z}_\ell / \mathbb{Z}_\ell ) \rightarrow H[\ell^n]$ as multiplication by $\ell^n$ composed with all of the above maps; it is an isomorphism since all of the constituent maps are isomorphisms.  \end{definition}

We define a corresponding pairing
\begin{align*}
\Hom \left(H, \frac{1}{\ell^n} \mathbb{Z}_\ell / \mathbb{Z}_\ell \right) \times \Hom \left(H, \mathbb{Q}_\ell / \mathbb{Z}_\ell \right) &\rightarrow \frac{1}{\ell^n}\mathbb{Z}_\ell / \mathbb{Z}_\ell \\
(\alpha, \beta) &\mapsto \langle \alpha, \beta \rangle_H := \beta( \psi_H(\alpha) ).
\end{align*}  

\bigskip

Every element of $\Hom(H ,  \mathbb{Z}_\ell / \ell^n ) = \ker \left(1 - F^\vee | \; \Hom(T / \ell^n, \mathbb{Z}_\ell / \ell^n) \right)$ can be expressed as $\omega(\bullet, s) \mod \ell^n$ for some $s \in T$ uniquely determined mod $\ell^n T.$  Because because $F \in \mathrm{GSp}^{(q)}(T,\omega),$ lying in $\ker(1 - F^\vee)$ amounts to $(1 - qF^{-1})s \in \ell^n T$ or equivalently, $(F-1)s \in \ell^n T$ because $q \equiv 1 \mod \ell^n T.$  Unravelling the definition of $\psi,$ we find

\begin{equation} \label{unravelpsi}
\psi_H \left( \frac{1}{\ell^n} \omega(\bullet, s) \right) = \frac{1}{\ell^n} (F-1) s \mod (F-1)T.
\end{equation}

\begin{lemma}[Compatibility between $\psi$ and $\omega$] \label{compatibilitypsiomega}
Let $r \geq 0$ be any non-negative integer.  Let $\alpha, \beta \in \Hom \left(H, \frac{1}{\ell^{n+r}} \mathbb{Z}_\ell / \mathbb{Z}_\ell \right).$  Then $\omega^o_H$ and $\psi_H$ satisfy the following compatibility relation:
$$\langle \ell^r \alpha, \beta \rangle_H - \langle \ell^r \beta, \alpha \rangle_H = \frac{q-1}{\ell^n} \omega^o_{H,n+r}(\alpha, \beta).$$  
\end{lemma}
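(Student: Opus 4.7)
The plan is to parameterize $\alpha$ and $\beta$ via the Weil pairing and reduce the identity to a purely algebraic manipulation on $T$. Writing $\alpha = \tfrac{1}{\ell^{n+r}}\omega(\bullet, s)$ and $\beta = \tfrac{1}{\ell^{n+r}}\omega(\bullet, t)$ for some $s, t \in T$, the condition that $\alpha, \beta$ factor through $H$ translates, by the identical Weil-pairing analysis as in the excerpt, to $(F-q)s, (F-q)t \in \ell^{n+r}T$.  Because $\ell^n \| q-1$, this in particular forces $(F-1)s, (F-1)t \in \ell^n T$, which makes the formula (\ref{unravelpsi}) applicable to $\ell^r\alpha = \tfrac{1}{\ell^n}\omega(\bullet,s)$ and similarly to $\ell^r\beta$.

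Substituting (\ref{unravelpsi}) into the definition of $\langle -, - \rangle_H$, I would directly compute
\[
\langle \ell^r\alpha, \beta\rangle_H = \beta\!\left(\tfrac{1}{\ell^n}(F-1)s\right) = \frac{\omega((F-1)s, t)}{\ell^{2n+r}} \pmod{\mathbb{Z}_\ell},
\]
and analogously for $\langle \ell^r\beta, \alpha\rangle_H$. Antisymmetry of $\omega$ gives
\[
\langle \ell^r\alpha, \beta\rangle_H - \langle \ell^r\beta, \alpha\rangle_H \equiv \frac{\omega((F-1)s, t) + \omega(s, (F-1)t)}{\ell^{2n+r}} \pmod{\mathbb{Z}_\ell}.
\]
Expanding $\omega(Fs - s, Ft - t)$ and using the symplectic similitude relation $\omega(Fs, Ft) = q\omega(s, t)$ yields the key identity
\[
\omega((F-1)s, t) + \omega(s, (F-1)t) = (q-1)\omega(s, t) - \omega((F-1)s, (F-1)t).
\]
Substituting and noting $\omega^o_{H,n+r}(\alpha, \beta) \equiv \omega(s,t)/\ell^{n+r}$ produces the desired main term $\tfrac{q-1}{\ell^n}\omega^o_{H,n+r}(\alpha, \beta)$, leaving the error $-\omega((F-1)s,(F-1)t)/\ell^{2n+r}$.

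The main obstacle is showing this error vanishes modulo $\mathbb{Z}_\ell$, i.e.\ that $\omega((F-1)s, (F-1)t) \in \ell^{2n+r}\mathbb{Z}_\ell$. The naive pointwise estimate from $(F-1)s, (F-1)t \in \ell^n T$ only gives $\ell^{2n}\mathbb{Z}_\ell$; the extra $\ell^r$ factor has to be extracted from the factoring-through-$H$ conditions. The trick is to split only one of the two $(F-1)$ factors as $(F-q) + (q-1)$:
\[
\omega((F-1)s, (F-1)t) = \omega((F-q)s, (F-1)t) + (q-1)\,\omega(s, (F-1)t).
\]
The first summand lies in $\ell^{(n+r) + n}\mathbb{Z}_\ell = \ell^{2n+r}\mathbb{Z}_\ell$, combining $(F-q)s \in \ell^{n+r}T$ with $(F-1)t \in \ell^n T$. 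For the second summand, the bound $\omega(s, (F-1)t) \in \ell^{n+r}\mathbb{Z}_\ell$ is exactly the statement that $\alpha$ vanishes on $(1-F)T$ (applied with $y = t$ in the condition $\omega((1-F)y, s) \in \ell^{n+r}\mathbb{Z}_\ell$), and multiplying by $(q-1) \in \ell^n \mathbb{Z}_\ell$ again lands in $\ell^{2n+r}\mathbb{Z}_\ell$. Combining these bounds concludes the proof.
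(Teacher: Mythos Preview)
Your proof is correct and follows the same overall strategy as the paper: parameterize $\alpha,\beta$ via the Weil pairing and reduce to an algebraic identity on $T$. The paper carries out the identical expansion and arrives at the same error term $\tfrac{1}{\ell^{2n+r}}\omega((F-1)s,(1-F)t)$.

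The one substantive difference is in how the error term is dispatched. The paper asserts that $(F-1)s,(F-1)t\in\ell^{n+r}T$ and concludes immediately. You instead record the sharper condition $(F-q)s,(F-q)t\in\ell^{n+r}T$ coming from $\ker(1-F^\vee)$, note that this only yields $(F-1)s,(F-1)t\in\ell^{n}T$ when $r>0$, and then recover the missing factor of $\ell^{r}$ by splitting one $(F-1)$ as $(F-q)+(q-1)$ and invoking the vanishing of $\alpha$ on $(1-F)T$ directly. Your route is slightly longer but makes explicit why the error term lands in $\ell^{2n+r}\mathbb{Z}_\ell$ rather than merely $\ell^{2n}\mathbb{Z}_\ell$; the paper's one-line justification glosses over the distinction between $(F-1)s$ and $(F-q)s$ at level $\ell^{n+r}$.
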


\begin{proof}
We can represent $\alpha$ and $\beta$ as
\begin{align*}
\alpha &= \frac{1}{\ell^{n+r}} \omega(\bullet, s) \mod \mathbb{Z}_\ell, \\
\beta &= \frac{1}{\ell^{n+r}} \omega(\bullet, t) \mod \mathbb{Z}_\ell,
\end{align*}
where $s,t \in T$ are unique mod $\ell^{n+r} T$ and satisfy $(F-1)s, (F-1)t \in \ell^{n+r} T.$  

By definition of the pairings $\omega^o_{H,m},$
\begin{equation} \label{omegapartcompatibility}
\omega^o_{H,n+r}(\alpha,\beta) = \frac{1}{\ell^{n+r}} \omega(s,t) \mod \mathbb{Z}_\ell.
\end{equation}

By the calculation from \eqref{unravelpsi},
\begin{align} \label{psipartcompatibilitypart1}
\langle \ell^r \alpha, \beta \rangle_H &= \beta \left( \psi_H(\alpha) \right) \nonumber \\
&= \beta \left( \frac{1}{\ell^n} (F-1)s, t \right) \nonumber \\
&= \frac{1}{\ell^{n+r}} \omega \left( \frac{1}{\ell^n} (F-1)s,t \right) \mod \mathbb{Z}_\ell
\end{align}
and likewise
\begin{align} \label{psipartcompatibilitypart2}
\langle \ell^r \beta, \alpha \rangle_H &= \alpha \left( \psi_H(\beta) \right) \nonumber \\
&= \alpha \left( \frac{1}{\ell^n} (F-1)t, s \right) \nonumber \\
&= \frac{1}{\ell^{n+r}} \omega \left( \frac{1}{\ell^n} (F-1)t,s \right) \mod \mathbb{Z}_\ell.
\end{align}

Combining \eqref{omegapartcompatibility},  \eqref{psipartcompatibilitypart1}, and  \eqref{psipartcompatibilitypart2}:
\begin{align*}
&\langle \ell^r \alpha, \beta \rangle_H - \langle \ell^r \beta, \alpha \rangle_H - \frac{q-1}{\ell^n} \omega^o_{H,n+r}(\alpha, \beta) \mod \mathbb{Z}_\ell \\
&= \frac{1}{\ell^{2n+r}} \left( \; \omega( (F-1)s,t) - \omega( (F-1)t,s ) - (q-1) \omega(s,t)  \; \right) \\
&=  \frac{1}{\ell^{2n+r}} \left( \; \omega( (F-1)s,t) - \omega( (F-1)t,s ) - (q-1) \omega(s,t) \; \right) \\
&= \frac{1}{\ell^{2n+r}} \left( \; \omega( (F-1)s,t) - \omega( (F-1)t,s ) - \omega(Fs,Ft) + \omega(s,t) \; \right) \\
&= \frac{1}{\ell^{2n+r}} \omega( (F-1)s, (1-F)t ) \\
&= 0 \mod \mathbb{Z}_\ell ,
\end{align*}
where the last line follows because $(F-1)s, (1-F)t \in \ell^{n+r} T.$  
\end{proof}

Finally, we introduce a scaling factor so as to make the compability relation in the above lemma match up with that in equation \ref{psiomegacomp}. Namely, we define
$\omega_H=\frac{q-1}{2\ell^n} \omega^o_H$.

\subsection{The $\omega$ and $\psi$ invariants for a principally polarized abelian varieties and curves over a finite field}\label{abomegapsi}

Let $A / \mathbb{F}_q$ be a principally polarized abelian variety.  Let $\zeta$ be a generator for $\mu_{\ell^n} \subset \mathbb{F}_q^\times.$  Let $T = T_{\ell}(A)$ denote the $\ell$-adic Tate module of $A$ and let $V = T_{\mathbb{Q}_\ell}.$  Let $\mu = \varprojlim \mu_{\ell^m},$ where $\mu_{\ell^m}$ are the $\ell^m$th roots of unity.  This is a free $\mathbb{Z}_\ell$-module of rank 1.  We will identify $\mu$ with $\mathbb{Z}_\ell$ by choosing a basis vector for $\mu.$     

Via the principal polarization, the Weil pairing defines a natural alternating non-degenerate symplectic pairing $\omega: T \times T \rightarrow \mathbb{Z}_\ell.$  The Frobenius endomorphism $F: A \rightarrow A$ induces $F: T_\ell(A) \rightarrow T_\ell(A).$  It acts as a symplectic similitude in $\mathrm{GSp}^{(q)}(T,\omega)$:
$$\omega(Fx,Fy) = q \cdot \omega(x,y) \text{ for all } x,y \in T.$$  

Let $H = T / (1-F)T.$  Note that $V / T$ is naturally isomorphic to $A[\ell^\infty].$  By the snake lemma for the diagram  
$$\begin{CD}
0 @>>> T @>>>  V @>>> V/T @>>> 0\\
@. @V{1-F}VV @V{1-F}VV @V{1-F}VV @. \\
0 @>>> T @>>> V @>>> V/T @>>> 0
\end{CD},$$ 
the group $H$ is isomorphic to $\ker(1-F| \; A[\ell^\infty]) = A(\mathbb{F}_q)_{\ell}.$  By the construction described in \S \ref{omegapsisymplecticsimilitude}, these data induce the triple $(H, \omega_H, \psi_H).$\footnote{The invariants $\omega_H$ and $\psi_H$ only depend on $b \mod \ell^n \mu,$ which is equivalent to a choice of generator $\zeta \in \mu_{\ell^n}.$}  The resulting triple is the \emph{bilinearly enhanced group associated to $A$}.  We denote the invariants $\omega_H,\psi_H$ by $\omega_A, \psi_A.$

\begin{remark}
It is not hard to check that the inverse $\psi_A^{-1}$ of $\psi_A$ is the Tate-Lichtenbaum pairing \cite[XI.9]{Silverman}, but we will not use this fact in our arguments. We use $\psi$ because $\psi$, unlike its inverse, descends to quotients of the group of rational points of the abelian variety. 
\end{remark}

\bigskip

For every smooth projective curve $C / \mathbb{F}_q,$ the Jacobian $\mathrm{Pic}^0(C)$ is an abelian variety with a canonical principal polarization.  By the above discussion, we can attach bilinearly enhanced group $(\mathrm{Pic}^0(C)(\mathbb{F}_q)_\ell, \omega_{\mathrm{Pic}^0(C)}, \psi_{\mathrm{Pic}^0(C)})$ to the curve $C.$  To ease notation, we denote this triple by $(G_C, \omega_C, \psi_C).$  For the principal polarization implicitly used to define these invariants, we will always use the canonical one.

\subsection{Equidistribution Conjecture}

We fix $q,\ell^n$ as before. For each positive integer $g$, we may consider the set $\cH_{g, \mathbb{F}_q}$ of smooth, projective hyperelliptic curves over $\F_q$ of genus $g$. 
To a hyperelliptic curve $C,$ we associate the $\ell^n$-BEG  $(G_C,\omega_C,\psi_C)$. We thus obtain a corresponding counting measure $\mu^q_g$ on $\cC_{\ell,n}$. We conjecture that for $q$ fixed and $g \to \infty,$ the measures $\mu^q_g$ converge to the measure $\mu$ referred to in Theorem \S \ref{measure-exists-unique} and formally defined in \S \ref{universalmeasuredefinition}; this refines the analogue in this setting of the conjecture \cite{Malle10} and also generalizes some conjectures from \cite{LT}.

\begin{conj}\label{eqconjff}

As $g\ra\infty$, the measures $\mu^q_g$ converge to $\mu$ in the weak-* topology.

\end{conj}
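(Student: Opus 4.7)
The plan is to follow the Ellenberg--Venkatesh--Westerland strategy, adapted to incorporate the enhancements $\omega$ and $\psi$, and strengthened to handle fixed $q$. By Lemma \ref{momimpmeas}, weak-$*$ convergence of $\mu^q_g$ to $\mu$ will follow once we establish convergence of moments: for every $\ell^n$-BEG $(G,\omega_G,\psi_G)$, we must show
\[
\lim_{g\to\infty}\mathbb{E}_{\mu^q_g}\#\sur(\,\cdot\,,(G,\omega_G,\psi_G)) \;=\; \frac{1}{|\sym^2 G[\ell^n]|}.
\]
So the first step is to reduce the problem to this moment computation.

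Next, I would realize the left-hand side as a point count. Using the recipes from \S\ref{abomegapsi} together with class field theory, a surjection of BEGs from $(G_C,\omega_C,\psi_C)$ onto a fixed $(G,\omega_G,\psi_G)$ corresponds to an unramified abelian cover of $C$ carrying extra pairing data. The set of hyperelliptic curves with such a marking forms an $\mathbb{F}_q$-variety $\mathcal{H}^{(G,\omega_G,\psi_G)}_g$, and the moment equals $|\mathcal{H}^{(G,\omega_G,\psi_G)}_g(\mathbb{F}_q)|/|\cH_{g,\F_q}|$. Applying the Grothendieck--Lefschetz trace formula to these spaces, the main term comes from $H^0_c$ — equivalently, from counting geometric components of $\mathcal{H}^{(G,\omega_G,\psi_G)}_g$ together with Frobenius action. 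Yu's monodromy theorem for $\Sp_{2g}(\mathbb{Z}_\ell)$, combined with a group-theoretic enumeration of orbits of symplectic similitudes on BEG-surjections, should show that this stable component count matches $1/|\sym^2 G[\ell^n]|$ in the $g\to\infty$ limit, consistent with Theorem \ref{measure-exists-unique}.

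The main obstacle is controlling the higher cohomology $H^i_c(\mathcal{H}^{(G,\omega_G,\psi_G)}_g\otimes\overline{\mathbb{F}_q};\mathbb{Q}_\ell)$ at \emph{fixed} $q$. In the trace formula, each $H^i_c$ contributes up to $q^i\dim H^i_c$, so one needs Betti number bounds of the form $\dim H^i_c \le C g^{Ai}$ with $C,A$ independent of $g,i$ (and polynomial growth beating the weights of the Frobenius eigenvalues). The EVW strategy — which we expect to adapt by exhibiting $\mathcal{H}^{(G,\omega_G,\psi_G)}_g$ as covered by (components of) a Hurwitz space from \cite{EVW}, thereby inheriting their Betti-number bounds — is sufficient when $q$ is allowed to grow with $g$, and this is exactly what underlies the weaker Theorem \ref{ffmain}. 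To prove the full Conjecture \ref{eqconjff} for fixed $q$, one would need genuinely sharper homological stability: a polynomial-in-$g$ (or, ideally, bounded) estimate for each $\dim H^i_c$, or an exact identification of the stable cohomology. This strengthening of the topological input of \cite{EVW} is the essential and still-missing ingredient.
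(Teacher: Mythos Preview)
The statement you are addressing is a \emph{conjecture}; the paper does not prove it and explicitly states so, proving only the weaker Theorem~\ref{ffmain} where $q\to\infty$ along with $g$. There is therefore no proof in the paper against which to compare. Your write-up is not a proof either, and you correctly recognize this in your final paragraph: the strategy you outline (moment-to-point-count via enhanced Hurwitz spaces, component count via big monodromy, Betti bounds inherited from \cite{EVW}) is exactly the machinery the paper builds in \S\ref{functionfieldproof} to prove Theorem~\ref{ffmain}, and you correctly locate the obstruction to upgrading it to fixed~$q$ --- the EVW bound $\dim H^i \le C(G')^{i+1}$ only yields a convergent error when $q>C(G')^2$, and no sharper homological stability is currently available.

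One small correction: the reduction ``moments converge $\Rightarrow$ measures converge weak-$*$'' is not Lemma~\ref{momimpmeas} (which only gives uniqueness of $\mu$ among measures with those exact moments) but rather the stability Theorem~\ref{CLStability}, whose proof uses the tail bound of Proposition~\ref{tailboundab} in addition to the moment-determines-measure argument. Also, in your trace-formula discussion the main term comes from $H^{2N}_c$ (top compactly supported cohomology, Frobenius eigenvalue $q^N$), not $H^0_c$; via Poincar\'e duality this does correspond to $H^0$, i.e.\ components, so your component-count reasoning via Yu's monodromy and Lemma~\ref{ffieldcount} is substantively right.
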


\subsection{Statements of Results}

While we cannot prove Conjecture \ref{eqconjff}, we may make partial progress towards it in the style of \cite{EVW}, by building on their work. Informally, we prove 
 the moments of $\mu_g^q$ get close to those of $\mu$ for large $g$. Moreover, the error gets smaller as $q$ gets bigger. More precisely, we prove
 
 \begin{thm}\label{ffmain}
 
 Fix an element $G^\bullet=(G,\omega_G,\psi_G)\in\cC_{\ell,n}$, and suppose $q$ is sufficiently large wrt $|G|$. Let $\E_G^+,\E_G^-$ be the limsup,liminf respectively
 of $\E_{\mu^q_g}\#\sur(*,G^\bullet)$ as $g\ra\infty$. Then $$\mid\E_G^{\pm}-\E_\mu\#\sur(*,G^\bullet)\mid=O_G(q^{-1/2}).$$
 
Moreover, if $g,q$ both tend to infinity then $\mu^q_g$ converges to $\mu$ in the weak-* topology.
 
 \end{thm}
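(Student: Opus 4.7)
The plan is to follow the strategy of \cite{EVW}: for each $G^\bullet = (G,\omega_G,\psi_G)\in \cC_{\ell,n}$, I would construct a finite étale cover $M_{g,G^\bullet}\to \mathcal{H}_g$ whose $\F_q$-points lying over $C \in \mathcal{H}_g(\F_q)$ are surjections of $\ell^n$-BEGs from $(G_C,\omega_C,\psi_C)$ onto $G^\bullet$, so that $\E_{\mu_g^q}\#\sur(\ast,G^\bullet) = |M_{g,G^\bullet}(\F_q)|/|\mathcal{H}_g(\F_q)|$. The Grothendieck--Lefschetz trace formula together with Deligne's purity then gives
\[
|M_{g,G^\bullet}(\F_q)| = q^{\dim \mathcal{H}_g}\cdot \#\{\text{Frobenius-stable geometric components of } M_{g,G^\bullet}\} + \mathrm{err},
\]
with $|\mathrm{err}| \leq q^{\dim \mathcal{H}_g - 1/2}\sum_i \dim H^i_c(M_{g,G^\bullet,\overline{\F_q}}, \Q_\ell)$.

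To handle the leading term, I would invoke Yu's monodromy theorem \cite{Yu}: the geometric and arithmetic monodromies of the $\ell^m$-level structure on the Jacobian over $\mathcal{H}_g$ are $\Sp_{2g}(\Z/\ell^m)$ and $\GSP^{(q)}_{2g}(\Z/\ell^m)$ respectively. Consequently, counting Frobenius-stable components of $M_{g,G^\bullet}$ becomes a purely group-theoretic exercise, equivalent to computing the expected number of BEG-surjections onto $G^\bullet$ in the Friedman--Washington random matrix model on $\GSP^{(q)}_{2g}(\Z_\ell)$. By Theorem \ref{measure-exists-unique} and the results of Section \ref{linearrandommodels}, this expectation stabilizes as $g\to\infty$ to $1/|\sym^2 G[\ell^n]| = \E_\mu\#\sur(\ast,G^\bullet)$, and matches it for $g$ sufficiently large with respect to $|G|$.

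To control the error, I would bound the Betti numbers of $M_{g,G^\bullet}$ uniformly in $g$. The key geometric input is that a surjection $G_C\twoheadrightarrow G$ corresponds via class field theory to an unramified abelian $G$-cover of $C$; pulling back along the hyperelliptic double cover $C\to\PP^1$ presents $M_{g,G^\bullet}$ as a finite cover of (or a union of components of) a Hurwitz space of the type studied in \cite{EVW}, whose Betti numbers are bounded independently of $g$ by results of that paper. Combining these two ingredients gives the first assertion with error $O_G(q^{-1/2})$. The second assertion then follows from Lemma \ref{momimpmeas}: a sequence of measures on $\cC_{\ell,n}$ whose moments converge to those of $\mu$ converges weakly to $\mu$, and the simultaneous limit $g,q\to\infty$ forces moment convergence by the first part.

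The main obstacle is the precise identification of $M_{g,G^\bullet}$ with a subspace of an EVW Hurwitz space compatibly with the $\omega$ and $\psi$ data. The BEG invariants are defined via the Weil pairing and a snake-lemma isomorphism intrinsic to the Jacobian, whereas a Hurwitz space records only the topological type of a branched cover of $\PP^1$; one must verify that the BEG-enhancement corresponds to a suitably geometric refinement on the Hurwitz side, so that the EVW bounds remain applicable. This is precisely where introducing $\omega$ and $\psi$ pays off: with them, the moments take the uniform value $1/|\sym^2 G[\ell^n]|$, matching the natural orbit count for the full monodromy group, which is exactly what makes \cite{EVW}'s cohomological machinery directly applicable in the $\ell^n \mid q-1$ regime.
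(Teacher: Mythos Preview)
Your overall architecture---build an \'etale cover of the Hurwitz space, apply Lefschetz and Deligne, count components via Yu's monodromy, and bound Betti numbers by exhibiting an EVW Hurwitz cover---is exactly the paper's approach. But there is a genuine gap in your construction of the cover $M_{g,G^\bullet}$.

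You claim that $M_{g,G^\bullet}$ can be built so that its $\F_q$-fiber over $C$ is \emph{exactly} the set of BEG-surjections $(G_C,\omega_C,\psi_C)\twoheadrightarrow G^\bullet$, and that the number of Frobenius-stable geometric components then equals the Friedman--Washington moment. This cannot be right: that moment is $1/|\sym^2 G[\ell^n]|$, which is not an integer once $G[\ell^n]\neq 0$, whereas a component count must be. The issue is that the $\psi$-invariant of a surjection depends on the Frobenius of the specific curve $C$, not just on the surjection itself, so ``BEG-surjections to $G^\bullet$'' is not the fiber of any cover defined purely by a subgroup of the monodromy group in the na\"ive way.

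The paper's fix is to define the cover via the subgroup $H^*\subset \pi_1(Y_N)$ consisting of $\sigma$ with $a(\sigma)$ fixing a chosen surjection $\phi$ \emph{and} satisfying the twisted condition $\psi_\phi(a(\sigma))=d(\sigma)\cdot\psi_*$, where $d$ is the degree map to $\widehat{\Z}$. One then proves (Lemma~\ref{ffieldcompare}) that the $\F_q$-points of this cover count BEG-surjections with multiplicity exactly $|\sym^2 G[\ell^n]|$; the cover itself is geometrically irreducible (Lemma~\ref{ffieldcount}), so the leading term is $q^N$, and after dividing by the multiplicity and by $|Y_N(\F_q)|$ one recovers the desired moment $1/|\sym^2 G[\ell^n]|$. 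Showing that $\psi_\phi$ is a group homomorphism and computing its image on $H\cap\Sp_{2g}$ (which is precisely the symmetric part, of size $|\sym^2 G[\ell^n]|$) are the key group-theoretic steps you are missing.

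A smaller point: for the weak-$*$ convergence in the second assertion you invoke Lemma~\ref{momimpmeas}, but that lemma only asserts uniqueness given exact moments. What is actually needed is the stability statement of Theorem~\ref{CLStability}, that approximate moment agreement forces approximate measure agreement; the paper proves this separately and invokes it here.
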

 
  \subsection{Generalizations for Conjecture \ref{eqconjff}} \label{relativeclassgrouppuncturedcurves}
We motivate in this section a generalization of Conjecture \ref{eqconjff}. Specifically, we work in the more general setting where the base curve
is not necessarily $\PP^1$, and is not necessarily proper. This will be useful later on when we motivate our conjecture in the number field setting.

To that end, let $C$ be a smooth, projective curve over $\F_q$, and let $S\subset C$ be a {reduced effective} divisor over $\F_q$. We consider double covers $D \xrightarrow{\pi} C$ where $D$
is a smooth projective curve, and $\pi$ is unramified over $S$. We set $T = \pi^{-1}(S)$, which is also reduced.  

We are interested in studying the $\ell$-part of the Picard group of $D-T$. However, since this will be split by the action of the non-trivial automorphism of $\pi$, it is better to consider the relative class group \[\Pic(D-T/C-S):=\frac{\Pic(D-T)}{\Pic(C-S)}.\]

Let $\Div_S, \Div_T$ denote the divisors on $C$ (resp. $D$) supported on $S$ (resp. $T$).  

\begin{lemma}
The natural restriction map induces right exact sequence
$$\Div_T / \pi^\ast \Div_S\rightarrow\Pic(D/C) \xrightarrow{\sim} \Pic(D - T / C - S).$$
\end{lemma}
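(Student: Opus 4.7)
The plan is to deduce the sequence from two copies of the standard Picard excision sequence, one for $(C,S)$ and one for $(D,T)$, combined by taking cokernels of the vertical pullback maps.

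For any smooth projective curve $X$ equipped with a reduced effective divisor $Z \subset X$, the restriction map $X \to X-Z$ fits into a right-exact excision sequence $\Div_Z \to \Pic(X) \to \Pic(X-Z) \to 0$, where the first arrow sends a divisor to the class of its associated line bundle. Applying this to $(C,S)$ and to $(D,T)$, and using that $\pi$ is unramified over $S$ so that $\pi^{-1}(S) = T$ as reduced divisors (hence $\pi^\ast \Div_S \subseteq \Div_T$), one obtains a commutative diagram with right-exact rows:
\begin{equation*}
\begin{CD}
\Div_S @>>> \Pic(C) @>>> \Pic(C-S) @>>> 0 \\
@VV\pi^\ast V @VV\pi^\ast V @VV\pi^\ast V \\
\Div_T @>>> \Pic(D) @>>> \Pic(D-T) @>>> 0.
\end{CD}
\end{equation*}

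Taking cokernels of the three vertical maps yields a right-exact sequence of cokernels, a fact that follows from cokernel being a colimit (colimits commute with colimits), or equivalently from a short diagram chase in the style of the $3\times 3$ lemma. The resulting sequence is
\[\Div_T/\pi^\ast \Div_S \longrightarrow \Pic(D)/\pi^\ast \Pic(C) \longrightarrow \Pic(D-T)/\pi^\ast \Pic(C-S) \longrightarrow 0,\]
which after unpacking the definitions of the relative class groups $\Pic(D/C)$ and $\Pic(D-T/C-S)$ is precisely the claimed right-exact sequence.

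The step requiring the most care is the verification that $\pi^\ast \Div_S \subseteq \Div_T$, which relies crucially on the unramified hypothesis (so that no nontrivial multiplicities are introduced when pulling back points of $S$); without this hypothesis one would have to replace $\Div_T$ by a larger group accounting for ramification. Everything else is routine, and as a direct sanity check one can verify by hand that a class in $\Pic(D)$ trivializing in $\Pic(D-T)/\pi^\ast\Pic(C-S)$ differs, modulo $\pi^\ast\Pic(C)$, from the class of a divisor supported on $T$, which pins down the image of the first arrow.
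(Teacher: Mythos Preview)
Your proof is correct and follows essentially the same approach as the paper: set up the commutative diagram of excision sequences for $(C,S)$ and $(D,T)$ with vertical $\pi^\ast$ maps, then pass to cokernels. The paper phrases the last step as ``the snake lemma'' while you invoke right-exactness of cokernels directly, but these amount to the same thing; your additional remarks on the role of the unramified hypothesis are a nice bonus.
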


\begin{proof}
There are maps of right exact sequences 
$$\begin{CD}
\Div_S @>>> \Pic(C) @>>> \Pic(C - S) @>>> 0 \\
@V{\pi^\ast}VV @V{\pi^\ast}VV @V{\pi^\ast}VV \\
\Div_T @>>> \Pic(D) @>>> \Pic(D - T) @>>> 0 \\
\end{CD}$$
with exact rows.  The result follows from the snake lemma.
\end{proof}

Since $\omega_D,\psi_D$ naturally push forward along quotient maps, we obtain elements $(\Pic(D/C)_\ell, \omega_{D/C}, \psi_{D/C})$ of $\cC_{\ell,n}$. 
If $S$ consists of $s$ closed points and $T$ consists of $t$ closed points, then $\Div_T / \pi^\ast \Div_S$ is a free abelian group on $u = t-s$ generators.
As such, it seems reasonable to model the image of $\Div_T / \pi^\ast \Div_S$ as a random $u$-generated subgroup of $\Pic(D/C).$ 

We define $\mu^g_{C,S}$ to be the counting measures corresponding to the elements $(\Pic(D/C)_\ell, \omega_{D/C}, \psi_{D/C})$ obtained as $\pi$ varies along
genus $g$ double covers of $C$ which are unramified over $S$.

The operator $Q^u$ defined in Definition \ref{Qtmu} exactly models the operation of quotienting out by $u$-random elements, which motivates the following conjecture:

\begin{conj}\label{eqconjffaffine}

Let $C,S,\mu^g_{C,S}$ be as above. As $g\ra\infty$, the measures $\mu^g_{C,S}$ converge to $Q^u\mu$ in the weak-* topology.

\end{conj}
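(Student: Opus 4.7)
The plan is to mirror the strategy the authors outline for Theorem \ref{ffmain}, upgrading the target measure from $\mu$ to $Q^u\mu$, and replacing hyperelliptic curves with double covers of $C$ unramified over $S$. By Lemma \ref{Qmomimpmeas}, the measure $Q^u\mu$ is determined by its moments in the strong sense required for weak-$*$ convergence, so it suffices to show that, for every fixed $G^\bullet=(G,\omega_G,\psi_G)\in\cC_{\ell,n}$,
\[ \lim_{g\to\infty}\E_{\mu^g_{C,S}}\,\#\sur(\ast,G^\bullet) \;=\; \E_{Q^u\mu}\,\#\sur(\ast,G^\bullet). \]

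First I would construct a moduli stack $\mathrm{Hur}^{g,G^\bullet}_{C,S}$ over $\F_q$ whose $\F_q$-points parametrize pairs $(\pi,\phi)$, where $\pi\colon D\to C$ is a smooth double cover of genus $g$ unramified over $S$, and $\phi$ is a surjection $(\Pic(D/C)_\ell,\omega_{D/C},\psi_{D/C})\twoheadrightarrow G^\bullet$ in $\cC_{\ell,n}$. The number of such pairs, divided by $|\cH^{g}_{C,S}(\F_q)|$, is exactly the moment $\E_{\mu^g_{C,S}}\,\#\sur(\ast,G^\bullet)$. Using the diagram of $\pi^\ast$-pullbacks that defines the relative class group together with the snake lemma in the lemma preceding the conjecture, I would reinterpret such a surjection as a connected $G$-torsor on $D-T$ whose class field theory datum satisfies prescribed compatibility with $\omega_G,\psi_G$; this realizes $\mathrm{Hur}^{g,G^\bullet}_{C,S}$ as a finite \'{e}tale cover of the Hurwitz scheme parametrizing the branch divisor.

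Next I would count geometric components. The Weil pairing combined with the monodromy input of \cite{Yu}, applied now to the symplectic local system on the Hurwitz space of double covers of $C$ unramified over $S$, shows that Frobenius acts on the Tate module as a uniformly random element of $\mathrm{GSp}^{(q)}_{2g}(\Z_\ell)$, once $g$ is large. Combining this with the component-counting argument in Section \ref{functionfieldproof} and the presence of $u=|T|-|S|$ extra degrees of freedom from $\Div_T/\pi^\ast\Div_S$ (which, group-theoretically, is exactly a free $\Z_\ell$-module of rank $u$ mapping into $\Pic(D/C)_\ell$), the geometric component count of $\mathrm{Hur}^{g,G^\bullet}_{C,S}$ matches, after normalization, the $Q^u\mu$-moment $\E_{Q^u\mu}\,\#\sur(\ast,G^\bullet)$ computed via the defining formula in Definition \ref{Qtmu}. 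To pass from components to point counts, I would apply Grothendieck--Lefschetz and invoke the EVW homological-stability bounds from \cite{EVW}: there is a forgetful map from $\mathrm{Hur}^{g,G^\bullet}_{C,S}$ to the configuration-of-branch-points space on $C-S$ which fibers, locally, over the Hurwitz spaces for which \cite{EVW} prove uniform bounds $\dim H^i\le C_1 C_2^i$. These bounds transfer to $\mathrm{Hur}^{g,G^\bullet}_{C,S}$ because the extra $\omega,\psi$ data gives only finitely many local twists, and the Lefschetz sum then yields $\#\mathrm{Hur}^{g,G^\bullet}_{C,S}(\F_q) = (\text{main term from components})\cdot q^{\dim}\bigl(1+O_G(q^{-1/2})\bigr)$.

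Finally I would combine these ingredients: the main term gives the desired moment of $Q^u\mu$, while the error $O_G(q^{-1/2})$ is absorbed once $g\to\infty$ is taken with $q$ fixed, provided the EVW stable-range threshold $g\gg_G 1$ is crossed before error accumulation matters. Applying the determination of $Q^u\mu$ by its moments (Lemma \ref{Qmomimpmeas}) then gives weak-$*$ convergence. The main obstacle, exactly as in \cite{EVW} and in the authors' proof of Theorem \ref{ffmain}, is verifying that the EVW homological-stability bounds extend to these generalized Hurwitz spaces: namely, double covers of an arbitrary smooth projective $C$ unramified over a prescribed $S$, enriched with the $(\omega_G,\psi_G)$ structure. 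The non-$\PP^1$ base and the puncture set $S$ alter the underlying topological configuration space, so one must recheck that the stability arguments of \cite{EVW}, which crucially use the structure of classifying spaces for Hurwitz braid groups, go through in this setting; this step, together with identifying the precise contribution of the free rank-$u$ quotient to the limiting moments, I expect to be where the real work lies.
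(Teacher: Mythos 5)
The statement you are trying to prove is a \emph{conjecture} in the paper: the authors do not prove it, and even its special case over $\PP^1$ (Conjecture \ref{eqconjff}) is left open. What they prove, Theorem \ref{ffmain}, is strictly weaker: for fixed $q$ sufficiently large relative to $|G|$ the limsup and liminf of the moments $\E_{\mu^q_g}\#\sur(*,G^\bullet)$ are within $O_G(q^{-1/2})$ of the $\mu$-moments, and actual weak-$*$ convergence is obtained only when $g$ and $q$ tend to infinity \emph{together}. Your proposal reproduces exactly that strategy (Hurwitz covers, component counts via monodromy, Grothendieck--Lefschetz plus the EVW Betti bounds), so it can at best give the analogous approximate statement; the step where you claim ``the error $O_G(q^{-1/2})$ is absorbed once $g\to\infty$ is taken with $q$ fixed'' is false. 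The EVW-type bounds $\dim H^i \le C_1C_2^i$ produce an error term that is uniform in $g$ but does \emph{not} decay as $g\to\infty$ at fixed $q$; to get genuine convergence at fixed $q$ one would need to identify the stable cohomology (or the exact point counts) rather than merely bound it, and that is precisely the open problem. Relatedly, the stability result you invoke to pass from moments to the measure (Theorem \ref{CLStability} / Lemma \ref{Qmomimpmeas}) needs the moments to actually converge to those of $Q^u\mu$, which your argument does not deliver.

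Beyond this central gap, several ingredients you treat as available are themselves unproven in the generality of the conjecture. The monodromy computation of \cite{Yu} is for hyperelliptic curves, i.e.\ double covers of $\PP^1$; for double covers of an arbitrary base curve $C$ unramified over $S$ you would need a separate big-monodromy statement. The identification of the limiting measure as $Q^u\mu$ rests on the claim that the images of the $u$ generators of $\Div_T/\pi^\ast\Div_S$ in $\Pic(D/C)_\ell$ equidistribute like $u$ independent random elements; the paper explicitly presents this only as a heuristic (``it seems reasonable to model\dots''), and your sketch assumes it without argument. Finally, as you yourself note, the homological stability bounds of \cite{EVW} are proved for Hurwitz spaces over the punctured line, and extending them to covers of a general $(C,S)$ is real work, not a formality. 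So the proposal is a reasonable outline of how one might attack a \emph{weaker, large-$q$} analogue of the conjecture, but it does not prove the conjecture, and the fixed-$q$ claim in your error analysis is incorrect.
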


\section{The invariants beyond the function field setting}\label{number-field-invariants}

Let $K$ be a number field containing the $\ell^n$th roots of unity for some odd prime $\ell$ and positive integer $n$, but not the $\ell^{n+1}$st roots of unit, and \emph{fix a generator $\zeta$ of $\mu_{\ell^n}(K)$}. We will define invariants $\psi_K$ and $\omega_K$ on $\mathrm{Cl}(K) [\ell^\infty]$ that mimic those defined in the function field setting.

To motivate these definitions, we can compare them to the function field case. If we replace every occurrence of $\spec \Oo_K$ in these definitions with the projective curve $C$ whose function field $\mathbb F_q(C)$ is $K$, we will define invariants $\psi_K$ and $\phi_K$ on the Picard group of $C$. In Section \ref{s-NT-FF-compatible}, we will see that $\psi_K= \psi_C $ and $\phi_K=\phi_C$, so these definitions agree with our earlier ones.

In Section \ref{s-internal-consistency} we will check that $\psi_K$ and $\omega_K$ satisfy the compatibility condition \eqref{psiomegacomp} making $ ( \mathrm{Cl}(K) [\ell^\infty], \omega_K, \psi_K )$ a bilinearly enhanced group.
\bigskip

\subsection{Definition of $\psi_K$}

\begin{definition}\label{psi-nt-defi}
 Working in the fppf site of $\spec\Oo_K$, recall the Kummer sequence $1\rightarrow\mu_{\ell^n}\rightarrow \mathbb{G}_m\rightarrow \mathbb{G}_m\rightarrow 1$. From the Kummer sequence and the fact
that $H^1(\spec\Oo_K,\mathbb{G}_m)\cong \mathrm{Cl}(K)$ we get the exact sequence
$$1\rightarrow \Oo_K^\times\otimes\Z/{\ell^n}\Z\xrightarrow{\delta} H^1(\spec\Oo_K,\mu_{\ell^n})\rightarrow \mathrm{Cl}(K)[{\ell^n}]\rightarrow 1.$$ 

Now, for any scheme $X$ we have that $H^1(X,\Z/{\ell^n}\Z) \cong\Hom(\pi_{1,et}(X)^{\textrm{ab}},\Z/{\ell^n}\Z)$ which yields $H^1(\spec\Oo_K,\Z/{\ell^n}\Z)\cong \mathrm{Cl}(K)^\vee[{\ell^n}]$ (from class field theory). 

We thus get a map $$\mathrm{Cl}(K)^\vee[{\ell^n}] \cong H^1(\spec\Oo_K,\Z/{\ell^n}\Z) \rightarrow H^1(\spec\Oo_K,\mu_{\ell^n}) \rightarrow \mathrm{Cl}(K)[{\ell^n}]$$ where the middle map is induced by cup product with $\zeta$. We define this map to be  $\psi_K$. 
\end{definition}

Note that, for $K$ the function field of a curve $C$ over a field $k$, we define $\mathrm{Cl}(K)$ as the group $\Pic(C)$ of line bundles on $C / k$, not its subgroup $\Pic^0(C)$ of degree zero line bundles. On the other hand, our definition of $\psi_C$ is in terms of $\Pic^0(C)$.  In Section \ref{s-NT-FF-compatible}, we will see that $\psi_C$ and $\psi_K$ are equal up to the inclusion $\Pic^0(C) \to \Pic(C)$.

\subsection{Definition of $\omega_K$}

The definition of $\omega_K$ is more involved - we do not construct it directly. We construct $\omega_K$ indirectly via pairings $\omega_{m,K}$ defined in \S \ref{omega-pairings} and discuss motivation for the definition of $\omega_K$ afterwards in \S{motivation-omega-K}.

We first state the group-theoretic Lemma \ref{wedge-from-pairing} that gives a criterion for $\omega$ to be determined uniquely by a system of pairings $\omega_m$. We then construct our pairings $\omega_{m,K}$ in \S \ref{omega-pairings}, verifying the hypotheses of Lemma \ref{wedge-from-pairing}; this defines an element of $(\wedge^2\mathrm{Cl}(K)) [\ell^n]$ in the number field case. In \S \ref{s-NT-FF-compatible}, we verify that the analogous definition in the function field case is compatible with our earlier definition by the Weil pairing.

\subsubsection{Lemma relating $\wedge^2 G$ and systems of alternating bilinear pairings on $G^\vee[\ell^m]$}

For $a, b \in  G^\vee [\ell^m]$, we view $a$ and $b$ as functions from $G$ to $\ell^{-m} \mathbb Z / \mathbb Z$, which gives a map $a \otimes b: G \otimes G \to  ( \ell^{-m} \mathbb Z / \mathbb Z ) \otimes (\ell^{-m} \mathbb Z / \mathbb Z)  = \ell^{-2m} \mathbb Z / \ell^{-m } \mathbb Z$. By embedding $\wedge^2 G$ into $G$ via $x\wedge y \mapsto x\otimes y - y\otimes x$, we have a map $\wedge^2 G \to \ell^{-2m} \mathbb Z / \ell^{-m } \mathbb Z$, which we also call $a \otimes b$.

\begin{lemma}\label{wedge-from-pairing} Let $G$ be a finite abelian $\ell$-group. Suppose we are given, for each $m$, a symplectic bilinear form $\omega_m : G^\vee [\ell^m]  \times G^\vee [\ell^m] \to \mathbb Q_\ell/ \mathbb Z_\ell $. 

Suppose that for all $a \in G^\vee [\ell^m] , b \in G^\vee [\ell^{m+1} ]  $ we have \begin{equation}\label{omega-m-compatibility} \omega_m (  a, \ell  b ) =  \omega_{m+1} ( a, b ) \end{equation}

Then there exists a unique $\omega \in \wedge^2 G$ such that for all natural numbers $m$ and for all $a, b \in  G^\vee [\ell^m] $,we have  \begin{equation}\label{omega-omega-m} \ell^m (a \otimes b ) (\omega) = \omega_m (a,b).\end{equation}

\end{lemma}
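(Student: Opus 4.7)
The plan is to reduce to a single ``top'' level. Choose $M$ with $\ell^M G = 0$. Then $G^\vee[\ell^M] = G^\vee$, and since $G^\vee$ is itself $\ell^M$-torsion, the form $\omega_M$ takes values in the $\ell^M$-torsion subgroup $\frac{1}{\ell^M}\mathbb{Z}/\mathbb{Z} \subseteq \mathbb{Q}_\ell/\mathbb{Z}_\ell$. I would construct $\omega$ from $\omega_M$ alone via a duality isomorphism, and then use the compatibility \eqref{omega-m-compatibility} to verify \eqref{omega-omega-m} at every other level $m$.

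The key input is the following standard duality for finite abelian $\ell$-groups, valid because $\ell$ is odd: the perfect evaluation pairing $G \otimes G^\vee \to \frac{1}{\ell^M}\mathbb{Z}/\mathbb{Z}$, tensored with itself and composed with the identification $(\frac{1}{\ell^M}\mathbb{Z}/\mathbb{Z})^{\otimes 2} \cong \frac{1}{\ell^M}\mathbb{Z}/\mathbb{Z}$ (multiplication by $\ell^M$), yields a perfect pairing $(G \otimes G) \otimes (G^\vee \otimes G^\vee) \to \frac{1}{\ell^M}\mathbb{Z}/\mathbb{Z}$ which restricts to a perfect pairing on the alternating parts. (The hypothesis $\ell$ odd is used to ensure that $\wedge^2 G$ splits off as a direct summand of $G \otimes G$ via the idempotent $\frac{1}{2}(1-\sigma)$, where $\sigma$ is the swap.) This furnishes a canonical bijection between $\wedge^2 G$ and alternating bilinear forms $G^\vee \times G^\vee \to \frac{1}{\ell^M}\mathbb{Z}/\mathbb{Z}$, sending $\omega \in \wedge^2 G$ to the form $(a,b) \mapsto \ell^M(a \otimes b)(\omega)$. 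I would define $\omega$ to be the element corresponding to $\omega_M$ under this bijection; by construction, \eqref{omega-omega-m} holds at $m = M$.

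To verify \eqref{omega-omega-m} at other levels: for $a, b \in G^\vee[\ell^m]$ with $m \le M$, viewing $b$ as an element of $G^\vee[\ell^{m+1}]$ and applying \eqref{omega-m-compatibility} gives $\omega_{m+1}(a,b) = \omega_m(a, \ell b) = \ell \cdot \omega_m(a,b)$ by $\mathbb{Z}$-bilinearity. Iterating yields $\omega_M(a,b) = \ell^{M-m} \omega_m(a,b)$ in $\mathbb{Q}_\ell/\mathbb{Z}_\ell$. A parallel identity $\ell^M(a \otimes b)(\omega) = \ell^{M-m} \cdot \ell^m(a \otimes b)(\omega)$ holds in $\mathbb{Q}_\ell/\mathbb{Z}_\ell$ by tracking the natural map $(\frac{1}{\ell^m}\mathbb{Z}/\mathbb{Z})^{\otimes 2} \to (\frac{1}{\ell^M}\mathbb{Z}/\mathbb{Z})^{\otimes 2}$ through the identifications with $\ell^{-2m}\mathbb{Z}/\ell^{-m}\mathbb{Z}$ and $\ell^{-2M}\mathbb{Z}/\ell^{-M}\mathbb{Z}$ (here one uses that $\ell^{-1} = \ell \cdot \ell^{-2}$ etc., generating the appropriate factor of $\ell^{M-m}$). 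Combining with the case $m = M$ yields \eqref{omega-omega-m} for all $m \le M$; for $m > M$ the same iteration argument applies (with $\ell^{-m}\mathbb{Z}/\mathbb{Z}$ replaced by its inclusion into the larger group). Uniqueness of $\omega$ is immediate: any $\omega'$ satisfying \eqref{omega-omega-m} for all $m$, in particular at $m = M$, is forced to equal $\omega$ by the duality bijection. The main technical work lies in the duality step and in the tensor-product bookkeeping across levels; once those are in hand, the rest is a direct consequence of $\mathbb{Z}$-bilinearity and the compatibility \eqref{omega-m-compatibility}.
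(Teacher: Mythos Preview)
Your claimed duality bijection between $\wedge^2 G$ and alternating bilinear forms $G^\vee \times G^\vee \to \tfrac{1}{\ell^M}\mathbb Z/\mathbb Z$ is false in general, and this breaks both your existence and uniqueness arguments. Tensoring a perfect pairing with itself does not produce a perfect pairing over $\mathbb Z$. Concretely, take $G = (\mathbb Z/\ell)^2 \oplus \mathbb Z/\ell^2$ with generators $x_1,x_2,x_3$, so the exponent is $\ell^2$ and $M=2$. The element $x_1\wedge x_2$ is nonzero in $\wedge^2 G$, yet for the dual generators $a_1,a_2$ one computes $a_1(x_1)a_2(x_2)=\ell^{-2}$, which lies in $\ell^{-M}\mathbb Z$ and hence vanishes in $\ell^{-2M}\mathbb Z/\ell^{-M}\mathbb Z$; thus $\ell^M(a\otimes b)(x_1\wedge x_2)=0$ for all $a,b\in G^\vee$. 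So the map $\wedge^2 G \to \mathrm{Alt}(G^\vee)$ at level $M$ has nontrivial kernel (and, since the two sides have the same order, it is not surjective either). In particular $\omega_M$ alone neither determines $\omega$ nor is guaranteed to lie in the image of your map.

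The information that recovers $x_1\wedge x_2$ lives at the \emph{lower} level $m=1$, where $\ell^1(a_1\otimes a_2)(x_1\wedge x_2)=\ell^{-1}\neq 0$. This is exactly why the paper's proof chooses a basis $G\cong\bigoplus_i \mathbb Z/\ell^{e_i}$ and reads off the coefficient $c_{i,j}$ of $x_i\wedge x_j$ from the form at level $m=\min(e_i,e_j)$ rather than at the top level, and then verifies \eqref{omega-omega-m} at all other $m$ by induction using \eqref{omega-m-compatibility}. Your reduction to a single top level cannot work; you must use the whole compatible family.
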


\begin{proof} We can express $G \cong \bigoplus_{i=1}^r \mathbb Z/\ell^{e_i}$ as a direct sum of cyclic groups $ \mathbb Z/\ell^{e_i}$ with generators $x_i$.

Any element of $G^\vee [\ell^m]$ is a linear combination of the forms $f_{i,m}$ for $i$ from $1$ to $r$, where $f_{i,m}$ sends $x_i$ to $\ell^{ - \min(e_i,m)}$ and $x_j$ to $0$ for $j \neq i$.

Thus, we have $\omega_m ( a,b) =\ell^m (a\otimes b) (\omega)$ for all $a,b$ if and only if we have \begin{equation}\label{wwm-basis} \omega_m( f_{i,m}, f_{j,m}) = \ell^m (f_{i,m} \otimes f_{j,m} ) (\omega) \end{equation} for all $1 \leq i< j \leq r$.

For any $\omega \in \wedge^2 G$, we can write $\omega = \sum_{i,j} c_{i,j} (x_i \wedge x_j )$ for some $c_{i,j}$.  We have  \[  (f_{i,m} \otimes f_{j,m} ) (x_i \wedge x_j) = f_{i,m} (x_i)  f_{i,j}(x_j) = \ell^{- \min(m,e_i) - \min(m,e_j) }  \] 

and thus \begin{equation} \label{basis-evaluation-formula} (f_{i,m} \otimes f_{j,m} ) (\omega ) = \ell^{ - \min(m,e_i) - \min(m,e_j) } c_{i,j} .\end{equation}

If $\omega$ satisfies \eqref{wwm-basis} for all $m, i, j$, then taking $m = \min(e_i,e_j)$, which gives $\ell^{  - \min(m,e_i) - \min(m,e_j) } =\ell^{-2\min(e_i,e_j) } $, we have
\[ c_{i,j} \equiv  \ell^{ \min (e_i,e_j)} \omega_{\min(e_i,e_j)} (f_{i,\min(e_i,e_j)}, f_{j,\min(e_i,e_j)} ) \mod \ell^{\min(e_i,e_j)} .\]
Since $(x_i \wedge x_j)$ is $\ell^{\min(e_i,e_j)}$-torsion, this implies that

\[\omega = \sum_{i,j} c_{i,j} (x_i \wedge x_j )  \sum_{i,j}  \ell^{\min (e_i,e_j)} \omega_{\min(e_i,e_j)} (f_{i,\min(e_i,e_j)}, f_{j,\min(e_i,e_j)} )( x_i \wedge x_j) .\]

If we prove the converse, that this value of $\omega$ satisfies \eqref{wwm-basis} for all $m,i,j$, then we will have established  the existence and uniqueness of a solution.
%
%
% the unique solution of \eqref{wwm-basis} .
%
%
%
%For uniqueness, take an arbitrary $\omega \in \wedge^2 G$, and write $\omega = \sum_{i,j} c_{i,j} (x_i \wedge x_j )$ for some $c_{i,j}$. To determine $\omega$, it suffices to determine $c_{i,j}$, and moreover because $x_i \wedge x_j$ is $\ell^{ \min(e_i,e_j)}$-torsion it suffices to determine $c_{i,j}$ modulo $\ell^{\min (e_i,e_j)}$.
%
%
%
%
%To check that this $\omega$ is unique, we observe that \[( f_{i, \min(e_i,e_j)} \otimes f_{j,\min(e_i,e_j)}  ) \left( \sum_{i,j} c_{i,j}(  x_i \wedge x_j  ) \right) = c_{i,j}\] because \[ f_{i, \min(e_i,e_j)}(x_i) =f_{j, \min(e_i,e_j)}(x_j)=1.\] Hence any $\omega$ which satisfies \eqref{wwm-basis} necessarily has \[c_{i,j} = \omega_{\min(e_i,e_j)} (f_{i,\min(e_i,e_j)}, f_{j,\min(e_i,e_j)} ) .\]
%
%To check that $\omega$ actually satisfies \eqref{wwm-basis} for all $i$, we observe that \[ f_{i,m} \otimes f_{j,m} (x_i \wedge x_j ) = \ell^{ 2m - \min(m,e_i) - \min(m,e_j)} \] so \[ f_{i,m} \otimes f_{j,m} (x_i \wedge x_j )(\omega) = \ell^{ 2m - \min(m,e_i) - \min(m,e_j)} \omega_{\min(e_i,e_j)} (f_{i,\min(e_i,e_j)}, f_{j,\min(e_i,e_j)} ) .\]

To do this, applying \eqref{basis-evaluation-formula}, it suffices to check that \[ \omega_m ( f_{i,m} , f_{j,m}) = \ell^{ m - \min(m,e_i) - \min(m,e_j)} \omega_{\min(e_i,e_j)} (f_{i,\min(e_i,e_j)}, f_{j,\min(e_i,e_j)} ).\]

For $m= \min(e_i,e_j)$ this is trivial and so we prove it by descending and ascending induction on $m$. For the descending induction, we observe that as long as $m \leq \min(e_i,e_j) $, $ f_{j, m-1} =\ell { f}_{j,m}$. By \eqref{omega-m-compatibility},

\begin{align*}
\omega_{m-1}(f_{i,m-1} , f_{j,m-1} ) &= \omega_{m-1}( f_{i,m-1}, \ell f_{j,m} ) \\
&= \omega_m (  f_{i,m-1}, f_{j,m} ) \\
&= \omega ( \ell f_{i,m}, f_{j,m} ) \\ 
&= \ell \omega( f_{i,m}, f_{j,m}),
\end{align*} which is handles the induction step because the exponent $m - \min(m,e_i) - \min(m,e_j)$ increases by $1$ when $m$ decreases by $1$. 

For the ascending induction, assume without loss of generality that $e_i \leq e_j$. For $m \geq e_i$, we have $f_{i,m+1} = f_{i,m}$, and $ f_{j,m} = \ell f_{j,m+1}  $ if $m<e_j$ and  $ f_{j,m+1}$ if $m \geq e_j$. Thus by \eqref{omega-m-compatibility},
\begin{align*}
\omega_{m+1} (f_{i,m+1}, f_{j,m+1}) &= \omega_{m+1} (  f_{i,m} ,f_{j,m+1}) \\
&= \omega_m ( f_{i,m} , \ell f_{j,m+1} ) \\
&= \begin{cases}  \omega_m (f_{i,m}, f_{j,m} ) & m< e_j \\ \ell \omega_m (f_{i,m}, f_{j,m})  & m \geq e_j \end{cases}, 
\end{align*}
which handles the inductions step because the exponent $m - \min(m,e_i)- \min(m,e_j)$ is constant if $e_i \leq m < e_j$ and increases by $1$ if $e_i,e_j \leq m$. 

\end{proof}

\begin{cor} In Lemma \ref{wedge-from-pairing}, if the pairings $\omega_m$ all take $\ell^n$-torsion values for some $n$, then $\omega$ is $\ell^n$-torsion. \end{cor}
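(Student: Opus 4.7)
The plan is to leverage the uniqueness portion of Lemma \ref{wedge-from-pairing} rather than manipulating the explicit formula for $\omega$ in terms of generators. The key observation is that the correspondence in the lemma is compatible with scalar multiplication by $\ell^n$: if $\omega \in \wedge^2 G$ is the unique element associated to the system $\{\omega_m\}$ via the relation
\[
\ell^m (a \otimes b)(\omega) = \omega_m(a,b) \quad \text{for all } a,b \in G^\vee[\ell^m],
\]
then multiplying both sides by $\ell^n$ shows that $\ell^n \omega$ satisfies the same defining relation with respect to the system $\{\ell^n \omega_m\}$.

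The hypothesis that every $\omega_m$ is $\ell^n$-torsion means precisely that $\ell^n \omega_m = 0$ for all $m$. Thus $\ell^n \omega$ satisfies $\ell^m (a\otimes b)(\ell^n \omega) = 0$ for every $m$ and every $a,b \in G^\vee[\ell^m]$. On the other hand, the trivial element $0 \in \wedge^2 G$ obviously satisfies the same relation against the identically zero system of pairings. Since the identically zero system trivially satisfies the compatibility \eqref{omega-m-compatibility} required to invoke Lemma \ref{wedge-from-pairing}, the uniqueness clause of that lemma forces $\ell^n \omega = 0$, which is exactly the claim.

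The only point to verify is that Lemma \ref{wedge-from-pairing} applies to the zero system and yields $0$ as its unique preimage; this is immediate from the explicit formula produced in the proof of that lemma. No obstacle is anticipated: the argument is essentially a functoriality-of-uniqueness observation. An alternative, more computational proof would substitute the explicit formula $\omega = \sum_{i,j} c_{i,j}(x_i \wedge x_j)$ from the lemma, with $c_{i,j} = \ell^{\min(e_i,e_j)} \omega_{\min(e_i,e_j)}(f_{i,\min(e_i,e_j)}, f_{j,\min(e_i,e_j)})$, and verify by cases on whether $\min(e_i,e_j) \le n$ or $>n$ that each summand $c_{i,j}(x_i \wedge x_j)$ is $\ell^n$-torsion, but the uniqueness argument avoids this case analysis entirely.
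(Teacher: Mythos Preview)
Your proof is correct and follows essentially the same approach as the paper: both argue that $\ell^n\omega$ corresponds (via the defining relation \eqref{omega-omega-m}) to the system $\{\ell^n\omega_m\}$, which is the zero system by hypothesis, and then invoke the uniqueness clause of Lemma~\ref{wedge-from-pairing} to conclude $\ell^n\omega=0$. Your write-up is more detailed, but the underlying idea is identical.
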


\begin{proof} If $\omega_m$ are the bilinear forms defined by $\omega$, then $\ell^n \omega_m$ are the bilinear forms defined by $\ell^n \omega$. Thus if $\ell^n \omega_m =0$, by the uniqueness of $\omega$, it follows that $\ell^n \omega=0$. \end{proof}

\subsubsection{Construction of the pairings $\omega_{m,K}$} \label{omega-pairings}
Recall that $K$ is a global field containing $\ell^n$th roots of unity with generator $\zeta.$  Let $m$ be a natural number. 

Artin-Verdier duality defines (among other things) a pairing 
\[ ()_{\textrm{AV} } :  H^1 ( \spec \Oo_K, \mu_{\ell^m}) \times H^2 ( \spec \Oo_K, \mathbb Z/\ell^m) \to \ell^{-m}\mathbb Z/ \mathbb Z \]

Let $\zeta_m \in H^1 ( \spec \Oo_K, \mu_{\ell^m})$ be given by the $\mu_{\ell^m}$-torsor consisting of the $\ell^m$th roots of our fixed generator $\zeta$ of $\mu_{\ell^n}.$

Define a bilinear form $\omega_{m,K}: H^1 ( \spec \Oo_K, \mathbb Z/\ell^m) \times H^1 ( \spec \Oo_K, \mathbb Z/\ell^m) \to \frac{1}{\ell^m} \mathbb{Z} / \mathbb{Z}$ by \[ \omega_{m,K}(a, b) = -\frac{1}{2}( \zeta_m, a \cup b)_{\textrm{AV}}.\] By the class field theory isomorphism $H^1 ( \spec \Oo_K, \mathbb Z/\ell^m) \cong \mathrm{Cl}(K)^\vee [\ell^m] $ we can equivalently view this as a bilinear form on $\mathrm{Cl}(K)^\vee [\ell^m] $. 

\begin{lemma}
Let $K$ be a global field.  \begin{enumerate}

\item $\omega_{m,K}$ is a symplectic bilinear form.

\item $\omega_{m,K} (a,b)$ has order dividing $\ell^n$.

\item $\omega_{m,K}$ and $\omega_{m+1}$ satisfy the compatibility \eqref{omega-m-compatibility}. \end{enumerate}\end{lemma}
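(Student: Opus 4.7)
The three claims split into two quick verifications and one longer compatibility check. For (1), graded-commutativity of étale cup product in degree $1$ gives $a \cup a = -a \cup a$ for $a \in H^1(\spec\Oo_K, \Z/\ell^m)$, hence $2(a \cup a) = 0$. Since $\ell$ is odd, $2$ is a unit in $\Z/\ell^m$, so $a \cup a = 0$ and $\omega_{m,K}$ is alternating. For (2), the class $\zeta_m$ is by construction the image of $\zeta \in \Oo_K^\times$ under the Kummer boundary $\Oo_K^\times \to H^1(\spec\Oo_K, \mu_{\ell^m})$; since $\zeta^{\ell^n} = 1$, we have $\ell^n \zeta_m = 0$, and bilinearity of the Artin--Verdier pairing gives $\ell^n \cdot (\zeta_m, a \cup b)_{\mathrm{AV}} = 0$.

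For (3), let $j : \Z/\ell^m \hookrightarrow \Z/\ell^{m+1}$ denote the inclusion sending $1 \mapsto \ell$, which realises $\mathrm{Cl}(K)^\vee[\ell^m] \hookrightarrow \mathrm{Cl}(K)^\vee[\ell^{m+1}]$ via class field theory; let $r' : \Z/\ell^{m+1} \twoheadrightarrow \Z/\ell^m$ denote reduction; and let $j^\vee : \mu_{\ell^{m+1}} \twoheadrightarrow \mu_{\ell^m}$, $x \mapsto x^\ell$, be the Cartier dual of $j$. I would first establish three preliminary identifications. First, under class field theory, $\ell b \in G^\vee[\ell^m]$ corresponds to $r'_* b \in H^1(\spec\Oo_K, \Z/\ell^m)$, because $j \circ r'$ is multiplication by $\ell$ on $\Z/\ell^{m+1}$ and hence $j_*(r'_* b) = \ell b$. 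Second, by naturality of cup product, $j_*(a) \cup b = j_*(a \cup r'_* b)$ in $H^2(\spec\Oo_K, \Z/\ell^{m+1})$, where the inner cup product uses the canonical identification $\Z/\ell^m \otimes \Z/\ell^m \cong \Z/\ell^m$. Third, $j^\vee_* \zeta_{m+1} = \zeta_m$, since raising the $\mu_{\ell^{m+1}}$-torsor of $\ell^{m+1}$th roots of $\zeta$ to the $\ell$th power recovers the $\mu_{\ell^m}$-torsor of $\ell^m$th roots of $\zeta$. Combining these with functoriality of Artin--Verdier duality, which gives $(\zeta_{m+1}, j_* x)_{\mathrm{AV}} = (j^\vee_* \zeta_{m+1}, x)_{\mathrm{AV}}$ for $x \in H^2(\spec\Oo_K, \Z/\ell^m)$, we compute
$$\omega_{m+1, K}(j_* a, b) = -\frac{1}{2}(\zeta_{m+1}, j_*(a \cup r'_* b))_{\mathrm{AV}} = -\frac{1}{2}(\zeta_m, a \cup r'_* b)_{\mathrm{AV}} = \omega_{m, K}(a, r'_* b),$$
which, together with the first identification, is precisely \eqref{omega-m-compatibility}.

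The main obstacle is bookkeeping: correctly tracking the three interlocking maps $j, r', j^\vee$, the various canonical identifications of tensor products of coefficient rings, and checking that no stray factor of $\ell$ arises at any step. Once these identifications are pinned down, every step becomes a standard functoriality statement about étale cohomology or Artin--Verdier duality, and no genuinely hard input is required.
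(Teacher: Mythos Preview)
Your proof is correct and follows essentially the same route as the paper's. Both arguments reduce part~(3) to functoriality of the cup product and Artin--Verdier pairing under the maps $j:\Z/\ell^m\hookrightarrow\Z/\ell^{m+1}$, $r':\Z/\ell^{m+1}\twoheadrightarrow\Z/\ell^m$, and $j^\vee:\mu_{\ell^{m+1}}\twoheadrightarrow\mu_{\ell^m}$; your explicit labeling of these three maps and the Cartier-dual adjunction $(\zeta_{m+1}, j_* x)_{\mathrm{AV}} = (j^\vee_*\zeta_{m+1}, x)_{\mathrm{AV}}$ makes the bookkeeping slightly cleaner than in the paper, but the underlying verification (checking an identity of coefficient pairings and invoking $j^\vee_*\zeta_{m+1}=\zeta_m$) is the same.
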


\begin{proof} (1) is clear because the cup product in degree 1 is symplectic bilinear. (2) is clear because $\zeta_m$ is $\ell^n$-torsion. 

(3) takes more work. First note that the class field theory isomorphism sends the inclusion map
\[ \mathrm{Cl}(K)^\vee [\ell^m] \to \mathrm{Cl}(K)^\vee [\ell^{m+1} ] \]
to the multiplication by $\ell$ map
 \[ H^1 ( \spec \Oo_K, \mathbb Z/\ell^m) \to  H^1 ( \spec \Oo_K, \mathbb Z/\ell^{m+1})\]
 and the multiplication by $\ell$ map
 \[ \mathrm{Cl}(K)^\vee [\ell^{m+1}] \to \mathrm{Cl}(K)^\vee [\ell^{m} ] \]
to reduction mod $\ell^m$ map
 \[ H^1 ( \spec \Oo_K, \mathbb Z/\ell^{m+1} ) \to  H^1 ( \spec \Oo_K, \mathbb Z/\ell^{m}).\]
We will denote the multiplication-by-$\ell$ map on cohomology classes as $a \mapsto \ell a$ and the reduction mod $\ell^m$ map as $b \mapsto \overline{b}$. Using this notation, and our definition of $\omega_m$, Equation \eqref{omega-m-compatibility} can be stated as
\[ ( \zeta_m, (\ell a) \cup b)_{\textrm{AV}}  =( \zeta_{m+1}, a\cup \overline{b})_{\textrm{AV}} .\]

To verify this, let us check the formula 
 \[ (\ell a) \cup b = \ell (a \cup \overline{b}).\] The cup product map is induced on cohomology by the multiplication $\mathbb Z/\ell^m \times \mathbb Z/\ell^m \rightarrow \mathbb Z/\ell^m$. The maps $a \to \ell a$ and $b \to \overline{b}$ are induced in cohomology by the maps $\mathbb Z / \ell^m \to \mathbb Z/\ell^{m+1}$ and $\mathbb Z/\ell^{m+1} \to \mathbb Z/\ell^m$, respectively. So any composition of these is induced on cohomology by a composition of maps of groups. To check the identity on cohomology, it suffices to check on the level of groups, where we must determine that multiplying one element by $\ell$ and then by another is equivalent to first multiplying by the other element and then by $\ell$, which is obvious.
 
Using this, we obtain 
\begin{align*} 
( \zeta_m, (\ell a) \cup b)_{\textrm{AV}} &= (\zeta_m, \ell (a \cup \overline{b} ))_{\textrm{AV} } \\
&= \ell  (  \overline{\zeta}_m, a\cup \overline{b})_{\textrm{AV}} \\
&= \ell ( \zeta_{m+1}, (a\cup \overline{b}))_{\textrm{AV}},
\end{align*} 
where the first identity follows from what we have described, the second from the same argument applied to the cup product in the definition of Artin-Verdier duality, and the third from the fact that $\overline{\zeta}_m = \zeta_{m+1}$, which is clear from the definition of $\zeta_m$ and $\zeta_{m+1}$. Here $\overline{\zeta}_{m+1}$ is defined using the map $H^1 ( \spec \Oo_K, \mu_{\ell^{m+1}} ) \to  H^1 ( \spec \Oo_K, \mu_{\ell^{m}})$, analogously to the $\mathbb Z/\ell^m$ case.

\end{proof} 

\begin{definition}\label{omega-nt-defi} For $K$ a global field, let $\omega_K \in (\wedge^2 \mathrm{Cl}(K) ) [\ell^n] $  be the unique $\omega$ such that \[\ell^m ( a \otimes b )(\omega_K) =\omega_{m,K} (a,b)\] for all $m.$  Here, $\omega_{m,K}$ are the alternating pairings defined at the beginning of Section \ref{omega-pairings}. \end{definition} 

\subsection{Motivation for the definition of $\omega_K$}\label{motivation-omega-K}
We offer some motivation for the definition of $\omega_K.$  First note that, as a general matter, it is more common in mathematics to define bilinear forms first and then to define elements of $\wedge^2$ or $\operatorname{Sym}^2$ in terms of them. Thus it is reasonable to first attempt to understand $\wedge^2 \mathrm{Cl}(K) [\ell^n]$ dually, in terms of bilinear forms. Because we are working with abelian groups and not vector spaces, it is not obvious which bilinear forms correspond to elements of $\wedge^2 \mathrm{Cl}(K) [\ell^n]$, but Lemma \ref{wedge-from-pairing} gives the answer. Lemma \ref{wedge-from-pairing} tells us to look for symplectic forms on the group of maps from the class group to $\mathbb Z/\ell^m$, which we recognize by class field theory as $H^1 ( \spec \Oo_K, \mathbb Z/\ell^m)$. A typical source of symplectic pairings on $H^1$ is the cup product, and it is especially natural to use the cup product here because of the relationship between the cup product and the Weil pairing, which we discuss later in Lemma \ref{omega-comparison}. To obtain such a pairing, we need a linear form on $H^2( \spec \Oo_K, \mathbb Z/\ell^m)$, which by Artin-Verdier duality is equivalent to an element in $H^1(\spec \Oo_K, \mu_{\ell^m})$. Finding the correct definition is then a matter of finding the correct torsor. Because our definition in the function field setting works equally well for any curve, it should correspond to a torsor that can be defined for any curve over $\mathbb F_q$. These would be the torsors that are pulled back from $\spec \mathbb F_q$, or, equivalently, split over $\overline{\mathbb F}_q (X)$. The analogue of the special field extension $\overline{\mathbb F}_q (X)$ in the number field setting is probably the cyclotomic field, since $\overline{\mathbb F}_q (X)$ is generated over $\mathbb{F}_q(X)$ by the roots of unity. To make a $\mu_{\ell^m}$-torsor that splits over a cyclotomic field, the simplest choice is to take the $\ell^m$th roots of a root of unity, and since we want our torsor to be $\ell^n$-torsion, and we have an $\ell^n$th root of unity available, that is a natural choice. The scalar constant of $-\frac{1}{2}$ is to make the compatibilty relation between $\psi_K$ and $\omega_K$ match up with our definitions, as well as with the function field case. Of course, if we scaled both definitions of $\omega$ as well as our compatibility relation
\eqref{psiomegacomp} by the same element of $\Z_\ell^\times$, all our results would remain essentially unchanged.

\subsection{Bilinear invariants for relative class groups}
The simplest case of the Cohen-Lenstra heuristics describes the class groups of quadratic extensions of $\mathbb Q$. Because these almost never contain $\ell^n$th roots of unity for $\ell>2$, we focus instead on varying quadratic extensions $L$ of a fixed global field $K$, where $K$ contains the $\ell^n$th roots of unity.

However, when doing this, $\mathrm{Cl}(L)_{\ell}$ will always contain $\mathrm{Cl}(K)_{\ell}$ as a subgroup. Because this subgroup does not vary, its distribution is uninteresting, so we quotient out by it. This motivates our use of the relative class group. Similarly, we have relative versions of $\psi$ and $\omega$.

\begin{definition} \label{relativepsiomega}
For an extension $L/K$, the {\emph relative class group} $\mathrm{Cl}(L/K)$ is the quotient $\mathrm{Cl}(L)/ \mathrm{Cl}(K)$, where we view ideal classes on $K$ as ideal classes on $K$ by tensoring with $\mathcal O_L$.

We define $\psi_{L/K}$ and $\omega_{L/K}$ to be the pushforwards of $\psi_L$ and $\omega_L$ from $\mathrm{Cl}(L)$ to $\mathrm{Cl}(L/K)$.
\end{definition}

As long as the degree $[L:K]$ is prime to $\ell$, the natural map $\mathrm{Cl}(L)_\ell$ to $\mathrm{Cl}(K)_\ell $ is split by the norm map $\mathrm{Cl}(K)_\ell \to \mathrm{Cl}(L)_\ell$ and so $\mathrm{Cl}(L/K)_\ell$ is a summand of $\mathrm{Cl}(K)_\ell$.

\subsection{Conjecture for the distribution of triples $(\mathrm{Cl}(L/K)_\ell, \psi_{L/K},\omega_{L/K})$ for quadratic extensions of number fields containing $\ell^n$-roots of unity}

When we transfer from the function field setting to the number field setting, we conjecture that the distribution of triples $(\mathrm{Cl}(L/K)_\ell, \psi_{L/K},\omega_{L/K})$ is governed by a modification of the measure $\mu$ characterized by Theorem \ref{measure-exists-unique}.  A modification of $\mu$ is necessary because the places of $L$ lying over $\infty$ must be thought of as analogous to punctures in the curve appearing in the function field case. In Conjecture \ref{eqconjffaffine}, we conjectured that for punctured curves the distribution of the Picard group is controlled by the measures $Q^{u} \mu$ defined in Definition \ref{Qtmu} by quotienting out by $u$ random elements. In the number field case, our conjecture is analogous:
%
%
%In the previous section, we discussed how punctures force us to quotient out by random elements. This motivates the following definition:
%
%\begin{definition}\label{Qtmu} Let $\mu$ be a measure on triples $\mathcal C_{\ell,n} $. Define the measure $Q\mu$ as follows:
%
%$$Q\mu(G,\omega_G,\psi_G):= \int_{\mathcal C_{\ell,n }}   \frac{\#\{f:\Z_\ell\rightarrow H\mid (H,\omega_H,\psi_H)/\im f\sim (G,\omega_G,\psi_G)\}}{|H|} d\mu(H,\omega_H,\psi_H).$$
%
%
%\end{definition}
%
%In other words, $Q\mu$ is the measure produced by the random process of taking a $\mu$-random triple and quotienting by a random elements.
%
%We define $Q^t \mu$ by iteratively $t$ times applying $Q$ to $\mu$.
%

\begin{conj}[Conjecture \ref{ntmain}]\label{ntmain-2} Let $\ell$ be an odd prime and $n$ a natural number. Let $K$ be a number field which contains the $\ell^n$th roots of unity but not the $\ell^{n+1}$th roots of unity. Let $t$ be half the degree of $K$. Then as $L$ varies over quadratic extensions of $K$, the BEG $( \mathrm{Cl}(L/K)_{\ell} , \psi_{L/K}, \omega_{L/K})$ is equidistributed according to the measure $Q^t \mu$.

 More formally, let $S_{K, X}$ be the set of quadratic extensions $L/K$ of discriminant less than $X$.  We conjecture that the counting measures of $( \mathrm{Cl}(L/K)_{\ell} , \psi_{L/K}, \omega_{L/K})$ averaged over $S_{K,X}$, converge to $Q^t \mu$ in the weak-* topology as $X \to\infty$.\end{conj}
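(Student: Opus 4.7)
The plan is to use a moment method: reduce the conjecture to a statement about convergence of moments, compute (or conjecturally describe) the moments on the number field side, and match them against the moments of $Q^t\mu$ which are described explicitly in Theorem \ref{measure-exists-unique} together with the analysis of the quotienting operator $Q$ in Section \ref{linearrandommodels}. Concretely, by Lemma \ref{Qmomimpmeas} the measure $Q^t\mu$ is determined by its moments in a quantitatively strong sense, so it suffices to show that for every fixed BEG $G^\bullet = (G,\omega_G,\psi_G)$,
\begin{equation*}
\lim_{X \to \infty} \frac{1}{|S_{K,X}|} \sum_{L/K \in S_{K,X}} \#\sur\bigl((\Cl(L/K)_\ell, \psi_{L/K}, \omega_{L/K}), G^\bullet\bigr)
\end{equation*}
exists and equals $\E_{Q^t\mu}\#\sur(*,G^\bullet)$. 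The archimedean factor $Q^t$ should emerge naturally in the moment count from the infinite places: exactly as in Section \ref{relativeclassgrouppuncturedcurves}, where punctures contribute free generators that kill surjections, each real or complex place of $K$ (totaling $2t$ places, paired into $t$ under complex conjugation acting on surjections) should cut down the count by the appropriate local factor.

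The key step is to translate the moment on the left into an arithmetic count. By class field theory, a surjection $\Cl(L/K) \twoheadrightarrow G$ corresponds to a $G$-extension $M/L$ unramified everywhere, and by the Hilbert symbol definition of $\psi$ discussed in Section \ref{s-alternate-psi}, together with the Artin-Verdier/cup product definition of $\omega$ from Section \ref{number-field-invariants}, the constraints $f_*\psi_L = \psi_G$ and $f_*\omega_L = \omega_G$ translate into constraints on $M/L$ expressible in terms of Galois cohomology classes in $H^1(\spec \Oo_L, \mu_{\ell^n})$ and $H^2(\spec \Oo_L, \mathbb{Z}/\ell^m)$. One would then want to count such enhanced unramified extensions on average over quadratic $L/K$. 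Following the philosophy of Bhargava--Wood--Wang and the moments program, such counts should be accessible by parameterizing the relevant Galois representations and invoking a suitable equidistribution statement for ideal classes of $L$.

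The main obstacle is, of course, exactly the same one that obstructs all existing proofs of Cohen--Lenstra type moment statements over number fields: one must establish that the average over $L/K \in S_{K,X}$ of $\#\{G\text{-extensions of }L\text{ unramified everywhere, with prescribed }\psi,\omega\}$ converges to the correct local product, which amounts to a uniform handle on ramification of auxiliary primes in varying quadratic extensions. This step is unconditional only in the function field regime handled by Theorem \ref{ffmain}, where \cite{EVW} plus the Betti-number bounds of Section \ref{functionfieldproof} allow one to count components of Hurwitz spaces; no analogous tool is available over $\Q$. A partial unconditional result should however be accessible: using genus theory and a direct class-field-theoretic computation, one can verify the $G = (\Z/\ell)^r$ moments, confirming in particular that the $\ell$-rank distribution matches that predicted by $Q^t\mu$.

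The supplementary step, which should be routine given the machinery already in the paper, is to verify that the triples $(\Cl(L/K)_\ell,\psi_{L/K},\omega_{L/K})$ produced by number fields lie in the support of $Q^t\mu$; this is essentially the content of Section \ref{s-internal-consistency} applied to the relative class group, where one checks that $\psi_{L/K}$ restricted to the appropriate subquotient is an isomorphism and that the BEG compatibility \eqref{psiomegacomp} holds on the nose. Once the moment identity is established and the support is controlled, Lemma \ref{Qmomimpmeas} closes the argument in the weak-$*$ topology as required.
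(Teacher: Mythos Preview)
The statement you are attempting to prove is a \emph{conjecture} in the paper, not a theorem; the paper does not prove it and does not claim to. What the paper actually does for this statement is: (i) motivate the exponent $t = \tfrac{1}{2}[K:\Q]$ via the function field analogy (Section~\ref{relativeclassgrouppuncturedcurves}) and the count of archimedean places; (ii) verify in Section~\ref{s-internal-consistency} that the triples $(\Cl(L/K)_\ell,\omega_{L/K},\psi_{L/K})$ always lie in the support of $Q^t\mu$ (Proposition~\ref{class-group-support}); (iii) prove the function field analogue (Theorem~\ref{ffmain}); and (iv) give numerical evidence in Section~\ref{data}. There is no moment computation over number fields in the paper, conditional or otherwise.

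Your outline is a reasonable heuristic roadmap, and you correctly identify the central obstruction yourself: the moment count you write down is exactly the kind of average that is currently out of reach over number fields. So what you have written is not a proof but a sketch of why one expects the conjecture to hold, which is also essentially what the paper offers. One small correction: your description of the archimedean places is garbled. Since $K \supset \Q(\mu_{\ell^n})$ with $\ell$ odd, $K$ is totally complex, so it has exactly $t = [K:\Q]/2$ complex places and no real places; there is no ``pairing under complex conjugation'' of $2t$ places down to $t$. The paper's motivation (just after the conjecture) is that these $t$ complex places all split in $L/K$, contributing $t$ to the analogue of $u$ in Conjecture~\ref{eqconjffaffine}. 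Also, the support check in Section~\ref{s-internal-consistency} does not show that $\psi_{L/K}$ is an isomorphism on any subquotient; rather it bounds the $\ell$-rank of $\ker\psi_{L/K}$ by $t$ (Lemma~\ref{number-field-rank-bound}), which by Theorem~\ref{quotient-measure-formula} is exactly the support condition for $Q^t\mu$.
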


In Section \ref{s-internal-consistency}, we will check, in Proposition \ref{class-group-support}, that $(\mathrm{Cl}(L/K)_{\ell}, \psi_{L/K} , \omega_{L/K} )  $ is always contained in the support of $Q^t \mu$, which is a basic sanity check on Conjecture \ref{ntmain-2}.

The motivation for quotienting by exactly $\frac{ [K:\mathbb Q]}{2}$ random elements comes primarily from the function field case, where we conjectured that the distribution $\Pic(D/C)_\ell$, where $D$ is a double cover of a punctured curve $C$, was $Q^u \mu$, where $u$ was the number of punctures of $D$ minus the number of punctures of $C$ (equivalently, the number of punctures of $C$ that are split in $u$). For an extension $L/K$ of number fields, because $K$ contains $\mathbb Q(\mu_{\ell^n})$, it is totally complex, and so has $\frac{[K:\mathbb Q]}{2}$ infinite places. Because these are all complex places, they all split in $L/K$, and hence the analogue of $u$ is $\frac{[K:\mathbb Q]}{2}$.

Alternatively, if one thinks of the elements we quotient by as coming from the unit group, the same logic shows that the rank of the unit group of $L$ modulo the unit group of $K$ is $\frac{[K:\mathbb Q]}{2}$.
%Because of the presence of units in the number field setting, we need to modify the measure $\mu$. Just as in classical Cohen-Lenstra, we need to quotient a random group by a number of elements equal to the rank of the unit group to get the right measure for higher-degree number fields, here we need to quotient a $\mu$-random group by a number of elements equal to the rank of the relative unit group of $L/K$, which is $t=[K:\mathbb Q]/2$ because $K$ is totally complex, to predict a plausible measure for class groups. To do this, we mke the following definition:

\section{Alternate definitions of $\psi_K$}\label{s-alternate-psi}

We present some equivalent definitions of the invariant $\psi_K$.  Throughout this section, we fix a generator $\zeta \in \mu_{\ell^n}(\mathcal O_K).$

\subsection{Definition by Restricting Torsors}

We can express the composition $H^1(\spec\Oo_K,\Z/{\ell^n}\Z) \xrightarrow{\cup \zeta} H^1(\spec\Oo_K,\mu_{\ell^n}) \rightarrow \mathrm{Cl}(K)[{\ell^n}]$ in a different way, using torsors. It is not possible to directly express $H^1(\spec O_K,\Z/{\ell^n}\Z) \cong\Hom(\mathrm{Cl}(K),\Z/{\ell^n}\Z)$ this way, as that map is not defined using torsors but rather using class field theory. 

\begin{prop}\label{torsor-definition} \begin{enumerate}

\item Given a $\mathbb Z/{\ell^n}$-torsor $Y$ over $\spec \Oo_K$, viewed as a scheme, the $\mathbb G_m$-torsor associated to it by the map $$H^1(\spec\Oo_K,\Z/{\ell^n}\Z) \xrightarrow{\cup \zeta} H^1(\spec\Oo_K,\mu_{\ell^n}) \rightarrow H^1(\spec \Oo_K, \mathbb G_m)$$ is the inverse of the space of invertible functions on $Y$ where the canonical $\mathbb Z/{\ell^n}$-action on $Y$ multiplies the functions by the fixed generator of $\mu_{\ell^n}$ in $\mathbb G_m$. 

\item Given a $\mathbb Z/{\ell^n}$-torsor  over $\spec \Oo_K$, viewed as an \'{e}tale algebra $R$ over $\mathcal O_K$ with an automorphism of order ${\ell^n}$, the locally free module associated to it by $H^1(\spec\Oo_K,\Z/{\ell^n}\Z)\xrightarrow{\cup \zeta} H^1(\spec\Oo_K,\mu_{\ell^n}) \rightarrow H^1(\spec \Oo_K, \mathbb G_m)= \mathrm{Cl}(K)$ is the dual to the subset of $R$ where the automorphism of order ${\ell^n}$ of acts by the fixed generator of $\mu_{\ell^n}$.

\end{enumerate}

\end{prop}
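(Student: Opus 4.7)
The plan is to compute the composition $H^1(\spec\Oo_K,\Z/\ell^n\Z) \xrightarrow{\cup \zeta} H^1(\spec\Oo_K,\mu_{\ell^n}) \to H^1(\spec\Oo_K,\G_m)$ directly in the language of torsors. The key observation is that $\zeta \in H^0(\spec\Oo_K, \mu_{\ell^n})$ corresponds to the homomorphism of \'etale group schemes $\rho_\zeta \colon \Z/\ell^n\Z \to \mu_{\ell^n}$ sending $1 \mapsto \zeta$, and cup product with a global-section class on $H^1$ coincides with change of structure group along the associated homomorphism. Granting this (a standard fact which one can verify either by a \v{C}ech cocycle calculation or by invoking the functoriality of the cup product), the image in $H^1(\spec\Oo_K,\G_m)$ of a $\Z/\ell^n\Z$-torsor $Y$ is represented by the contracted product
\[
L \;:=\; Y \times^{\Z/\ell^n\Z} \G_m,
\]
where the canonical generator $\sigma$ of $\Z/\ell^n\Z$ acts on $\G_m$ by multiplication by $\zeta$.

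Next I would unwind the sections of $L$. By the universal property of the contracted product, an invertible section of $L$ over an open $U$ is an equivariant map $Y|_U \to \G_m$, i.e.\ an invertible function $f$ on $Y|_U$ satisfying $\sigma^* f = \zeta^{-1} f$. Letting $M$ be the line bundle whose invertible sections are functions satisfying $\sigma^* f = \zeta f$ (this is precisely the object described in the statement of (1)), the pointwise product $(f,g) \mapsto fg$ lands in the $\sigma$-invariant functions on $Y|_U$, which by descent are just functions on $U$. Locally, where $Y \to \spec\Oo_K$ is trivial, this is manifestly a perfect pairing of $\G_m$-torsors, and so $L \cong M^{-1}$, giving (1).

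Part (2) then follows by translating the preceding discussion into the language of \'etale algebras: writing $Y = \spec R$, the condition $\sigma^* f = \zeta f$ becomes $\sigma(r) = \zeta r$ for $r \in R$, so the $\Oo_K$-module of sections of $M$ is precisely the eigenspace $R_\zeta := \{r \in R : \sigma(r) = \zeta r\}$. This is a rank one projective $\Oo_K$-module thanks to \'etale-local triviality of $Y$ together with the assumption $\mu_{\ell^n} \subset \Oo_K$ (which allows for a full eigenspace decomposition of $R$ under $\sigma$). The identification $L \cong M^{-1}$ from the previous paragraph then identifies the locally free module associated to $L$ with $R_\zeta^\vee$, which is (2). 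The main technical obstacle is the initial identification of $\cup\,\zeta$ with change of structure group along $\rho_\zeta$; once this is in place, the rest is essentially linear algebra on the \'etale algebra $R$.
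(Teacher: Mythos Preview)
Your proposal is correct and follows essentially the same route as the paper. Both arguments identify $\cup\,\zeta$ with change of structure group along $\rho_\zeta:\Z/\ell^n\Z\to\mu_{\ell^n}\hookrightarrow\G_m$, describe the resulting $\G_m$-torsor as the space of maps $f:Y\to\G_m$ satisfying $f(\sigma y)=\zeta^{-1}f(y)$ (the paper states this directly as the functorial recipe for pushforward of torsors, you phrase it via the contracted product and its sections), and then observe this is the inverse of the torsor of functions with $\sigma^* f=\zeta f$; part~(2) is in both cases an immediate translation into \'etale algebras and invertible modules.
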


\begin{proof}\begin{enumerate}

\item In general, given groups $H$ and $G$, a map $i: H \to G$, and a left $H$-torsor $Y$ representing an element in $H^1(X,H)$, the induced image in $H^1(X,G)$ is given by the left $G$-torsor of maps $ f: X \to G$  such that for $h \in H, y\in Y$, $ f(hx) = f(x) h^{-1}$, with the action of $G$ given by left multiplication. This can be checked immediately with the cocycle definition of functoriality of $H^1$.  In the case $H = \mathbb Z/{\ell^n}$, $G = \mathbb G_m$,  $i$ sending a generator of $H$ to a generator of $\mu_n$, this is exactly the stated construction.

\item This follows from the previous part and the observation that a $\mathbb G_m$-torsor is associated to the unique invertible sheaf whose invertible sections over each open set are equal to the $\mathbb G_m$-torsor.  The space of invertible functions on $Y$ such that the canonical $\mathbb Z/{\ell^n}$-action on $Y$ multiplies the functions by the fixed generator of $\mu_{\ell^n}$ in $\mathbb G_m$ is simply the invertible elements of the module of elements of $R$ where the automorphism of order $n$ acts by the fixed generator of $\mu_{\ell^n}$, and then dualizing this module corresponds to inverting the torsor.

\end{enumerate}

\end{proof}

\subsection{ Definition by Hilbert Symbols} \label{psihilbertsymbol}

In this section we construct an alternative definition for $\psi_K$ using Hilbert symbols. This will be convenient for explicit calculations, and is what we used in our numerical experiments.

To understand the map $H^1(K, \mathbb Z/{\ell^n}) \xrightarrow{\cup \zeta} H^1(K, \mu_{\ell^n})$ more explicitly, we use the following commutative diagram:

$$\begin{CD}
 H^1(\spec \Oo_K, \mathbb Z/{\ell^n}) @>>> H^1(K, \mathbb Z/{\ell^n}) \\
@V{\cup \zeta}VV @V{\cup \zeta}VV \\
 H^1( \spec \Oo_K, \mu_{\ell^n}) @>>> H^1(K, \mu_{\ell^n}).
\end{CD}$$

%\[ \begin{tikzcd} H^1(\spec \Oo_K, \mathbb Z/{\ell^n}) \arrow[r]\arrow[d] & H^1(K, \mathbb Z/{\ell^n}) \arrow[d] \\ H^1( \spec \Oo_K, \mu_{\ell^n}) \arrow[r] & H^1(K, \mu_{\ell^n}) \\ \end{tikzcd}\]

Because $\mu_{\ell^n}= \mathbb Z/{\ell^n}$ over $K$, the rightmost map is an isomorphism. Since $\spec \Oo_K$ is a normal scheme of dimension 1, the top map is an 
injective. it follows that the map $H^1(\spec \Oo_K, \mathbb Z/{\ell^n})\xrightarrow{\cup \zeta} H^1(\spec \Oo_K, \mu_{\ell^n})$ is also injective.

Consider the pairing $\gamma:\Aa_K^{\times}\times \Aa_K^{\times} \rightarrow \mu_{\ell^n}$ sending $\gamma(a,b)=\sum_v \langle a,b\rangle_{{\ell^n},v}$ where $\langle ,\rangle_{{\ell^n},v}$ denotes the ${\ell^n}$-Hilbert symbol pairing at $v$. By Hilbert reciprocity, $K^{\times}$ is isotropic for $\gamma$, so $\gamma$ descends to a pairing on $K^{\times}\backslash \Aa_K^{\times} \times K^{\times}/(K^{\times})^{\ell^n},$ which we also denote by $\gamma.$

Class field theory gives an isomorphism $H^1 ( K, \mathbb Z/{\ell^n}) = \Hom( K^{\times} \backslash \mathbb A_K^\times , \mathbb Z/{\ell^n})$. Furthermore $H^1 ( K, \mu_{\ell^n}) = K^\times / (K^\times)^{\ell^n}$ by Kummer theory.

\begin{lemma}\label{class-kummer-isomorphism} The isomorphism $K^\times / (K^\times)^{\ell^n} \cong \Hom( K^{\times} \backslash \mathbb A_K^\times , \mathbb Z/{\ell^n})$ obtained by composing the Kummer theory isomorphism $K^\times / (K^\times)^{\ell^n} \cong H^1(K,\mu_{\ell^n}),$ the isomorphism $H^1(K, \mathbb Z/{\ell^n}) \cong H^1(K,\mu_{\ell^n})$ induced by a fixed choice of generator $\zeta \in H^0(K,\mu_{\ell^n}),$ and the class field theory isomorphism is induced by the pairing $\gamma$:
\begin{align*}
K^\times / (K^\times)^{\ell^n} &\rightarrow \Hom( K^{\times} \backslash \mathbb A_K^\times , \mathbb Z/{\ell^n}) \\
b &\mapsto \iota \circ \gamma(\cdot, b),
\end{align*}
where $\iota: \mu_{\ell^n} \xrightarrow{\sim} \mathbb{Z} / \ell^n \mathbb{Z}$ is the isomorphism induced by the choice of $\zeta.$
 \end{lemma}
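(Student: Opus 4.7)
\medskip

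\noindent\textbf{Proof plan.} The idea is to reduce the global identity to a local statement at each place $v$ and then invoke the standard definition of the local Hilbert symbol as expressing local class field theory on Kummer classes. Specifically, I will decompose the global class field theory isomorphism $H^1(K,\mathbb Z/\ell^n) \cong \Hom(K^\times\backslash \mathbb A_K^\times,\mathbb Z/\ell^n)$ as ``restrict $\chi \in H^1(K,\mathbb Z/\ell^n)$ to each $K_v$, compose with the local Artin map $K_v^\times \to G_{K_v}^{\mathrm{ab}}$ to get a local character $\chi_v$, and then send an idele $(a_v)$ to $\sum_v \chi_v(a_v)$,'' where Hilbert reciprocity guarantees both that the sum is finite and that principal ideles are annihilated.

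\smallskip

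\noindent\textbf{Step 1: the local comparison.} Fix a place $v$. The Kummer isomorphism $K_v^\times/(K_v^\times)^{\ell^n} \xrightarrow{\sim} H^1(K_v,\mu_{\ell^n})$ sends $b$ to the cohomology class of the $1$-cocycle $\sigma \mapsto \sigma(\beta)/\beta$ for any choice of $\beta \in \overline{K_v}$ with $\beta^{\ell^n} = b$. Applying $\iota$ at the coefficients yields the local character $\chi_{b,v}: G_{K_v} \to \mathbb Z/\ell^n$, $\sigma \mapsto \iota\!\left(\sigma(\beta)/\beta\right)$. By the very definition (or characterizing property) of the local Hilbert symbol, for every $a\in K_v^\times$ the local Artin element $\sigma_{a,v}$ acts on $\beta$ by $\sigma_{a,v}(\beta)/\beta = \langle a,b\rangle_{\ell^n,v}$, so $\chi_{b,v}$ pulled back to $K_v^\times$ via the local Artin map is exactly $a \mapsto \iota(\langle a,b\rangle_{\ell^n,v})$.

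\smallskip

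\noindent\textbf{Step 2: globalization.} Let $b \in K^\times/(K^\times)^{\ell^n}$ and let $\chi_b \in H^1(K,\mathbb Z/\ell^n)$ denote the class obtained from $b$ via Kummer theory and $\iota$. Compatibility of Kummer theory with restriction to completions and the fact that the local--global description of the global Artin reciprocity map factors through the product of local reciprocity maps together imply that, for an idele $(a_v) \in \mathbb A_K^\times$, the value of the global CFT character on $(a_v)$ equals $\sum_v \chi_{b,v}(\sigma_{a_v,v})$. By Step 1 this equals $\sum_v \iota(\langle a_v,b\rangle_{\ell^n,v}) = \iota\bigl(\gamma((a_v),b)\bigr)$. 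Hilbert reciprocity implies the sum vanishes on $K^\times$, confirming the map descends to the quotient as required.

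\smallskip

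\noindent\textbf{Main obstacle.} There is no deep difficulty here: the entire content is the compatibility of global CFT with the local Artin maps plus the tautological link between the Hilbert symbol and the Kummer cocycle under local CFT. The only real care needed is bookkeeping of sign/normalization conventions (e.g.\ whether one uses $\langle a,b\rangle$ or $\langle b,a\rangle$, and whether $\iota$ or $\iota^{-1}$ intervenes), and verifying that the choice of $\zeta$ used to define $\iota$ is the same one used to identify $\mu_{\ell^n}$ with $\mathbb Z/\ell^n$ inside the Hilbert symbol. Once the conventions in \S\ref{psihilbertsymbol} are matched with those in the definition of $\langle\,,\,\rangle_{\ell^n,v}$, the identity $b \mapsto \iota\circ\gamma(\cdot,b)$ drops out.
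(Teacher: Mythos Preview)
Your proposal is correct and follows essentially the same approach as the paper: both reduce the global statement to a place-by-place comparison via the local--global compatibility of class field theory, and then observe that the local identity is precisely the definition of the Hilbert symbol. Your write-up is somewhat more explicit about the Kummer cocycle and the role of $\iota$, but the argument is the same.
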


\begin{proof} Let $b$ in $K^\times$ be an element. Its associated class in $H^1(K,\mu_{\ell^n})= H^1(K,\mathbb Z/{\ell^n})$ corresponds to the degree ${\ell^n}$ abelian extension $K(\sqrt[{\ell^n}]{b})$. We must check that this Galois character, viewed as a character of the idele class group, is given by $ a \mapsto  \sum_v \langle a,b\rangle_{{\ell^n},v}$. Because the ideles are contained in a product of local fields, it is sufficient to  check that the character of the Galois group of $K_v$ defined by $K_v(\sqrt[{\ell^n}]{b})$ is given by $a \mapsto \langle a,b \rangle_{{\ell^n},v}$. This is one definition of the Hilbert symbol. \end{proof}

\begin{lemma}\label{adelic-dual-description} Under the identification of $H^1(K,\mathbb{Z} / \ell^n \mathbb{Z})$ with $\Hom( K^{\times} \backslash \mathbb A_K^\times , \mathbb Z/{\ell^n})$ via the class field theory isomorphism, the image of $H^1(\spec \Oo_K, \mathbb Z/{\ell^n})$ inside  $H^1(K, \mathbb Z/{\ell^n})$ is the subset of $ \Hom( K^{\times} \backslash \mathbb A_K^\times , \mathbb Z/{\ell^n})$ that is trivial on $\Oo_{K_v} ^\times$ for all finite places $v$ of $K$. \end{lemma}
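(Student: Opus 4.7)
The plan is to interpret both sides as classifying the same set of abelian extensions of $K$ and then to invoke the local--global compatibility of the reciprocity map.

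First I would unravel the left-hand side. An element of $H^1(\spec \Oo_K, \Z/\ell^n)$ is an \'etale $\Z/\ell^n$-torsor over $\spec \Oo_K$, equivalently a finite \'etale cover of $\spec \Oo_K$ equipped with a free $\Z/\ell^n$-action. Passing to the generic fiber gives a $\Z/\ell^n$-torsor over $\spec K$, i.e.\ a cyclic (possibly trivial) extension $L/K$ with $\Gal(L/K) \hookrightarrow \Z/\ell^n$ such that $\Oo_L$ is \'etale over $\Oo_K$, which is the same as $L/K$ being unramified at every finite place of $K$. Conversely, any such $L/K$ extends uniquely to an \'etale $\Z/\ell^n$-cover of $\spec \Oo_K$ by taking the integral closure. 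Thus the image of $H^1(\spec \Oo_K, \Z/\ell^n) \hookrightarrow H^1(K,\Z/\ell^n)$ is precisely the set of classes corresponding to abelian extensions unramified at every finite place.

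Second, I would translate the right-hand side. By construction, the class field theory isomorphism $H^1(K,\Z/\ell^n) \cong \Hom(K^\times \backslash \Aa_K^\times, \Z/\ell^n)$ sends the class of an abelian extension $L/K$ to its global reciprocity character $\chi_{L/K}$. Local--global compatibility says that for every finite place $v$ the restriction $\chi_{L/K}|_{K_v^\times}$ is the local reciprocity character of $L_v/K_v$, and the local reciprocity map $K_v^\times \to \Gal(L_v/K_v)^{\mathrm{ab}}$ carries $\Oo_{K_v}^\times$ onto the inertia subgroup. Hence $\chi_{L/K}|_{\Oo_{K_v}^\times}$ is trivial exactly when $L/K$ is unramified at $v$.

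Combining these two identifications yields the lemma: a class in $H^1(K,\Z/\ell^n)$ is in the image of $H^1(\spec \Oo_K, \Z/\ell^n)$ iff the corresponding extension is unramified at every finite place iff the corresponding character kills $\Oo_{K_v}^\times$ for every finite $v$. The only step that could be considered substantive is the extension/descent claim that an \'etale $\Z/\ell^n$-cover of $\spec \Oo_K$ is the same data as an abelian extension of $K$ unramified at all finite places; this is standard (integral closure in one direction, \'etaleness of $\Oo_L/\Oo_K$ at unramified primes in the other), and in particular does not require any restriction on the residue characteristics because $\Z/\ell^n$ is a locally constant \'etale sheaf on any base.
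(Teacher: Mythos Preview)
Your proof is correct and follows essentially the same route as the paper: both identify the image of $H^1(\spec\Oo_K,\Z/\ell^n)$ with characters factoring through the Galois group of the maximal abelian extension unramified at all finite places, and then use class field theory to translate this into the vanishing condition on $\Oo_{K_v}^\times$. The paper phrases the first step via $\pi_1^{\mathrm{et}}(\spec\Oo_K)^{\mathrm{ab}}$ rather than torsors and invokes the global reciprocity description of the narrow Hilbert class field directly rather than local--global compatibility, but these are only cosmetic differences.
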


\begin{proof} $H^1(\spec \Oo_K, \mathbb Z_{\ell^n}) = \Hom( \left( \pi_1^{et} (\spec \Oo_K)\right)^{\mathrm{ab}} , \mathbb Z/{\ell^n})$ is the subset of $\Hom (\left( \Gal(K)\right)^{\mathrm{ab}}, \mathbb Z/{\ell^n})$ that is trivial on the kernel of the natural map $\left( \Gal(K)\right)^{\mathrm{ab}} \to \left( \pi_1^{et} (\spec \Oo_K)\right)^{\mathrm{ab}})$, which is the natural map from the Galois group of the maximal abelian extension to the maximal abelian unramified extension, which in the language of class field theory is precisely the profinite completion of the map $K^{\times} \backslash \mathbb A_K^\times \to K^{\times} \backslash \mathbb A_K^\times/\prod_v \Oo_{K_v}^\times$, hence elements trivial on the kernel of this map are precisely elements trivial on $ \Oo_{K_v}^\times$. (The profinite completion may be ignored because we are working with finite order characters of these groups.) \end{proof}

\begin{lemma}\label{kummer-torsor-description} The image of $H^1(\spec \Oo_K, \mu_{\ell^n})$ inside $H^1(K, \mu_{\ell^n})$ is the subset of $K^\times / (K^\times)^{\ell^n}$ consisting of elements whose valuation at each finite place is a multiple of ${\ell^n}$.  \end{lemma}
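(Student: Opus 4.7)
The plan is to compare the Kummer sequences on $\spec \Oo_K$ and on $\spec K$ via the natural restriction-to-the-generic-point map. On $\spec \Oo_K$ (in the fppf topology, which is needed at places above $\ell$), the Kummer sequence gives the exact sequence
\[ \Oo_K^\times / (\Oo_K^\times)^{\ell^n} \xrightarrow{\delta} H^1(\spec \Oo_K, \mu_{\ell^n}) \to \mathrm{Cl}(K)[\ell^n] \to 0, \]
while on $\spec K$ Hilbert 90 gives $H^1(K, \mu_{\ell^n}) = K^\times / (K^\times)^{\ell^n}$. The restriction map fits into a commutative square with the natural inclusion $\Oo_K^\times / (\Oo_K^\times)^{\ell^n} \to K^\times/(K^\times)^{\ell^n}$ on the left. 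Note that the property ``$v(b) \in \ell^n \mathbb{Z}$ for every finite place $v$'' is well-defined on $K^\times/(K^\times)^{\ell^n}$, so the statement makes sense.

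For the inclusion $\subseteq$, I would argue one place at a time. Since $\Oo_{K_v}$ is a DVR, $\mathrm{Pic}(\spec \Oo_{K_v}) = 0$ and the local Kummer sequence collapses to
\[ H^1(\spec \Oo_{K_v}, \mu_{\ell^n}) = \Oo_{K_v}^\times / (\Oo_{K_v}^\times)^{\ell^n}. \]
The map to $H^1(K_v, \mu_{\ell^n}) = K_v^\times/(K_v^\times)^{\ell^n}$ is induced by the inclusion of units, whose image is exactly the classes with valuation in $\ell^n \mathbb{Z}$ (one easily checks that if a unit becomes an $\ell^n$th power in $K_v^\times$, then already in $\Oo_{K_v}^\times$, by looking at valuations). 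Functoriality of restriction then forces every class in the image of $H^1(\spec \Oo_K, \mu_{\ell^n})$ to have valuation divisible by $\ell^n$ at each finite place.

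For the reverse inclusion $\supseteq$, take $b \in K^\times$ with $v(b) \in \ell^n \mathbb{Z}$ for every finite place. Write the principal divisor as $(b) = \ell^n D$ for some divisor $D$ on $\spec \Oo_K$. The class $[D] \in \mathrm{Cl}(K)$ is then $\ell^n$-torsion, so by the Kummer exact sequence on $\spec \Oo_K$ it lifts to some $\alpha \in H^1(\spec \Oo_K, \mu_{\ell^n})$. Let $b' \in K^\times/(K^\times)^{\ell^n}$ be the image of $\alpha$ in $H^1(K,\mu_{\ell^n})$. Since $\alpha$ maps to $[D]$ in $\mathrm{Cl}(K)[\ell^n]$, the divisor $(b')$ equals $\ell^n D'$ with $[D'] = [D]$, so $D' - D = (g)$ for some $g \in K^\times$, and therefore $b' = u g^{\ell^n} b$ for some $u \in \Oo_K^\times$. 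Replacing $\alpha$ by $\alpha - \delta(u)$ (which does not change the image in $\mathrm{Cl}(K)[\ell^n]$) produces a lift in $H^1(\spec \Oo_K, \mu_{\ell^n})$ whose image in $K^\times/(K^\times)^{\ell^n}$ is exactly the class of $b$.

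The only mildly delicate point is the local computation at places above $\ell$, where one must insist on the fppf topology so that the Kummer sequence remains exact; beyond that, all the steps are clean diagram chases with the Kummer sequences and the structure of principal divisors.
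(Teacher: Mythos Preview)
Your argument is correct. The forward inclusion matches the paper's: both restrict to $\spec\Oo_{K_v}$ and read off the valuation condition from the local Kummer sequence.

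For the reverse inclusion the two proofs diverge. The paper invokes Beauville--Laszlo (formal gluing): a $\mu_{\ell^n}$-torsor over $K$ extends to $\spec\Oo_K$ if and only if it extends over each $\spec\Oo_{K_v}$, which reduces everything to the same local computation already done. You instead construct a global preimage directly from the Kummer exact sequence on $\spec\Oo_K$, lifting $[D]\in\Cl(K)[\ell^n]$ to some $\alpha$ and then correcting by a unit. This is more elementary---no descent theorem is needed---but your step ``since $\alpha$ maps to $[D]$, the divisor $(b')$ equals $\ell^n D'$ with $[D']=[D]$'' is exactly the compatibility asserted in the paper's \emph{next} lemma (that the Kummer map $H^1(\spec\Oo_K,\mu_{\ell^n})\to\Cl(K)[\ell^n]$ is ``restrict to $K$, then take $\tfrac{1}{\ell^n}\mathrm{div}$''). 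So you are implicitly reordering the two lemmas; this is harmless since that compatibility can be checked independently via the $(L,\phi)$ description of $\mu_{\ell^n}$-torsors, but you should flag it. The paper's Beauville--Laszlo route keeps the two lemmas logically independent and treats both inclusions with a single local criterion.
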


\begin{proof} A $\mu_{\ell^n}$-torsor over $K$ necessarily extends to a $\mu_{\ell^n}$ torsor over an open subset of $\spec \Oo_K$. To check it extends to the whole ring, by Beauville-Laszlo, it is necessary and sufficient to check that it extends to each complete local ring. To do this, we compare the Kummer sequences for $K, K_v,$ and $\spec \Oo_{K_v}$. 

\[ \begin{tikzcd} K^\times \arrow[r,"{\ell^n}"] \arrow[d] & K^\times \arrow[r] \arrow[d] & H^1(K,\mu_{\ell^n}) \arrow[r]\arrow[d] & 0 \\
 K_v^\times \arrow[r,"{\ell^n}"] & K_v^\times \arrow[r] & H^1(K_v,\mu_{\ell^n}) \arrow[r] & 0 \\
  \Oo_{K_v}^\times \arrow[r,"{\ell^n}"]\arrow[u] &  \Oo_{K_v}^\times \arrow[r]\arrow[u] & H^1 (\spec \Oo_{K_v},\mu_{\ell^n}) \arrow[r]\arrow[u] & 0 \\ \end{tikzcd} \]
 
Hence an element of $H^1(K, \mu_{\ell^n})$, when restricted to $H^1(K_v,\mu_{\ell^n})$, lies in the image of $H^1(\spec \Oo_{K_v},\mu_{\ell^n})$, if and only if the corresponding element of $K^\times / (K^\times)^{\ell^n}$, when restricted to $K_v^\times / (K_v^\times)^{\ell^n}$, lies inside the image of $ \Oo_{K_v}^\times / ( \Oo_{K_v}^\times)^{\ell^n}$. The image of $ \Oo_{K_v}^\times / ( \Oo_{K_v}^\times)^{\ell^n}$ is equal to $ \Oo_{K_v}^\times (K_v^\times)^{\ell^n}$, which consists precisely of elements whose $v$-adic valuation is a multiple of ${\ell^n}$.
\end{proof}

\begin{lemma}\label{kummer-map-description} The natural map to  $H^1(\spec \Oo_K, \mu_{\ell^n})  \to H^1(\spec \Oo_K, \mathbb G_m)[{\ell^n}] = Cl(K)[{\ell^n}]$ coming from the Kummer exact sequence is given by sending an element of $f\in K^\times/(K^\times)^n$ whose valuation is a multiple of ${\ell^n}$ at all finite places to $\prod_{\mathfrak p \in \spec \Oo_K} \mathfrak p^{ v_p(f)/{\ell^n}}$.  \end{lemma}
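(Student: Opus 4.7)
The plan is to interpret the Kummer connecting map directly in terms of line bundles, and then identify which line bundle arises from $f$. The map $H^1(\spec\Oo_K, \mu_{\ell^n}) \to H^1(\spec\Oo_K, \mathbb{G}_m)[\ell^n]$ is simply the map induced by the inclusion $\mu_{\ell^n} \hookrightarrow \mathbb{G}_m$. Under the standard equivalence between $\mu_{\ell^n}$-torsors on $\spec\Oo_K$ and pairs $(L,\tau)$ consisting of a line bundle $L$ together with a trivialization $\tau: L^{\otimes \ell^n} \xrightarrow{\sim} \Oo_K$, this induced map is just $(L,\tau) \mapsto L$. So my task reduces to computing the line bundle $L$ associated to $f$.

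By Lemma \ref{kummer-torsor-description}, the hypothesis that $f$ has valuation divisible by $\ell^n$ at each finite place is precisely the condition for $f$ to come from a class in $H^1(\spec \Oo_K, \mu_{\ell^n})$. At the generic point, the $\mu_{\ell^n}$-torsor associated to $f$ is $\spec K[y]/(y^{\ell^n} - f)$, and the associated line bundle $L_K$ is the isotypic component $K\cdot y$ on which $\mu_{\ell^n}$ acts through the defining character. The trivialization $\tau_K: L_K^{\otimes \ell^n} \xrightarrow{\sim} K$ sends $y^{\otimes \ell^n}$ to $f$.

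To extend $L_K$ to a line bundle $L$ on $\spec \Oo_K$, I realize $L$ as a fractional ideal of $\Oo_K$ via the identification $L_K \cong K$ sending $y \leftrightarrow 1$. Under this identification, $\tau$ becomes ``division by $f$'' on the generic fiber, and so the requirement that $\tau$ restrict to an isomorphism $L^{\otimes \ell^n} \xrightarrow{\sim} \Oo_K$ of invertible $\Oo_K$-modules sitting inside $K$ forces $f^{-1}L^{\ell^n} = \Oo_K$, i.e.\ $L^{\ell^n} = (f)$. Since $\Oo_K$ is a Dedekind domain and every $v_{\mathfrak{p}}(f)$ is divisible by $\ell^n$, unique factorization of fractional ideals yields the unique solution $L = \prod_{\mathfrak{p}} \mathfrak{p}^{v_{\mathfrak{p}}(f)/\ell^n}$, which is exactly the claimed image.

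The main delicate point is pinning down the sign and direction conventions in the torsor/line-bundle dictionary, so that the formula comes out as stated rather than with an inverse; once this is fixed the rest is immediate. If the dictionary proves bothersome, a pedestrian alternative is to compute the Kummer boundary directly at the level of cocycles: cover $\spec \Oo_K$ so that $f$ is an $\ell^n$-th power up to units on an \'etale refinement, choose local $\ell^n$-th roots of $f$, and read off the $\mathbb{G}_m$-valued transition data of the resulting $\mathbb{G}_m$-torsor, recovering the same fractional ideal.
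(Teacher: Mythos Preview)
Your proof is correct and follows essentially the same idea as the paper: identify the line bundle attached to the $\mu_{\ell^n}$-torsor of $\ell^n$-th roots of $f$, and recognize it as the unique fractional ideal whose $\ell^n$-th power is $(f)$. The only difference is packaging: the paper invokes its earlier Proposition~\ref{torsor-definition} (the associated $\mathbb G_m$-torsor as the inverse of the torsor of $\mu_{\ell^n}$-equivariant invertible functions on the cover) and then reads off the local valuation of $\sqrt[\ell^n]{f}$ at each $\mathfrak p$, whereas you use the standard dictionary between $\mu_{\ell^n}$-torsors and pairs $(L,\tau\colon L^{\otimes \ell^n}\xrightarrow{\sim}\Oo_K)$ and then solve $L^{\ell^n}=(f)$ globally via unique factorization of fractional ideals. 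Your route is slightly more self-contained; the paper's keeps the sign bookkeeping isolated in Proposition~\ref{torsor-definition}, which is exactly the delicate point you flagged.
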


Note that this is well-defined as an ideal class $f$ as multiplying by the ${\ell^n}$th power of any element simply multiplies $\prod_{\mathfrak p \in \spec \Oo_K} \mathfrak p^{ v_p(f)/{\ell^n}}$ by that element's principal ideal, and raising $\prod_{\mathfrak p \in \spec \Oo_K} \mathfrak p^{ v_p(f)/{\ell^n}}$ to the ${\ell^n}$th power produces the principal ideal generated by $f$.

\begin{proof} By Proposition \ref{torsor-definition}, given a $\mu_{\ell^n}$-torsor, the associated $\mathbb G_m$-torsor can be defined as the inverse of the torsor  of $\mathbb G_m$-valued functions on the torsor that transform by multiplication by $\mu_{\ell^n}$ under the action of $\mu_{\ell^n}$.  In particular any meromorphic such function gives us a map from the torsor to $\mathbb G_m$, and hence lets us write it as a fractional ideal. 

For the torsor $\sqrt[{\ell^n}]{f}$, for $f \in K$, $\sqrt[{\ell^n}]{f}$ is such a function. Over any point $\mathfrak p$, the order to which $\mathfrak p$ appears in this fractional ideal is the highest power of $\mathfrak p$ that divides elements (locally) in the image of this function. Because all elements in the image are multiples of $\sqrt[{\ell^n}]{f}$ by local units, the highest power of $\mathfrak p$ that divides them is $v_{\mathfrak p}(f) /{\ell^n}$.\end{proof} 

Putting it all together, we get the following description of the map $\psi_K : \mathrm{Cl}(K)^\vee [n]  \to \mathrm{Cl} (K)[n]$, previously defined cohomologically.

\begin{prop} \label{hilberysymboldescription} There is a natural identification

\[ \mathrm{Cl}(K)^\vee [{\ell^n}]   = \Hom \left( K^\times \backslash \mathbb A_K^\times / \prod_v  \Oo_{K_v}^\times ,\mathbb Z/{\ell^n} \right). \]

Any such homomorphism on the adeles can be written as $a \mapsto \sum_{v}\langle a,b\rangle_{{\ell^n},v}$ for some $b \in K^\times$ such that:
\begin{enumerate}
\item The valuation of $b$ at each place is a multiple of ${\ell^n}$,
\item For every place $v$ the element $b_v$ pairs trivially with all of $\Oo_{K_v}^\times$.
\end{enumerate}
The element $b$ is unique up to multiplication by elements of $(K^\times)^{\ell^n}$. 

Moreover, the ideal class $\prod_{\mathfrak p \in \spec \Oo_K}  \mathfrak p ^{v_{\mathfrak p}(b)/{\ell^n}}$ is the image of the original element of $\mathrm{Cl}(K)^\vee [{\ell^n}]  $ under the map $\psi_K: \mathrm{Cl}(K)^\vee [{\ell^n}] \to \mathrm{Cl}(K)[{\ell^n}]$ defined in Definition \ref{psi-nt-defi}. \end{prop}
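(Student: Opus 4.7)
The plan is to assemble the four preceding lemmas into a single chain of identifications; no genuinely new ideas are required beyond tracking the Hilbert symbol through the commutative diagrams already in place.

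First, class field theory gives $\mathrm{Cl}(K)^\vee[\ell^n] \cong H^1(\spec \Oo_K, \mathbb{Z}/\ell^n)$, and Lemma \ref{adelic-dual-description} identifies this subgroup of $H^1(K, \mathbb{Z}/\ell^n) = \Hom(K^\times \backslash \mathbb{A}_K^\times, \mathbb{Z}/\ell^n)$ as the characters that are trivial on $\prod_v \Oo_{K_v}^\times$. This gives the first claimed identification. The injectivity of $H^1(\spec \Oo_K, \mathbb{Z}/\ell^n) \to H^1(K, \mathbb{Z}/\ell^n)$ noted before Lemma \ref{class-kummer-isomorphism} (using that $\spec \Oo_K$ is normal of dimension one) lets me work entirely at the level of $K$.

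Next I would apply Lemma \ref{class-kummer-isomorphism}, which, after the $\cup\zeta$ twist, identifies characters of $K^\times \backslash \mathbb{A}_K^\times$ with elements of $K^\times / (K^\times)^{\ell^n}$ via $b \mapsto \iota \circ \gamma(\,\cdot\,, b)$. Thus any $\chi \in \mathrm{Cl}(K)^\vee[\ell^n]$ is of the form $a \mapsto \sum_v \langle a, b\rangle_{\ell^n, v}$ for some $b \in K^\times$, uniquely determined modulo $(K^\times)^{\ell^n}$ by Kummer theory. The two conditions on $b$ come from tracking when the Kummer class actually lies in the unramified subgroup $H^1(\spec \Oo_K, \mu_{\ell^n})$: Lemma \ref{kummer-torsor-description} gives that $b$'s valuation at each finite place is a multiple of $\ell^n$, and the triviality of $\chi$ on $\Oo_{K_v}^\times$ from Lemma \ref{adelic-dual-description} translates directly, through the Hilbert-symbol formula for $\chi$, into the statement that $\langle u, b\rangle_{\ell^n, v} = 0$ for all $u \in \Oo_{K_v}^\times$.

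Finally, by Definition \ref{psi-nt-defi}, the map $\psi_K$ is the composition $\mathrm{Cl}(K)^\vee[\ell^n] \cong H^1(\spec \Oo_K, \mathbb{Z}/\ell^n) \xrightarrow{\cup \zeta} H^1(\spec \Oo_K, \mu_{\ell^n}) \to \mathrm{Cl}(K)[\ell^n]$. Under the identifications just established, the intermediate $\mu_{\ell^n}$-class in $H^1(\spec \Oo_K, \mu_{\ell^n})$ is represented in $K^\times / (K^\times)^{\ell^n}$ by the very element $b$. Lemma \ref{kummer-map-description} then reads off the image in $\mathrm{Cl}(K)[\ell^n]$ as the fractional ideal $\prod_{\mathfrak{p} \in \spec \Oo_K} \mathfrak{p}^{v_\mathfrak{p}(b)/\ell^n}$, which is the asserted formula. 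The main (and only) obstacle is a compatibility check: that Lemma \ref{class-kummer-isomorphism}'s Hilbert-symbol description is compatible with the commutative square relating $H^1$ of $\spec \Oo_K$ to $H^1$ of $K$; this is routine since both vertical arrows are restriction-of-torsors maps, so all the identifications genuinely commute.
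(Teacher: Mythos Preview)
Your proposal is correct and follows essentially the same approach as the paper: both proofs assemble Lemmas \ref{adelic-dual-description}, \ref{class-kummer-isomorphism}, \ref{kummer-torsor-description}, and \ref{kummer-map-description} via the commutative square relating cohomology over $\spec\Oo_K$ and over $K$. Your write-up is in fact somewhat more explicit than the paper's, which simply states that the result follows by combining the lemmas in the subsection.
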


\begin{proof} This follows by combining all the lemmas in this subsection. The description of $ \mathrm{Cl}(K)^\vee [{\ell^n}] $ is Lemma \ref{adelic-dual-description}.  The description of elements in terms of Hilbert symbols is Lemma \ref{class-kummer-isomorphism}. The fact that $b$ has a valuation at each place a multiple of ${\ell^n}$ is the commutativity of the diagram combined with Lemma \ref{kummer-torsor-description}. The description of $\psi_K$ follows from the commutativity of the diagram and Lemma \ref{kummer-map-description}. \end{proof}

\section{Compatibility between $\psi$ and $\omega$ in the number field case}\label{s-internal-consistency}

The goal of this section is to show that $(\mathrm{Cl} (L/K)_{\ell}, \omega_{L/K}, \psi_{L/K})$  is always contained in the support of the measure $Q^t \mu,$ defined in Definition \ref{Qtmu} and conjectured in Conjecture \ref{ntmain-2} to govern the distribution of the triples $(Cl (L/K)_{\ell}, \omega_{L/K}, \psi_{L/K}).$ Because $Q^t \mu$ is a measure on the set $\mathcal C_{\ell,n}$ of isomorphism classes of BEGs, we first, by a series of lemmas, show the compatibility condition \eqref{psiomegacomp}, which verifies that $(Cl (L/K)_{\ell}, \omega_{L/K}, \psi_{L/K})$ is a BEG. We next show that $\operatorname{ker} \psi_{L/K}$ has rank $t= \frac{ [K:\mathbb Q]}{2}$, which is sufficient, by the measure calculation in the next section, to imply that it lies in the support.

\subsection{Equivalence between Artin-Verdier and Class Field Theory}

We begin by establishing the compatibility between two separate pairings on $H^1$.  As $\omega_{L/K} $ is defined using Artin-Verdier Duality, 
this will ultimately allow us to relate $\omega_{L/K}$ and $\psi_{L/K}$. 

As $\omega$ is defined as a series of pairings indexed by a natural number $m$, we will consider throughout a natural number $m$, not necessarily equal to $n$, and the $\ell^m$th roots of unity $\mu_{\ell^m}$.

\begin{lemma}\label{lem-Brauer-invariant}  Let $\mathfrak p$ be a prime of $\mathcal O_K$, $K_{\mathfrak p}$ the corresponding local field, $\pi$ a uniformizer, $\kappa(\pi) \in H^1 ( K_{\mathfrak p} , \mu_{\ell^m})$ the image of $\pi$ under the connecting map from the Kummer sequence. Let $\alpha \in H^1 ( \mathcal O_{K_{\mathfrak p} } , \mathbb Z/\ell^m)$ be a torsor which, viewed as a map from the fundamental group of $\mathcal O_{K_{\mathfrak p}}$ to $\mathbb Z/\ell^m$, sends $\operatorname{Frob}_{\mathfrak p}$ to $k \in \mathbb Z/\ell^m$. 

Regard $\kappa(\pi) \cup (\alpha) \in H^2 (K_{\mathfrak p}, \mu_{\ell^m})$ as an element of the Brauer group. Then we have the formula for the invariant of the Brauer class \[ \operatorname{inv} ( \kappa(\pi) \cup (\alpha)  ) = -\frac{k}{\ell^m} .\]

\end{lemma}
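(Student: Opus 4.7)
The plan is to reduce the assertion to the classical formula expressing the local Brauer invariant of an unramified cyclic algebra in terms of the valuation of its second input. First, the target $H^2(K_{\mathfrak p}, \mu_{\ell^m})$ is canonically identified with $\mathrm{Br}(K_{\mathfrak p})[\ell^m]$ via the Kummer sequence $1 \to \mu_{\ell^m} \to \mathbb{G}_m \xrightarrow{\ell^m} \mathbb{G}_m \to 1$, so that computing the invariant of $\kappa(\pi) \cup \alpha$ as a cohomology class is the same as computing the Hasse invariant of the associated central simple algebra. Consequently the definition of $\kappa$ as the Kummer connecting map and the general identity $\delta(x) \cup y = \delta(x \cup y)$ allow us to rewrite $\kappa(\pi) \cup \alpha$ in $\mathrm{Br}(K_{\mathfrak p})$ as $\delta(\pi \cup \alpha)$, where $\pi \cup \alpha \in H^1(K_{\mathfrak p},\mathbb{G}_m/\ell^m)$.

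Next I would identify this Brauer class with a cyclic algebra. The unramified character $\alpha$ factors through $\mathrm{Gal}(L/K_{\mathfrak p})$, where $L/K_{\mathfrak p}$ is the unramified cyclic extension of degree $d = \ell^m/\gcd(k,\ell^m)$, sending arithmetic Frobenius $\sigma_L$ to $k \in \mathbb{Z}/\ell^m$. A direct cocycle calculation (or equivalently the statement in Serre, \emph{Local Fields}, XIV.\S 1--3, or Neukirch--Schmidt--Wingberg, \emph{Cohomology of Number Fields}, Ch.~V, on the cyclic algebra interpretation of the cup product) shows that, as Brauer classes,
\[\kappa(\pi) \cup \alpha \;=\; k \cdot [\,(L/K_{\mathfrak p}, \sigma_L, \pi)\,] \in \mathrm{Br}(K_{\mathfrak p}),\]
where $(L/K_{\mathfrak p},\sigma_L,\pi)$ denotes the standard cyclic algebra built from these data.

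Then I would invoke the classical formula for the invariant of an unramified cyclic algebra: for any $b \in K_{\mathfrak p}^\times$, $\mathrm{inv}(L/K_{\mathfrak p}, \sigma_L, b) = v(b)/[L:K_{\mathfrak p}] \mod 1$. Specializing to $b = \pi$ and multiplying by $k$, we obtain $\mathrm{inv}(\kappa(\pi)\cup\alpha) = k/\ell^m$ up to sign. The sign $-1$ in the statement is then a bookkeeping check between the chosen normalizations of the local invariant map, of the cup product, and of the Kummer isomorphism $\mu_{\ell^m} \cong \mathbb{Z}/\ell^m$ used via $\zeta$. Graded-commutativity of the cup product in degree $(1,1)$, together with the convention (used here, and forced by the factor $-\tfrac{1}{2}$ in the definition of $\omega_{m,K}$ and by the Artin--Verdier normalization) in which local reciprocity sends a uniformizer to the arithmetic Frobenius, yields the negative sign.

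The main obstacle is really only the sign verification: the existence and the absolute value of the invariant are entirely classical. To pin down the sign, one must choose explicit cocycle representatives for $\kappa(\pi)$ and for $\alpha$, compute their cup product on the nose, and compare with the standard $2$-cocycle $(\sigma_L^i, \sigma_L^j) \mapsto \pi^{\lfloor (i+j)/d \rfloor}$ representing $(L/K_{\mathfrak p}, \sigma_L, \pi)$. Making this explicit, and carrying the conventions through $\delta$, $\cup$, and $\mathrm{inv}$, is the only nontrivial part and is what ultimately produces the $-k/\ell^m$ claimed.
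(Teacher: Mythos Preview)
Your approach is correct in outline and genuinely different from the paper's. You reduce to the classical invariant formula for unramified cyclic algebras by identifying $\kappa(\pi)\cup\alpha$ with (a multiple of) the cyclic algebra $(L/K_{\mathfrak p},\sigma_L,\pi)$ via standard references, leaving only a sign to be chased through the various normalizations. The paper instead proceeds by direct construction: it reduces to $k=1$, writes down the explicit $2$-cocycle $\phi(\sigma,\tau)=(\sigma(\pi_0)/\pi_0)^{n_\tau}$, builds the corresponding crossed-product algebra $A_\phi=\bigoplus_{g}Le_g$, and then exhibits a simple subalgebra $B\subset A_\phi$ whose invariant is visibly $-1/\ell^m$ (read off from the valuation of the generator $\pi_0^{-1}e_{1,1}$) and whose centralizer is a matrix algebra; the Centralizer Theorem then gives $\operatorname{inv}(A_\phi)=-1/\ell^m$ with the sign falling out of the computation rather than being tracked through conventions.

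The trade-off is clear: your route is shorter and more conceptual, but the part you yourself flag as ``the only nontrivial part''---the sign---is not actually carried out in your proposal. You describe \emph{what} one would do (choose explicit cocycles, compare with the standard $2$-cocycle for the cyclic algebra, track conventions through $\delta$, $\cup$, and $\operatorname{inv}$) without doing it, and your appeal to graded-commutativity and the arithmetic-Frobenius convention is too loose to pin down the sign. The paper's explicit algebra construction is longer but self-contained and settles the sign unambiguously. If you want to complete your argument, you must actually perform the cocycle comparison you sketch in the last paragraph; otherwise the proof remains an outline at precisely the point where the content lies.
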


\begin{proof}  It suffices to check this in the case $k=1$, as the torsor with $k=1$ generates the group of torsors.  This we now do by an explicit calculation with Brauer groups. 

Let $n=\ell^m$.  We let $\phi$ be the 2-cocycle  $\kappa(\pi)\cup\alpha$. Letting $\pi_0^n=\pi$ and computing explicitly, we see that 
$$\phi(\sigma,\tau) = \left(\frac{\sigma(\pi_0)}{\pi_0}\right)^{n_\tau}$$ where $n_\tau$ acts on the residue field by the $n_\tau$th power
of Frobenius. 	

 Let $K_{n}$ be the unramified extension of $K$ of degree $n,$ and $L=K_n(\pi_0)$.  To an element $g\in G=\Gal(L/K)$ we assign $(\mu_g,n_g)$ such that
$g\mid_{K_n}=F^{n_g}$ and $\frac{g\pi_0}{\pi_0} = \mu_g$. This identifies $G$ with the semi-direct product of 
$\mu_n$ and $\Z/n\Z$.
%We identify the Galois group $G=\Gal(L/K)$ with $\mu_n\times \Z/n\Z$ by \[\tau_{\mu,a}(k_{\un}\pi_0) = \mu F^a(k_\un)\pi_0.\]

Now we may rewrite our cocycle $\phi$  as $\phi(g,h):=\mu_g^{n_h}$. By the standard dictionary between $H^2(K,\mu_n)$ and $n$-torsion in the Brauer group $\mathrm{Br}(K)$ \cite[IV,Cor 3.15]{MilneClassFieldTheory}, this gives rise to the central simple algebra $A_\phi$ given by
$$A_\phi:=\displaystyle\bigoplus_{g\in G} Le_g$$ with multiplication defined by  
\begin{align*}
e_g \ell &= g(\ell)e_g, \\ 
e_ge_h &= \phi(g,h)e_{gh}.
\end{align*}

We'll exhibit a simple subalgebra $B$ of $A_\phi$ satisfying:
\begin{itemize}
\item
$Z_{A_\phi}(B)$ is isomorphic to a matrix algebra over $K,$ say $M_d(K).$

\item
$\mathrm{inv}_K(B) = -\frac{1}{n}.$
\end{itemize}	

By the Centralizer Theorem, $A_\phi \cong B\otimes_K Z_{A_\phi}(B).$  So by the above two items, it follows that 
\begin{align*}
\mathrm{inv}(A_\phi) &= \mathrm{inv}_K(B) + \mathrm{inv}_K(Z_{A_\phi}(B)) \\
&= -\frac{1}{n} + \mathrm{inv}_K( M_m(K) ) \\
&= - \frac{1}{n}.
\end{align*}

Because $\mu_n \subset K_n,$ we can diagonalize the ($K$-linear) action of $F$ on $K_n$: $K_n = \bigoplus_{\mu \in \mu_n} K_{\mu},$ where
$K_{\mu} = \{\alpha \in K_n: F(\alpha) = \mu \cdot \alpha\}.$  Let $C :=\displaystyle\sum_\mu K_\mu e_{\mu,0}$.  Note that $K_n \cong C$ via the isomorphism $\sum_{\mu \in \mu_n} k_{\mu} \mapsto \sum_{\mu \in \mu_n} k_{\mu} e_{\mu,0}$; ; via this isomorphism, we see that Frobenius on $C$ equals $k e_{\mu,0} \mapsto F(k) e_{F(\mu),0}$ for $k \in K, \mu \in \mu_n.$  Define $B := C[\pi_0^{-1}e_{1,1}].$ \newline

We claim that conjugation by $\pi_0^{-1} e_{1,1}$ induces Frobenius on $C.$  Indeed, for $k \in K_n$, we have

\begin{align}\label{comm}
(\pi_0^{-1}e_{1,1})k e_{\mu,0}(\pi_0^{-1} e_{1,1})^{-1} &= \pi_0^{-1}e_{1,1}k e_{\mu,0}\pi_0e_{1,-1} \nonumber \\
&=\pi_0^{-1}F(k)e_{1,1}\mu\pi_0 e_{\mu,0}e_{1,-1} \nonumber \\
&=F(k)F(\mu) e_{1,1}e_{\mu,0}e_{1,-1} \nonumber \\
&=F(k)F(\mu)  e_{F(\mu),1}e_{1,-1} \nonumber \\
&=F(k)F(\mu)  F(\mu)^{-1} e_{F(\mu),0} \nonumber \\
&= F(k)e_{F(\mu),0}. 
\end{align}

It follows from the above that $C$ is its own centralizer in $B$. Indeed, let $c\in C$ be a generator over $K$. Then for a general element $\sum_
{k=0}^{n-1} c_k (\pi_0^{-1} e_{1,1})^k\in B$ commuting with $c$, we see that $$c\sum c_k (\pi_0^{-1} e_{1,1})^k = \sum c_kF^k(c) (\pi_0^{-1} e_{1,1})^k$$ from which it follows that $c_k=0$  for $k\neq 0$. Also, considering the centralizer of $\pi_0^{-1}e_{1,1},$ it follows from \eqref{comm} that the center of $B$ equals $K$. 

We next verify that $B$ is simple.  
%To check that $B$ is simple, it suffices to check that every non-zero element generates $B$ as a 2-sided ideal. 
Let $J$ be a non-zero 2-sided ideal. Then $J$ is a vector space over $C$ under left mutiplication. Conjugation by $C^\times$ breaks $B$ up into the $1$-dimensional eigenspaces over $C$ with distinct characters, namely $C(\pi_0^{-1} e_{1,1})^k$ has the character $c\ra \frac{c}{F^k(c)}$.  Because $J$ is invariant under $C^\times$ conjugation, it must contain at least one of these eigenspaces.  Each of these eigenspaces contains a unit, and therefore $J=B$.

From the above computation and since the valuation of $\pi_0^{-1}e_{1,1}$ is $-\frac{1}{n}$
it follows that $\inv_K(B)=-\frac{1}{n}$ (see \cite[IV.4]{MilneClassFieldTheory}).

\medskip

It remains to show that $Z_{A_\phi}(B)$ is isomorphic to a matrix algebra over $K.$ To do this, let $R=K_n[e_{\mu,0}, \mu \in \mu_n].$  The action of Frobenius on $R$ is given by Frobenius on $K_n$ and $F(e_{\mu,0}):=e_{F(\mu),0}$. Note that the fixed algebra $R^F$ is contained in $Z_{A_\phi}(B)$ by equation
 \eqref{comm}. Now we have an isomorphism $\phi:R\ra K_n^n$ sending $y=\sum_\mu a_\mu e_{\mu,0}$ to 
 $\phi(y) = (\sum_\mu a_\mu \mu^k)_{k = 0,\ldots,n}$ which is $F$-equivariant for the component-wise action of $F$ on $K_n^n$. 
 
 Thus we see that $R^F\cong K^n$. It follows that $Z_{A_\phi}(B)$ is a central simple algebra of dimension $n^2$ containing $n$
mutually orthogonal idempotents, and thus $Z_{A_\phi}(B)\cong M_n(K)$, completing the proof.
\end{proof}

\begin{lem}\label{lem-giant-diagram} For any $\alpha \in H^1 (\mathcal O_K, \mathbb Z/\ell^m)$, the following diagram commutes:
\[ \begin{tikzcd}
H^0(K_{\mathfrak p}, \mathbb G_m) \arrow[r] \arrow[d, "\kappa"]& H^{1}_c(U, \mathbb G_m) \arrow[r] \arrow[d, "\kappa"]&  H^1_c( \mathcal O_K, \mathbb G_m) \arrow[r] \arrow[d, "\kappa"]& H^1 ( \mathcal O_K, \mathbb G_m)\arrow[d, "\kappa"] \\
H^1(K_{\mathfrak p}, \mathbb \mu_{\ell^m} ) \arrow[r]\arrow[d,"\cup \alpha"] & H^{2}_c(U,  \mu_{\ell^m} ) \arrow[r] \arrow[d,"\cup \alpha"]&  H^2_c( \mathcal O_K,  \mu_{\ell^m} ) \arrow[r] \arrow[d,"\cup \alpha"]& H^2 ( \mathcal O_K,  \mu_{\ell^m} )\arrow[d,"\cup \alpha"] \\
H^2(K_{\mathfrak p}, \mathbb \mu_{\ell^m} ) \arrow[r] & H^{3}_c(U,  \mu_{\ell^m} ) \arrow[r] &  H^3_c( \mathcal O_K,  \mu_{\ell^m} ) \arrow[r] & H^3 ( \mathcal O_K,  \mu_{\ell^m} ) \\ \end{tikzcd}\]
where the maps $\kappa$ arise from the Kummer sequence, the horizontal arrows in the left and right columns arise from the exact sequence of compactly supported cohomology \cite[III, Proposition 0.4(a)]{MilneArithmeticDuality}, and the horizontal arrows in the middle column arise from \cite[III, Proposition 0.4(c)]{MilneArithmeticDuality}. \end{lem}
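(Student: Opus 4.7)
The plan is to decompose the $3\times 4$ diagram into its six constituent small squares and to verify each one by a naturality argument, grouping them by the type of vertical map.

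The three squares in the top strip all involve the Kummer connecting map $\kappa$ associated to the fixed short exact sequence of étale sheaves $1 \to \mu_{\ell^m} \to \mathbb G_m \xrightarrow{\ell^m} \mathbb G_m \to 1$ on $\spec \mathcal O_K$. Each horizontal arrow in rows one and two is one of: restriction along an open immersion of sites (e.g.\ $\spec \mathcal O_K \hookleftarrow U$, and $U \hookleftarrow \spec K_{\mathfrak p}$ in the limit), extension-by-zero in compactly supported cohomology ($H^i_c(U,-)\to H^i_c(\mathcal O_K,-)$), the forgetful map ($H^i_c\to H^i$), or the connecting map in the long exact sequence of compactly supported cohomology of \cite[III.0.4(a),(c)]{MilneArithmeticDuality}. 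Every such arrow is a natural transformation of $\delta$-functors in the sheaf variable, and connecting homomorphisms of a fixed short exact sequence commute with natural transformations. Each of the three top squares therefore commutes on the nose.

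The three squares in the bottom strip all involve cup product with $\alpha$. We pull $\alpha$ back along the various site morphisms without changing notation. Bifunctoriality of the cup product immediately handles the two squares whose horizontal arrows are pullback/restriction and extension-by-zero. For the remaining square, where the horizontal arrow is the connecting homomorphism $\partial : H^i(K_{\mathfrak p}, \mu_{\ell^m}) \to H^{i+1}_c(U, \mu_{\ell^m})$ in the long exact sequence of compactly supported cohomology, we need to verify $\partial(\alpha \cup x) = \alpha \cup \partial x$ (up to the usual $(-1)^{|\alpha|}$ which can be absorbed into a global sign convention). This is the standard compatibility of cup product with the connecting map associated to a distinguished triangle: realize $R\Gamma_c(U,-)$ as the mapping fibre of $R\Gamma(\mathcal O_K,-)\to R\Gamma(K_{\mathfrak p},-)$ (this is Milne's construction), represent $\alpha$ by a Čech $1$-cocycle with values in $\mathbb Z/\ell^m$, and observe that left-cup-product with $\alpha$ lifts to a map of complexes that commutes with the cone construction, so the induced maps on cohomology commute with $\partial$.

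I expect the only real technical obstacle to be book-keeping for sign conventions, since the degree of $\alpha$ is odd and cup product with a degree-$1$ class is known to anti-commute with connecting maps in some conventions. The clean way around this is to work entirely at the level of complexes using the cone description of $R\Gamma_c$: then both vertical columns of the diagram factor through maps of complexes (the Kummer boundary is a morphism in the derived category coming from a distinguished triangle, and cup product with a Čech representative of $\alpha$ is a genuine chain map), so commutativity becomes a single statement at the level of complexes. Once that is set up, all six small squares become immediate, and assembling them yields the lemma.
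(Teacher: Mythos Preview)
Your proposal is correct and follows essentially the same route as the paper. Both arguments reduce to Milne's mapping-cone model for $R\Gamma_c$, verify the Kummer squares by naturality of connecting maps (the paper phrases this concretely via a short exact sequence of injective resolutions, you phrase it as a morphism of $\delta$-functors), and verify the cup-product squares by representing $\alpha$ as a cocycle so that cupping with it becomes a genuine chain map compatible with the cone construction; your explicit attention to the sign issue is a point the paper glosses over, but your proposed resolution (work at the level of complexes) is exactly what the paper does.
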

\begin{proof} Let $U = \operatorname{Spec} \mathcal O_K - \{ \mathfrak p\}$  .
Milne defines the compactly supported cohomology groups of a sheaf $\mathcal F$ on $U$ as the shifted mapping cone of the natural map of complexes \[ \Gamma ( U, I^* ( \mathcal F)) \to \Gamma (K_\mathfrak p, I^*(\mathcal F)  ) \times \prod_{v |\infty} \Gamma ( K_v, I^* (\mathcal F) )\] where $I^*( \mathcal F)$ is an injective resolution of $\mathcal F$ on the flat site of $U$. Note that the restriction of $I^*(\mathcal F)$ to $\spec K_\mathfrak v$ is an injective resolution of the restriction of $\mathcal F$ to $K_{v}$. 

The compactly supported cohomology groups of $\mathcal F$ on $\mathcal O_K$ are defined similarly, as the shifted mapping cone of \[ \Gamma ( U, I^* ( \mathcal F)) \to  \prod_{v |\infty} \Gamma ( K_v, I^* (\mathcal F) ).\]

This induces natural maps \[H^i ( K_{\mathfrak p}, \mathcal F) \to H^{i+1}_c (U, \mathcal F) \textrm{ and } H^{i+1}_c ( \mathcal O_K, \mathcal F) \to H^{i+1} (\mathcal O_K, \mathcal F)\] arising directly from the construction of $H^*_c$ as a mapping cone. These calculate the left and right horizontal arrows of the diagram.

The middle arrow is constructed by first mapping $H^{i}_c (U, \mathcal F) $ to the mapping cone
\[ \Gamma ( U, I^i ( \mathcal F)) \to \Gamma_{\mathfrak p} (K_\mathfrak p, I^{i+1} (\mathcal F)  ) \times \prod_{v |\infty} \Gamma ( K_v, I^i (\mathcal F) )\] 
and then identifying this mapping cone as $H^i( \mathcal O_K, \mathcal F)$.

The first vertical arrow, the Kummer sequence, involves choosing a triple of injective resolutions of $\mu_{\ell^m}, \mathbb G_m,$ and $\mathbb G_m$ that themselves form a short exact sequence \cite[III, Proposiition 0.4(b)]{MilneArithmeticDuality}. The commutativity of the squares can be checked straightforwardly on cochains, because the inverse image along a map of sheaves, differential, and inverse image along another map of sheaves we use to define the connecting homomorphism commute with the various pullbacks of sections to different spaces we use to define the horizontal maps.

For the second vertical arrow, the cup product, after choosing an injective resolution of $\mu_{\ell^m}$, we choose a complex of flat sheaves, isomorphic to $\mathbb Z/\ell^m$, where our chosen class $\alpha$ appears as a cocycle. This can be done by choosing a finite \'{e}tale covering where $\alpha$ splits and taking the Cech complex or that covering, or more simply by choosing an extension $\mathbb Z/\ell^m \to \mathcal F \to \mathbb Z/\ell^m$ representing $\alpha$ and using the complex $\mathcal F \to \mathbb Z/\ell^m$. We then choose a further injective resolution of $\mu_{\ell^m}$ that resolves the tensor product of these two complexes. The cup product is then expressed as multiplication of cochains. Because multiplication of cochains commutes with pullback, it commutes with the horizontal maps.

Hence all the squares are commutative.

\end{proof}

\begin{prop}\label{prop-cft-av} The following two pairings  $H^1(\mathcal O_K, \mathbb G_m) \times H^1(\mathcal O_K, \mathbb Z/\ell^m) \to \mathbb Z / \ell^m$ are equal:
\begin{enumerate}
\item
Identify $H^1(\mathcal O_K, \mathbb Z/\ell^m)$ with $\mathrm{Hom}(\pi_1( \spec \mathcal{O}_K)^{\mathrm{ab}}, \mathbb{Z} / \ell^m)$ with $\mathrm{Hom}(\mathrm{Cl}(K), \mathbb{Z} / \ell^m)$ (the latter identification by class field theory).  Identify $H^1(\mathcal{O}_K, \mathbb{G}_m)$ with $\mathrm{Cl}(K).$  Then pair $\mathrm{Hom}(\mathrm{Cl}(K), \mathbb{Z} / \ell^m)$ with  $\mathrm{Cl}(K)$ by evaluation.  
\item
Map $H^1(\mathcal O_K, \mathbb G_m)$ to $H^2(\mathcal O_K, \mu_{\ell^m})$ by the connecting homomorphism from the Kummer sequence.  Then take cup product of $H^2(\mathcal O_K, \mu_{\ell^m})$ with $H^1(\mathcal{O}_K, \mathbb{Z} / \ell^m)$ which lands in $H^3(\mathcal{O}_K, \mu_{\ell^m}).$  Then apply the Artin-Verdier duality trace map $H^3(\mathcal{O}_K, \mu_{\ell^m}) \rightarrow \mathbb{Z} / \ell^m.$
\end{enumerate}
%The natural pairing $H^1(\mathcal O_K, \mathbb G_m) \times H^1(\mathcal O_K, \mathbb Z/\ell^m) \to \mathbb Z / \ell^m$ by viewing $\mathbb Z/\ell^m$-torsors as unramified homomorphism from the Galois group to $\mathbb Z/\ell^m$, hence homomorphisms from the class group to $\mathbb Z/\ell^m$ by class field theory, is equal to the natural pairing $H^2(\mathcal O_K, \mu_{\ell^m} ) \times H^1(\mathcal O_K, \mathbb Z/\ell^m) \to \mathbb Z_\ell^m$ obtained from Artin-Verdier duality composed with the Kummer sequence map $H^1(\mathcal O_K, \mathbb G_m) \to H^2(\mathcal O_K, \mu_{\ell^m})$. 
\end{prop}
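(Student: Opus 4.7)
The plan is to reduce both pairings to local invariants at a single prime and then invoke Lemma \ref{lem-Brauer-invariant} to compute the local contribution. Since $\mathrm{Cl}(K) = H^1(\mathcal{O}_K, \mathbb{G}_m)$ is generated by classes $[\mathfrak{p}]$ of prime ideals, it suffices to verify the equality of the two pairings on such a class against an arbitrary $\alpha \in H^1(\mathcal{O}_K, \mathbb{Z}/\ell^m)$. Fix $\mathfrak{p}$ and set $k := \alpha(\mathrm{Frob}_{\mathfrak{p}}) \in \mathbb{Z}/\ell^m$. For pairing (1), class field theory identifies $[\mathfrak{p}]$ with $\mathrm{Frob}_{\mathfrak{p}}$, so the pairing evaluates (up to the fixed sign convention) to $k/\ell^m$.

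For pairing (2) I would apply Lemma \ref{lem-giant-diagram} with $U = \spec \mathcal{O}_K \setminus \{\mathfrak{p}\}$. The key observation is that the class $[\mathfrak{p}] \in H^1(\mathcal{O}_K, \mathbb{G}_m)$ is the image of the uniformizer $\pi \in H^0(K_{\mathfrak{p}}, \mathbb{G}_m)$ under the top horizontal composition of that diagram. By chasing around the diagram I may compute $\kappa([\mathfrak{p}]) \cup \alpha \in H^3(\mathcal{O}_K, \mu_{\ell^m})$ by first applying $\kappa$ and then $\cup\, \alpha|_{K_{\mathfrak{p}}}$ locally, obtaining the Brauer class $\kappa(\pi) \cup \alpha|_{K_{\mathfrak{p}}} \in H^2(K_{\mathfrak{p}}, \mu_{\ell^m})$, and finally pushing forward through $H^3_c(U,\mu_{\ell^m})$ and $H^3_c(\mathcal{O}_K,\mu_{\ell^m})$ to $H^3(\mathcal{O}_K,\mu_{\ell^m})$. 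The Artin-Verdier trace factors as a sum of local invariants, and because our class is supported entirely at $\mathfrak{p}$ only that local contribution survives. By Lemma \ref{lem-Brauer-invariant} the local invariant at $\mathfrak{p}$ equals $-k/\ell^m$.

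Combining the two computations shows that both pairings agree, once the sign conventions for Artin reciprocity, the Kummer connecting map, and the Artin-Verdier trace are fixed consistently (and in particular consistently with the sign appearing in Lemma \ref{lem-Brauer-invariant}). The main obstacle is precisely this sign bookkeeping: the explicit ``$-1/n$'' in Lemma \ref{lem-Brauer-invariant} arose from a particular presentation of the cyclic algebra, and matching it against the Artin map requires that $[\mathfrak{p}]$ be identified with $\mathrm{Frob}_{\mathfrak{p}}$ (rather than its inverse) in the sign convention chosen for this paper. A secondary technical point is to confirm that the various pullback and pushforward maps in Lemma \ref{lem-giant-diagram} are compatible with the Artin-Verdier trace in the expected way, i.e.\ that tracing the class from $H^2(K_{\mathfrak{p}}, \mu_{\ell^m})$ through $H^3_c$ and then applying the global trace recovers the local invariant at $\mathfrak{p}$; this is standard for Artin-Verdier duality but deserves to be stated explicitly.
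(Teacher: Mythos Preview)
Your proposal is correct and follows essentially the same route as the paper: reduce to prime ideal classes $[\mathfrak p]$, identify pairing (1) with $\alpha(\mathrm{Frob}_{\mathfrak p})$ via Artin reciprocity, and for pairing (2) use Lemma~\ref{lem-giant-diagram} to lift $[\mathfrak p]$ to a local uniformizer $\pi$, then invoke Lemma~\ref{lem-Brauer-invariant} for the local invariant. The paper spells out two points you leave as remarks: it verifies explicitly (via the mapping-cone description of $H^*_c$) that $\pi$ really does map to $[\mathfrak p]$ along the top row, and it justifies the passage from local invariant to global trace by citing Milne to show that all four groups in the bottom row of Lemma~\ref{lem-giant-diagram} are canonically $\mathbb Z/\ell^m$, so the bottom horizontal maps are the identity under these identifications---which is precisely your ``Artin-Verdier trace recovers the local invariant'' step. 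Your caution about signs is well placed; the paper is somewhat casual here, and making the conventions explicit, as you suggest, would strengthen the argument.
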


\begin{proof} It suffices to check that, for all sufficiently large primes $\mathfrak p$, and all torsors $\alpha \in H^1(\mathcal O_K, \mathbb Z/\ell^m)$, the image of the class $[\mathfrak{p}]$ of the ideal sheaf $\mathfrak p$ under the pairings with $\alpha$ defined by (1) and (2) are equal; this suffices because all elements of the class group arise from infinitely many primes. By Artin reciprocity, $[\mathfrak{p}]$ corresponds to $\mathrm{Frob}_{\mathfrak{p}}$ under class field theory.  So the pairing of $\alpha$ with $[\mathfrak{p}]$ defined by (1) is simply the action of $\Frob_{\mathfrak p}$ on the $\overline{K}$-points of the torsor $\alpha.$ In particular, it depends only on the restriction of $\alpha$ to $K_{\mathfrak p}.$

We will now calculate the pairing of $\alpha$ with $[\mathfrak{p}]$ defined by (2).  By definition, this is the image of $ (\kappa[\mathfrak p] \cup \alpha) \in H^3(\mathcal O_K, \mu_{\ell^m})$ under the identification $H^3(\mathcal O_K, \mu_{\ell^m}) \cong \mathbb Z/\ell^m$ from Artin-Verdier duality.

Let $\pi$ be a uniformer of $K_{\mathfrak p}$. Let us first check that $[\mathfrak p] \in H^1 (\mathcal O_K, \mathbb G_m)$ is the image of $\pi$ under the three arrows in the top row of the commutative diagram of Lemma \ref{lem-giant-diagram}.

We can view $H^{1}_c(U, \mathbb G_m)$ as consisting of line bundles on $U$ with trivializations on the punctured formal neighborhood of the points in $S$. Under this identification, the image of $\pi$ in $H^1_c(U, \mathbb G_m)$ is the trivial line bundle on $U$ with the identity trivialization at all infinite places and with the trivialization at the place $\mathfrak p$ twisted by $\pi$.  Given a line bundle $L$ on $U$ with a trivialization on the punctured formal neighborhood of $\mathfrak p$, there is a unique way of extending it to a line bundle on $\spec \mathcal O_K$ with a trivialization on the formal neighborhood of $\mathfrak p$. This is the line bundle whose sections are sections of $L$ on $U$ whose image under the trivialization does not have a pole at $\mathfrak p$. The map $H^1_c(U, \mathbb G_m) \to H^1_c(\mathcal O_K , \mathbb G_m)$ sends a line bundle with trivialization to the extended line bundle.  Applying this to our chosen line bundle, the sections of $\mathcal O_U$ whose image under the trivialization dividing by $\pi$ do not have a pole at $\mathfrak p$ are exactly the sections in the ideal $\mathfrak p$, so the extended line bundle is the ideal sheaf $\mathfrak p$. Finally, the natural map $H^1_c(\mathcal O_K , \mathbb G_m) \to H^1(\mathcal O_K , \mathbb G_m)$ forgets the trivialization at $\infty$. Thus, $\pi$ is sent to the ideal class $[\mathfrak p]$.

The groups on the bottom row of the diagram are all isomorphic to $\mathbb Z/\ell^m$ in a standard way. For instance this follows from \cite[Proposition 2.6 on page 169]{MilneArithmeticDuality}, which shows that $H^2( K_\mathfrak p, \mathbb G_m) = H^3_c(U, \mathbb G_m) = H^3_c(\mathcal O_K, \mathbb G_m) = H^3(\mathcal O_K, \mathbb G_m)= \mathbb Q/\mathbb Z$ and $H^1( K_\mathfrak p, \mathbb G_m) = H^2_c(U, \mathbb G_m) = H^2_c(\mathcal O_K, \mathbb G_m) = H^2(\mathcal O_K, \mathbb G_m)= 0$.  The standard isomorphism on the bottom-left is the invariant map of the Brauer group. The standard map on the bottom-right is used to define the Artin-Verdier pairing. Hence we have
\begin{align*}
&( \alpha, \kappa [\mathfrak{p}] )_{\mathrm{AV}} \\
&= \operatorname{inv} ( \kappa( \pi ) \cup \alpha) \\
&= \alpha ( \operatorname{Frob}_{\mathfrak p})
\end{align*}

%\[ \langle \alpha, \kappa( \mathfrak p) \rangle  = \operatorname{inv} ( \kappa( \pi ) \cup \alpha) = \alpha ( \operatorname{Frob}_{\mathfrak p}) \] 

by Lemma \ref{lem-Brauer-invariant}.  This matches the pairing of $\alpha$ with $[\mathfrak p]$ under the pairing (1), as desired. 

%The pullback of $\alpha$ to $H^1(K_{\mathfrak p} , \mathbb Z/\ell^m)$ is an unramified $\mathbb Z/n$ torsor, hence a class in $H^1( k _{\mathfrak p}, \mathbb Z/\ell^m)$. Any class in this group corresponds to a torsor on which $\Frob_{\mathfrak p}$ acts by adding $k$ for some $k \in \mathbb Z/\ell^m$.  

\end{proof}

\subsection{Checking compatibility between $\psi$ and $\omega$}

In this section we proceed to use Proposition \ref{prop-cft-av} to establish relation (1), the compatibility relation between $\psi$ and $\omega.$  To do this, we find it convenient to study the connecting homomorphisms associated to certain
explicit group scheme extensions between $\mathbb G_m$ and $\mu_{\ell^N}$ for various $N.$

\begin{definition} Let $\langle \alpha, \beta \rangle:  \mathrm{Cl}(K)^\vee [\ell^n] \times \mathrm{Cl}(K)^\vee \to \mathbb Z/\ell^n$ be the pairing defined by applying the class field theory isomorphism $\mathrm{Cl}(K)^\vee [\ell^n]  \cong H^1( \mathcal O_K , \mathbb Z/\ell^n)$, the map $\mathbb Z/\ell^n \to \mu_{\ell^n}$ defined by our fixed generator of $\mu_{\ell^n},$ and the Kummer map to map $\alpha$ to $\mathrm{Cl}(K)$ and then contracting with $\beta$.  \end{definition}

Throughout this subsection we fix $m=n+r$ .

\begin{definition} Let $G$ be the group scheme that sits in the middle of the exact sequence  $0 \to  \mu_{\ell^{n+r} } \to G\to \mathbb Z/\ell^{n+r} \rightarrow 0$ obtained by pulling back the Kummer exact sequence $0 \to \mu_{\ell^{n+r} } \to \mathbb G_m \to \mathbb G_m \rightarrow 0$ along the series of maps $\mathbb Z/\ell^{n+r} \to \mathbb Z/\ell^n\to \mu_{\ell^n} \to \mathbb G_m,$ where the second map is defined via our fixed choice of generator $\zeta$ for $\mu_{\ell^n}$.\end{definition}

\begin{lemma} The group scheme $G$ consists of pairs $(a,x)$ with $a \in \mathbb Z/\ell^{n+r}$, $x \in \mu_{\ell^{2n+r}}$ such that $x^{\ell^{n+r}} = \zeta^a$ for $\zeta$ our chosen generate of $\mu_{\ell^n}$ \end{lemma}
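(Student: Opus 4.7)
\medskip

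The plan is to unwind the definition of the pullback of short exact sequences of sheaves (or group schemes) on the fppf site of $\spec \Oo_K$. Recall that if we pull back a short exact sequence $0 \to A \to B \to C \to 0$ along a map $f : D \to C$, the resulting extension is $0 \to A \to B \times_C D \to D \to 0$, where $B \times_C D$ is the fiber product in the category of sheaves of abelian groups (equivalently, of group schemes, since the fiber product is representable).

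Applying this to the Kummer sequence $0 \to \mu_{\ell^{n+r}} \to \mathbb G_m \xrightarrow{(\,\cdot\,)^{\ell^{n+r}}} \mathbb G_m \to 0$ pulled back along the composite map $f : \mathbb Z/\ell^{n+r} \to \mathbb Z/\ell^n \xrightarrow{\zeta} \mu_{\ell^n} \hookrightarrow \mathbb G_m$ (which sends $a \mapsto \zeta^{a \bmod \ell^n} = \zeta^a$, the second equality using that $\zeta$ has order $\ell^n$), the middle term $G$ is, by construction, the fiber product
\[
G \;=\; \mathbb G_m \times_{\mathbb G_m} (\mathbb Z/\ell^{n+r})
\]
where the first map is $x \mapsto x^{\ell^{n+r}}$ and the second is $f$. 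So the $T$-points of $G$ are precisely pairs $(a,x)$ with $a \in \mathbb Z/\ell^{n+r}(T)$ and $x \in \mathbb G_m(T)$ satisfying $x^{\ell^{n+r}} = \zeta^a$, which is the claimed defining equation.

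The remaining point is to check that any such $x$ automatically lies in $\mu_{\ell^{2n+r}}$; this is an easy torsion computation. From $x^{\ell^{n+r}} = \zeta^a$ and the fact that $\zeta^{\ell^n} = 1$, we get $x^{\ell^{2n+r}} = (\zeta^a)^{\ell^n} = 1$, so $x \in \mu_{\ell^{2n+r}}$. Conversely every pair $(a,x)$ with $a \in \mathbb Z/\ell^{n+r}$ and $x \in \mu_{\ell^{2n+r}}$ satisfying $x^{\ell^{n+r}} = \zeta^a$ defines a $T$-point of the fiber product, so the two group-scheme descriptions agree.

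I expect no real obstacles here: the whole content of the lemma is to spell out the pullback in coordinates, and the only slightly nonobvious observation is that the defining equation forces $x$ into $\mu_{\ell^{2n+r}}$, which follows immediately from $\zeta \in \mu_{\ell^n}$. The group law on $G$ is the componentwise one inherited from $\mathbb G_m \times \mathbb Z/\ell^{n+r}$, and the two maps in the extension $0 \to \mu_{\ell^{n+r}} \to G \to \mathbb Z/\ell^{n+r} \to 0$ are $\mu_{\ell^{n+r}} \ni x \mapsto (0,x)$ and $(a,x) \mapsto a$, as one sees directly from the universal property.
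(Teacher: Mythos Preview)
Your proof is correct and follows essentially the same approach as the paper: identify $G$ as the fiber product $\mathbb G_m \times_{\mathbb G_m} (\mathbb Z/\ell^{n+r})$ via the $\ell^{n+r}$-power map and $a \mapsto \zeta^a$, then observe that $\zeta \in \mu_{\ell^n}$ forces $x^{\ell^{2n+r}} = 1$. Your write-up is simply more detailed than the paper's one-sentence version.
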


\begin{proof} By definition, $G$ is the fiber product of $\mathbb G_m$ and $\mathbb Z/\ell^{n+r}$ over $\mathbb G_m$ under the $\ell^{n+r}$ power and $a \mapsto \zeta^a$ maps respectively, so it consists of pairs $x \in \mathbb G_m, a\in \mathbb Z/\ell^{n+r}$ with $x^{\ell^{n+r}} = \zeta^a$, which because $\zeta^{l^n}=0$ forces $x^{\ell^{2n+r}}=1$ so $x \in \mu_{\ell^{2n+r}}$. \end{proof}

\begin{definition} Let $B: H^i ( \mathcal O_K, \mathbb Z/\ell^{n+r}) \to H^{i+1} (\mathcal O_K, \mu_{\ell^{n+r}})$ be the connecting homomorphism associated to this exact sequence $0 \to \mu_{\ell^{n+r} } \to G \to \mathbb Z/\ell^{n+r} \rightarrow 0$. \end{definition}

\begin{lemma}\label{langle-rangle-B} For $\alpha, \beta  \in \mathrm{Cl}(K)^\vee [\ell^{n+r}]$, viewed as elements of $H^1 (\mathcal O_K, \mathbb Z/\ell^{n+r}),$  we have \[\langle \ell^r \alpha, \beta \rangle = (B\alpha,\beta)_{AV} =\operatorname{tr} ( B\alpha \cup \beta)  \] where $\operatorname{tr} :H^3 (\mathcal O_K, \mu_{\ell^{n+r}} ) \to \mathbb Q/\mathbb Z$ is the Artin-Verdier trace.

\end{lemma}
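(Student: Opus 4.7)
The plan is to dispose of the second equality immediately as a matter of definition: the Artin--Verdier pairing $(\cdot,\cdot)_{AV}$ between $H^2(\mathcal O_K, \mu_{\ell^{n+r}})$ and $H^1(\mathcal O_K, \mathbb Z/\ell^{n+r})$ is by construction cup product followed by the Artin--Verdier trace, so $(B\alpha,\beta)_{AV} = \operatorname{tr}(B\alpha \cup \beta)$ is tautological. The substantive content lies in the first equality, which I will deduce by computing $B\alpha$ explicitly via functoriality of the connecting homomorphism and then invoking Proposition \ref{prop-cft-av}.

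The key step is to identify $B\alpha$ with $\kappa(\psi_K(\ell^r \alpha))$, where $\kappa: \mathrm{Cl}(K) = H^1(\mathcal O_K, \mathbb G_m) \to H^2(\mathcal O_K, \mu_{\ell^{n+r}})$ is the Kummer connecting map. By its very definition, the sequence $0 \to \mu_{\ell^{n+r}} \to G \to \mathbb Z/\ell^{n+r} \to 0$ is the pullback of the Kummer sequence $0 \to \mu_{\ell^{n+r}} \to \mathbb G_m \xrightarrow{\ell^{n+r}} \mathbb G_m \to 0$ along the composite $\mathbb Z/\ell^{n+r} \twoheadrightarrow \mathbb Z/\ell^n \xrightarrow{\zeta^{\cdot}} \mu_{\ell^n} \hookrightarrow \mathbb G_m$. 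Functoriality of the connecting homomorphism applied to the resulting morphism of short exact sequences therefore yields $B\alpha = \kappa(c)$, where $c \in \mathrm{Cl}(K)$ is the image of $\alpha$ under the induced map on $H^1$. The three-step factorization above shows that $c$ is the image of the reduction $\overline{\alpha} \in H^1(\mathcal O_K, \mathbb Z/\ell^n)$ under $H^1(\mathcal O_K, \mathbb Z/\ell^n) \xrightarrow{\cup \zeta} H^1(\mathcal O_K, \mu_{\ell^n}) \to \mathrm{Cl}(K)$, which is precisely $\psi_K$ by Definition \ref{psi-nt-defi}. Combined with the observation recorded earlier in the paper that reduction mod $\ell^n$ on $H^1(\mathcal O_K, \mathbb Z/\ell^{\bullet})$ corresponds under class field theory to multiplication by $\ell^r$ on $\mathrm{Cl}(K)^\vee[\ell^{\bullet}]$, this gives $c = \psi_K(\ell^r \alpha)$.

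To conclude, I apply Proposition \ref{prop-cft-av} to the elements $c \in \mathrm{Cl}(K)$ and $\beta \in \mathrm{Cl}(K)^\vee[\ell^{n+r}]$, obtaining $\operatorname{tr}(\kappa(c) \cup \beta) = \beta(c)$, where $\beta$ is viewed via the class field theory identification as a homomorphism on $\mathrm{Cl}(K)$. Chaining the pieces together gives
\[ \operatorname{tr}(B\alpha \cup \beta) \;=\; \operatorname{tr}(\kappa(\psi_K(\ell^r \alpha)) \cup \beta) \;=\; \beta(\psi_K(\ell^r \alpha)) \;=\; \langle \ell^r \alpha, \beta \rangle, \]
which is the desired identity. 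The main obstacle is purely notational --- juggling Kummer theory, class field theory, and Artin--Verdier identifications in parallel, and verifying that the various reduction, cup-with-$\zeta$, and evaluation maps compose in the expected way --- but no new geometric input is required, since the hard work of reconciling the Brauer-theoretic and CFT descriptions of the pairing is already encapsulated in Proposition \ref{prop-cft-av}.
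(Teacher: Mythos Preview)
Your proof is correct and follows essentially the same approach as the paper's: both arguments identify $B\alpha$ with the Kummer boundary of the image of $\alpha$ under $\mathbb Z/\ell^{n+r}\to\mathbb Z/\ell^n\to\mu_{\ell^n}\to\mathbb G_m$ (by functoriality of connecting maps for pulled-back extensions), then invoke Proposition~\ref{prop-cft-av} to convert the resulting Artin--Verdier pairing into the evaluation pairing $\beta(\psi_K(\ell^r\alpha))$. The only difference is expository order --- you compute $B\alpha$ first and then apply the proposition, whereas the paper starts from the definition of $\langle\ell^r\alpha,\beta\rangle$ and unwinds forward --- but the logical content is identical.
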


\begin{proof} By definition,  $\langle \alpha, \beta \rangle$ is obtained by taking $\ell^r\alpha$, viewing it as an element of $H^1( \mathcal O_K , \mathbb Z/\ell^n)$, mapping to $H^1(\mathcal O_K, \mu_{\ell^n})$ (via our fixed choice of generator $\zeta$ for $\mu_{\ell^n}$) and then to $H^1(\mathcal O_K, \mathbb G_m)$, and then contracting with $\beta.$  By Proposition \ref{prop-cft-av}, this is equivalent to taking $\ell^r \alpha$, mapping to $H^1(\mathcal O_K, \mathbb G_m)$ along that series of maps, applying the connecting homomorphism from the Kummer sequence to map into $H^2(\mathcal O_K, \mu_{\ell^m})$, and then taking an Artin-Verdier pairing of the result with $\beta$.

Viewing $\ell^r \alpha$ as an element of  $H^1( \mathcal O_K , \mathbb Z/\ell^n)$ is equivalent to viewing $\alpha$ as an element of  $H^1( \mathcal O_K , \mathbb Z/\ell^{n+r} )$ and mapping to $H^1( \mathcal O_K , \mathbb Z/\ell^n)$ by reduction mod $\ell^n$. So all told this is equivalent to sending $\alpha$ from $H^1( \mathcal O_K , \mathbb Z/\ell^n)$ to $H^1(\mathcal O_K, \mathbb G_m)$ by the composed map $\mathbb Z/\ell^{n+r} \to \mathbb G_m$, which is $a\mapsto \zeta^a$, applying the Kummer exact sequence connecting map, and then Artin-Verdier duality. 

Because $G$ is the pullback of the Kummer exact sequence on that series of maps, $B$ is the composition of that series of maps with the Kummer exact sequence. 

Finally, the relation between Artin-Verdier duality and the Artin-Verdier trace is simply the definition of the Artin-Verdier duality map.
\end{proof}

\begin{definition} Let $G'$ be the group scheme consisting of pairs $a \in \mathbb Z/\ell^{2n+r}, x \in \mu_{\ell^{2n+r}}$ such that $x^{n+r} = \zeta^{2a}$, modulo the subscheme of pairs $(\ell ^{n+r} t, \zeta^{t})$. \end{definition}

 Then $G'$ has a map to $\mathbb Z/\ell^{n+r}$ given by taking $a$ modulo $\ell^{n+r}$, whose kernel is isomorphic to $\mu_{\ell^{n+r}}$ under the map $(a,x) \mapsto x \zeta^{- a / \ell^{n+r}}$.
 
 \begin{definition} Let $B': H^i ( \mathcal O_K, \mathbb Z/\ell^{n+r}) \to H^{i+1} (\mathcal O_K, \mu_{\ell^{n+r}})$ be the connecting homomorphism associated with the exact sequence $0\to \mu_{\ell^{n+r}} \to G' \to \mathbb Z/\ell^{n+r} \to 0 $. \end{definition}
 
 \begin{definition} Let $f: G \times G \to G'$ be the bilinear map of group schemes that sends $(a_1, x_1) \times (a_2, x_2)$ to $(\tilde{a}_1\tilde{a}_2, x_1^{\tilde{a}_2} x_2^{\tilde{a}_1})$, where $\tilde{a}_1$ and $\tilde{a}_2$ are lifts of $a_1$ and $a_2$ respectively from $\mathbb Z/\ell^{n+r}$ to $\mathbb Z/\ell^{2n+r}$. \end{definition}

 \begin{lemma} The map $f$ is well-defined. \end{lemma}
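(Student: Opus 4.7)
The plan is to verify well-definedness by checking two conditions in turn: first, that the putative image $(\tilde{a}_1\tilde{a}_2,\, x_1^{\tilde a_2}x_2^{\tilde a_1})$ actually satisfies the defining relation of $G'$; and second, that the image is independent of the chosen lifts $\tilde a_1,\tilde a_2 \in \mathbb Z/\ell^{2n+r}$ modulo the subscheme $\{(\ell^{n+r}t,\zeta^t)\}$ which we quotient by.

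For the first condition, I would compute directly: using the defining relations $x_i^{\ell^{n+r}}=\zeta^{a_i}$ in $G$, we have
\[
\bigl(x_1^{\tilde a_2}x_2^{\tilde a_1}\bigr)^{\ell^{n+r}} \;=\; \zeta^{a_1 \tilde a_2 + a_2 \tilde a_1}.
\]
Since $\zeta$ has order $\ell^n$ and $a_i\equiv \tilde a_i \pmod{\ell^{n+r}}$, and $\ell^{n+r}$ is divisible by $\ell^n$, we may replace each $a_i$ by $\tilde a_i$ in the exponent, obtaining $\zeta^{2\tilde a_1\tilde a_2}$, which is exactly the relation defining $G'$.

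For the second condition, by symmetry it suffices to change $\tilde a_1$ to $\tilde a_1 + \ell^{n+r} s$ for $s \in \mathbb Z/\ell^n$. The first coordinate of the image shifts by $\ell^{n+r} s\tilde a_2$, and the second coordinate shifts multiplicatively by $x_2^{\ell^{n+r}s}=\zeta^{a_2 s}$. Setting $t = s\tilde a_2 \pmod{\ell^n}$, the shift equals $(\ell^{n+r} t, \zeta^t)$ provided $\zeta^{a_2 s} = \zeta^{\tilde a_2 s}$, which again follows from $a_2\equiv \tilde a_2 \pmod{\ell^n}$. Hence the two choices of lifts differ by an element of the subscheme we quotient by, so the image in $G'$ is independent of lifts.

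I do not expect a serious obstacle: once the definitions are spelled out, both verifications reduce to keeping careful track of which torsion level each exponent lives in (mod $\ell^n$ for exponents on $\zeta$, mod $\ell^{n+r}$ for the $\mathbb Z/\ell^{n+r}$-coordinates appearing on the source $G\times G$, and mod $\ell^{2n+r}$ for the first coordinate in $G'$). The minor subtlety worth flagging is that bilinearity of $f$ as a map of group schemes is not built into the formula at the nose — it should then be noted that, once well-definedness is in place, one may check bilinearity similarly by a direct calculation using the same principle that terms of the form $\ell^{n+r}\cdot(\text{integer})\cdot\tilde a_i$ contribute trivially modulo the quotient subscheme.
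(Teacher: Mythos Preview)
Your proof is correct and follows the same approach as the paper. The paper's proof is a terse one-liner that only spells out the lift-independence check (your second condition) for the case of shifting $\tilde a_1$ by $\ell^{n+r}$; your version is more complete in that it also explicitly verifies that the image satisfies the defining relation of $G'$, and treats a general shift $\ell^{n+r}s$, but the underlying computation is identical.
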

 
 \begin{proof} Adding $\ell^{n+r}$, say to $a_1$, has the effect of adding $\ell^{n+r} \tilde{a}_2$ to the first coordinate and multiplying the second coordinate by $x_2^{ \ell^{n+r} } = \zeta^{a_2}$. \end{proof}
 
 \begin{lemma} \begin{enumerate}
 
 \item The map $f$ is compatible with the projections onto $\mu_{\ell^{n+r}}$.
 
 \item The maps $\mu_{\ell^{n+r} } \times G \to  \mu_{\ell^{n+r}} \subset G'$ and $ G\times \mu_{\ell^{n+r} }  \to  \mu_{\ell^{n+r}} \subset G'$ induced by $f$ are the same as those obtained by projecting from $G$ to $\mathbb Z/\ell ^{n+r}$ and taking the obvious multiplication. 
 
 \end{enumerate}
 
 \end{lemma}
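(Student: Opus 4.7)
The plan is to prove both assertions by direct calculation from the explicit formula $f((a_1,x_1),(a_2,x_2)) = (\tilde a_1 \tilde a_2,\, x_1^{\tilde a_2} x_2^{\tilde a_1})$, together with the observation (already verified in the previous lemma) that the output does not depend on the choice of lifts $\tilde a_i \in \mathbb{Z}/\ell^{2n+r}$ of $a_i \in \mathbb{Z}/\ell^{n+r}$.

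For part (1), I project the first coordinate of $f((a_1,x_1),(a_2,x_2))$ along the surjection $G' \twoheadrightarrow \mathbb{Z}/\ell^{n+r}$ (whose kernel is the subgroup $\mu_{\ell^{n+r}}$) to obtain $\tilde a_1 \tilde a_2 \bmod \ell^{n+r} = a_1 a_2$. Hence the square
\begin{equation*}
\begin{tikzcd}
G \times G \arrow[r,"f"] \arrow[d] & G' \arrow[d] \\
\mathbb{Z}/\ell^{n+r} \times \mathbb{Z}/\ell^{n+r} \arrow[r,"\mathrm{mult}"] & \mathbb{Z}/\ell^{n+r}
\end{tikzcd}
\end{equation*}
commutes, which is the asserted compatibility of $f$ with the $\mathbb{Z}/\ell^{n+r}$-quotient projections of $G$ and $G'$ (read in the natural way as compatibility with the two extensions, given that $\mu_{\ell^{n+r}}$ is the kernel in each).

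For part (2), note that the inclusion $\mu_{\ell^{n+r}} \hookrightarrow G$ sends $y$ to $(0,y)$: the constraint $y^{\ell^{n+r}} = \zeta^0 = 1$ cuts out $\mu_{\ell^{n+r}} \subset \mu_{\ell^{2n+r}}$. Likewise the identification $\ker(G' \to \mathbb{Z}/\ell^{n+r}) \cong \mu_{\ell^{n+r}}$ sends $(0,x)$ to $x$. Plugging into the formula,
\begin{equation*}
f((0,y_1),(a_2,x_2)) = (0 \cdot \tilde a_2,\, y_1^{\tilde a_2} x_2^{0}) = (0, y_1^{\tilde a_2}) = (0, y_1^{a_2}),
\end{equation*}
where the last equality holds because $y_1 \in \mu_{\ell^{n+r}}$ and so $y_1^{\tilde a_2}$ depends on $\tilde a_2$ only modulo $\ell^{n+r}$. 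Under the identifications above this corresponds to the element $y_1^{a_2} \in \mu_{\ell^{n+r}}$, i.e.\ the natural action of the projection $a_2$ of $(a_2,x_2)$ on $y_1$. The case of $G \times \mu_{\ell^{n+r}}$ follows identically by the symmetric form of $f$.

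No genuine obstacle arises: both parts follow immediately from the explicit formula once one observes that $\mu_{\ell^{n+r}}$-valued quantities have exponents that may be reduced modulo $\ell^{n+r}$. The content is simply that the Heisenberg-type cocycle defining $f$ respects the natural quotient and kernel structures of the two extensions $0 \to \mu_{\ell^{n+r}} \to G \to \mathbb{Z}/\ell^{n+r} \to 0$ and $0 \to \mu_{\ell^{n+r}} \to G' \to \mathbb{Z}/\ell^{n+r} \to 0$, which will be needed downstream to relate $B\alpha \cup \beta$ and $B'(\alpha \cup \beta)$ on cohomology.
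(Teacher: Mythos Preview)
Your proof is correct and follows exactly the approach the paper intends: the paper's own proof reads in its entirety ``Both can be checked immediately,'' and you have simply written out that immediate check in full detail, including the sensible interpretation of part (1) as compatibility with the quotient maps to $\mathbb{Z}/\ell^{n+r}$ (the paper's phrase ``projections onto $\mu_{\ell^{n+r}}$'' appears to be a slip). Your verification in part (2)---reducing exponents of $y_1 \in \mu_{\ell^{n+r}}$ modulo $\ell^{n+r}$ and using the explicit isomorphism $(0,x) \mapsto x$ on the kernel of $G' \to \mathbb{Z}/\ell^{n+r}$---is exactly what is needed for the downstream cup-product computation in Lemma~\ref{B-B'}.
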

 
 \begin{proof} Both can be checked immediately.\end{proof}

\begin{lemma}\label{B-B'}  For $\alpha, \beta  \in H^1 (\mathcal O_K, \mathbb Z/\ell^{n+r}),$  we have \[ B \alpha \cup \beta -  \alpha \cup B\beta = B'(\alpha \cup \beta).\]\end{lemma}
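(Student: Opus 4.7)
The plan is to establish this Leibniz-type identity by a direct computation at the level of cochains, exploiting the fact that the map $f$ is bilinear (rather than merely a group scheme homomorphism). Represent $\alpha$ and $\beta$ by cocycle representatives in a suitable cochain model for fppf cohomology (for instance \v{C}ech cohomology on a sufficiently fine cover, or cochains arising from an injective resolution). After refining further if necessary, one can lift these cocycles to cochains $\tilde{\alpha}, \tilde{\beta}$ valued in the group scheme $G$, since $G \twoheadrightarrow \mathbb{Z}/\ell^{n+r}$ is surjective. By the standard construction of the connecting homomorphism, the coboundaries $d\tilde{\alpha}$ and $d\tilde{\beta}$ are automatically 2-cochains valued in $\ker(G\to \mathbb{Z}/\ell^{n+r}) = \mu_{\ell^{n+r}}$ and represent the classes $B\alpha$ and $B\beta$ respectively.

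Now form the 2-cochain $f(\tilde{\alpha}, \tilde{\beta})$ with values in $G'$. Because $f$ is bilinear, the usual Leibniz rule for cup products holds:
\[ d\bigl(f(\tilde{\alpha}, \tilde{\beta})\bigr) = f(d\tilde{\alpha}, \tilde{\beta}) - f(\tilde{\alpha}, d\tilde{\beta}), \]
with the minus sign coming from $\tilde{\alpha}$ having degree $1$. This identity is completely formal: it is the same calculation that produces the Leibniz rule for cup products induced by any bilinear pairing of coefficient sheaves, and it does not require $f$ to respect group structures in either variable separately.

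It remains to interpret each of the three terms using the two compatibility properties of $f$ stated just before the lemma. The first compatibility (that $f$ commutes with projection to $\mathbb{Z}/\ell^{n+r}$) implies that $f(\tilde{\alpha}, \tilde{\beta})$ is a lift to $G'$ of a cochain representative of $\alpha \cup \beta$; therefore, once we know its coboundary lies in $\mu_{\ell^{n+r}} \subset G'$, that coboundary represents $B'(\alpha \cup \beta)$ by the definition of the connecting homomorphism. The second compatibility (that the restrictions of $f$ to $\mu_{\ell^{n+r}} \times G$ and $G \times \mu_{\ell^{n+r}}$ coincide with the ordinary multiplication pairings $\mu_{\ell^{n+r}} \otimes \mathbb{Z}/\ell^{n+r} \to \mu_{\ell^{n+r}} \hookrightarrow G'$) shows that $f(d\tilde{\alpha}, \tilde{\beta})$ is a cochain representative for $B\alpha \cup \beta$, and similarly $f(\tilde{\alpha}, d\tilde{\beta})$ represents $\alpha \cup B\beta$. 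Substituting into the Leibniz identity gives the claimed equality $B\alpha \cup \beta - \alpha \cup B\beta = B'(\alpha \cup \beta)$.

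The main obstacle will be bookkeeping: pinning down the correct sign in the Leibniz rule in whatever cochain model is chosen, and justifying rigorously that cup products induced by bilinear maps of sheaves (not morphisms of group schemes, and not valued in a ring) behave in the expected way. These are foundational issues about cup products in sheaf cohomology that are essentially formal once one works with explicit cochains, but they are the only place where care is required.
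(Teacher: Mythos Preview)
Your proposal is correct and follows essentially the same approach as the paper: represent $\alpha,\beta$ by \v{C}ech cocycles, lift to $G$-valued cochains $\tilde{\alpha},\tilde{\beta}$, form their cup product via the bilinear map $f$ to obtain a $G'$-lift of $\alpha\cup\beta$, and apply the Leibniz rule together with the two compatibility properties of $f$. If anything, you are slightly more careful than the paper's write-up about the sign in the Leibniz rule and about explicitly invoking both compatibilities of $f$ with the projections and inclusions.
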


\begin{proof}  To do this, we use the facts that there is a natural map from Cech cohomology of a sheaf to usual cohomology, compatible with cup products and connecting homomorphisms, and that it is an isomorphism in degree $1$. Thus we can represent $\alpha$ and $\beta$ by Cech cocycles. We can calculate the connecting homomorphism explicitly as, first, an arbitrary lift of those cocycles from $\mathbb Z/\ell^{n+r}$ to $G$ (possibly after refinement), second applying the Cech differential, and third recognizing the result as a cocycle for $\mu_{\ell^{n+r}}$.

The bilinear map $G \times G \to G'$ induces a cup product where we cup $G$-cochains with $G$-cochains to obtain $G'$-cochains, and in particular for $\tilde{\alpha}$ a lift of $\alpha$ to $G$ and $\tilde{\beta}$ a lift of $\beta$ to $G$, $\tilde{\alpha} \cup \tilde{\beta}$ is a lift of $\alpha \cup \beta$ to $G'$.  Then we have

\[ B' (\alpha \cup \beta) = d_{G'} (\tilde{\alpha} \cup \tilde{\beta}) = d_G \tilde{\alpha} \cup \tilde{\beta} + \alpha \cup d_G \tilde{\beta} \] and $d_G \tilde{\alpha}$ is a cochain for $G$ such that $\pi ( d\tilde{\alpha} )= d \pi(\tilde{\alpha})=  d\alpha=0$, hence is a class in $H^2( \mathcal O_K, \mu_{\ell^{n+r}})$ and in fact is $B\alpha$, so its cup product with $\tilde{\beta}$ is the same as the cup product with $\pi(\tilde{\beta})= \beta$, so \[ d_G \tilde{\alpha} \cup \tilde{\beta}  = B \alpha \cup \beta\] and similarly the other term is $\alpha \cup B\beta$.\end{proof}

\begin{lemma}\label{langle-rangle-B'} For $\alpha, \beta  \in Cl(K)^\vee [\ell^{n+r}]$, viewed as elements of $H^1 (\mathcal O_K, \mathbb Z/\ell^{n+r}),$  we have \[ \langle \ell^r \alpha,\beta\rangle - \langle \ell^r \beta, \alpha \rangle =\tr ( B' ( \alpha \cup \beta)).\] \end{lemma}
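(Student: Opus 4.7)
The plan is to combine the three preceding results mechanically: Lemma \ref{langle-rangle-B} rewrites each of the two bracket terms as an Artin-Verdier trace of a cup product involving $B$, the graded commutativity of the cup product lets us swap the two factors in one of them at the cost of a sign that turns out to be trivial, and Lemma \ref{B-B'} then identifies the resulting expression $B\alpha \cup \beta - \alpha \cup B\beta$ with $B'(\alpha \cup \beta)$.

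More precisely, first I would apply Lemma \ref{langle-rangle-B} directly to both $\langle \ell^r \alpha, \beta \rangle$ and $\langle \ell^r \beta, \alpha \rangle$ to get
\[ \langle \ell^r \alpha, \beta \rangle - \langle \ell^r \beta, \alpha \rangle = \mathrm{tr}(B\alpha \cup \beta) - \mathrm{tr}(B\beta \cup \alpha).\]
Next, I would use the graded commutativity of the cup product in \'etale (equivalently fppf) cohomology. Since $B\beta \in H^2(\mathcal{O}_K, \mu_{\ell^{n+r}})$ and $\alpha \in H^1(\mathcal{O}_K, \mathbb{Z}/\ell^{n+r})$, the sign is $(-1)^{2\cdot 1} = 1$, and so $B\beta \cup \alpha = \alpha \cup B\beta$ in $H^3(\mathcal{O}_K, \mu_{\ell^{n+r}})$, giving
\[ \langle \ell^r \alpha, \beta \rangle - \langle \ell^r \beta, \alpha \rangle = \mathrm{tr}\bigl( B\alpha \cup \beta - \alpha \cup B\beta \bigr).\]
Finally, I would apply Lemma \ref{B-B'} to rewrite the bracketed quantity as $B'(\alpha \cup \beta)$, yielding the desired identity.

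The argument is essentially formal once all the cocycle-level computations have been packaged into Lemma \ref{B-B'}; the only substantive check beyond invoking the previous lemmas is that the graded commutativity of the cup product applies to the Artin-Verdier setup, which is standard. I do not expect any real obstacle: the hard work was done in Lemmas \ref{langle-rangle-B} and \ref{B-B'}, and this lemma is the clean combinatorial consequence used downstream to match up the compatibility \eqref{psiomegacomp} with the Artin-Verdier definition of $\omega_K$.
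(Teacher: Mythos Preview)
Your proposal is correct and matches the paper's proof essentially verbatim: the paper also combines Lemmas \ref{langle-rangle-B} and \ref{B-B'}, noting that $\alpha \cup B\beta = B\beta \cup \alpha$ because $B\beta$ lies in even degree $2$. There is nothing to add.
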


\begin{proof} This follows on combining Lemmas \ref{langle-rangle-B} and \ref{B-B'}, upon remembering that $\alpha \cup B \beta = B\beta\cup \alpha$ because $B \beta$ is in degree $2$, which is even. \end{proof}

\begin{definition} Let $G^*$ be the Cartier dual of $G'$. Let $B^*: H^i ( \mathcal O_K, \mathbb Z/\ell^{n+r}) \to H^{i+1} (\mathcal O_K, \mu_{\ell^{n+r}})$ be the connecting homomorphism associated to $G^*$.  \end{definition}

\begin{lemma}\label{B'-B*} We have \[  \tr( B' ( \alpha \cup \beta))=- \tr( \alpha \cup \beta \cup B^* (1)) \]

%$B': H^2 ( \mathcal O_K, \mathbb Z/\ell^{n+r}) \to H^{3} (\mathcal O_K, \mu_{\ell^{n+r}})$ and  $B^*: H^0 ( \mathcal O_K, \mathbb Z/\ell^{n+r}) \to H^{1} (\mathcal O_K, \mu_{\ell^{n+r}})$ are dual under Artin-Verdier duality.
 
  \end{lemma}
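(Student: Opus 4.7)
The strategy is to express both sides of the identity as cup products with a common degree-one class in $H^1(\mathcal O_K, \mu_{\ell^{n+r}})$, and then use Cartier duality to relate those degree-one classes (up to a sign).

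The first step is a general fact: for any short exact sequence $0 \to \mu_{\ell^{n+r}} \to E \to \mathbb Z/\ell^{n+r} \to 0$ of sheaves on $\mathrm{Spec}\,\mathcal O_K$ with associated connecting homomorphism $\delta$, and for any $\gamma \in H^i(\mathcal O_K, \mathbb Z/\ell^{n+r})$,
\[ \delta(\gamma) \;=\; \gamma \cup \delta(1), \]
where $1 \in H^0(\mathcal O_K, \mathbb Z/\ell^{n+r})$ and the cup product uses the natural multiplication pairing $\mathbb Z/\ell^{n+r} \otimes \mu_{\ell^{n+r}} \to \mu_{\ell^{n+r}}$. This can be verified by a routine \v Cech-cochain argument modeled on the proof of Lemma \ref{B-B'}: choose a \v Cech representative of $\gamma$, lift it termwise to $E$ using surjectivity of $E \to \mathbb Z/\ell^{n+r}$, and check that its coboundary matches $\gamma \cup \delta(1)$. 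Applied to the extension defining $B'$, this yields $B'(\alpha \cup \beta) = (\alpha \cup \beta) \cup B'(1)$.

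The second step is to show that $B'(1) = -B^*(1)$ in $H^1(\mathcal O_K, \mu_{\ell^{n+r}})$. This is where Cartier duality enters. By definition $G^* = \mathcal{H}om(G', \mathbb G_m)$, and since $G'$ is killed by $\ell^{n+r}$ there is a perfect pairing $G' \otimes G^* \to \mu_{\ell^{n+r}}$. Under the identification $\mathcal{H}om(\mathbb Z/\ell^{n+r}, \mu_{\ell^{n+r}}) = \mu_{\ell^{n+r}}$, both extension classes are represented by elements of $H^1(\mathcal O_K, \mu_{\ell^{n+r}})$, and Cartier duality acts on such extension classes by negation. Concretely, if local lifts $s_i \in G'$ of $1 \in \mathbb Z/\ell^{n+r}$ produce a \v Cech $1$-cocycle $c_{ij} = s_i s_j^{-1} \in \mu_{\ell^{n+r}}$ representing $B'(1)$, then the Cartier-dual lifts in $G^*$ produce the inverse cocycle $c_{ij}^{-1}$, which represents $B^*(1)$. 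This sign can be made explicit by writing both $G'$ and $G^*$ using the pair-description given earlier. Combining the two steps yields $B'(\alpha \cup \beta) = -(\alpha \cup \beta) \cup B^*(1)$, and applying the Artin--Verdier trace gives the lemma.

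The main obstacle is the sign-bookkeeping in the Cartier-duality step: both the identification of the connecting map with a cup product and Cartier duality itself carry sign conventions, and pinning down the net $-1$ requires a careful explicit \v Cech computation (or a direct diagram-chase using the explicit pair-description of $G^*$) rather than an appeal to a black-box duality theorem. If any sign issue arises, a fallback is to construct an auxiliary group-scheme extension $0 \to G^* \to G^{\dagger} \to G' \to 0$ realizing the Cartier pairing as a connecting map and chase cocycles through it, in direct analogy with the proof of Lemma \ref{B-B'}.
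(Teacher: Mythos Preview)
Both steps of your decomposition fail for this particular extension. The identity $\delta(\gamma)=\gamma\cup\delta(1)$ in Step~1 is valid only when the extension is a $\mathbb Z/\ell^{n+r}$-module extension (equivalently, when it splits over every geometric point), since your \v Cech lift $\tilde\gamma:=\gamma\cdot s$ tacitly multiplies a section of $E$ by an element of $\mathbb Z/\ell^{n+r}$. But $G'$ is \emph{not} $\ell^{n+r}$-torsion: over $\bar k$ the element $(1,x)$ with $x^{\ell^{n+r}}=\zeta^{2}$ has $\ell^{n+r}\cdot(1,x)=(\ell^{n+r},\zeta^{2})$, which is not in the subgroup $\{(\ell^{n+r}t,\zeta^{t})\}$ since that would force $1\equiv 2\pmod{\ell^{n}}$. (This is the same obstruction that makes the Bockstein for $0\to\mathbb Z/\ell\to\mathbb Z/\ell^{2}\to\mathbb Z/\ell\to 0$ violate the formula.) Your Step~2 is also incorrect: Cartier duality does not negate extension classes in general---already $0\to\mathbb Z/\ell\to\mathbb Z/\ell^{2}\to\mathbb Z/\ell\to 0$ is self-Cartier-dual with nonzero class---and here a direct computation from the pair-description shows that $B'(1)$ is the torsor of $\ell^{n+r}$-th roots of $\zeta^{2}$, i.e.\ $B'(1)=2\alpha_{n+r}$, whereas Lemma~\ref{B^*-alpha} gives $B^{*}(1)=-\alpha_{n+r}$. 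So $B'(1)\neq -B^{*}(1)$, and your assertion that $G'$ is killed by $\ell^{n+r}$ (hence that the Cartier pairing lands in $\mu_{\ell^{n+r}}$) is likewise false.

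The paper avoids this decomposition entirely. It works directly with the Cartier pairing $G'\times G^{*}\to\mathbb G_{m}$: lifting $\alpha\cup\beta$ to a $G'$-valued cochain $\widetilde{\alpha\cup\beta}$ and $1$ to a $G^{*}$-valued cochain $\tilde 1$, the Leibniz rule in $\mathbb G_{m}$-cochains gives
\[
d\bigl(\widetilde{\alpha\cup\beta}\cup\tilde 1\bigr)=d(\widetilde{\alpha\cup\beta})\cup\tilde 1+\widetilde{\alpha\cup\beta}\cup d\tilde 1,
\]
and one identifies the two terms on the right with the images in $H^{3}(\mathcal O_{K},\mathbb G_{m})$ of $B'(\alpha\cup\beta)$ and $(\alpha\cup\beta)\cup B^{*}(1)$ respectively; since the trace factors through $H^{3}(\mathcal O_{K},\mathbb G_{m})$, this suffices. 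Your fallback suggestion is in this spirit, but the clean argument uses the bilinear pairing directly rather than constructing a further extension.
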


\begin{proof}  Because the trace map factors through $H^3(\mathcal O_K, \mathbb G_m$, it suffices to show that \[ B' ( \alpha \cup \beta)+  \alpha \cup \beta \cup B^* (1) =0 \in H^3(\mathcal O_K, \mathbb G_m).\]

As in the proof of Lemma \ref{B-B'}, we may assume that $\alpha$ and $\beta$ are Cech cocycles, and perform the calculations in cohomology. We can lift $\alpha \cup \beta \in C^2( \mathcal O_K, \mathbb Z/\ell^{n+r}) $ to a cochain $\tilde{\alpha} \cup \tilde{\beta} \in C^2(\mathcal O_K, G')$ and lift $1\in H^0 (\mathcal O_K, \mathbb Z/\ell^{n+r})$ to a cochain  $\tilde{1} C^0(\mathcal O_K, G^*$. By definition, $B' (\alpha \cup \beta)$ consists of applying the differential to obtain a cochain in $C^3 (\mathcal O_K, G')$, pulling back to $C^3(\mathcal O_K, \mu_{\ell^{n+r})}$, and then mapping to $C^3 (\mathcal O_K, \mathbb G_m)$. 

This last step is equivalent to cupping with $1$ under the Cartier duality pairing  $\mu_{\ell^{ n+r}} \times \mathbb Z/\ell^{n+r} \to \mathbb G_m$. So the pulling back to $\mu_{\ell^{n+r}}$ and then mapping to $\mathbb G_m$ is equivalent to cupping with $\tilde{1}$ under the Cartier duality pairing $G' \times G^* \to \mathbb G_m$, which extends it. So

\[ B' (\alpha \cup \beta) = d( \tilde{\alpha} \cup \tilde{\beta} ) \cup \tilde{1} .\]

Similarly, we have \[(\alpha \cup \beta) \cup B^*(1) = (\tilde{\alpha} \cup \tilde{\beta} ) \cup d \tilde{1} ,\] because $ d \tilde{1}$ is the image of $B*(1)$ under the map $C^1 ( \mathcal O_K, \mu_{\ell^{n+r}}) \to C^1 (\mathcal O_K, G^*)$ and $\alpha \cup \beta$ is the image of $\tilde{\alpha} \cup \tilde{\beta}$ under the dual map $C^2( \mathcal O_K, G' ) \to C^1 (\mathcal O_K, \mathbb Z/\ell^{n+r} )$. Thus, 

\[  B' ( \alpha \cup \beta)+  \alpha \cup \beta \cup B^* (1) = d ( \tilde{\alpha} \cup \tilde{\beta} \cup \tilde{1}) \] which is a coboundary and thus vanishes in cohomology, as desired. \end{proof}

\begin{lemma}\label{langle-rangle-B*} For $\alpha, \beta  \in Cl(K)^\vee [\ell^{n+r}]$, viewed as elements of $H^1 (\mathcal O_K, \mathbb Z/\ell^{n+r}),$  we have \[ \langle \ell^r \alpha,\beta\rangle - \langle \ell^r \beta, \alpha \rangle =( B^* 1, \alpha\cup \beta)_{AV} .\]\end{lemma}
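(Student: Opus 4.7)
The proof is designed to be a short concatenation of the two immediately preceding lemmas, so my plan is to package that chain of equalities and then read it back as an Artin--Verdier pairing.

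The first step is to invoke Lemma \ref{langle-rangle-B'}, which gives
\[ \langle \ell^r\alpha,\beta\rangle - \langle \ell^r\beta,\alpha\rangle = \tr\bigl(B'(\alpha\cup\beta)\bigr).\]
The second step is to apply Lemma \ref{B'-B*}, which rewrites the right-hand side as
\[ \tr\bigl(B'(\alpha\cup\beta)\bigr) = -\tr\bigl(\alpha\cup\beta\cup B^*(1)\bigr).\]
So after these two substitutions, everything reduces to identifying $-\tr(\alpha\cup\beta\cup B^*(1))$ with the Artin--Verdier pairing $(B^*1,\alpha\cup\beta)_{AV}$.

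For the third step, I would unpack the definition of the Artin--Verdier pairing on $H^1(\mathcal O_K,\mu_{\ell^{n+r}}) \times H^2(\mathcal O_K,\mathbb Z/\ell^{n+r})$: it is the cup product, taken with respect to the natural evaluation pairing $\mu_{\ell^{n+r}} \otimes \mathbb Z/\ell^{n+r} \to \mu_{\ell^{n+r}}$ of coefficient sheaves, composed with the AV trace $H^3(\mathcal O_K,\mu_{\ell^{n+r}}) \to \mathbb Q/\mathbb Z$. Graded commutativity of cup product moves the degree-one class $B^*(1)$ past the degree-two class $\alpha\cup\beta$ with sign $(-1)^{1\cdot 2} = 1$, so $\tr(B^*(1)\cup(\alpha\cup\beta)) = \tr((\alpha\cup\beta)\cup B^*(1))$, and the chain of equalities closes up to the sign convention chosen for $(-,-)_{AV}$ throughout the paper.

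The main bookkeeping item, and the only real content beyond quoting the two previous lemmas, is the sign: one must make sure the convention used when writing $(B^*1,\alpha\cup\beta)_{AV}$ (which was fixed implicitly when $\omega_{m,K}$ was defined as $-\tfrac{1}{2}(\zeta_m, a\cup b)_{AV}$) is compatible with the sign appearing in Lemma \ref{B'-B*}. Concretely, I would verify that with the Milne-style normalization of the AV trace used earlier in Proposition \ref{prop-cft-av}, the cup-product-then-trace formula matches $-\tr(\alpha\cup\beta\cup B^*(1))$ on the nose. Once this sign check is in hand the proof is a single display combining the two lemmas with the unpacking of the pairing, and there is no further content to verify.
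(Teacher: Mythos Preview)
Your proposal is correct and follows essentially the same route as the paper: invoke Lemma~\ref{langle-rangle-B'}, then Lemma~\ref{B'-B*}, then use graded commutativity of cup product (the degree-one class $B^*(1)$ commutes past the degree-two class $\alpha\cup\beta$) together with the definition of the Artin--Verdier pairing as cup-then-trace. Your explicit flagging of the sign bookkeeping is appropriate; the paper's own display handles this in one line without comment, and indeed the sign as written there does not visibly match the statement of Lemma~\ref{B'-B*}, so your caution is well placed.
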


\begin{proof} By Lemmas \ref{langle-rangle-B'} and \ref{B'-B*}, we have 
\begin{align*}
\langle \ell^r \alpha,\beta\rangle - \langle \ell^r \beta, \alpha \rangle &= \tr (  B' (\alpha \cup \beta) ) \\
&= \tr ( B'(\alpha \cup \beta) \cup 1 ) \\
&= \tr( \alpha \cup \beta \cup B^* 1) \\ 
&= \tr ( B^*1 \cup \alpha \cup \beta ) \\
&= (B^*1, \alpha\cup \beta)_{AV}.
\end{align*}
\end{proof}

\begin{lemma}\label{B^*-alpha} We have $B^* 1 = - \alpha_{n+r}$ where $\alpha_{n+r} \in H^1( \mathcal O_K, \mu_{\ell^{n+r}})$ is identified with the torsor of $\ell^{n+r}$th roots of our fixed generator $\zeta$ of $\mu_{\ell^n}$ (defined in \S\ref{number-field-invariants}). \end{lemma}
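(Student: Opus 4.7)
My plan is to compute $B^*(1)$ directly, by interpreting it as the class of the Cartier dual extension and exhibiting an explicit local character of $G'$ that lifts the constant section $1 \in H^0(\mathcal{O}_K, \mathbb{Z}/\ell^{n+r})$. The general principle I will use is that for any short exact sequence $0 \to \mu_{\ell^{n+r}} \to H \to \mathbb{Z}/\ell^{n+r} \to 0$, the image of $1$ under the connecting homomorphism $H^0(\mathbb{Z}/\ell^{n+r}) \to H^1(\mu_{\ell^{n+r}})$ is precisely the class of the extension; so I am really computing the class of $G^* = (G')^\vee$ in $\mathrm{Ext}^1(\mathbb{Z}/\ell^{n+r},\mu_{\ell^{n+r}}) = H^1(\mathcal{O}_K, \mu_{\ell^{n+r}})$.

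First, I would make the sequence for $G^*$ explicit. A character $G' \to \mathbb{G}_m$ restricts to a character $\mu_{\ell^{n+r}} \to \mathbb{G}_m$, i.e., an element of $\mathbb{Z}/\ell^{n+r}$; characters that are trivial on $\mu_{\ell^{n+r}}$ factor through $G'/\mu_{\ell^{n+r}} = \mathbb{Z}/\ell^{n+r}$ and so form a copy of $\mu_{\ell^{n+r}}$. This gives the required exact sequence $0 \to \mu_{\ell^{n+r}} \to G^* \to \mathbb{Z}/\ell^{n+r} \to 0$, where the surjection sends a character to its restriction to $\mu_{\ell^{n+r}}$.

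Next, I would construct a local lift of $1 \in \mathbb{Z}/\ell^{n+r}$ to $G^*$, i.e., a character of $G'$ whose restriction to $\mu_{\ell^{n+r}}$ is the tautological inclusion $y \mapsto y$. Working fppf-locally over a cover on which $\zeta$ has an $\ell^{n+r}$-th root $\eta$ (which is the same data as a trivialization of the $\mu_{\ell^{n+r}}$-torsor $\alpha_{n+r}$), I would define
$$\chi(a,x) := x \cdot \eta^{-a}.$$
I would verify three things: (i) it is well-defined on the quotient $G'$, since on the subscheme $\{(\ell^{n+r}t,\zeta^t)\}$ it evaluates to $\zeta^t \cdot \eta^{-\ell^{n+r}t} = \zeta^t\cdot \zeta^{-t}=1$; (ii) it is a group scheme homomorphism, by the computation $\chi(a_1+a_2,x_1x_2) = x_1x_2 \eta^{-a_1-a_2} = \chi(a_1,x_1)\chi(a_2,x_2)$; (iii) its restriction to the kernel $\mu_{\ell^{n+r}} = \{(0,y)\}$ is $\chi(0,y) = y$, i.e., it is the tautological inclusion.

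Finally, I would compute the coboundary. For a Galois element $\sigma$ (or more generally, a Cech cochain with respect to our fppf cover), $\sigma(\eta) = \chi_{\alpha_{n+r}}(\sigma)\cdot \eta$ where $\chi_{\alpha_{n+r}}$ is the cocycle representing $\alpha_{n+r}$. Hence
$$\sigma(\chi)(a,x) = x \cdot (\sigma\eta)^{-a} = \chi_{\alpha_{n+r}}(\sigma)^{-a}\cdot \chi(a,x),$$
so $\sigma(\chi)\cdot \chi^{-1}$ is the character of $G'$ that is trivial on $\mu_{\ell^{n+r}}$ and sends $a\mapsto \chi_{\alpha_{n+r}}(\sigma)^{-a}$; under our identification of such characters with $\mu_{\ell^{n+r}}$, this is $\chi_{\alpha_{n+r}}(\sigma)^{-1}$. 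Thus the cocycle representing $B^*(1)$ is the inverse of the cocycle representing $\alpha_{n+r}$, giving $B^*(1) = -\alpha_{n+r}$ in additive notation. The only subtlety worth highlighting is bookkeeping around Cartier duality and the sign convention — which is what supplies the minus sign — and making sure the local formula for $\chi$ is visibly equivariant enough that the Galois transformation can simply be read off from the transformation of $\eta$.
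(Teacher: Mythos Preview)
Your proof is correct and computes the same thing as the paper, though the packaging differs slightly. The paper first writes down an explicit presentation of $G^*$ as pairs $(a^* \in \mu_{\ell^{2n+r}}, x^* \in \mathbb{Z}/\ell^{2n+r})$ satisfying $(a^*)^{\ell^{n+r}}\zeta^{x^*}=1$ modulo a cyclic subgroup, and then identifies the fiber of $G^* \to \mathbb{Z}/\ell^{n+r}$ over $1$ directly with the torsor of $\ell^{n+r}$th roots of $\zeta^{-1}$. You instead avoid dualizing $G'$ explicitly: you work with $G^*$ as the character group of $G'$, write down one local character $\chi(a,x)=x\eta^{-a}$ lifting $1$, and read off the coboundary from the Galois action on $\eta$. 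Both arguments are the standard ``extension class equals connecting image of $1$'' computation; yours is a touch more economical in that it never needs the full presentation of $G^*$, while the paper's version makes the torsor structure on the fiber visibly isomorphic to $\{\,\eta:\eta^{\ell^{n+r}}=\zeta^{-1}\,\}$ without passing through a cocycle. The sign bookkeeping you flag is exactly the content of the last line of the paper's proof.
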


\begin{proof} By definition, $B^*1$ is the torsor defined as the inverse image of $1\in \mathbb Z/\ell^{n+r}$ under the exact sequence $0 \to \mu_{\ell}^{n+r} \to G^* \to \mathbb Z/\ell^{n+r} \to 1$. 

To calculate this, observe by dualizing the definition of $G'$ that $G^*$ is the group consisting of pairs of $(a^* \in \mu_{\ell^{2n+r}}, x^* \in \mathbb Z/\ell^{2n+r}$ such that $\left(a^*\right)^{\ell^{n+r}}  \zeta^{ x^*}=1$, modulo the subgroup generated by $(\zeta^2, \ell^{n+r} )$. 

To calculate the torsor, we look at the set of elements sent to $1 \in \mathbb Z/\ell^{n+r}$ by the projection onto $x^*$ mod $\ell^{n+r}$, which is isomorphic under the map $(a^*, x^*) \to a^* \zeta^{- 2  \frac{x^* -1}{\ell^{n+r}}} $ to the set of $\ell^{n+r}$th roots of $\zeta^{-1}$. 

On the other hand, $\alpha_{n+r}$ is defined as the torsor of $\ell^{n+r}$th roots of $\zeta$. Since they are inverse torsors, their cohomology classes are negatives of each other. \end{proof}

%This implies that the connecting homomorphism $B': H^2 ( \mathcal O_K, \mathbb Z/\ell^{n+r}) \to H^{3} (\mathcal O_K, \mu_{\ell^{n+r}})$ is Artin-Verdier dual to the connecting homorphism $B^*: H^0 ( \mathcal O_K, \mathbb Z/\ell^{n+r}) \to H^{1} (\mathcal O_K, \mu_{\ell^{n+r}})$ associated to the Cartier dual $G^*$ of $G'$. In particular $$. Now $B^* 1$ is simply the $H^1$ class associated with the $\mu_{\ell^{n+r}}$ torsor obtained from $G^*$.

%
%
%
%Fixing an generator of $\mu_{\ell^n}$ we have a map $\psi: Cl(K)^\vee [\ell^n] \to Cl(K) [\ell^n]$ defined by the class field theory isomorphism  $Cl(K)^\vee [\ell^n]  \cong H^1( \mathcal O_K , \mathbb Z/\ell^n)$, the fixed map $\mathbb Z/\ell^n \to \mu_{\ell^n}$, and the Kummer map.
%
%
%We defined the $\omega$ pairing by applying Artin-Verdier duality to the cup product of the $\mathbb Z/\ell^{n+r}$-torsors associated to $\alpha$ and $\beta$, which lies in $H^2( \mathcal O_K, \mathbb Z/\ell^{n+r})$, and a certain fixed element of $H^1 ( \mathcal O_K, \mu_{\ell^{n+r}})$, defined as the torsor of $\ell^{n+r}$th roots of a fixed generator of $\mu_{\ell^n}$. 
%
%
%
%

\begin{lemma}\label{psi-omega-comp-verify} Let $m=n+r$ and let $\alpha$ and $\beta$ be elements of $Cl(K)^\vee [\ell^m].$  There is an equality
\[ \langle \ell^r \alpha,\beta\rangle - \langle \ell^r \beta, \alpha \rangle =2 \cdot \omega_{n+r,K} (\alpha,\beta) \] 
\end{lemma}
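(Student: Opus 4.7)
The plan is to chain together the preceding lemmas, which have been carefully arranged so that the desired identity falls out essentially for free once all the pieces are in place.

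First, I would apply Lemma \ref{langle-rangle-B*}, which directly computes the asymmetric combination $\langle \ell^r \alpha, \beta \rangle - \langle \ell^r \beta, \alpha \rangle$ as the Artin-Verdier pairing $(B^* 1, \alpha \cup \beta)_{AV}$. This is the main analytic content and was built up through the sequence of lemmas comparing the connecting homomorphisms $B$, $B'$, and $B^*$ attached to the extensions $G$, $G'$, and $G^* = (G')^\vee$.

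Next, I would invoke Lemma \ref{B^*-alpha}, which identifies $B^* 1$ with $-\zeta_{n+r}$, where $\zeta_{n+r} \in H^1(\spec \Oo_K, \mu_{\ell^{n+r}})$ is the torsor of $\ell^{n+r}$-th roots of the fixed generator $\zeta$ of $\mu_{\ell^n}$. Substituting this into the previous formula gives
\[ \langle \ell^r \alpha, \beta \rangle - \langle \ell^r \beta, \alpha \rangle = -(\zeta_{n+r}, \alpha \cup \beta)_{AV}. \]

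Finally, I would apply the definition of $\omega_{m,K}$ from Section \ref{omega-pairings}, namely $\omega_{m,K}(a,b) = -\tfrac{1}{2}(\zeta_m, a \cup b)_{AV}$, taken with $m = n+r$. Multiplying through by $2$ shows $2 \cdot \omega_{n+r,K}(\alpha, \beta) = -(\zeta_{n+r}, \alpha \cup \beta)_{AV}$, which matches the right-hand side of the displayed formula above. Hence the claimed equality holds. There is no real obstacle here since all the hard work, including the sign conventions and the compatibility with class field theory via Proposition \ref{prop-cft-av}, has already been absorbed into the earlier lemmas; the present statement is the final bookkeeping step.
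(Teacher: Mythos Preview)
Your proof is correct and follows exactly the same approach as the paper: the paper's proof simply says ``This follows from Lemmas \ref{langle-rangle-B*} and \ref{B^*-alpha} as well as Definition \ref{omega-nt-defi},'' and you have spelled out precisely that chain of substitutions. The only cosmetic difference is that the paper refers to the torsor as $\alpha_{n+r}$ in Lemma \ref{B^*-alpha} while you (consistently with \S\ref{omega-pairings}) call it $\zeta_{n+r}$; they are the same object.
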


\begin{proof} This follows from Lemmas \ref{langle-rangle-B*} and \ref{B^*-alpha} as well as Definition \ref{omega-nt-defi}. \end{proof} 

\subsection{Non-degeneracy of $\psi_{L/K}$}

Finally, we check the non-degeneracy condition for $(\mathrm{Cl}(L/K)_{\ell}, \omega_{L/K}, \psi_{L/K})$ to be contained in the support of $Q^t \mu$.

\begin{lemma}\label{number-field-rank-bound} The $\ell$-rank of the kernel of $\psi_{L/K}$ is at most $t=\frac{[K:\mathbb Q]}{2} $.\end{lemma}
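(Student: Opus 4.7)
The plan is to identify the kernel of $\psi_{L/K}$ with the $-1$-eigenspace (under the $\mathrm{Gal}(L/K)$-action) of $\mathcal{O}_L^\times \otimes \mathbb{Z}/\ell^n$, and then to bound the $\ell$-rank of the latter using Dirichlet's unit theorem together with the fact that $L$ is totally complex.

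First I would check that $\psi_L$ is $\mathrm{Gal}(L/K)$-equivariant: the Kummer sequence on $\spec\mathcal{O}_L$ and the class field theory identification $H^1(\spec\mathcal{O}_L, \mathbb{Z}/\ell^n) \cong \mathrm{Cl}(L)^\vee[\ell^n]$ are Galois-natural, and $\zeta \in K$ is Galois-fixed. Because $\ell$ is odd, every finite $\ell$-group with $\mathrm{Gal}(L/K) = \langle \sigma \rangle$-action splits into $\pm$ eigenspaces via the idempotents $(1\pm\sigma)/2$; since $[L:K]=2$ is prime to $\ell$, the inclusion $\mathrm{Cl}(K)_\ell \hookrightarrow \mathrm{Cl}(L)_\ell$ is injective with image equal to the $+$ part (inclusion composed with norm is $2$, norm composed with inclusion is $1+\sigma$). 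Dually, $\mathrm{Cl}(L/K)^\vee[\ell^n]$ corresponds to the $-$ part of $\mathrm{Cl}(L)^\vee[\ell^n]$. Tracing through the definition of the pushforward, using that $\psi_L$ preserves eigenspaces, one sees that $\psi_{L/K}$ is simply $\psi_L$ restricted to these $-$ parts, so $\ker \psi_{L/K} = (\ker \psi_L)^-$.

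Next, because $L$ contains $\mu_{\ell^n}$, the choice of $\zeta$ identifies $\mathbb{Z}/\ell^n \cong \mu_{\ell^n}$ as sheaves on $\spec\mathcal{O}_L$ Galois-equivariantly, so cup product with $\zeta$ is an isomorphism $H^1(\spec\mathcal{O}_L, \mathbb{Z}/\ell^n) \xrightarrow{\sim} H^1(\spec\mathcal{O}_L, \mu_{\ell^n})$. The Kummer exact sequence
\[ 0 \to \mathcal{O}_L^\times \otimes \mathbb{Z}/\ell^n \xrightarrow{\delta} H^1(\spec\mathcal{O}_L, \mu_{\ell^n}) \to \mathrm{Cl}(L)[\ell^n] \to 0 \]
then produces a Galois-equivariant isomorphism $\ker \psi_L \cong \mathcal{O}_L^\times \otimes \mathbb{Z}/\ell^n$. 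It thus suffices to bound the $\ell$-rank of $(\mathcal{O}_L^\times \otimes \mathbb{Z}/\ell^n)^-$ by $t$.

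Finally, since $\mu_{\ell^n} \subseteq K$ with $\ell$ odd, both $K$ and $L$ are totally complex with $r_2(K) = t$ and $r_2(L) = 2t$. A short computation shows that $\mathrm{Gal}(L/K)$ must act trivially on the $\ell$-power torsion $\mu(L)_\ell$: the action is by $\zeta \mapsto \zeta^a$ with $a^2=1$ in $(\mathbb{Z}/\ell^{n'})^\times$ and $a \equiv 1 \pmod{\ell^n}$ (since $\mu_{\ell^n} \subset K$ is fixed), and for $\ell$ odd this forces $a=1$. So the entire torsion subgroup of $\mathcal{O}_L^\times \otimes \mathbb{Z}_\ell$ lies in the $+$ part. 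Dirichlet's theorem gives $\dim_{\mathbb{Q}_\ell}(\mathcal{O}_L^\times \otimes \mathbb{Q}_\ell) = 2t-1$, while $(\mathcal{O}_L^\times)^\sigma = \mathcal{O}_K^\times$ has rank $t-1$, so (using that $\mathbb{Q}_\ell$-tensor commutes with taking $\sigma$-invariants) $(\mathcal{O}_L^\times \otimes \mathbb{Q}_\ell)^+$ has dimension $t-1$ and $(\mathcal{O}_L^\times \otimes \mathbb{Q}_\ell)^-$ has dimension $t$. Combined with the vanishing of torsion in the $-$ part, $(\mathcal{O}_L^\times \otimes \mathbb{Z}_\ell)^-$ is free of $\mathbb{Z}_\ell$-rank $t$, and reducing modulo $\ell^n$ gives an $\ell$-rank of exactly $t$. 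The main technical point is tracking Galois-compatibility through the series of identifications; after that the arithmetic input is standard.
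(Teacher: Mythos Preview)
Your overall strategy matches the paper's: embed $\ker\psi_L$ into $\mathcal{O}_L^\times\otimes\mathbb{Z}/\ell^n$, use the $\pm$-eigenspace decomposition under $\sigma$, and invoke Dirichlet. However, one step is incorrect as stated.

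You assert that the choice of $\zeta$ identifies $\mathbb{Z}/\ell^n\cong\mu_{\ell^n}$ as sheaves on $\spec\mathcal{O}_L$, so that cup product with $\zeta$ is an \emph{isomorphism} $H^1(\spec\mathcal{O}_L,\mathbb{Z}/\ell^n)\to H^1(\spec\mathcal{O}_L,\mu_{\ell^n})$. This is false: at primes above $\ell$ the group scheme $\mu_{\ell^n}$ is not \'etale, so the map of group schemes $\mathbb{Z}/\ell^n\to\mu_{\ell^n}$, $a\mapsto\zeta^a$, is not an isomorphism over $\spec\mathcal{O}_L$ (only over $\spec\mathcal{O}_L[1/\ell]$). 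Concretely, the two $H^1$'s have different orders: by class field theory $|H^1(\spec\mathcal{O}_L,\mathbb{Z}/\ell^n)|=|\mathrm{Cl}(L)[\ell^n]|$, whereas the Kummer sequence gives $|H^1(\spec\mathcal{O}_L,\mu_{\ell^n})|=|\mathcal{O}_L^\times\otimes\mathbb{Z}/\ell^n|\cdot|\mathrm{Cl}(L)[\ell^n]|$, and the unit contribution is nontrivial.

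Fortunately only \emph{injectivity} of this map is needed, and that does hold: restrict to the generic point, where $\mu_{\ell^n}$ really is constant, and use that $H^1(\spec\mathcal{O}_L,\mathbb{Z}/\ell^n)\hookrightarrow H^1(L,\mathbb{Z}/\ell^n)$ since $\spec\mathcal{O}_L$ is normal. This is exactly how the paper argues. With injectivity in hand you get a Galois-equivariant embedding $\ker\psi_L\hookrightarrow\mathcal{O}_L^\times\otimes\mathbb{Z}/\ell^n$, hence $(\ker\psi_L)^-\hookrightarrow(\mathcal{O}_L^\times\otimes\mathbb{Z}/\ell^n)^-$, and the rest of your argument goes through unchanged. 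Your careful verification that $\sigma$ acts trivially on $\mu(L)_\ell$ (so torsion contributes nothing to the minus part) is a point the paper leaves implicit.
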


\begin{proof} 
We first focus on $\psi_K$ for a single field $K$. By the definition of $\psi_K$, the kernel of $\psi_K$ consists
of those elements of $H^1(\spec\Oo_K,\Z/\ell^n\Z)$ which map to those elements of $H^1(\spec\Oo_K,\mu_{\ell^n})$ which have trivial image in the class group. We have the commutative diagram 

\[ \begin{tikzcd} H^1(\spec \Oo_K, \mathbb Z/{\ell^n}) \arrow[r]\arrow[d] & H^1(K, \mathbb Z/{\ell^n}) \arrow[d] \\ H^1( \spec \Oo_K, \mu_{\ell^n}) \arrow[r] & H^1(K, \mu_{\ell^n}) \\ \end{tikzcd}\] in which the top arrow is injective because $\spec \Oo_K$ is normal, and the right arrow is in isomorphism, so the map $H^1(\spec \Oo_K, \mathbb Z/{\ell^n}) \to H^1(K, \mu_{\ell^n})$ is injective, hence the map $H^1(\spec\Oo_K,\Z/\ell^n\Z) \to H^1(\spec\Oo_K,\mu_{\ell^n})$ is injective.

Thus we may identify $\ker \psi_K$ with a subgroup of the kernel of the natural map $H^1(\spec\Oo_K,\mu_{\ell^n})\to H^1( \spec \Oo_K, \mathbb G_m)$, which by the Kummer exact sequence is $\Oo_K^{\times}\otimes\Z/\ell^n\Z$.

Since $K$ contains the $\ell^n$th roots of unity, it is totally complex and has unit rank $\frac{ [K:\mathbb Q]}{2}$. Thus by Dirichlet's unit theorem, $\Oo_K^{\times}\otimes\Z/\ell^n\Z \cong (\Z/\ell^n\Z)^{\frac{[K:\Q]}{2} +1}$, where the $+1$ comes from torsion.  Thus, the $\ell$-rank of its subgroup $\ker\psi_K$ is at most $\frac{[K:\Q]}{2}+1$.

Now, we return to our setting. Note that since $L$ is odd, the order 2 automorphism of $L/K$ gives a canonical splitting 
$$\Cl(L)[\ell^n]\cong \Cl(K)[\ell^n]\oplus\Cl(L/K)[\ell^n]$$ which is respected by the map
$\psi_L$, and such that $\psi_L$ restricts to $\psi_K$. Following the above, and letting $\Oo_{L/K}^\times\subset\Oo_L^\times$ denote the kernel of the norm map to $K$, we
may identify $\ker\psi_{L/K}$ with a subgroup of $\Oo_{L/K}^\times \otimes \Z/\ell^n\Z$, and conclude that it has $\ell$-rank at most $\frac{[L:\Q]-[K:\Q]}{2}$ which is equal to $t_{L/K}$ since both $L$ and $K$ are totally complex. 
\end{proof}

\begin{prop}\label{class-group-support} The triple $(\mathrm{Cl}(L/K)_{\ell}, \omega_{L/K}, \psi_{L/K})$ is contained in the support of $Q^t \mu$ where $t = \frac{ [K:\Q]}{2}$. \end{prop}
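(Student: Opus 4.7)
The plan is to reduce the statement to two facts already in hand: the compatibility relation between $\psi$ and $\omega$ in the number field setting, and the rank bound on $\ker \psi_{L/K}$. The only remaining ingredient is a description of the support of $Q^t\mu$, which we expect to be computed in Section \ref{linearrandommodels} as part of the analysis of $Q^t\mu$.

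First, I would verify that $(\mathrm{Cl}(L/K)_\ell, \omega_{L/K}, \psi_{L/K})$ is actually an object of $\mathcal{C}_{\ell,n}$, i.e.\ an $\ell^n$-BEG in the sense of Definition \ref{beg}. Since $\psi_{L/K}$ and $\omega_{L/K}$ are obtained as pushforwards of $\psi_L$ and $\omega_L$ along the quotient $\mathrm{Cl}(L) \to \mathrm{Cl}(L/K)$, and since the compatibility relation \eqref{psiomegacomp} is preserved under such pushforwards (both sides of the relation are functorial in quotient maps), it suffices to check \eqref{psiomegacomp} for the pair $(\psi_L, \omega_L)$ on $\mathrm{Cl}(L)$. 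This is exactly the content of Lemma \ref{psi-omega-comp-verify} applied to the field $L$.

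Next, I would invoke the explicit description of the support of $Q^t\mu$. By Theorem \ref{measure-exists-unique}, the support of $\mu$ consists precisely of those triples $(H,\omega_H,\psi_H)$ with $\psi_H$ an isomorphism. From the definition of $Q$ (Definition \ref{Qtmu}) as quotienting by a random map $\mathbb{Z}_\ell \to H$, an object $(G,\omega_G,\psi_G)$ lies in the support of $Q^t\mu$ iff it arises as a quotient of some $(H,\omega_H,\psi_H)$ in the support of $\mu$ by the image of a homomorphism $\mathbb{Z}_\ell^t \to H$. Since $\psi_H$ is an isomorphism, the induced $\psi_G$ on the quotient can have kernel of $\ell$-rank at most $t$ (quotienting $H$ by an $\ell$-rank $\leq t$ subgroup both adds at most $t$ to the $\ell$-rank of $\ker \psi_G$ and imposes no more). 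I expect this to be spelled out in Section \ref{linearrandommodels}: the support of $Q^t\mu$ consists of all BEGs $(G,\omega_G,\psi_G)$ for which $\mathrm{rk}_\ell \ker \psi_G \leq t$.

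Finally, by Lemma \ref{number-field-rank-bound}, the $\ell$-rank of $\ker \psi_{L/K}$ is at most $t = [K:\mathbb{Q}]/2$. Combining the three steps, the triple is a BEG with $\mathrm{rk}_\ell \ker \psi_{L/K} \leq t$, hence lies in the support of $Q^t\mu$. The only step that is not completely formal is the characterization of the support of $Q^t\mu$; I would expect the main obstacle in writing the proof carefully to be confirming that taking a random quotient by $t$ elements really does produce every BEG with kernel rank at most $t$ (as opposed to a more restricted class), which boils down to the fact that for any surjection $H \twoheadrightarrow G$ in $\mathcal{C}_{\ell,n}$ whose kernel has $\ell$-rank at most $t$, one can find $t$ generators of the kernel that can serve as the images of the random $\mathbb{Z}_\ell$-generators.
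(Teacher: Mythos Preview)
Your proposal is correct and follows essentially the same route as the paper: verify the BEG axioms via Lemma \ref{psi-omega-comp-verify} and functoriality of \eqref{psiomegacomp} under quotients, invoke the support description of $Q^t\mu$, and conclude via Lemma \ref{number-field-rank-bound}. The only cosmetic difference is that the paper reads off the support of $Q^t\mu$ directly from the explicit nonvanishing formula in Theorem \ref{quotient-measure-formula}, rather than arguing from the definition of $Q$ as you sketch; this sidesteps your last paragraph's concern entirely.
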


\begin{proof} 

By Lemma \ref{psi-omega-comp-verify} and $\omega_L$ and $\psi_L$ satisfy \eqref{psiomegacomp}. Thus their projections $\omega_{L/K}$ and $\psi_{L/K}$ satisfy \eqref{psiomegacomp}. Thus by Definition \ref{beg}, $(\mathrm{Cl} (L/K)_{\ell}, \omega_{L/K}, \psi_{L/K})$ is a BEG.

Because the product in Theorem \ref{quotient-measure-formula} is manifestly nonzero, it follows from that theorem that the support of $Q^t \mu$ consists of all BEGs where the kernel of $\psi_G$ has rank at most $t$. By Lemma \ref{number-field-rank-bound}, the kernel of $\psi_{L/K}$ has rank at most $t$ and thus $(\mathrm{Cl} (L/K)_{\ell}, \omega_{L/K}, \psi_{L/K})$ is indeed contained in the support.  \end{proof}

\section{Compatibility of the general definitions of $\psi$ and $\omega$ with Frobenius}  \label{s-NT-FF-compatible}

Let $C$ be a smooth projective geometrically irreducible curve over a finite field $k$ containing the $\ell^n$th roots of unity.  Let $A = \mathrm{Pic}^0(C).$ Because $A$ is a principally polarized abelian variety, the construction in \S3.1 gives $A(k)_\ell$ the structure of a bilinearly enhanced group, which we denote $( \mathrm{Pic}^0(C)(k)_\ell, \omega_C, \psi_C)$. This is the same notation we used in the special case where $C$ is hyperelliptic in \S3.2.

The  exact sequence $0 \to \Pic^0(C)(k) \to \Pic(C) (k) \to \mathbb Z \to 0$ (which is exact by Lang's theorem) induces an isomorphism $\mathrm{zer}: \Pic^0(C)(k) [\ell^n] \to  \Pic(C) (k)[\ell^n] $ and a surjection $\mathrm{zer}^\vee:  \Pic(C) (k)^\vee [\ell^n] \to \Pic^0(C)(k)^\vee [\ell^n]$ with kernel $\mathbb Z/\ell^n$.

The aim of this section is to prove the commutative diagram \[ \mathrm{zer} \circ \psi_C \circ \mathrm{zer}^\vee  = \psi_K \] where $K$ is the function field of $C$.

\begin{lemma}\label{psi-psi-Weil} Let $B$ be an abelian variety. Let $\tau: \pi_1(B_{\overline{k}} ) \to \mu_{\ell^n}$ be a homomorphism. This induces a class $[\tau]$ in $H^1 ( B_{\overline{k}} , \mu_{\ell^n})$ and hence\footnote{$B^\vee[\ell^n] = \mathrm{Pic}^0 (B)[\ell^n] = \mathrm{Pic}(B) [\ell^n]  =H^1 ( B_{\overline{k}} , \mu_{\ell^n})$ because the component group of $\mathrm{Pic}(B)$ is torsion-free.}  an $\ell^n$-torsion class $[\tau]$  in $H^1(B_{\overline{k}}  , \mathbb G_m) [\ell^n]  = B^\vee [\ell^n]$. 

Because $\tau$ is an $\ell^n$-torsion character, we can also view it as a map  $ \tau': B[\ell^n]  \cong \pi_1(B_{\overline{k}} )  \otimes \mathbb Z/\ell^n  \to \mu_{\ell^n} $

Pairing against  $[\tau] \in B^\vee[\ell^n]$ in the Weil pairing $B[\ell^n]  \times B^\vee [\ell^n] \to \mu_{\ell^n}$ recovers the homomorphism $\tau': B[\ell^n] \to \mu_{\ell^n}$. \end{lemma}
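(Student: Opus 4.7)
The plan is to make both sides of the claimed identity explicit using the $B[\ell^n]$-torsor $[\ell^n] \colon B \to B$, which is the connected \'etale Galois cover realizing the canonical surjection $\pi_1(B_{\overline{k}}) \twoheadrightarrow B[\ell^n]$ that underlies the isomorphism $\pi_1(B_{\overline{k}}) \otimes \mathbb{Z}/\ell^n \cong B[\ell^n]$ used in the statement.

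First I would describe $[\tau] \in B^\vee[\ell^n]$ concretely. Since $\tau$ is valued in the $\ell^n$-torsion group $\mu_{\ell^n}$, it factors as $\pi_1(B_{\overline{k}}) \twoheadrightarrow B[\ell^n] \xrightarrow{\tau'} \mu_{\ell^n}$. The associated $\mu_{\ell^n}$-torsor on $B_{\overline{k}}$ is therefore the pushout
\[ T_\tau := (B \times \mu_{\ell^n}) / B[\ell^n],\]
with $x \in B[\ell^n]$ acting as $(y,\zeta) \mapsto (y+x, \tau'(x)\cdot\zeta)$. Pushing out further along $\mu_{\ell^n} \hookrightarrow \mathbb{G}_m$ produces the line bundle $L_\tau = (B \times \mathbb{G}_m)/B[\ell^n]$, which by the Kummer exact sequence represents $[\tau] \in \mathrm{Pic}(B_{\overline{k}})[\ell^n] = B^\vee[\ell^n]$.

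Next I would apply the classical description of the Weil pairing via multiplication by $\ell^n$: for any $L \in B^\vee[\ell^n]$, the pullback $[\ell^n]^*L$ is canonically trivial, and for any trivialization $\phi \colon [\ell^n]^*L \xrightarrow{\sim} \mathcal{O}_B$, the ratio $t_x^* \phi / \phi$ is a locally constant, hence constant, element of $\overline{k}^\times$ defining $e_{\ell^n}(x, L) \in \mu_{\ell^n}$; here one uses $[\ell^n] \circ t_x = [\ell^n]$ for $x \in B[\ell^n]$ together with geometric connectedness of $B$. For $L = L_\tau$, the explicit description above furnishes a canonical trivialization of $[\ell^n]^* L_\tau$, since the pullback of the torsor $T_\tau$ along $[\ell^n]$ is the trivial $\mu_{\ell^n}$-torsor $B \times \mu_{\ell^n}$. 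Under this trivialization, translation by $x \in B[\ell^n]$ acts on the $\mu_{\ell^n}$-factor by multiplication by $\tau'(x)$, directly from the formula defining the pushout; hence $e_{\ell^n}(x, L_\tau) = \tau'(x)$.

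The main obstacle is bookkeeping of sign and inverse conventions: depending on the normalization of the Weil pairing and of the map $\pi_1/\ell^n \cong B[\ell^n]$, the outcome could a priori be $\tau'(x)^{\pm 1}$. I would resolve this by fixing conventions compatible with those in Section \ref{abomegapsi}, where the pairing is defined via the principal polarization and already underlies the definitions of $\omega_A$ and $\psi_A$; once conventions are aligned, the identity $e_{\ell^n}(\cdot, [\tau]) = \tau'$ is a standard instance of the compatibility between the Kummer-theoretic and Weil-pairing descriptions of $B^\vee[\ell^n]$ as $\mathrm{Hom}(B[\ell^n], \mu_{\ell^n})$.
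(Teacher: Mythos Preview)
Your proposal is correct and follows essentially the same approach as the paper: both arguments realize the line bundle $L_\tau$ as descent of the trivial bundle along the \'etale cover $[\ell^n]\colon B\to B$ with descent datum $r_g = \tau'(g)$, and then invoke Mumford's definition of the Weil pairing to identify $e_{\ell^n}(\cdot,L_\tau)$ with $\tau'$. The paper simply cites \cite[\S20, p.~183]{Mu} and notes that Mumford's character $\chi$ equals our $\tau'$, whereas you have unpacked this computation explicitly via the pushout torsor and the trivialization of $[\ell^n]^*L_\tau$; the content is the same.
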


\begin{proof} 
This follows from the definition of the Weil pairing in \cite[\S20, p.183]{Mu}. To see the equivalence between Mumford's definition and ours, recall how one obtains an $\ell^n$-torsion line bundle
from $\tau$. First, one considers the multiplication by $\ell^n$ \'{e}tale cover $m:B\ra B$. To descend the trivial line bundle $\Oo_B$ amounts to giving isomorphisms 
$r_g:\Oo_B\ra \Oo_B$ for each $g\in B[\ell^n]$, and so one simply takes $r_g=\tau'(g)$. In Mumford's notation, his $\chi$ is our $\tau'$.

\end{proof}

\begin{lemma}\label{psi-psi-Lang} Let $C$ be a curve over a finite field $k$ and let $A$ be its Jacobian, a principally polarized abelian variety. Viewing $G \subset \mathrm{Cl}(K)_\ell$ as a subset of $\pi_1(C)^{\mathrm{ab}}_\ell$ via class field theory, the natural homomorphism $\pi_1(A_{\overline{k}})_\ell = \pi_1(C_{\overline{k}})^{\mathrm{ab}}_\ell \to \pi_1(C)^{\mathrm{ab}}_\ell= \pi_1(A)^{\mathrm{ab}}_\ell$ factors through $G.$  The map to $G$ is exactly the map $\phi: A[\ell^r] \to G$ of \S3.1 composed with the natural map $\pi_1(A_{\overline{k}})_\ell  = T_\ell(A) \to A[\ell^r]$. \end{lemma}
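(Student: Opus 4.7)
The plan is to combine unramified geometric class field theory for the function field $K = k(C)$ with the standard identification $\pi_1(C_{\ol k})^{\mathrm{ab}}_\ell \cong T_\ell A$ and a small calculation about abelianizations of group extensions. By Artin reciprocity for $C$, there is a canonical isomorphism $\pi_1(C)^{\mathrm{ab}} \cong \widehat{\Pic(C)}$ under which the natural projection $\pi_1(C)^{\mathrm{ab}} \twoheadrightarrow \Gal(\ol k/k)$ corresponds to the degree map $\Pic(C) \to \Z$. Since $\Pic^0(C)(k) = G$ is finite (because $A$ is an abelian variety over a finite field), on $\ell$-primary parts this reads $\pi_1(C)^{\mathrm{ab}}_\ell \cong G \oplus \Z_\ell$, with $G$ appearing precisely as the kernel of the projection $\pi_1(C)^{\mathrm{ab}}_\ell \twoheadrightarrow \Gal(\ol k/k)_\ell$ coming from the homotopy exact sequence.

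First I would deduce the factorization claim directly from this setup. Abelianizing the short exact sequence $1 \to \pi_1(C_{\ol k}) \to \pi_1(C) \to \Gal(\ol k/k) \to 1$ yields the right exact sequence $\pi_1(C_{\ol k})^{\mathrm{ab}}_\ell \to \pi_1(C)^{\mathrm{ab}}_\ell \to \Gal(\ol k/k)_\ell \to 0$, so the image of $\pi_1(C_{\ol k})^{\mathrm{ab}}_\ell$ is exactly the kernel of the rightmost arrow, which by the above is $G$.

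Next I would identify the induced map $T_\ell A = \pi_1(A_{\ol k})_\ell \to G$ with the stated composition, using the standard group-theoretic fact that for any extension $1 \to N \to E \to Q \to 1$ the image of $N^{\mathrm{ab}}$ in $E^{\mathrm{ab}}$ equals the $Q$-coinvariants $N^{\mathrm{ab}}/(1-Q)N^{\mathrm{ab}}$ under conjugation. Applied here, with the Galois-equivariant identification $\pi_1(C_{\ol k})^{\mathrm{ab}}_\ell = T_\ell A$ and $F \in \Gal(\ol k/k)$ the geometric Frobenius, this shows the image equals $T_\ell A/(1-F) T_\ell A = H$, and the map $T_\ell A \to H$ is the canonical quotient. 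When $r$ is chosen so that $\ell^r G = 0$, this quotient factors through the reduction $T_\ell A \twoheadrightarrow T_\ell A/\ell^r T_\ell A = A[\ell^r]$, and the resulting arrow $A[\ell^r] \to H = G$ is by construction the map $\phi$ of Section \ref{s-ffdef}.

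The hard part will be the compatibility check: the two natural embeddings of $G$ into $\pi_1(C)^{\mathrm{ab}}_\ell$ — one as $\Pic^0(C)(k)_\ell \hookrightarrow \widehat{\Pic(C)}_\ell$ via Artin reciprocity, the other as the coinvariants $H = T_\ell A/(1-F) T_\ell A$ via the homotopy sequence — must be shown to match under the snake-lemma/Kummer isomorphism $H \cong A(k)_\ell = G$ used in Section \ref{abomegapsi}. This compatibility is the content of Lang's isogeny theorem together with the compatibility of geometric Artin reciprocity with the Albanese map $C \to A$; concretely, it reduces to checking that unramified class field theory for $C$ pulled back along $C \to A$ agrees with the Kummer-theoretic identification $A(k) \otimes \Z_\ell \cong H^1(\Gal(\ol k/k), T_\ell A) = (T_\ell A)_F$ (the last equality holding because $\Gal(\ol k/k)$ has cohomological dimension one). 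Granting this standard compatibility, all the pieces assemble to yield the lemma.
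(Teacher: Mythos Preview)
Your outline is correct and identifies the right ingredients, but it is organized differently from the paper's proof and leaves the substantive step as a black box.

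The paper proceeds concretely: it builds the Lang cover $C' = C \times_A A$ along $1-F: A \to A$, then verifies by a direct Frobenius-element computation that the Galois group $A(k) \times \widehat{\mathbb Z}$ of the further base change $C^* = C'_{\overline k}$ is identified with $\widehat{\Cl(K)}$ exactly by Artin reciprocity (the computation $F^{\deg v}(y) - y = -\sum_i F^i(\mathrm{AJ}(x))$ pins down the reciprocity law on each closed point). Only after this does the paper read off how $\pi_1(A_{\overline k})_\ell$ acts on $C'$ and match it to the snake-lemma description of $\phi$. So the paper's proof is essentially a self-contained verification of the Lang--Artin compatibility over function fields.

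Your route is more structural: you invoke Artin reciprocity as given, then use the coinvariants identification (the five-term sequence for $1 \to \pi_1(C_{\overline k}) \to \pi_1(C) \to \widehat{\mathbb Z} \to 1$, using that $H_2(\widehat{\mathbb Z}) = 0$) to show the image of $T_\ell A$ in $\pi_1(C)^{\mathrm{ab}}_\ell$ is precisely $T_\ell A/(1-F)T_\ell A$. This is cleaner and correctly isolates the one nontrivial point, namely that the snake-lemma isomorphism $T_\ell A/(1-F)T_\ell A \cong A(k)_\ell$ agrees with the reciprocity embedding $A(k)_\ell = \Pic^0(C)(k)_\ell \hookrightarrow \widehat{\Pic(C)}_\ell$. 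But you then label this ``standard'' and stop. That compatibility \emph{is} the content of the lemma; the paper's Frobenius calculation is exactly a proof of it. Your reduction is valid, and the coinvariants framing is arguably more transparent than the paper's, but to make the argument complete you still need to supply the Lang-isogeny verification that the paper carries out in detail.
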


\begin{proof}
Lang gave a beautiful construction of class field theory over function fields using the map $1-F$ and the Jacobian. He did not explain this in detail in his paper, because class field theory over function fields was already known. We explain how class field theory gives this identity. 

Fix a divisor $D$ of degree 1 on $C$, giving an identification $\mathrm{Cl}(K)_\ell = G \times \mathbb Z_\ell$ and an Abel-Jacobi map $C \to A$. Consider the abelian cover $C'$ of $C$ defined by the fiber product over $A$ of $C$ with the map $(1-F) :A \to A$, which has Galois group $A(k)$. Consider further the base change $C^*$ of $C'$ to $\overline{k}$, which has Galois group $G \times \widehat{\mathbb Z} = \widehat{ \mathrm{Cl}(K)}$.  We claim the natural map from $\pi_1^{\mathrm{ab}}(C)$ to the Galois group of this cover is the same as the identification of this Galois group with the profinite completion of $\mathrm{Cl}(K)$ under class field theory.

To do this, it suffices to check that every Frobenius element is sent to the same element of $\widehat{\mathrm{Cl}(K)}$ under these two definitions, because the Frobenius elements are dense in the Galois group. Under class field theory, the Frobenius element at a closed point $v$ is sent to the class of the line bundle $\mathcal O(-v)$, or, writing $D$ for our degree $1$ divisor, $( \mathcal O(-v + \deg v \cdot D), \deg v)$ in $G \times \widehat{ \mathbb{Z} }$. On the other hand, we can check how $\operatorname{Frob}_v$ acts on the fiber of $C^*$ over $C$ at $v$. First, we calculate the action on $C'$. Let $x$ be a geometric point of $C$ lying over 
%$\operatorname{Frob}_v$.  
$v.$ The points in $C'$ lying over $x$ can be expressed as pairs $(x,y)$ with $y \in A(\overline{k})$ such that $(1 - F)(y) = \mathrm{AJ}(x)$ where $\mathrm{AJ}$ is the Abel-Jacobi map. 
The action of $\operatorname{Frob}_v$ on the fiber of $C'$ at $x$ is given by $F^{\deg v}$. We have

\begin{align*}
F^{\deg v} (y) - y &=   \sum_{i=0}^{\deg v-1} F^{i+1}(y) - F^i(y) \\
&= \sum_{i=0}^{\deg v-1} F^{i} ( F(y) -y ) \\
&= - \sum_{i=0}^{\deg v-1} F^{i} (\mathrm{AJ}(x)).
\end{align*}

Now $\mathrm{AJ}(x)$ is the class of the line bundle $\mathcal O(x - D)$. So $F^i(\mathrm{AJ}(x))$ is the class of $\mathcal O( F^i(x) - D)$. Thus $\sum_{i=0}^{\deg v-1} F^{i} (\mathrm{AJ}(x))$ is the class of $\mathcal O \left( \left(\sum_{i=0}^{\deg v-1}   F^i(x)  \right)  - \deg v \cdot D \right) $. Since $\mathcal O (v) =\mathcal O  \left(\sum_{i=0}^{\deg v-1}   F^i(x)  \right)$, we conclude that $ F^{\deg v}(y) - y $ is the class of $\mathcal O ( -v + \deg v \cdot D )$. In other words, $F^{\deg v}$ acts on this fiber by translation by $\mathcal O ( -v + \deg v \cdot D ) \in G$.

Finally, $F^{\deg v}$ acts on $\overline{k}$ by $F^{\deg v}$, which corresponds to the element $\deg v \in \widehat{\mathbb Z}$. So indeed these two homomorphisms are the same on each Frobenius element, and thus equal.

Now, to understand the map $\pi_1 (A _{\overline{k}})_\ell = \pi_1(C_{\overline{k}})^{\mathrm{ab}}_\ell  \to \pi_1(C)^{\mathrm{ab}}_\ell $, it suffices to see how elements of $\pi_1 (A _{\overline{k}}) $ act on $C^*$. These elements fix $\overline{\mathbb F_q}$, so their action on $C^*$ depends only on their action on $C'$ and factors through $G$. The isomorphism $\pi_1 (A _{\overline{k}})= \pi_1(C_{\overline{k}})^{\mathrm{ab}}   $ is defined via the embedding of $C$ into $A$, so their action on $C'$ is equal to their action on the covering $(1-F)$ of $A$ by $A$, which has Galois group $A(k)$. We identify $\pi_1 (A _{\overline{k}})_\ell $ with the inverse limit of $A[\ell^n]$ for natural numbers $n$ via its actions on the coverings of $A$ by $A$ defined by the multiplication by $\ell^n$ map. Choose $m$ such that the $(1-F)$ covering factors through the multiplication by $m$ map. Let $n$ be the $\ell$-adic valuation of $m$ and let $m'=m/\ell^n$. 

There exists a homomorphism $M: A \to A$ such that $(1-F) M = m = M(1-F) $.  The image of an $\ell^n$-torsion point $x$ in $A(k)$ is then given by $M( (m')^{-1} x)$. Letting $y$ be any inverse image in $A[\ell^{\infty}]$ of $x$ under $1-F$, we have $m y = M ((1-F) y )= M( x)$ so that $\ell^n y = M ( (m')^{-1} x)$ . Thus, $x$ is obtained from $M ( (m')^{-1} x)$ by the snake lemma map of the diagram of \S3.1 - we take $M ( (m')^{-1} x)$ in $A[\ell^{\infty}]$, pull back to $A[\ell^{\infty}]$ under $\times \ell^n$ to obtain $y$, pushforward under $1-F$ to obtain $x$, and then recognize it as an element of $A[\ell^r]$. Because $\phi$ is by definition the inverse of this snake lemma map, this shows that the composition of $\phi$ with the projection from $\pi_1(A _{\overline{k}})_\ell$ matches the action of $\pi_1(A _{\overline{k}})$ on $C'$ and thus, by our earlier discussion, agrees with class field theory. 

\end{proof}

\begin{thm}

\[ \mathrm{zer} \circ \psi_C \circ \mathrm{zer}^\vee  = \psi_K .\]

\end{thm}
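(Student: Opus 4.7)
My plan is to show that both $\psi_K$ and $\mathrm{zer} \circ \psi_C \circ \mathrm{zer}^\vee$ send a character $\alpha \in H^1(C, \mathbb{Z}/\ell^n) = \mathrm{Cl}(K)^\vee[\ell^n]$ (using class field theory, and noting that $\spec \Oo_K = C$ in the function field setting) to the same $\ell^n$-torsion line bundle on $C$ --- namely, the one obtained from $\alpha \cup \zeta \in H^1(C, \mu_{\ell^n})$ by the Kummer connecting map. Throughout, I identify $\Pic(C)(k)[\ell^n]$ with $\Pic^0(C)(k)[\ell^n] = A(k)[\ell^n]$, since every $\ell^n$-torsion line bundle has degree zero. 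The tools I need are Lemma \ref{psi-psi-Lang}, to rewrite $\mathrm{zer}^\vee(\alpha)$ geometrically, together with Lemma \ref{psi-psi-Weil} and Proposition \ref{torsor-definition}, to translate the Weil-pairing description of $\psi_C$ into Kummer-theoretic language.

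For $\psi_K$, Definition \ref{psi-nt-defi} already builds the map as cup-with-$\zeta$ followed by Kummer, so this side produces the advertised line bundle tautologically. For the other side, I would first use Lemma \ref{psi-psi-Lang} to identify $\mathrm{zer}^\vee(\alpha)$ as the pullback of $\alpha$ along the natural map $T_\ell A = \pi_1(A_{\overline{k}})_\ell \to \pi_1^{\mathrm{ab}}(C)_\ell$, composed with the snake map $\phi\colon A[\ell^n] \to A(k)[\ell^n]$. Definition \ref{def-psi-G} then computes $\psi_C$ by using the Weil pairing to transport this character to an element of $T_\ell A / \ell^n$, exploiting its automatic $F$-invariance to land in $A^\vee[\ell^n]$, and finally snaking back down to $A(k)[\ell^n]$. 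Here Lemma \ref{psi-psi-Weil} is the key: the intermediate element of $A^\vee[\ell^n] = \Pic^0(A_{\overline{k}})[\ell^n]$ obtained is precisely the class of the $\mu_{\ell^n}$-torsor on $A_{\overline{k}}$ cut out by $\alpha|_{\pi_1(A_{\overline{k}})} \cup \zeta$, descended to $A/k$ via its $F$-invariance.

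To finish, I would observe that the Abel--Jacobi embedding $C \hookrightarrow A$ (after choosing a degree-one divisor) pulls this line bundle on $A$ back to exactly the line bundle on $C$ produced by $\psi_K$: pullback of cohomology commutes with cup products, and the Kummer connecting maps on $A$ and on $C$ are compatible under pullback, being two manifestations of the same short exact sequence $0 \to A[\ell^n] \to A \xrightarrow{\ell^n} A \to 0$ read as Galois cohomology over $k$ versus as a Kummer fragment on $C$. A separate small check handles the kernel of $\mathrm{zer}^\vee$: the character detecting the degree of Frobenius on the constant field pulls back from $\spec k$, where $\Pic$ is trivial, so $\psi_K$ kills it as required.

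The main obstacle, I expect, will be the bookkeeping in this last reconciliation --- in particular verifying that the snake isomorphism $\ker(1-F \mid A^\vee[\ell^n]) \cong A^\vee(k)[\ell^n]$ implicit in $\psi_C$ agrees, after Abel--Jacobi pullback and the canonical principal polarization identification $A \cong A^\vee$ of the Jacobian, with the degree-one Kummer connecting map on $C$ used in $\psi_K$. In spirit this is just the well-known compatibility of the Weil pairing with cup product, combined with the fact that on a Jacobian the principal polarization coincides with Abel--Jacobi pullback of line bundles, but the signs and normalizations need to be tracked carefully to make sure that the $\zeta$ chosen to define $\psi_K$ matches the $\zeta$ used to convert the Weil pairing into a $\mathbb{Z}/\ell^n$-valued pairing in Definition \ref{def-psi-G}.
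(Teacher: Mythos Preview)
Your proposal is correct and follows essentially the same strategy as the paper's proof: both pull the comparison back to $C_{\overline{k}}$, invoke Lemma~\ref{psi-psi-Lang} to identify $\mathrm{zer}^\vee(\alpha)$ with the restriction of $\alpha$ along $T_\ell A \to \Pic(C)(k)_\ell$, and use Lemma~\ref{psi-psi-Weil} to translate the Weil-pairing step in $\psi_C$ into line-bundle language. The only cosmetic difference is that you phrase the final reconciliation as ``Abel--Jacobi pullback of the line bundle on $A$ agrees with the Kummer line bundle on $C$,'' whereas the paper stays on the level of $A[\ell^n]$ and argues that $\phi^\vee(\mathrm{zer}^\vee(\alpha))$ and $\overline{\psi}_K(\overline{\alpha})$ have the same Weil pairing with every element, then closes with the tautology that the snake map is inverse to ``lift and apply $1-F$.'' Since Abel--Jacobi pullback \emph{is} the principal polarization of the Jacobian, these two endings are the same computation viewed from two sides; your anticipated ``main obstacle'' of matching the snake isomorphism with the Kummer map dissolves once you observe that both are characterized by the same Weil-pairing identity from Lemma~\ref{psi-psi-Weil}. (One small slip: in your middle paragraph the Weil pairing lands you in $T/\ell^n = A[\ell^n]$, not $A^\vee[\ell^n]$; the principal polarization identifies them, so this is harmless.)
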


\begin{proof} Fix $\alpha \in H^1 ( C, \mathbb Z/\ell^n),$ which is naturally identified with $\mathrm{Cl}(K)^\vee [\ell^n]$ by class field theory.  We wish to show $\psi_K (\alpha) = \psi_C ( \mathrm{zer}^\vee(\alpha))$ as elements of $\mathrm{Cl}(K)[\ell^n]$. Because the natural map $\mathrm{Cl}(K)[\ell^n] = \Pic(C) [\ell^n] \to \Pic(C_{\overline{k}} ) [\ell^n]$ is injective, it suffices to check that the pullbacks of $\psi_K (\alpha) $and $\psi_C ( \mathrm{zer}^\vee(\alpha))$ to $C_{\overline{k}}$ are equal.

 Let $\overline{\alpha}$ be the pullback of $\alpha$ to $C_{\overline{k}}$. 
 
 Recall, from Definition \ref{psi-nt-defi}, that $\psi_K(\alpha)$ is defined via the composition $H^1 ( C, \mathbb Z/\ell^n)  \to H^1(C, \mu_{\ell^n} ) \to H^1(C, \mathbb G_m)$. Let $\overline{\psi}_K$ be defined by the analogous composition $H^1 ( C_{\overline{k}} , \mathbb Z/\ell^n)  \to H^1(C_{\overline{k}} , \mu_{\ell^n} ) \to H^1(C_{\overline{k}} , \mathbb G_m)$. 
 Because these maps are compatible with the pullback to $C_{\overline{k}}$, the pullback of $\psi_K(\alpha)$ to $C_{\overline{k}}$ is $\overline{\psi}_K(\overline{\alpha})$.

We identify $H^1(C_{\overline{k}}, \mathbb Z/\ell^n)$ with the set of homomorphisms from $T_\ell(A)$ to $\mathbb Z/\ell^n$.  Viewing $\alpha$ as a homomorphism $\Pic(C) (k) \to \mathbb Z/\ell^n$, the pullback $\overline{\alpha}$ of $\alpha$ is obtained by composing $\alpha$ with the projection $T_\ell(A) \to \Pic(C)(k)_\ell$.  The latter composition factors through  $T_\ell(A) / \ell^n,$ which is naturally identified with $A[\ell^n].$ By Lemma \ref{psi-psi-Weil}, the Weil pairing with $\overline{\psi}_K(\overline{\alpha})$ equals this homomorphism $A [\ell^n] \to \mathbb Z/\ell^n$, composed with the map $\mathbb Z/\ell^n \to \mu_{\ell^n}$ defined by $\zeta$. 

Because the Weil pairing is a perfect pairing, it suffices to check that the Weil pairing with $\overline{\psi}_K(\overline{\alpha})$ equals the Weil pairing with the pullback of $\psi_C( \mathrm{zer}^\vee (\alpha))$, which we now compute, following the construction of \S\ref{abomegapsi}.

We have the map $\phi: T_\ell(A)  \to \mathrm{coker}(1-F | \; T_\ell(A) )  \cong \Pic^0(C) (k)_\ell$ and the Cartier dual map $\phi^\vee:  \Pic^0(C) (k)_\ell^\vee \to A[\ell^\infty].$

  The $\ell^n$-torsion element $\phi^\vee ( \mathrm{zer}^\vee (\alpha))$  of $A[\ell^\infty]$  is Cartier dual to a map $T_\ell(A) \to \mu_{\ell^n}$. This dual map is obtained by first applying the projection $\phi: A[\ell^m] \to \Pic^0(C) (k)$, then $\mathrm{zer}: \Pic^0(C)(k) \to \Pic(C) (k)$, and then applying $\alpha$. (Because $\alpha$ is an element of the dual $\mathrm{Cl}(K)^\vee [\ell^n]$, i.e. the space of linear forms  $\mathrm {Cl}(K) \to \mathbb Z/\ell^n$, this map is referred to as $\alpha$ and not $\alpha^\vee$.)

   Since the duality between $T_\ell(A)$ and $A[\ell^{\infty}]$ comes from the Weil pairing, $\phi^\vee ( \mathrm{zer}^\vee (\alpha))$ is the element of $A[\ell^m]$ whose Weil pairing with an element of $A[\ell^m]$ is this composition $\alpha^\vee \circ \mathrm{zer} \circ \phi$. On the other hand, we just saw that the Weil pairing of $\overline{\psi}_K(\overline{\alpha})$ with an element of $A[\ell^n]$ is obtained by lifting to an element of $T_\ell(A)$, projecting to $\Pic^0(C)(k)_\ell$, including into $\Pic(C)(k)_\ell$ and then applying $\alpha$. We chose $m$ so that the map $T_\ell(A) \to  \Pic^0(C)(k)_\ell$ factors through $A[\ell^m]$ and then defined $\phi$ to be this factorization. Thus, we see that given any element of $A[\ell^m]$, taking the Weil pairing with $\phi^\vee ( \mathrm{zer}^\vee (\alpha))$ is equivalent to multiplying by $\ell^{m-n}$ to obtain an element of $A[\ell^n]$ and taking the Weil pairing with $\overline{\psi}_K(\overline{\alpha})$. In other words, embedding $A[\ell^n]$ in $A[\ell^m]$ the usual way, we have $ \phi^\vee ( \mathrm{zer}^\vee (\alpha))= \overline{\psi}_K(\overline{\alpha}) $

In other words,  $\phi^\vee ( \mathrm{zer}^\vee (\alpha))$ is the image of $\overline{\psi}_K ( \overline{\alpha})$ under the inclusion map $A[\ell^n] \to A[\ell^m]$. By the definition of $\psi$ (Definition \ref{def-psi-G}), it suffices to show that for $\alpha_0 \in T_\ell(A) \otimes \mathbb Q_\ell $ such that $ \alpha_0 \mod T_\ell(A) = \phi^\vee ( \mathrm{zer}^\vee (\alpha)) \psi_C(\alpha)=  \overline{\psi}_K(\overline{\alpha})$, we have $\phi ( (1-F)  \alpha_0 )  =  \overline{\psi}_K(\overline{\alpha})$.  But $\phi$ was defined by the snake lemma as the inverse of the map obtained by taking a lift along the map $T_\ell(A) \otimes \mathbb Q_\ell \to A[\ell^{\infty} ]$ and then applying $1-F$, so this identity follows.

\end{proof}

\begin{lemma}\label{P-1-case}  Let $c \in H^2 (\mathbb P^1_{k}, \mathbb Z/\ell^m )$ be a class.  The Artin-Verdier pairing of $c$ with $\zeta_m$  is equal to $\frac{1-q}{\ell^n}$ times the pullback of $c$ to $H^2(\mathbb P^1_{\overline{k} }, \mathbb Z/\ell^m)$ followed by the trace map on $H^2(\mathbb P^1_{\overline{k} }, \mathbb Z/\ell^m)$. \end{lemma}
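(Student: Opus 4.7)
The strategy is to reduce the pairing on $\mathbb P^1_k$ to one on $\spec k$ via the projection formula for the structure map $f: \mathbb P^1_k \to \spec k$. First observe that $\zeta_m$ restricted to $\mathbb P^1_k$ equals $f^*\overline{\zeta}_m$, where $\overline{\zeta}_m \in H^1(\spec k, \mu_{\ell^m})$ is the Kummer class of $\zeta \in k^\times$. The projection formula then gives
\[
f_*(f^*\overline{\zeta}_m \cup c) = \overline{\zeta}_m \cup f_* c \;\in\; H^1(\spec k, \mu_{\ell^m} \otimes \mathbb Z/\ell^m(-1)) = H^1(\spec k, \mathbb Z/\ell^m),
\]
where $f_* c \in H^0(\spec k, \mathbb Z/\ell^m(-1)) = \Hom_{\Gal}(\mu_{\ell^m}, \mathbb Z/\ell^m)$ is the Galois-invariant geometric trace obtained by pulling $c$ back to $\mathbb P^1_{\overline{k}}$ and applying the standard trace $H^2(\mathbb P^1_{\overline{k}}, \mathbb Z/\ell^m) \xrightarrow{\sim} \Hom(\mu_{\ell^m}, \mathbb Z/\ell^m)$. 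Up to the $\zeta$-induced identification of this Hom group with $\mathbb Z/\ell^m$, this is precisely the ``trace of the pullback of $c$'' appearing on the right-hand side of the lemma.

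Next I would identify the Artin-Verdier trace $\mathrm{tr}_{\mathrm{AV}}: H^3(\mathbb P^1_k, \mu_{\ell^m}) \to \mathbb Z/\ell^m$ with the composition of $f_*$ and the Frobenius-evaluation isomorphism $H^1(\spec k, \mathbb Z/\ell^m) \cong \mathbb Z/\ell^m$. This falls out of the Hochschild-Serre spectral sequence for $\mathbb P^1_k \to \mathbb P^1_{\overline{k}}$: since $\mathbb P^1_{\overline{k}}$ is simply connected and $\spec k$ has cohomological dimension one, the only surviving $E_2$-contribution to $H^3$ is $E_2^{1,2} = H^1(k, \mathbb Z/\ell^m)$, and the AV trace is known to factor through this edge map for smooth proper curves over finite fields.

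To finish, I would compute $\overline{\zeta}_m(\Frob) \in \mu_{\ell^m}$ explicitly: for any $\alpha \in \overline{k}^\times$ with $\alpha^{\ell^m} = \zeta$, one has $\overline{\zeta}_m(\Frob) = \Frob(\alpha)/\alpha = \alpha^{q-1}$. Writing $q-1 = \ell^n u$ with $\gcd(u, \ell) = 1$, and noting that $\alpha$ is a primitive $\ell^{m+n}$-th root of unity, we find $\alpha^{q-1} = (\alpha^{\ell^n})^u$, where $\alpha^{\ell^n}$ is a primitive $\ell^m$-th root of unity. Feeding this into $\overline{\zeta}_m \cup f_* c$ and evaluating at $\Frob$ yields $f_* c(\alpha^{q-1})$, which under the $\zeta$-trivializations produces the scalar $\frac{q-1}{\ell^n}$ times the geometric trace of $\overline{c}$. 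The sign convention in Lemma \ref{lem-Brauer-invariant}---where the invariant of $\kappa(\pi) \cup \alpha$ is normalized to $-k/\ell^m$---then flips the sign to give the claimed factor $\frac{1-q}{\ell^n}$.

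The main obstacle will be the bookkeeping of Tate twists and sign conventions: specifically, ensuring that the Hochschild-Serre edge identification, the projection formula, and the Artin-Verdier normalization of Lemma \ref{lem-Brauer-invariant} all combine consistently to yield precisely $\frac{1-q}{\ell^n}$ rather than $\pm\frac{q-1}{\ell^n}$.
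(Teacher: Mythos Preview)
Your projection-formula approach is genuinely different from the paper's and is a reasonable structural strategy, but it has real gaps beyond mere bookkeeping.

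The paper proceeds much more concretely: it first shows that $H^2(\mathbb P^1_k,\mathbb Z/\ell^m)$ is generated by the image of the Kummer class of a degree~$1$ line bundle (via the isomorphism $H^2(\mathbb P^1_k,\mathbb Z/\ell^{\min(n,m)})\cong H^2(\mathbb P^1_k,\mathbb Z/\ell^m)$ and the $\zeta$-induced identification with $\mu$-coefficients), so it suffices to evaluate both linear forms on that single class. The geometric trace of this class is~$1$ by the SGA4 normalization. For the Artin--Verdier side, the paper invokes Proposition~\ref{prop-cft-av}, which converts the AV pairing of a Kummer class with a $\mathbb Z/\ell^m$-torsor into the class-field-theory action of the corresponding Galois element on that torsor. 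A degree~$1$ line bundle corresponds to $\Frob_q^{-1}$ under Artin reciprocity (since $[\mathfrak p]\leftrightarrow\Frob_{\mathfrak p}$ and $\mathcal O(1)$ is the \emph{inverse} of an ideal sheaf), and $\Frob_q$ acts on the $\zeta_m$-torsor by adding $\tfrac{q-1}{\ell^n}$; hence $\Frob_q^{-1}$ contributes $\tfrac{1-q}{\ell^n}$. The sign thus falls out of the class-field-theory normalization, not directly from Lemma~\ref{lem-Brauer-invariant}.

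In your argument, two steps need more than you have given. First, the assertion that the Artin--Verdier trace on $H^3(\mathbb P^1_k,\mu_{\ell^m})$ agrees with the Hochschild--Serre edge map followed by evaluation at $\Frob$ is exactly the content that the paper establishes via Proposition~\ref{prop-cft-av} and the commutative diagram of Lemma~\ref{lem-giant-diagram}; it is not something you can simply cite as ``known'' in this setting without essentially reproving that proposition. Second, your sign justification is misdirected: Lemma~\ref{lem-Brauer-invariant} computes a \emph{local} Brauer invariant $\mathrm{inv}(\kappa(\pi)\cup\alpha)$ and feeds into Proposition~\ref{prop-cft-av}, but it does not by itself flip the sign of your global Frobenius computation. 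If you carry out your approach honestly you will get $\tfrac{q-1}{\ell^n}$ unless you also track whether the Hochschild--Serre edge uses $\Frob$ or $\Frob^{-1}$ in its identification $H^1(k,\mathbb Z/\ell^m)\cong\mathbb Z/\ell^m$ relative to the AV normalization, and that compatibility is precisely what the paper's explicit generator computation pins down.
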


\begin{proof} There is a natural map $H^2 ( \mathbb P^1_k , \mathbb Z/\ell^{\min(n,m)}) \to H^2 (\mathbb P^1_k, \mathbb Z/\ell^m)$ given by the inclusion.

Let us check that this map is an isomorphism. To do this, observe that $ H^2 (\mathbb P^1_k, \mathbb Z/\ell^m)$ is, by a spectral sequence, the $\Frob_q$-invariants in $H^2(\mathbb P^1_{\overline{k}}, \mathbb Z/\ell^m)$, and similarly with $H^2 ( \mathbb P^1/k , \mathbb Z/\ell^{\min(n,m)})$. The natural map $H^2 ( \mathbb P^1_{\overline{k}} , \mathbb Z/\ell^{\min(n,m)}) \to H^2 (\mathbb P^1_{\overline{k}}, \mathbb Z/\ell^m)$ is not necessarily an isomorphism, but it is an isomorphism on $\Frob_q$-invariants, because $\operatorname{Frob}_q$ acts by multiplication by $q$ on both groups so the $\operatorname{Frob}_q$-invariants are exactly the $(q-1)$-torsion elements.

Furthermore, we have an isomorphism $H^2 ( \mathbb P^1_k , \mathbb Z/\ell^{\min(n,m)})  \cong H^2 ( \mathbb P^1_k , \mu_{\ell^{\min(n,m)}) })$ using our chosen generator for $\mu_{\ell^n}$.

Thus, we can assume $c$ lies in the image of $H^2 ( \mathbb P^1_k , \mu_{\ell^{\min(n,m)}) })$.

Furthermore, it suffices to take $c$ a generator of this group. By Kummer theory, the Kummer class applied to a degree $1$ line bundle on $C$ is sufficient.

The trace map on $H^2 (\mathbb P^1_{\overline{k}}, \mathbb Z/\ell^m)$, composed with the projection from $H^2 ( \mathbb P^1_{\overline{k} } , \mu_{\ell^{\min(n,m)}) })$, is equal to the trace map on $H^2 (\mathbb P^1_{\overline{k}}, \mathbb Z/\ell^m)$, since the trace map is compatible with inclusions of cyclic groups. By definition \cite[XVII, (1.1.3.2) and (1.1.3.3)]{sga4-3}, the trace map of the Kummer class of a degree $1$ line bundle is $1$. Multiplied by $1$, we get $\frac{q-1}{\ell^n}$.

The Artin-Verdier pairing between the projection of this Kummer class along  $H^2 ( \mathbb P^1_{k } , \mu_{\ell^{\min(n,m)}) }) \to H^2 (\mathbb P^1_k, \mathbb Z/\ell^{\min(n,m)}) \to H^2(\mathbb P^1_k, \mathbb Z/\ell^m)$ and $\zeta_m \in H^1( \mathbb P^1_k, \mu_\ell^m)$ is the Artin-Verdier pairing of this Kummer class and the image of $\zeta_m$ along the Cartier dual maps  $H^1( \mathbb P^1_k, \mu_\ell^m) \to H^1 (\mathbb P^1_k , \mu_{\ell^{\min(n,m)}} ) \to H^1 (\mathbb P^1_k, \mathbb Z/\ell^{\min n, m})$. The image of $\zeta_m$ in $H^1 (\mathbb P^1_k , \mu_{\ell^{\min(n,m)}} ) $ is the torsor of $\ell^{\min(n,m)}$th roots of our fixed generator $\zeta\in \mu_{\ell^n}$. The action of $\Frob_q$ on this torsor is by multiplication by $\zeta^{ \frac{q-1}{ \ell^{\min (n,m)} }}$ The map $\mu_{\ell^{\min(n,m)}}\to \mathbb Z/\ell^{\min (n, m)})$ sends $\zeta^{ \ell^{n - \min(n,m) x} } $ to $x$, so the action of $\Frob_q$ on the image of $\zeta_m$ in $H^1 (\mathbb P^1_k, \mathbb Z/\ell^{\min (n, m)})$ is by adding $\frac{q-1}{ \ell^{\min(n,m)} \cdot \ell^{n - \min(n,m)}} = \frac{q-1}{\ell^n} \in \mathbb Z/\ell^n$.

It follows from \label{prop-cft-av} that the Artin-Verdier pairing of the Kummer class of a degree $1$ line bundle with this torsor is equal to the action of the Galois group element corresponding under class field theory to a degree $1$ line bundle with this torsor. Every degree $1$ line bundle is the inverse of the ideal sheaf at a point, and thus is sent by class field theory to a Galois element acting on $\overline{k}$ as $\Frob_q^{-1} $, so acting on this torsor as $\frac{q^{-1} -1}{\ell^n} \equiv \frac{1-q}{\ell^n} \mod q$, and so the Artin-Verdier pairing is $\frac{1-q}{\ell^n}$, as desired.

\end{proof}

\begin{lemma}\label{omega-comparison} For each finite field $\mathbb F_q$, prime $\ell$, and natural number $n$ such that $q \equiv 1 \mod \ell^n$ but $q\not\equiv 1\mod \ell^{n+1}$, and curve $C$ over $\mathbb F_q$, we have \[\omega_K =  \omega_C\] where $K =\mathbb F_q(C)$ is the function field of $C.$ \end{lemma}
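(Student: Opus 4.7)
The plan is to apply Lemma \ref{wedge-from-pairing} to reduce the comparison $\omega_K = \omega_C$ to verifying that the two elements induce the same system of alternating bilinear pairings on $H^1(C,\mathbb{Z}/\ell^m) = \mathrm{Cl}(K)^\vee[\ell^m]$ for every $m$. Since the degree map exhibits $\mathrm{Cl}(K) = \Pic(C)$ as an extension of $\mathbb{Z}$ by $\Pic^0(C)(k)$, its $\ell$-power torsion coincides with $\Pic^0(C)(k)_\ell$. To make sense of the equality $\omega_K = \omega_C$ one must check that $\omega_{m,K}$ is pulled back along the surjection $\mathrm{Cl}(K)^\vee[\ell^m] \twoheadrightarrow \Pic^0(C)(k)_\ell^\vee[\ell^m]$ and that its values match those induced by $\omega_C$.

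The first key ingredient is to factor the Artin--Verdier trace $H^3(C,\mu_{\ell^m}) \to \mathbb{Z}/\ell^m$ through the Leray spectral sequence for $C \to \spec \mathbb{F}_q$. Because $H^2(C_{\overline{k}},\mu_{\ell^m})$ is canonically $\mathbb{Z}/\ell^m$ with trivial Frobenius action and $\zeta_m$ is pulled back from $\spec \mathbb{F}_q$, the argument of Lemma \ref{P-1-case} generalizes essentially verbatim from $\mathbb{P}^1$ to $C$, yielding
$$(\zeta_m,c)_{\mathrm{AV}} \;=\; \tfrac{1-q}{\ell^n}\cdot \mathrm{tr}_{C_{\overline{k}}}(\bar c)$$
for every $c \in H^2(C,\mathbb{Z}/\ell^m)$, where $\bar c$ is the pullback to $C_{\overline{k}}$ and $\mathrm{tr}_{C_{\overline{k}}}$ is the geometric trace. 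The second key ingredient is the classical identification of the cup product on $H^1(C_{\overline{k}},\mathbb{Z}/\ell^m)$ with the Weil pairing on $T_\ell(A)/\ell^m$: under the self-duality of $T_\ell(A)/\ell^m$ coming from $\omega$, if $a,b \in H^1(C_{\overline{k}},\mathbb{Z}/\ell^m) = \Hom(T_\ell(A)/\ell^m,\mathbb{Z}/\ell^m)$ correspond to $\tfrac{1}{\ell^m}\omega(\bullet,s)$ and $\tfrac{1}{\ell^m}\omega(\bullet,t)$, then $\mathrm{tr}_{C_{\overline{k}}}(\bar a \cup \bar b) = \tfrac{1}{\ell^m}\omega(s,t)$.

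Combining these inputs and unwinding the definition $\omega_{m,K}(a,b) = -\tfrac{1}{2}(\zeta_m,a\cup b)_{\mathrm{AV}}$ gives
$$\omega_{m,K}(a,b) \;=\; -\tfrac{1}{2}\cdot\tfrac{1-q}{\ell^n}\cdot\tfrac{1}{\ell^m}\omega(s,t) \;=\; \tfrac{q-1}{2\ell^n}\cdot\omega^o_{H,m}(a',b'),$$
which is exactly the pairing on $H^\vee[\ell^m]$ induced by $\omega_C = \tfrac{q-1}{2\ell^n}\omega^o_H$ from Section \ref{omegapsisymplecticsimilitude}. Since these agree for all $m$, Lemma \ref{wedge-from-pairing} forces $\omega_K = \omega_C$. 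The main technical obstacle I anticipate is pinning down the cup-product/Weil-pairing identification with correctly tracked signs and twists; the cleanest route is to first establish this geometrically on $C_{\overline{k}}$ (where it follows from Poincar\'e duality together with the biduality identification of the Jacobian, in the same spirit as Lemma \ref{psi-psi-Weil}) and then check that the Frobenius-equivariances and Tate twists in the Leray spectral sequence conspire to produce exactly the normalization $\tfrac{q-1}{2\ell^n}$ when one descends from $C_{\overline{k}}$ to $C$.
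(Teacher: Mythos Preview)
Your overall plan is correct and matches the paper's: reduce via the uniqueness in Lemma~\ref{wedge-from-pairing} to comparing the pairings $\omega_{m,K}$ and $\omega_{C,m}$, identify the Weil pairing on the Tate module with the cup product on $H^1(C_{\overline{k}},\mathbb{Z}/\ell^m)$ followed by the geometric trace (the paper cites \cite[Ch.~5, Prop.~3.4]{SGA45} and \cite[V.2, Rmk~2.4(f)]{MilneEtaleCohomology} for this), and then verify the identity
\[
(\zeta_m,c)_{\mathrm{AV}} \;=\; \tfrac{1-q}{\ell^n}\,\mathrm{tr}_{C_{\overline{k}}}(\bar c)
\]
of linear forms on $H^2(C,\mathbb{Z}/\ell^m)$.

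The difference lies in how this last identity is established for a general curve $C$. The paper does \emph{not} argue directly via Leray; instead it proves the identity for $\mathbb{P}^1$ (Lemma~\ref{P-1-case}) and then fixes a finite map $f:C\to\mathbb{P}^1$ and checks that both linear forms are compatible with $f_*$: for the \'etale trace this is \cite[XVIII, Lemma~1.1.5]{sga4-3}, and for the Artin--Verdier side one uses the push--pull formula together with the pushforward compatibility of the isomorphism $H^3(-,\mathbb{G}_m)\cong\mathbb{Q}/\mathbb{Z}$. Your claim that the $\mathbb{P}^1$ argument ``generalizes essentially verbatim'' is not quite right: Lemma~\ref{P-1-case} exploits that $H^2(\mathbb{P}^1_k,\mathbb{Z}/\ell^m)$ is cyclic, generated by the Kummer class of a degree-one line bundle, so one only has to check the identity on a single class. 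For general $C$ the Leray filtration on $H^2(C,\mathbb{Z}/\ell^m)$ has an additional piece $H^1(\mathbb{F}_q,H^1(C_{\overline{k}},\mathbb{Z}/\ell^m))$ coming from the Jacobian. Your direct approach can still succeed, but you must separately argue that both linear forms vanish on this piece: the right-hand side because such classes die under pullback to $C_{\overline{k}}$, and the left-hand side because $\zeta_m$ lies in the $H^1(\mathbb{F}_q,H^0)$ part of the filtration and the cup product then lands in $H^2(\mathbb{F}_q,-)=0$. Once that is done, the remaining $H^0(\mathbb{F}_q,H^2(C_{\overline{k}}))$ piece is handled exactly as in Lemma~\ref{P-1-case}. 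The paper's pushforward argument has the advantage of avoiding this case split entirely.
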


\begin{proof} By the uniqueness statement in Lemma \ref{wedge-from-pairing}, it suffices to check that the bilinear forms $\omega_{m,K}$ defined in \S \ref{omega-pairings} is equal to  $  \omega_{C,m}=  \ell^m (a \otimes b) (\omega_{C} ) $.
% In the function field case, $\omega \in \wedge^2  \Pic^0(X)$ defines an order $\ell^n$ symplectic bilinear form on $\Hom( \Pic^0(X),\mathbb Z/\ell^m)$, hence an order $\ell^n$ symplectic bilinear form on $H^1(X,\mathbb Z/\ell^m)$ as $\Hom( \Pic^0(X),\mathbb Z/\ell^m)$ is a quotient of $H^1(X,\mathbb Z/\ell^m)$.
% 
% 
% To check that these agree, we first calculate the induced linear form on $H^2(X, \mathbb Z/\ell^m)$.

By definition, the bilinear form $\omega_{m,K}$ takes classes $a,b$ to $ - \frac{1}{2} (\zeta_m, a \cup b)_{AV}$. On the other hand, the bilnear form $\omega_{m,C}$ can be obtained by pulling back to $A[\ell^m]$, equivalently, $H^1 (C_{\overline{k}}, \mathbb Z/\ell^m)$, and then taking $\frac{ 1- q}{ 2 \ell^n}$ times the Weil pairing. The Weil pairing is equivalent to the cup product $H^1 (C_{\overline{k}}, \mathbb Z/\ell^m) \times H^1 (C_{\overline{k}}, \mathbb Z/\ell^m) \to H^2 (C_{\overline{k}}, \mathbb Z/\ell^m)$ followed by the trace map on  $H^2 (C_{\overline{k}}, \mathbb Z/\ell^m)$. 
See \cite[Chapter 5, Prop 3.4]{SGA45} and \cite[V.2, Rmk 2.4(f)]{MilneEtaleCohomology}.

Because cup product is compatible with pullback from $C_k$ to $C_{\overline{k}}$, it suffices to check that the two linear forms on $H^2(C_k, \mathbb Z/\ell^m)$, the first one being the Artin-Verdier pairing with $\zeta_m$ times $\frac{-1}{2}$, and the second being the pullback to $C_{\overline{k}}$ followed by the trace times $\frac{q-1 }{2\ell^n}$, are equal. 
\bigskip

We can first handle the case $C = \mathbb P^1$, which is exactly Lemma \ref{P-1-case}.

We will now use the case of $\mathbb P^1$ to handle the general case. To do this, we fix a map $ f:C \to \mathbb P^1$ and check separately that both our linear forms are compatible with the projection to $\mathbb P^1$ in the sense that evaluating the form on a given class in  $H^2(C_k, \mathbb Z/\ell^m)$ and pushing forward the form to $H^2(\mathbb P^1_k ,\mathbb Z/\ell^m)$ and then evaluating the linear form give the same result. 

For the trace map in \'{e}tale cohomology, this compatibility with pushforward is \cite[XVIII, Lemma 1.1.5]{sga4-3}. This is stated over an algebraically closed field, so we must in addition use the fact that pushforward along $C \to \mathbb P^1$ commutes with pullback to an algebraically closed field.

For Artin-Verdier duality, recall that the pairing of $\beta\in H^2(C_k, \mathbb Z/\ell^m)$ with $\zeta_m$ proceeds in 2 steps. We first consider $\zeta_m \cup\beta\in H^3(C_k,\Z/\ell^m)$
and then apply the isomorphism $H^3(C_k,\mathbb G_m)\cong \Q/\Z$ and $H^3(C_k,\mathbb G_m)[\ell^m]\cong H^3(C_k,\Z/\ell^m)$. By the push-pull formula, it follows
that $$f_*\beta\cup \zeta_m = f_*(\beta\cup \zeta_m)$$ where we abuse notation slightly by using $\zeta_m$ to denote cohomology classes on $C$ and on $\mathbb P^1$.
It is therefore sufficient to check that the isomorphism $H^3(C_k,\mathbb G_m)\cong \Q/\Z$ commutes with pushforward. But this follows from its definition via the Brauer group
in \cite[II.2.1]{MilneArithmeticDuality}, together with the following commutative diagram:

\[
\xymatrix{
0\ar[r]&\mathbb G_{m,C}\ar[r]\ar[d]& g_*\mathbb G_{m,\eta}\ar[r]\ar[d]& \mathrm{Div}_C\ar[r]\ar[d]& 0\\
0\ar[r]&\mathbb G_{m,\mathbb P^1}\ar[r]& g_*\mathbb G_{m,\eta}\ar[r]& \mathrm{Div}_{\mathbb P^1}\ar[r]& 0\\
}
\]

where the vertical maps are the natural norm and pushforward maps respectively.

Because the linear forms are equal on $\mathbb P^1$, and preserved by projection to $\mathbb P^1$, they are equal in general.

\end{proof}

\section{Random Matrix Theory} \label{linearrandommodels}

For the entirety of this section, we fix a prime number $\ell$ and a positive integer $n$.

\subsection{The Linear Model: Motivation} \label{linearrandommodelmotivation}
If we draw our intuition from the function field setting, it is natural to want to model the distribution of $\coker(F - 1)$ where $F$ is sampled randomly from 
$\GSP^{(q)}(\ZZ_\ell)$. However,  the non-linearity of this model makes it hard to work with. Instead, we linearize in the following way: If $F \in \GSP_{2g}^{(q)}(\ZZ_\ell)$ is close to 1, then 

$$\coker(\log(F)) = \coker(\log(1 + (F-1))) = \coker(F-1).$$

It is thus plausible that $\coker(F-1)$ and $\coker(\log(F))$ are distributed in the same way.  The Lie algebra of $\Sp_{2g}(\ZZ_\ell)$ consists of skew-symplectic matrices\footnote{That is, matrices $M$ such that $\langle Mv, w\rangle= -\langle v, Mw \rangle$ for $\langle \cdot, \cdot \rangle$ the symplectic pairing.}.  Therefore, the logarithm of a general element in $\GSP_{2g}^{(q)}(\ZZ_\ell)$ should take the shape $\frac{1}{2} \log(q) + M,$ where $M$ is skew-symplectic.  Assuming $\ell$ is odd, the assumption $\ell^n || q - 1$ implies that $\frac{1}{2} \log q$ has valuation $n.$  
Therefore, the cokernel of $M + \frac{1}{2} \log q$ has the same distribution as the cokernel of $M + \ell^n.$  This motivates our linear random model:
we consider the cokernel of $M + \ell^n,$ where $M$ is randomly sampled from the Lie algebra of $\Sp_{2g}(\ZZ_\ell)$ with respect to its additive Haar measure.

\subsection{The Linear Model} \label{linearrandommodelcalculations}
Let $\ell$ be an odd prime and consider the following model. Let $\Z_{\ell}^{2g},\omega = \sum_{i=1}^g e_i \wedge f_i$ be the standard symplectic space, and $M$ be a skew-symplectic endomorphism. Let $G=G_M$ be the cokernel of $M+\ell^n.$ We further wish to endow $G_M$ with additional data so as to obtain an element of 
$\cC_{\ell,n}$: 

The first structure is the pushforward $\overline{\omega}=\omega_M\in\wedge^2 G$ of the standard symplectic form, where $\overline{\cdot}$ denotes reduction mod $(M + \ell^n)(\ZZ_\ell^{2g})$. Note that $\omega_G$ is $\ell^n$-torsion, 
since 

% $$\omega(Mv,w)+\omega(v,Mw) = 2\textcolor{red}{\ell^n}\omega(fv,fw).$$

\begin{align*}
0 &=\sum \overline{Me_i} \wedge \overline{f_i} + \overline{e_i} \wedge \overline{Mf_i} \\
&= -2\ell^n \sum \overline{e_i} \wedge \overline{f_i}. \\
&= -2\ell^n \omega_M.
 \end{align*}

The second structure is the isomorphism $\psi_M:G^{\vee}[\ell^n]\ra G[\ell^n]$ 
stemming from the snake lemma, as in \S \ref{omegapsisymplecticsimilitude}. Explicitly, if we dualize we get an identification of $G^{\vee}$ with the kernel of $\ell^n-M$ on $(\Q_{\ell}/\Z_{\ell})^{2g}$,
and we define $\psi_G$ by sending $\alpha$ to $(M+\ell^n)\alpha'$, where $\alpha'$ is any lift of $\alpha$ to $\Q_{\ell}^{2g}$. 
This provides us with a bilinear form on $G^{\vee}[\ell^n]\times G^{\vee}$
given by $\langle \alpha,\beta\rangle_G:= \beta(\psi(\alpha))$ and like in Lemma \ref{compatibilitypsiomega}, the triple $(G_M,\omega_M,\psi_M)$ satisfies the compatibility relation \eqref{psiomegacomp}.

\begin{definition}\label{linearmeasuredef}
We define the linear measure $\mu_g$ on $\cC_{\ell,n}$ as the pushforward of the Haar measure on skew-symplectic matrices $M$ under the map
$M\ra (G_M,\omega_M,\psi_M)$.

\end{definition}

We shall show that  as $g\ra\infty$ the $\mu_g$  converge to a natural probability measure $\mu$, and we shall begin by computing its moments.

\subsection{Moments of $\mu_g$} First, we need a couple preliminary lemmas:

\begin{lemma}\label{allsur}

Fix a finite abelian group $G$. Fix $\omega_G \in \wedge^2 G$ satisfying $\ell^n \omega_G = 0.$  If $g$ is large enough then there exists a surjection $f :\Z_\ell^{2g}\ra G$ such that $f \omega = \omega_G$, and for any such surjection $f$ there exists a skew-symplectic $M$ with $f\circ(M+\ell^n)=0$. 

\end{lemma}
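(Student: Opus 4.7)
The proof naturally splits into two parts. For the first, I would start by writing $\omega_G = \sum_{i=1}^k a_i \wedge b_i$ as a sum of simple wedges (always possible in $\wedge^2 G$), and choose generators $c_1,\ldots,c_r$ of $G$. For $g \geq k+r$, I would then define $f$ on the symplectic basis $(e_i, f_i)$ by $f(e_i) = a_i, f(f_i) = b_i$ for $i \leq k$, $f(e_{k+j}) = c_j, f(f_{k+j}) = 0$ for $j \leq r$, and $f = 0$ on remaining basis vectors. This $f$ is surjective thanks to the $c_j$ and satisfies $f_*\omega = \sum_{i=1}^g f(e_i) \wedge f(f_i) = \omega_G$.

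For the second part, the idea is to translate the problem into tensor language using the perfect pairing $\omega$. The pairing induces an isomorphism $V \cong V^\vee$, hence $\End(V) = V \otimes V^\vee \cong V \otimes V$. Under this identification, a direct computation in a symplectic basis shows: skew-symplectic endomorphisms correspond to $\sym^2 V \subset V \otimes V$; the subspace $\Hom(V, H)$ corresponds to $H \otimes V$; and $\mathrm{id}_V$ corresponds to $-\omega \in \wedge^2 V$, so $-\ell^n \mathrm{id}_V$ corresponds to $\ell^n \omega$. Consequently, the condition $f \circ (M + \ell^n) = 0$, which says $M + \ell^n \mathrm{id}_V \in \Hom(V, H)$, becomes: find $M' \in \sym^2 V$ with $M' - \ell^n \omega \in H \otimes V$. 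Since $\ell$ is odd, $V \otimes V = \sym^2 V \oplus \wedge^2 V$, and this is equivalent to finding $h \in H \otimes V$ whose antisymmetric part equals $-\ell^n \omega$; then $M' := h + \ell^n \omega$ lies in $\sym^2 V$ by construction.

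The image of the antisymmetric projection $H \otimes V \to \wedge^2 V$ is $H \wedge V$, which a short argument (via Smith normal form on $H \subset V$) identifies with $\ker(\wedge^2 f : \wedge^2 V \to \wedge^2 G)$. So the obstruction to finding the desired $h$ is $\wedge^2 f(-\ell^n \omega) = -\ell^n \omega_G$, which vanishes precisely by the hypothesis that $\omega_G$ is $\ell^n$-torsion. I don't anticipate a serious obstacle: the conceptual content is that $\ell^n \omega_G$ is exactly the obstruction class to the lift, and the remaining work is bookkeeping the dualities and symmetric/antisymmetric decompositions without sign errors.
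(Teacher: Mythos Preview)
Your proof is correct and takes a genuinely different route from the paper's.

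For the second part, the paper argues as follows: by Witt's extension theorem over $\Z_\ell$, all surjections $f$ with $f_*\omega=\omega_G$ form a single $\Sp_{2g}(\Z_\ell)$-orbit, so it suffices to exhibit $M$ for one carefully chosen $f$; it then builds $f$ and $M$ explicitly as a direct sum of $2\times 2$ blocks. Your approach instead treats an \emph{arbitrary} such $f$ directly: using the symplectic identification $\End(V)\cong V\otimes V$, you recast the problem as finding $M'\in\sym^2 V$ with $M'-\ell^n\omega\in H\otimes V$, and identify the obstruction as the image of $\ell^n\omega$ in $\wedge^2 V/(H\wedge V)\cong\wedge^2 G$, which is $\ell^n\omega_G=0$. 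This is more conceptual and completely avoids the (nontrivial, over $\Z_\ell$) appeal to Witt; it also makes transparent \emph{why} the torsion hypothesis on $\omega_G$ is exactly what is needed. The paper's approach, on the other hand, is more hands-on and yields an explicit $M$, which is sometimes useful downstream (e.g., in the proof of Lemma~\ref{nonlinearmoments}). Your first-part construction is also simpler than the paper's block construction, though both work.
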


\begin{proof}

By Witt's extension theorem the set of all $f$ satisfying $f\omega=\omega_G$ forms a single orbit under $\Sp_{2g}\Z_\ell$. For the second claim, it is therefore enough to find $M$ satisfying the conditions of the Lemma relative to a single $f$ for which $f\omega=\omega_G$. For every $k>0,$ any $2\times 2$ anti-diagonal matrices whose off-diagonal entries are $- x,2\ell^n+x$ with valuations $k,n$ resp. defines a surjection  
$f:\Z_\ell^2\ra H_k= \Z/\ell^n \oplus \Z/\ell^r$ for which $f\omega = \omega_k$ generates $\wedge^2H_k[\ell^n]$. An arbitrary $\omega_G$ can be obtained by pushing forward 
$\oplus \omega_k$ under a surjective map $\oplus_k H_k \ra G$. This verifies the first claim, and we can now define $M$ for the second claim. Let $M$ be the direct sum of the transformations defined by
\begin{align*}
(M + \ell^n)e_1 &= (2\ell^n + x) e_1 \\
(M + \ell^n)e_2 &= -x \cdot e_2.
\end{align*}
Then $M$ has trace 0 and so is skew-symplectic with integral entries and $\im(M + \ell^n) = \ker f.$  The claim follows.

\end{proof}

\begin{lemma}\label{eqdist}

Fix $G,\omega_G\in\wedge^2G$. As $g \to \infty,$ the proportion of surjections $f:\Z_\ell^{2g}\ra G$ for which $f\omega = \omega_G$ approaches $\frac{1}{\mid\wedge^2G\mid}.$  

\end{lemma}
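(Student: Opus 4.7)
The plan is to apply Fourier analysis on the finite abelian group $\wedge^2 G$. Identify $\Hom_{\Z_\ell}(\Z_\ell^{2g}, G) = G^{2g}$ via $f \leftrightarrow (v_1, w_1, \ldots, v_g, w_g)$ with $v_i = f(e_i)$, $w_i = f(f_i)$; under this identification $f\omega = \sum_{i=1}^{g} v_i \wedge w_i =: \Phi(v, w) \in \wedge^2 G$. Fourier inversion on $\wedge^2 G$ gives
$$|\Phi^{-1}(\omega_G)| = \frac{1}{|\wedge^2 G|} \sum_{\chi \in (\wedge^2 G)^\vee} \overline{\chi(\omega_G)} \left( \sum_{v, w \in G} \chi(v \wedge w) \right)^{g},$$
and the task reduces to bounding the inner character sums.

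Each character $\chi$ of $\wedge^2 G$ can be written as $\chi(v \wedge w) = \exp(2\pi i B_\chi(v, w))$ for an antisymmetric bilinear form $B_\chi: G \times G \to \Q/\Z$ (using that $\ell$ odd makes $G \otimes G$ split as symmetric plus antisymmetric), and $B_\chi$ is non-zero whenever $\chi \neq 1$. For each fixed $v$, the sum $\sum_w \exp(2\pi i B_\chi(v, w))$ equals $|G|$ if $B_\chi(v, \cdot) \equiv 0$ and vanishes otherwise, so the double sum equals $|G| \cdot |\mathrm{rad}(B_\chi)|$. For $\chi \neq 1$, the radical $\mathrm{rad}(B_\chi) = \{v : B_\chi(v, \cdot) = 0\}$ is a proper subgroup of the $\ell$-group $G$ and hence has index at least $\ell$, so the character sum is at most $|G|^2/\ell$ in absolute value. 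Combining the $\chi = 1$ main term with the non-trivial contributions gives
$$|\Phi^{-1}(\omega_G)| = \frac{|G|^{2g}}{|\wedge^2 G|} + O_G(|G|^{2g} \ell^{-g}).$$

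Finally, I would pass from all homomorphisms to surjections. The proportion of non-surjective $f \in G^{2g}$ is $O_G(\ell^{-2g})$, since every such $f$ factors through one of the finitely many maximal subgroups of $G$, each of index at least $\ell$. Therefore
$$|\{f \in \sur(\Z_\ell^{2g}, G) : \Phi(f) = \omega_G\}| = |\Phi^{-1}(\omega_G)| - O_G(|G|^{2g}\ell^{-2g}) = \frac{|G|^{2g}}{|\wedge^2 G|} + o_G(|G|^{2g}),$$
and dividing by $|\sur(\Z_\ell^{2g}, G)| = |G|^{2g}(1 - o_G(1))$ yields the limit $1/|\wedge^2 G|$. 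No step is expected to be a serious obstacle: the argument reduces to a standard character-sum estimate together with the routine observation that almost all tuples in $G^{2g}$ generate as $g \to \infty$.
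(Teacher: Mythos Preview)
Your argument is correct and is essentially the explicit, quantitative version of the paper's proof. The paper observes that the pushforward measure $\nu_g$ on $\wedge^2 G$ is the $g$-fold convolution of $\nu_1$ and that $\nu_g$ has full support for large $g$ (by Lemma~\ref{allsur}), then invokes the standard fact that convolution powers of a measure with full support on a finite abelian group converge to uniform; you unwind this via Fourier analysis on $\wedge^2 G$, identifying each nontrivial character with a nonzero alternating form and bounding its character sum by $|G|\cdot|\mathrm{rad}(B_\chi)|\le |G|^2/\ell$. In fact the paper carries out exactly this Fourier computation later, in the proof of Lemma~\ref{upperbound}, so your approach is not only correct but entirely in the spirit of the surrounding arguments; the only difference is that the paper's proof of this particular lemma is stated qualitatively, whereas yours yields the explicit rate $O_G(\ell^{-g})$.
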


\begin{proof}

First, note that since almost all homomorphisms are surjections, we can and do use a random homomorphism $f$ instead of a random surjection. Let $\nu_g$ be the pushforward
measure on $\wedge^2G$ thus obtained for a given $g$. Note that $\nu_g$ is the $g\,$th convolution of $\nu_1$. Since $\nu_g$ has full support by Lemma \ref{allsur}, the
claim follows.  
\end{proof}

We now compute the asymptotics of the $\mu_g$ and their moments.

\begin{thm}\label{totmommeas}

Fix $(G,\omega_G,\psi_G)\in\cC_{\ell,n}$. As $g\ra\infty$, we have $$\E_{\mu_g}\mid\sur(*,(G,\omega_G,\psi_G))\mid \ra \frac{1}{\left| \sym^2G[\ell^n] \right|}.$$ 

Moreover, if $\psi_G$ is an isomorphism, then  $$\mu_g((G,\omega_G,\psi_G)) \ra \frac{c_\ell}{\left| \Aut(G,\omega_G,\psi_G) \right| \cdot \left| \sym^2G[\ell^n] \right|}$$ where $c_\ell=\prod_{i=0}^{\infty} (1-\ell^{-(2i+1)}).$ 
And if $\psi_G$ is not an isomorphism, then
$$\mu_g((G,\omega_G,\psi_G)) \ra 0.$$

\end{thm}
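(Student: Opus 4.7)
The plan is to address the three claims in sequence. First, the vanishing $\mu_g((G,\omega_G,\psi_G)) = 0$ when $\psi_G$ is not an isomorphism is immediate: on the full-measure set of skew-symplectic $M$ for which $M+\ell^n$ is invertible over $\mathbb{Q}_\ell$, the snake-lemma construction of $\S\ref{omegapsisymplecticsimilitude}$ produces $\psi_M$ as an honest isomorphism, so $\mu_g$ is supported on triples with invertible $\psi$ and vanishes identically elsewhere.

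For the moment statement, I swap expectation with summation to rewrite
\[
\E_{\mu_g} \#\sur(\ast, (G,\omega_G,\psi_G)) = \sum_{f : V \twoheadrightarrow G} \mathbb{P}_M \bigl(f \circ (M+\ell^n) = 0,\ f_\ast \omega = \omega_G,\ f_\ast \psi_M = \psi_G\bigr),
\]
since a surjection of BEGs $(G_M,\omega_M,\psi_M) \twoheadrightarrow (G,\omega_G,\psi_G)$ is the same as a surjection $f : V \twoheadrightarrow G$ factoring through $G_M$ and compatible with the extra structure. The key reparametrization is that skew-symplectic matrices $M$ correspond bijectively, with Haar measures matching, to symmetric bilinear forms $B : V \times V \to \mathbb{Z}_\ell$ via $B(v,w) = \omega(v, Mw)$. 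Under the identification $G^\vee \hookrightarrow V \otimes \mathbb{Q}_\ell/\mathbb{Z}_\ell$ that sends $\chi$ to the unique $t_\chi$ with $\chi \circ f = \omega(-, t_\chi)$, the condition $f \circ (M+\ell^n) = 0$ becomes $B(-, t_\chi) \equiv \ell^n \omega(-, t_\chi)$ modulo the order of $\chi$. The compatibility relation \eqref{psiomegacomp} on $(G,\omega_G,\psi_G)$ is exactly what guarantees this system of linear constraints on $B$ is consistent, and the additional condition $f_\ast \psi_M = \psi_G$ then pins down the symmetric part of $B$ restricted to $\tilde G^\vee \times \tilde G^\vee$, which amounts to specifying a symmetric bilinear form on $G^\vee[\ell^n]$.

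With the constraint analysis in hand, I combine three asymptotic counts: the number of surjective $f$ is $|G|^{2g}(1+o(1))$; the proportion with $f_\ast \omega = \omega_G$ tends to $1/|\wedge^2 G|$ by Lemma \ref{eqdist}; and the conditional probability that random $B$ satisfies the prescribed linear constraints equals the inverse of the image size of the constraint map, which a dimension count shows to be $|\wedge^2 G| / (|G|^{2g} |\sym^2 G[\ell^n]|)$ in the limit. Their product yields the moment $1/|\sym^2 G[\ell^n]|$. For the pointwise limit when $\psi_G$ is invertible, I run the same analysis but count pairs $(f, M)$ giving an isomorphism of BEGs (requiring $\ker f = \im(M+\ell^n)$ rather than just containment) and then divide by $|\Aut(G,\omega_G,\psi_G)|$ to pass from pairs to unordered triples. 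The extra genericity that $M+\ell^n$ induce a bijection on a symplectic quotient complementary to the constrained subspace contributes a factor tending to $c_\ell = \prod_{i \ge 0}(1 - \ell^{-(2i+1)})$, by a Friedman--Washington-style density computation adapted to skew-symplectic matrices. The main obstacle throughout will be the bookkeeping of how the antisymmetric constraints from $\omega_G$ and symmetric constraints from $\psi_G$ interact across summands of $G^\vee$ of differing orders $\ell^{e_i}$; the compatibility relation \eqref{psiomegacomp} is the essential structural input that makes the accounting close up to exactly the stated formulas.
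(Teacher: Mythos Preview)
Your proposal is correct and follows essentially the same approach as the paper: both identify skew-symplectic $M$ with symmetric forms via the symplectic pairing, use Lemma~\ref{eqdist} to count surjections with the correct $\omega$-pushforward, and verify via the compatibility relation~\eqref{psiomegacomp} that the linear constraints imposed by $\psi_G$ cut out a coset of the right codimension. The paper works in explicit matrix coordinates (tracking entries $A_{i,j}$ for $G=\oplus_i \mathbb{Z}/\ell^{m_i}$) where you phrase things abstractly via bilinear forms and the elements $t_\chi$, but the substance---including the block-triangular analysis giving the $c_\ell$ factor from invertibility of the complementary symmetric block---is identical.
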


\begin{proof}

Fix a surjection $f$ with $f\omega = \omega_G$. By Lemma \ref{eqdist}, there are  $(1-o_g(1))\cdot\frac{|G|^{2g}}{\#\wedge^2G}$ of these.  By Lemma \ref{allsur}, for large enough $g$ there is at least one skew-symplectic $M$ satisfying $f \circ (M + \ell^n) = 0.$  The set of all such $M$ form a coset for the group of those skew-symplectic matrices $N$ with $f\circ N=0$. 

Identifying $L=\Z_\ell^{2g}$ with $L^\ast$ via the symplectic pairing, $M$ can be viewed as a self-dual map from
$L^\ast$ to $L.$  Thus we are looking for the measure of symmetric matrices with image in the kernel of $f$. 
Write $$G=\oplus_{i=1}^r \left( \Z/\ell^{m_i}\right) \cdot b_i.$$ Pick a basis $e_i$ for $L$ so that $f(e_i)=b_i$ with $b_i=0$ for $i>r$ by convention
% \textcolor{red}{Fix a basis $x_1,\cdots, x_{2g}$ of $L.$  For $g$ large enough, there is \emph{some} surjection $f': \mathbb{Z}_\ell^{2g} \rightarrow G$ for which 
%\begin{itemize}
%\item
%$f' \omega = \omega_G,$ 
%\item
%$f x_{i_k} = b_k$ for some $i_1, ... , i_r.$  WLOG, we assume these indices are $1, \ldots, r.$   
%\end{itemize}
%In fact, there are many such surjections for $g$ large.  Subtracting appropriate multiples of $x_1,\ldots, x_r$ from every $x_k, k > r,$ we can find a basis $y_1, \ldots, y_{2g}$ of $L$ with $f' y_k = b_k$ if $i \leq r$ and $0$ if $k > r.$  Since all such surjections lie in a single $\mathrm{Sp}_{2g}(\mathbb{Z}_\ell)$-orbit, $f = f' \circ g$ for some $g \in \mathrm{Sp}_{2g}(\mathbb{Z}_\ell).$  Then $e_k = g^{-1} y_k$ is a basis of $L$ for which $f(e_k) = b_k$ if $k \leq r$ and $0$ if $k > r.$}   
Let $b_i^\vee$ denote the dual basis for $G^\vee.$  Let $A$ denote the matrix of $M+\ell^n$ with respect to the bases $e_i$ 
and $e_i^*$, so that $Ae_i^*=\sum_j A_{i,j}e_j$. Then the condition that $f\circ (M+\ell^n)=0$ is equivalent to $A_{i,j}$ being divisible by $\ell^{m_j}$. Moreover, if we let
$r_i:=\max(0,m_i-n)$ then $\ell^{r_i}b_i$ is a basis for $G[\ell^n]$ and the the map $\psi_G$ is given by
$$\psi_G(\ell^{r_i}b_i^\vee)=\sum_j \frac{A_{i,j}}{\ell^{m_i-r_i+r_j}}\ell^{r_j}b_j.$$ 

Now, changing $M$ amounts to adding a symmetric matrix $B$ to $A$. Clearly, $B$ must satisfy $\ell^{\max(m_i,m_j)} \mid B_{i,j}$. The Haar measure
of all such $B$ is easily computed to be $\frac{\mid\wedge^2G\mid}{|G|^{2g}}$. To finish the proof of the first part of the lemma, we must show that any $\psi_G$ that is compatible
with $\omega_G$ can occur as above with an appropriate choice of $A'$. Now for any two such $\psi^1_G,\psi^2_G$ consider the difference $\psi^3_G=\psi^1_G-\psi^2_G$. 
We may write $$\psi^3_G(\ell^{r_i}b_i^\vee)=\sum_j \frac{a_{i,j}}{\ell^{m_i-r_i+r_j}}\ell^{r_j}b_j.$$  The element $a_{i,j}$ is well defined modulo 
$\ell^{\min(m_j,n)+m_i+r_j-r_i} = \ell^{m_j+\min(m_i,n)}$.
 Applying the compatibility relation \ref{psiomegacomp}  with $b_i^\vee,b_j^\vee$ and $r=\max(r_i,r_j)$ implies that $a_{i,j}$ and $a_{j,i}$ are equal modulo $\ell^{\min(m_i+m_j,n+m_i,n+m_j)}$. Since $$\min(m_j+\min(m_i,n), m_i+\min(m_j,n)) = \min(m_i+m_j,n+m_i,n+m_j)$$ it follows that we may pick a single integer which represents
$a_{i,j}$ and $a_{j,i}$ simultaneously. Setting $A'_{i,j}$ to be this integer completes the proof.

\medskip

Now, for the second part of the lemma, note that we are now looking for matrices $M$ such that the image of $M+\ell^n$ is equal to the kernel of $f$. This is equivalent to the two conditions
\begin{itemize}
\item
$\ell^{m_j} | A_{i,j},$ the condition from earlier which guarantees that $\im(M + \ell^n) \subset \ker f$
\item 
$\ell^{\sum m_i}\mid\mid\det A,$ which combined with the above bullet point implies that $\im(M + \ell^n) = \ker f.$ 
\end{itemize}
After taking out a factor of $\ell^{m_j}$ from the $j \,$th column in $A$, we see that the resulting matrix is block upper-triangular, consisting of an
$r\times r$ matrix $C$ and a $2g-r\times 2g-r$ matrix $D$, and we are looking for the probability that both of these are invertible. 

Since $k\geq 1$, the reduction of $D$ mod $\ell$ is just a symmetric matrix, which is invertible with probability tending to $c_\ell$(this is the $t=0$ case of Lemma \ref{symmetricprob}). 
%\footnote{\blue{(give reference, this is not hard)} \textcolor{red}{$\mathrm{GL}_n(\mathbb{F}_q)$ acts on symmetric $n \times n$ matrices by $g \cdot A = g A g^t.$  There are two orbits, corresponding to the two isomorphism classes of quadratic forms $Q_1,Q_2.$  The number of invertible symmetric matrices equals $\frac{\# \mathrm{GL}_n(\mathbb{F}_q)}{\# O(Q_1, \mathbb{F}_q)} + \frac{\# \mathrm{GL}_n(\mathbb{F}_q)}{\# O(Q_2, \mathbb{F}_q)}.$  We could refer to Steinberg's book ``Endomorphisms of linear algebraic groups" Theorem 11.16 (that's a general formula for the order of a finite group of Lie type).}}.

We claim that $C$ is invertible iff $\psi$ is invertible, which would complete the proof. Note that $C$ is block upper triangular, as is $\psi$, so to check invertibility we only have to restrict
to the blocks where $m_i$ is fixed. The claim is now immediate, as the matrices for $C$ and $\psi$ are both the appropriate submatrices of $\left(\frac{A_{i,j}}{\ell^{m_i}}\right)_{i,j}$.

\end{proof}

We shall also need a uniform upper bound for the intermediate measures $\mu_g$, so as to apply Fatou's Lemma when we study the limiting measure

\begin{lemma}\label{upperbound}

There exists an absolute constant $c$ such that 
$$\sum_{\omega_G\in \wedge^2G[\ell^n]}\#\sur\left((\Z_\ell^{2g},\omega),(G,\omega_G)\right)\leq c|G|^{2g}\cdot \frac{|\wedge^2G[\ell^n]|}{|\wedge^2 G|}.$$
\end{lemma}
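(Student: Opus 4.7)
The strategy is Fourier analysis on the finite abelian group $\wedge^2 G$. Since every surjection is in particular a homomorphism, it suffices to bound
\[
\#\{f: \Z_\ell^{2g} \to G \text{ homomorphism}, \ f\omega \in \wedge^2 G[\ell^n]\}.
\]
Applying Plancherel, writing the indicator of $\wedge^2 G[\ell^n]$ as an average of characters of $\wedge^2 G$ trivial on it, this count equals
\[
\frac{|\wedge^2 G[\ell^n]|}{|\wedge^2 G|} \sum_\chi \sum_f \chi(f\omega),
\]
where $\chi$ ranges over characters of $\wedge^2 G$ trivial on $\wedge^2 G[\ell^n]$. Crucially the inner sum factorizes: since $f\omega = \sum_{i=1}^g f(e_i) \wedge f(f_i)$, one gets $\sum_f \chi(f\omega) = \prod_{i=1}^g \sum_{x,y\in G}\chi(x\wedge y) = (|G|\cdot|\mathrm{Rad}(\chi)|)^g$, where $\mathrm{Rad}(\chi) = \{x\in G : \chi(x\wedge y)=0 \text{ for all } y\in G\}$. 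The lemma therefore reduces to the bound $\sum_\chi (|\mathrm{Rad}(\chi)|/|G|)^g \leq c$ for an absolute constant $c$.

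Next, I would perform a structural reduction. Decompose $G = H' \oplus H''$, where $H'$ is generated by cyclic summands of order dividing $\ell^n$ and $H''$ by those of order greater than $\ell^n$. Since $H'$ is annihilated by $\ell^n$, both $\wedge^2 H'$ and $H'\otimes H''$ are contained in $\wedge^2 G[\ell^n]$; hence every character $\chi$ appearing in the sum factors through $\wedge^2 H''$, and a direct check gives $\mathrm{Rad}(\chi) = H' \oplus \mathrm{Rad}_{H''}(\chi)$, so $|\mathrm{Rad}(\chi)|/|G| = |\mathrm{Rad}_{H''}(\chi)|/|H''|$. The problem is thus reduced to the analogous bound on characters of $\wedge^2 H''$ trivial on $\wedge^2 H''[\ell^n]$, where every cyclic summand of $H''$ has exponent strictly greater than $\ell^n$.

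The surjectivity hypothesis enters by forcing $g \geq r''$, where $r'' := \dim_{\F_\ell}(H''/\ell H'')$ is the number of generators of $H''$. Indeed, any surjection $f$ with $f\omega \in \wedge^2 G[\ell^n]$ induces, by reduction mod $\ell$ and projection to $H''$, a surjection $\bar f: \F_\ell^{2g} \twoheadrightarrow H''/\ell H'' \cong \F_\ell^{r''}$ with $\wedge^2 \bar f(\omega) = 0$, because the image of $\wedge^2 G[\ell^n]$ in $\wedge^2(G/\ell G)$ projects trivially onto $\wedge^2(H''/\ell H'')$. A surjection of a symplectic space whose pushforward of the symplectic form vanishes must have co-isotropic kernel, and comparing dimensions $\dim(\ker \bar f)^\perp = r''$ and $\dim \ker \bar f = 2g - r''$ gives $g \geq r''$.

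Finally, for $g \geq r''$, I would bound $\sum_\chi (|\mathrm{Rad}_{H''}(\chi)|/|H''|)^g$ by classifying characters $\chi$ according to the Smith normal form of the associated antisymmetric integer matrix on $H''$. In the model case $H'' = (\Z/\ell^{m})^{r''}$, a standard count gives $N(r'',2k) \leq C \ell^{2kr'' - 2k^2 - k}$ antisymmetric forms of mod-$\ell$ rank $2k$, while $(|\mathrm{Rad}_{H''}(\chi)|/|H''|)^g \leq \ell^{-2kg}$; taking $g \geq r''$ yields
\[
\sum_k N(r'',2k)\,\ell^{-2kg} \leq \sum_{k\geq 0} C\,\ell^{-2k^2-k},
\]
which converges to a constant depending only on $\ell$. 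The main obstacle I anticipate is carrying out this bookkeeping uniformly when the cyclic summands $\Z/\ell^{m_j}$ of $H''$ have varying exponents $m_j > n$, which requires tracking the full Smith invariants rather than just the mod-$\ell$ rank; the geometric decay principle should nevertheless persist, yielding an absolute constant $c$.
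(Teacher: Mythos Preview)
Your Fourier-analytic setup, the factorization $\sum_f \chi(f\omega) = (|G|\cdot|\mathrm{Rad}(\chi)|)^g$, and the rank bound $g \geq r''$ coming from the isotropic-kernel argument are exactly what the paper does. The only divergence is in the final step, bounding $\sum_\chi [G:\mathrm{Rad}(\chi)]^{-g}$.

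You propose to classify characters on $\wedge^2 H''$ by Smith invariants of the associated antisymmetric form and sum the contributions; you correctly flag the varying-exponent case as the obstacle, and while this can be pushed through, it is laborious. The paper sidesteps this entirely with a uniform trick: since every relevant $\chi$ has $\mathrm{Rad}(\chi)\supset G[\ell^n]$, it descends to an alternating form on $G' := G/G[\ell^n]$ (your $H''/H''[\ell^n]$), which has $\ell$-rank at most $g$. Grouping characters by the quotient $H = G'/(\mathrm{Rad}(\chi)/G[\ell^n])$ and using $\#\mathrm{Surj}(G',H)\leq |H|^g$, one gets
\[
\sum_{\chi} [G:\mathrm{Rad}(\chi)]^{-g} \;\leq\; \sum_{H} \frac{|\wedge^2 H|}{|\Aut H|},
\]
the sum running over \emph{all} finite abelian $\ell$-groups $H$, independent of $G$; a short partition calculation (Lemma~\ref{part}) shows this converges. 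This replaces your anticipated case analysis with a single universal series, and is the clean way to finish your argument.
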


\begin{proof}

First, we claim that for such a surjection to exist, the group $G'=\ell^nG$ must have $\ell$-rank at most $g$. To see this, note that since $\omega_G$ is $\ell^n$-torsion it pushes
forward to 0 under the natural surjection from $G$ to $G'$. (This implication can be checked using a basis.) Since $\Z_\ell^{2g}$ then surjects onto $G'$ and maps $\omega$ to $0$, dualizing we see that
 $G'^{\vee}[\ell]$ embeds into $\F_p^{2g}$ as an isotropic subspace and thus has rank at most $g$. The claim follows. We may thus assume that $\ell^nG$ has rank at
 most $g$ and is thus a quotient of $\Z_\ell^g$.
 
 Now, we prove the lemma by using Fourier analysis. We in fact bound the total number of homomorphisms $f:\Z_\ell^{2g}\rightarrow G$ which take $\omega$ to $\omega_G$.  In this proof, let $H^\vee$ denote $\Hom(H,\mathbb{S}^1),$ where $\mathbb{S}^1 := \{z \in \C: |z| = 1 \}.$  We compute 

 \begin{align*}
 \sum_{f:\Z_\ell^{2g}\rightarrow G} \delta_{f_*\omega\in\wedge^2G[\ell^n]}&=\sum_{f:\Z_\ell^{2g}\rightarrow G} \E_{\chi\in\ell^n(\wedge^2G)^\vee} \left( \chi(f_*\omega)\right)\\
 &=\E_{\chi\in\ell^n(\wedge^2G)^\vee}\sum_{f:\Z_\ell^{2g}\rightarrow G}\chi(f_*\omega)\\
 &=\E_{\chi\in\ell^n(\wedge^2G)^\vee}\left(\sum_{f:\Z_\ell^{2}\rightarrow G}\chi(f_*\omega)\right)^g\\
 \end{align*}
 
 Now each $\chi\in \left(\wedge^2 G \right)^\vee$ is naturally an alternating bilinear form on $G$ valued in $\mathbb{S}^1,$ so we denote $\ker\chi$ as those elements of $G$ that pair to $0$ with every
 other element of $G$. It is then easy to see that $\sum_{f:\Z_\ell^{2}\rightarrow G}\chi(f_*\omega) = |\ker\chi|\cdot |G|$ so that the above sum becomes

$$ \frac{|G|^{2g}\cdot \left|\wedge^2G[\ell^n]\right|}{|\wedge^2G|}\sum_{\chi\in\ell^n(\wedge^2G)^\vee} \frac{1}{[G:\ker\chi]^g}.$$

Now,  $\chi\in\ell^n(\wedge^2G)^\vee$ is equivalent to  $\ker\chi \supset G[\ell^n].$  Let $G' = G / G[\ell^n].$  There is a surjection 
\begin{align*}
&\{ (f,\omega): f: G' \twoheadrightarrow G'/N \text{ surjective with } \ker f = N, \omega \text{ non-degenerate, alternating on } G/N \} \\
&\twoheadrightarrow \{ \text{alternating bilinear forms on } G' \text{ with kernel } N \}\\ 
\end{align*}
defined by
$$(f,\omega) \mapsto \left[ (g,g') \mapsto \omega(f(g),f(g')) \right].$$
Every fiber of the above mapping consists of a single $\mathrm{Aut}(G'/N)$ orbit.  The number of such orbits is at most 
$$\frac{\# \mathrm{Surj}(G',G'/N) \cdot |\wedge^2 G'/N|}{\mathrm{Aut}(G'/N)},$$
because $\mathrm{Aut}(G'/N)$ acts freely on the left side.  Thus, 
\begin{align*}
\sum_{\chi\in\ell^n(\wedge^2G)^\vee} \frac{1}{[G:\ker\chi]^g} &\leq \sum_{N \subset G'} \frac{\#\mathrm{Surj}(G',G'/N) \cdot | \wedge^2 G'/N | }{\#\Aut \left( G'/N \right)\cdot |G'/N|^g}\\
&\leq \sum_H \frac{|\wedge^2\!\!H|}{\#\Aut H}\\
\end{align*}

where $H$ varies over all finite abelian $\ell$-groups and the last inequality follows since $$\#\sur\left({G/G[\ell^n],H}\right)\leq \#\sur\left(\Z_\ell^g,H\right) = |H|^g.$$

It thus remains to show that $ \sum_H \frac{|\wedge^2\!H|}{\#\Aut H}$ is finite. This is an easy calculation with partitions, which we carry out in Lemma \ref{part}.

 \end{proof}
 
 \begin{lemma}\label{part}
 
  $ \sum_H \frac{|\wedge^2\!H|}{\#\Aut H}$ is finite, where the sum is over all finite abelian $\ell$-groups $H$.
 
 \end{lemma}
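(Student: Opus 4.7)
The plan is to parametrize finite abelian $\ell$-groups by partitions and bound the resulting sum by a convergent infinite product over part sizes. Write $H = H_\lambda := \bigoplus_{i \geq 1} \Z/\ell^{\lambda_i}$ for a partition $\lambda = (\lambda_1 \geq \lambda_2 \geq \cdots)$, set $m_k := \#\{i : \lambda_i = k\}$, and set $n(\lambda) := \sum_{i < j} \min(\lambda_i, \lambda_j) = \sum_i (i-1)\lambda_i$. Decomposing $\wedge^2$ of a direct sum gives $|\wedge^2 H_\lambda| = \ell^{n(\lambda)}$. For the automorphism count, the key inputs are $|\End H_\lambda| = \ell^{|\lambda|+2n(\lambda)}$ (using $|\Hom(\Z/\ell^a,\Z/\ell^b)| = \ell^{\min(a,b)}$) and the observation that, after sorting basis vectors by decreasing $\lambda_i$, the mod-$\ell$ reduction of an arbitrary endomorphism is automatically block upper triangular with diagonal blocks uniform in $M_{m_k}(\F_\ell)$; automorphy is equivalent to invertibility of these diagonal blocks. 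Together these yield
\[ |\Aut H_\lambda| = \ell^{|\lambda| + 2n(\lambda)} \prod_{k \geq 1} \prod_{j=1}^{m_k}(1 - \ell^{-j}), \]
so that
\[ \frac{|\wedge^2 H_\lambda|}{|\Aut H_\lambda|} = \frac{1}{\ell^{|\lambda| + n(\lambda)} \prod_k \prod_{j=1}^{m_k}(1 - \ell^{-j})}. \]

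Next, I would decouple the contributions of different part sizes $k$. Letting $\lambda'_k := \sum_{j \geq k} m_j$ be the conjugate partition, a direct rearrangement gives $n(\lambda) = \sum_k \binom{\lambda'_k}{2} \geq \sum_k \binom{m_k}{2}$; combined with $|\lambda| = \sum_k k m_k$ this produces the separable bound
\[ \sum_\lambda \frac{|\wedge^2 H_\lambda|}{|\Aut H_\lambda|} \leq \prod_{k \geq 1} T_k, \qquad T_k := \sum_{m = 0}^\infty \frac{1}{\ell^{km + \binom{m}{2}} \prod_{j=1}^m (1-\ell^{-j})}, \]
where the index $m$ now plays the role of $m_k$ in the $k$th factor.

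Finally, each $T_k$ converges by super-exponential decay in $m$: using the uniform lower bound $\prod_{j=1}^m (1-\ell^{-j}) \geq c_\ell := \prod_{j \geq 1}(1 - \ell^{-j}) > 0$, the $m=0$ term contributes $1$, the $m=1$ term contributes $\ell^{-k}/(1-\ell^{-1})$, and the $m \geq 2$ tail is majorized by a geometric series of ratio $\ell^{-k}$; hence $T_k = 1 + O(\ell^{-k})$ uniformly in $k$. Since $\sum_k \ell^{-k} < \infty$, the product $\prod_k T_k$ converges, completing the proof. The substantive content is packed into the formula for $|\Aut H_\lambda|$ and the identity $n(\lambda) = \sum_k \binom{\lambda'_k}{2}$; once those are in hand, the rest is a routine estimate.
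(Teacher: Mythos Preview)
Your proof is correct and follows essentially the same route as the paper: parametrize by partitions, write down the exact formulas $|\wedge^2 H_\lambda|=\ell^{n(\lambda)}$ and $|\Aut H_\lambda|=\ell^{|\lambda|+2n(\lambda)}\prod_k\prod_{j=1}^{m_k}(1-\ell^{-j})$, and then bound the resulting sum. The one organizational difference is that you invoke the conjugate-partition identity $n(\lambda)=\sum_k\binom{\lambda'_k}{2}\ge\sum_k\binom{m_k}{2}$ to decouple the sum cleanly into a product $\prod_k T_k$ with $T_k=1+O(\ell^{-k})$, whereas the paper instead bounds $\prod_k\prod_{j=1}^{m_k}(1-\ell^{-j})\ge c_\ell^{\,n_H}$ and then estimates the remaining sum more directly (and somewhat less transparently, with a sign typo in the displayed exponent). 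Your decoupling is tidier and makes the convergence immediate; the paper's version is shorter but relies on the reader filling in the separability of the sum over the $a_i$.
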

 
 \begin{proof}
 
 We may parametrize $H$ with sequences $(a_i)_{i\in\N}$ of non-negative integers only finitely many of which are non-zero, identifying a sequence with
 $\oplus_i (\Z/\ell^i)^{a_i}$. We denote by $n_H$ the maximum integer such that $a_n>0$. Let $c=\prod_{i>0}(1-\ell^{-1})$.
 
 Then $$|\wedge^2\!H| = \ell^{\sum_{i<j}ia_ia_j+\sum_i ia_i(a_i-1)/2}=\ell^{\sum_{i\leq j} ia_ia_j- \sum_i ia_i(a_i+1)/2}$$ and
 $$\#\Aut H= \ell^{\sum_{i\leq j} iaia_j} \prod_i\prod_{1\leq k\leq n_H}(1-\ell^{-k})^{-1}\geq  \ell^{\sum_{i\leq j} iaia_j} c^n$$
 
 and so 
 
 \begin{align*}
  \sum_H \frac{|\wedge^2\!H|}{\#\Aut H}&\leq \sum_H c^{-n_H}\ell^{\sum_i ia_i(a_i+1)/2}\\
  &\leq \sum_{n_H,a}c^{-n_H-1}\ell^{-n_Ha(a+1)/2}\\
  &\leq 2c^{-1}\sum_{n_H}(c\ell)^{-n_H}\\
  &\leq\frac{2c^{-1}}{1-c^{-1}\ell^{-1}}\\  
  \end{align*}

 \end{proof}
 
 \subsection{The universal measure $\mu$} \label{universalmeasuredefinition}
 
 \begin{thm}\label{univmeas}
 
 As $g\ra\infty$ the measures $\mu_g$ weak-* converge to  a probability measure $\mu$, with the moments from Theorem \ref{totmommeas}.  
 
 \end{thm}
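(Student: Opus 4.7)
The plan is to define $\mu$ directly via the limiting singleton values computed in Theorem \ref{totmommeas}: set
\[
\mu(\{(G,\omega_G,\psi_G)\}) = \begin{cases} \dfrac{c_\ell}{|\Aut(G,\omega_G,\psi_G)|\cdot|\sym^2 G[\ell^n]|} & \text{if } \psi_G \text{ is an isomorphism}, \\ 0 & \text{otherwise}. \end{cases}
\]
Because $\mathcal{C}_{\ell,n}$ is countable and discrete, weak-$*$ convergence of the probability measures $\mu_g$ to $\mu$ is equivalent to the combination of (a) pointwise convergence $\mu_g(\{x\})\to \mu(\{x\})$ for every $x\in\mathcal{C}_{\ell,n}$, and (b) the identity $\mu(\mathcal{C}_{\ell,n})=1$. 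Item (a) is precisely the content of the second half of Theorem \ref{totmommeas}, so the entire content of this theorem is verifying (b).

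For (b), applying Fatou's lemma to the sum over the countable set $\mathcal{C}_{\ell,n}$ immediately gives $\mu(\mathcal{C}_{\ell,n})\leq \liminf_g \mu_g(\mathcal{C}_{\ell,n})=1$, so $\mu$ is automatically a sub-probability measure. To upgrade this to equality we must establish tightness of the family $\{\mu_g\}$: for every $\epsilon>0$ there is a finite set $S_\epsilon\subset\mathcal{C}_{\ell,n}$ with $\mu_g(S_\epsilon)>1-\epsilon$ uniformly in $g$. Once tightness is in hand, no mass can be lost in passing to the limit, forcing $\mu(\mathcal{C}_{\ell,n})=1$.

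The tightness argument combines the per-surjection Haar measure calculation from the proof of Theorem \ref{totmommeas} with the uniform surjection count of Lemma \ref{upperbound}. Concretely, for each fixed finite abelian $\ell$-group $G$, the total $\mu_g$-mass of all triples with underlying group admitting $(G,\omega_G)$ as a quotient is bounded by
\[
\frac{|\wedge^2 G|}{|G|^{2g}}\cdot\sum_{\omega_G\in\wedge^2 G[\ell^n]}\#\sur\!\big((\Z_\ell^{2g},\omega),(G,\omega_G)\big) \;\leq\; c\,|\wedge^2 G[\ell^n]|,
\]
using Lemma \ref{upperbound}. This is not summable in $G$ by itself, but the same mechanism applied at the level of BEG structures (factoring in the automorphism weights and using the summability input of Lemma \ref{part}, which was the whole point of proving it) yields a summable dominating function on $\mathcal{C}_{\ell,n}$ that is independent of $g$. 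The tail sums then go to zero uniformly as one restricts to groups of size $>N$, establishing tightness. The main obstacle is precisely this: packaging Lemma \ref{upperbound} and Lemma \ref{part} together with the Haar measure computation so as to produce a genuinely summable majorant, rather than the pointwise bound $c|\wedge^2 G[\ell^n]|$ which is too weak.

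With tightness and pointwise singleton convergence in hand, weak-$*$ convergence $\mu_g\Rightarrow \mu$ follows. Finally, the moment formulas asserted in the statement are inherited from those of the $\mu_g$ via dominated convergence applied to $\sum_x \mu_g(\{x\})\cdot\#\sur(x,G^\bullet)$, dominated by the same majorant used above (since $\#\sur(x,G^\bullet)$ is bounded for any fixed $G^\bullet$ when the underlying group of $x$ is fixed, and we control the tails in the underlying group).
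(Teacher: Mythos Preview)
Your overall strategy matches the paper's: define $\mu$ pointwise, establish a uniform summable majorant for the $\mu_g$, and conclude both that $\mu$ is a probability measure and that the moments pass to the limit. The difference is that you explicitly leave open what you yourself call ``the main obstacle,'' namely producing the summable majorant. This gap is real in your write-up, but it is easy to close, and the paper closes it with one line.

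The missing observation is this: the moment bound $\E_{\mu_g}\#\sur(*,G)\leq c\,|\wedge^2 G[\ell^n]|$ from Lemma~\ref{upperbound} bounds the \emph{mass} via $\mu_g(G)\cdot\#\Aut(G)\leq \E_{\mu_g}\#\sur(*,G)$ (since any group surjects onto itself in $\#\Aut(G)$ ways). Hence
\[
\mu_g(G)\;\leq\;\frac{c\,|\wedge^2 G[\ell^n]|}{\#\Aut(G)}\;\asymp\;\mu(G),
\]
the last comparison coming from the explicit formula for $\mu(G)$ derived from Theorem~\ref{totmommeas}. So $C\mu$ itself is the majorant you were looking for. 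Fatou then gives $\sum_G \mu(G)\leq 1$, so the majorant is summable, and dominated convergence forces $\sum_G\mu(G)=1$. You do not need to invoke Lemma~\ref{part} separately here (though it would also work, since $|(\wedge^2 G)[\ell^n]|\leq|\wedge^2 G|$). Your displayed inequality conflated the moment with the mass; dividing by $\#\Aut(G)$ is exactly the missing step.

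For the moments, your dominated-convergence sketch is slightly off: $\#\sur(x,G^\bullet)$ is \emph{not} bounded as the underlying group of $x$ grows, so controlling the tail in the underlying group alone does not suffice. What does work is that $\mu_g(x)\#\sur(x,H)\leq C\mu(x)\#\sur(x,H)$ and $\sum_x \mu(x)\#\sur(x,H)=\E_\mu\#\sur(*,H)$, which is finite by Fatou applied to the group-level moments. This gives $\lim_g\E_{\mu_g}\#\sur(*,H)=\E_\mu\#\sur(*,H)$ at the group level; the paper then recovers the refined BEG moments by a liminf/limsup squeeze over the finitely many $(\omega_H,\psi_H)$, though your direct dominated-convergence route would also work once the majorant $C\mu(x)\#\sur(x,H)$ is in hand.
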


\begin{proof}

First, note that by Theorem \ref{totmommeas} $\mu(G) = \frac{c_\ell\cdot \mid\wedge^2G[\ell^n]\mid h_G}{\Aut(G)}$ where $h_G$ is the fraction of pairs $\omega_G,\psi_G$ such that $\psi_G$ is invertible. By writing
$G=\oplus_{j=1}^n (\Z/\ell^j)^{r_j(G)}$ it is easy to see that 
$$h_G = \prod_{j=1}^{\lfloor n/2\rfloor} (\ell^{-1};\ell^{-1})_{r_j(G[\ell^n])} \prod_{j=\lfloor n/2\rfloor +1}^{n} (\ell^{-1};\ell^{-2})_{\lceil r_j(G[\ell^n])/2\rceil}.$$

In particular, $\mu(G) \asymp \frac{\mid \wedge^2G[\ell^n]\mid }{\Aut(G)}$.

Now, for any group $G$, it follows from Theorem \ref{totmommeas} together with the calculations in Lemma \ref{upperbound} that

$$\E_{\mu_g}\#\sur\left(*,G\right)\leq c\cdot |\wedge^2G[\ell^n]|,$$

so that in particular it follows that \begin{equation}\label{eq-star} \mu_g(G)\ll \mu(G).\end{equation}  By Fatou's Lemma, it follows that $\mu$ is a probability measure.

It remains to show that $\mu$ has the predicted moments. By Fatou's Lemma again, it follows from \eqref{eq-star} that for any group $H$, we have
$\lim_g\E_{\mu_g}\#\sur\left(*,H\right) = \E_{\mu}\#\sur\left(*,H)\right)$. This implies
\begin{align*}
\limsup_g\sum_{\omega_H,\psi_H}\E_{\mu_g}\#\sur\left(*,(H,\omega_H,\psi_H)\right)&= \limsup_g\E_{\mu_g}\#\sur\left(*,H\right)\\
&=\E_{\mu} \#\sur\left(*,H\right)\\
&=\sum_{\omega_H,\psi_H}\E_{\mu}\#\sur\left(*,(H,\omega_H,\psi_H)\right)\\
\end{align*}

On the other hand, by Fatou's Lemma, for each pair $(\omega_H,\psi_H)$ we have 

$$\liminf_g\E_{\mu_g}\#\sur\left(*,(H,\omega_H,\psi_H)\right)\leq \E_{\mu}\#\sur\left(*,(H,\omega_H,\psi_H)\right).$$

Since $\liminf \leq \limsup$ we must have equality, and so the claim follows.

\end{proof}

\begin{lemma}\label{momimpmeas}

The measure $\mu$ is determined by its moments.
\end{lemma}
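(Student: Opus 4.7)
The plan is to prove uniqueness via a Möbius-inversion argument in the category $\mathcal{C}_{\ell,n}$, exploiting the fact that $\#\sur(H^\bullet,K^\bullet)$ vanishes unless there is a surjection from $H^\bullet$ onto $K^\bullet$, in which case $|K|\leq |H|$. Fix a target BEG $G^\bullet=(G,\omega_G,\psi_G)$. First I would construct coefficients $c_{G^\bullet}(K^\bullet)$, indexed by isomorphism classes of BEGs $K^\bullet$ admitting a surjection onto $G^\bullet$, such that
$$\sum_{K^\bullet} c_{G^\bullet}(K^\bullet)\cdot \#\sur(H^\bullet,K^\bullet) \;=\; \mathbf{1}[H^\bullet \cong G^\bullet]$$
for every $H^\bullet\in\mathcal{C}_{\ell,n}$. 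Since there are only finitely many isomorphism classes of BEGs of each fixed order, and the diagonal entries $\#\sur(K^\bullet,K^\bullet)=\#\Aut(K^\bullet)$ are nonzero, the $c_{G^\bullet}(K^\bullet)$ can be built inductively by descending induction on $|K|$. For any fixed $H^\bullet$, only finitely many terms on the left are nonzero (those with $K^\bullet$ a quotient of $H^\bullet$), so the identity holds pointwise with no convergence issues.

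Once these coefficients are in hand, I would integrate both sides against any probability measure $\nu$ on $\mathcal{C}_{\ell,n}$ to obtain, at least formally,
$$\nu(G^\bullet) \;=\; \sum_{K^\bullet} c_{G^\bullet}(K^\bullet)\cdot \E_\nu \#\sur(*, K^\bullet).$$
If this identity is proved to hold with absolute convergence for $\nu=\mu$, then $\mu(G^\bullet)$ is determined by the moments of $\mu$, giving uniqueness. Moreover, because the right-hand side is linear in the moments, the same identity immediately yields the stronger statement mentioned after the lemma: if another measure $\nu$ has moments close to those of $\mu$, then $\nu(G^\bullet)$ is close to $\mu(G^\bullet)$ for each $G^\bullet$, with the error bounded in terms of the same absolutely convergent sum.

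The hard part will be justifying this interchange of sum and integral, i.e., establishing absolute convergence of $\sum_{K^\bullet}|c_{G^\bullet}(K^\bullet)|\cdot \E_\mu \#\sur(*,K^\bullet)$. Individual Möbius coefficients can be large, but the moments $\E_\mu\#\sur(*,K^\bullet)=1/|\sym^2 K[\ell^n]|$ are at most $1$. My strategy would be to group the sum according to the underlying abelian group $K$, summing over all compatible pairs $(\omega_K,\psi_K)$ first, and then to bound the resulting inner sum by combining the explicit formula for $\mu(K^\bullet)$ from Theorem \ref{totmommeas} with the $\wedge^2$-weighted surjection count bound from Lemma \ref{upperbound}. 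A tail estimate in $|K|$ would then follow from a partition-counting argument in the spirit of Lemma \ref{part}, giving the geometric decay in $|K|$ that is sufficient for absolute convergence. This analytic tail estimate is the main obstacle in the plan; once it is secured, the Möbius framework converts it directly into both the uniqueness and the quantitative stability statements.
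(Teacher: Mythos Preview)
Your M\"obius-inversion plan is a natural first attack, but the paper takes a different and more economical route that sidesteps the tail estimate you flag as the main obstacle. Rather than building inversion coefficients $c_{G^\bullet}(K^\bullet)$ and then fighting for absolute convergence, the paper observes that the moment condition immediately gives the a~priori bound $\mu'(G^\bullet)\le 1/\bigl(|\Aut(G^\bullet)|\cdot|\sym^2 G[\ell^n]|\bigr)$, so the rescaled vector $DU$ with $D=\mathrm{diag}\bigl(|\Aut|\cdot|\sym^2[\ell^n]|\bigr)$ lies in $L^\infty$. One then writes the moment equations as $MD^{-1}\cdot DU=T$ and notes the key coincidence: the entries of $MD^{-1}$ are exactly $c_\ell^{-1}\mu(H^\bullet)\cdot\#\sur(H^\bullet,G^\bullet)$, so its row sums are $c_\ell^{-1}$ times the moments of $\mu$ themselves. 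This makes $I-MD^{-1}$ have $L^\infty$ operator norm $c_\ell^{-1}-1<1$, and a Neumann series inverts $MD^{-1}$ directly. No M\"obius coefficients are ever computed or bounded; the diagonal rescaling by $c_\ell/\mu$ does all the work. Your strategy of grouping by the underlying group and invoking Lemmas~\ref{upperbound} and~\ref{part} is plausible in spirit, but those lemmas control forward quantities (numbers of surjections, the measure itself), not the alternating M\"obius coefficients, and it is not clear they yield the needed decay. A minor point: your inductive construction of $c_{G^\bullet}(K^\bullet)$ should proceed by \emph{ascending} induction on $|K|$, not descending, since the defining relation at $H=K$ expresses $c_{G^\bullet}(K^\bullet)$ in terms of $c_{G^\bullet}(L^\bullet)$ for $|L|<|K|$.
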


\begin{proof}

Suppose $\mu'$ is another measure with the same moments. Note that we immediately get the inequality 
$$\mu'(G,\omega_G,\psi_G)\leq \frac{1}{\left| \Aut(G,\omega_G,\psi_G) \right| \cdot |\sym^2G[\ell^n]|}.$$

Let $V$ be the vector space of functions on triples $(G,\omega_G,\psi_G)$ where $\psi_G$ is invertible. Write $U$ for the 
column vector whose components  are $\mu'(G,\omega_G,\psi_G)$, and let $M$ be the matrix whose components are $$\#\sur\left((G,\omega_G,\psi_G),(H,\omega_G,\psi_H)\right).$$ 
Then $MU=T$ by assumption, where $T$ has components $|\sym^2G[\ell^n]|^{-1}$.  Now let $D$ be the diagonal matrix with entries $\#\Aut(G,\omega_G,\psi_G)|\sym^2G[\ell^n]|$ 
Then $MD^{-1} \cdot DU = T$. Now $DU$ and $T$ are both in $L^\infty(V)$, and the rows of $MD^{-1}-I$ have sum $c_{\ell}^{-1}-1<1$ so $I-MD^{-1}$ has operator norm less than 1 and
$MD^{-1}$ is therefore invertible as an operator on $L^\infty(V)$. Thus, $DU= (MD^{-1})^{-1} T $ uniquely determines $DU$, and thus $U$.  

\end{proof}

\subsection{The non-linear model}

We define a non-linear model along the lines of \cite{LT}, but taking the pairings $\psi_G$ into account. We then use our work on the linear model above to prove that the non-linear model converges to the same measure $\mu$, resolving in particular \cite[Conjecture 3.1]{LT}.

So, let $\nu^n_g$ be defined as follows. Take $q\in\Z_{\ell}$ to be any element such that $\ell^n|| q-1$. Now we define the measure $\nu^n_g$ to be the pushforward of the Haar
measure on $F\in \GSP_{2g}^{(q)}$ to our category $\mathcal{C}_n$ under the map $F\ra \coker(1-F)$. 

We begin by showing show that $\nu^n_g$ has the ``right"  moments. First, we recall the following

\begin{lemma}\cite[Theorem 3.1]{LT}\label{nonlinearbound}

Fix $H^\cdot=(H,\omega_H) \in(\wedge^2 H)[\ell^n]$. If we forget the $\psi_G$ factor, the $\nu^n_g$ -expected number of surjections $G^\cdot=(G,\omega_G)$ to any lift of $H^\cdot$ 
is equal to $0$ for $g\leq g(H)$ and $1$  for $g>g(H)$.

\end{lemma}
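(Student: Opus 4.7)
The plan is to exchange the order of integration and sum so that the moment is expressed as a Burnside-type count of orbits for an appropriate group action. A surjection $G^\cdot = (\coker(1-F), \omega_G) \twoheadrightarrow (H,\omega_H)$ corresponds bijectively to a surjection $\tilde f : \Z_\ell^{2g} \twoheadrightarrow H$ satisfying $\tilde f_\ast \omega = \omega_H$ together with the invariance condition $\tilde f\circ F = \tilde f$. Thus
\[
\E_{\nu^n_g} \#\sur(*,H^\cdot) \;=\; \int_{\GSP^{(q)}_{2g}(\Z_\ell)} \#\{\,\tilde f \in X : \tilde f \circ F = \tilde f\,\}\, dF,
\]
where $X$ denotes the finite set of surjections $\tilde f : \Z_\ell^{2g} \twoheadrightarrow H$ with $\tilde f_\ast\omega = \omega_H$.

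Next I would handle the fact that $\GSP^{(q)}_{2g}$ is a coset, not a group, by fixing any $F_0 \in \GSP^{(q)}_{2g}$ and writing $F = A F_0$ with $A \in \Sp_{2g}(\Z_\ell)$. Under the substitution $\tilde g := \tilde f \circ F_0^{-1}$, the set $X$ is preserved because $\tilde g_\ast \omega = q^{-1}\tilde f_\ast\omega = \omega_H$ (using $q \equiv 1\pmod{\ell^n}$ and $\ell^n\omega_H=0$), and the condition $\tilde f\circ F = \tilde f$ becomes $A\cdot \tilde g = \tilde g$ for the natural right action of $\Sp_{2g}(\Z_\ell)$ on $X$ by precomposition with $A^{-1}$. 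Hence
\[
\E_{\nu^n_g}\#\sur(*,H^\cdot) \;=\; \int_{\Sp_{2g}(\Z_\ell)} \#\{\,\tilde g \in X : A\cdot \tilde g = \tilde g\,\}\, dA.
\]

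Now Burnside's lemma applies to the compact group $\Sp_{2g}(\Z_\ell)$ acting on the finite set $X$, yielding that the right-hand side equals the number of $\Sp_{2g}(\Z_\ell)$-orbits on $X$. The decisive input is then Witt's extension theorem for symplectic $\Z_\ell$-lattices, which (as already invoked in the proof of Lemma \ref{allsur}) asserts that $\Sp_{2g}(\Z_\ell)$ acts transitively on $X$ whenever $X$ is nonempty. Define $g(H)$ to be the threshold value below which no surjection $\Z_\ell^{2g}\twoheadrightarrow H$ respecting the $\omega$-data exists; the existence half of Lemma \ref{allsur} guarantees that $X$ is nonempty once $g > g(H)$.

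Combining these, we obtain exactly $0$ orbits when $g \le g(H)$ and exactly $1$ orbit when $g > g(H)$, giving the claimed value of the expected number of surjections. The main subtlety, which is the transitivity of $\Sp_{2g}(\Z_\ell)$ on $X$ in the presence of the push-forward constraint $\tilde f_\ast\omega = \omega_H$, is precisely the Witt-extension input already used elsewhere in the paper, so the argument reduces to carefully unwinding the Burnside formula for the coset $\GSP^{(q)}_{2g}$.
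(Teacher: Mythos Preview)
The paper does not give its own proof of this lemma; it is simply quoted from \cite[Theorem~3.1]{LT}. Your overall strategy---swap the sum and integral, use that $\Sp_{2g}(\Z_\ell)$ acts transitively (by Witt) on the set $X$ of surjections $\Z_\ell^{2g}\twoheadrightarrow H$ pushing $\omega$ to $\omega_H$, and then invoke an orbit-counting argument---is exactly the approach of \cite{LT}, and is mirrored in the paper's proof of the very next lemma (Lemma~\ref{nonlinearmoments}), where the key observation is that $S_f := \{F \in \GSP^{(q)}_{2g} : f\circ F = f\}$ is a left torsor for $\stab(f) \subset \Sp_{2g}(\Z_\ell)$.

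There is, however, a small algebraic slip in your coset manipulation. With $F = A F_0$ and $\tilde g := \tilde f \circ F_0^{-1}$ (so $\tilde f = \tilde g\circ F_0$), the condition $\tilde f\circ F = \tilde f$ unwinds to $\tilde g\circ (F_0 A) = \tilde g$, not to $\tilde g\circ A = \tilde g$; since $F_0 A$ ranges over $\GSP^{(q)}_{2g}$ rather than $\Sp_{2g}$, this substitution by itself does not reduce the problem to a standard Burnside count over a group. The clean fix is to argue directly via orbit--stabilizer, as the paper does in Lemma~\ref{nonlinearmoments}: for each $\tilde f\in X$ the set $S_{\tilde f}$ is nonempty (pick any $F_0\in\GSP^{(q)}_{2g}$, note $\tilde f\circ F_0\in X$, and use transitivity of $\Sp_{2g}$ on $X$ to find $A\in\Sp_{2g}$ with $\tilde f\circ F_0 = \tilde f\circ A$, whence $\tilde f\circ(F_0 A^{-1})=\tilde f$ and $F_0 A^{-1}\in S_{\tilde f}$), and is a left coset of $\stab_{\Sp_{2g}}(\tilde f)$, hence has Haar measure $1/|X|$. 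Summing over $\tilde f\in X$ gives $1$ when $X\neq\emptyset$ and $0$ otherwise.
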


We now need to know that when we throw in the $\psi_H$ then we eventually get the right moments, for large enough $g$.

\begin{lemma}\label{nonlinearmoments}

Fix $H^\cdot=(H,\omega_H,\psi_H)\in\mathcal{C}_g$. The $\nu^n_g$ -expected number of surjections $G^\cdot=(G,\omega_G,\psi_G)\ra H^\cdot$ is equal to $1/|\sym^2 H[\ell^n]|$ for 
$g\gg_H 1$.

\end{lemma}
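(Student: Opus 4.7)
The plan is to compute $\E_{\nu_g^n} \#\sur(*, H^\cdot)$ directly, mimicking the proof of Theorem \ref{totmommeas} in the non-linear setting. I would expand the moment as a double sum over surjections $f: \Z_\ell^{2g} \twoheadrightarrow H$ with $f_*\omega = \omega_H$, weighting each by the Haar measure of the set of $F \in \GSP_{2g}^{(q)}$ satisfying (i) $f \circ (1 - F) = 0$, so that $f$ factors through a surjection $\bar f : \coker(1-F)\twoheadrightarrow H$, and (ii) $\bar f_* \psi_F = \psi_H$. By Lemma \ref{eqdist}, for $g \gg_H 1$ the number of such $f$ is asymptotic to $|H|^{2g}/|\wedge^2 H|$, so the main task becomes computing the Haar measure for a single fixed $f$.

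Fix such an $f$ and choose a basis of $\Z_\ell^{2g}$ whose first $r$ vectors lift the generators of $H = \oplus_{i=1}^r \Z/\ell^{m_i}$, exactly as in Theorem \ref{totmommeas}. Writing $N = 1 - F$, condition (i) yields the divisibility constraints $\ell^{m_j} \mid N_{i,j}$ on entries of the matrix of $N$, as in the linear setting. The symplectic condition $F \in \GSP^{(q)}$ becomes the relation
$$\omega(Nx, y) + \omega(x, Ny) - \omega(Nx, Ny) = (q-1)\omega(x,y),$$
and the prescription $\bar f_* \psi_F = \psi_H$ fixes the residues of certain $N_{i,j}$ for $1 \leq i, j \leq r$ in the same pattern as the entries of $A$ that determined $\psi_G$ in the proof of Theorem \ref{totmommeas}.

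The main obstacle is the quadratic term $\omega(Nx, Ny)$, which distinguishes the non-linear model from the linear one. Within the $r \times r$ block governing $H$, the divisibility $\ell^{m_j} \mid N_{i,j}$ forces the quadratic term to be divisible by a strictly higher power of $\ell$ than either linear term, while $q - 1 = c\ell^n$ with $c \in \Z_\ell^\times$. A Hensel-type change of variables should then trade the non-linear constraint for the affine skew-symplectic constraint of Theorem \ref{totmommeas}, at the cost of a Jacobian that is an $\ell$-adic unit and hence preserves Haar measure. Carrying this out carefully (and tracking the induced identification of $\psi_F$ with the $\psi_G$ of the linear setting) is the technical heart of the argument.

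Once the reduction is complete, the Haar measure of valid $F$'s equals its linear analogue, namely
$$\frac{|\wedge^2 H|}{|H|^{2g} \cdot |\sym^2 H[\ell^n]|}(1 + o_g(1)).$$
Multiplying by the count of surjections $f$ supplied by Lemma \ref{eqdist} and taking $g$ sufficiently large in terms of $H$, the expected number of surjections converges to $1/|\sym^2 H[\ell^n]|$, as claimed. Note also that summing over $\psi_H$ compatible with $\omega_H$ must recover the value $1$ from Lemma \ref{nonlinearbound}, which provides a useful internal consistency check: the number of valid $\psi_H$ compatible with $\omega_H$ is exactly $|\sym^2 H[\ell^n]|$, matching the reciprocal of the predicted moment.
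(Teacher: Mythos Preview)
Your proposal has a genuine gap at the ``Hensel-type change of variables'' step. The Haar measure on $\GSP_{2g}^{(q)}(\Z_\ell)$, pushed forward via $F \mapsto N = 1-F$, is not the additive Haar measure on any affine subspace of $M_{2g}(\Z_\ell)$: it is supported on the quadric cut out by your displayed relation, and outside the $r\times r$ block the entries of $N$ are generically $\ell$-adic units, so the quadratic term $\omega(Nx,Ny)$ is not small there. Since the similitude constraint couples all entries of $N$, you cannot linearize only on the $r\times r$ block and treat the rest as independent. A genuine Hensel or implicit-function linearization of the full constraint would require $N \equiv 0 \pmod{\ell}$, which holds on a set of measure zero. (The paper's introduction makes exactly this point: because $F$ is almost never congruent to $1$ modulo $\ell$, there is no hope of relating the two models by a direct change of variables.)

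The paper sidesteps this by exploiting the group structure rather than linearizing the constraint. For fixed $f$ with $f_*\omega = \omega_H$, the set $S_f = \{F \in \GSP_{2g}^{(q)} : f\circ F = f\}$ is a torsor for $\stab(f) \subset \Sp_{2g}(\Z_\ell)$, so its total Haar measure is already determined by Lemma~\ref{nonlinearbound}. The remaining question is how $\psi_F$ is distributed as $F$ ranges over $S_f$. Fixing a basepoint $F_0$, one checks that $R:s \mapsto \psi_{sF_0} - \psi_{F_0}$ is a group homomorphism $\stab(f) \to \Hom(H^\vee[\ell^n], H[\ell^n])$; the nonlinear cross-term drops out precisely because $(1-s)$ annihilates $V^*[\ell^n] = f^\vee(H^\vee[\ell^n])$. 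Thus the fibres of $F\mapsto\psi_F$ over $S_f$ are cosets of $\ker R$ and have equal Haar measure, so the moment equals $1/|\im R|$. Finally one shows $\im R$ consists exactly of the symmetric pairings: here the paper \emph{does} use an exponential, but only to construct specific elements $s = e^M$ with $M \equiv 0 \pmod \ell$ hitting each symmetric $\rho$, not to parametrize all of $S_f$. Your consistency check against Lemma~\ref{nonlinearbound} is exactly what pins down $|\im R| = |\sym^2 H[\ell^n]|$.
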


\begin{proof}

We follow closely the proof of \cite[Theorem 3.1]{LT}. Following that proof, for $g>g(H)$ we fix a surjection $f:\Z_\ell^{2g}\ra H$ such that $f_*\omega = \omega_H$. Let $V:=\ker f$. 
Then the set of all surjections  onto $H$ pushing $\omega$ to $\omega_H$ forms a single orbit under pre-composition by $\Sp_{2g}(\Z_\ell)$. Let $S_f$ denote the set of elements 
$F\in\GSP^{(q)}_{2g}(\Z_\ell)$ such that $\im(1-F)\subset \ker f$, or equivalently $f=f\circ F$. Note that $S_f$ is a left torsor for 
$\stab(f)\subset \Sp_{2g}(\Z_\ell)$. We fix $F_0\in S_f$.
It will be useful for us also to recall that $s\in \stab(f)$ iff $(1-s)V^*=0$. Now we consider what happens to the $\psi$-pairing under an element $F_0s$. 

Dualizing, we have $$f^\vee: H^\vee\hookrightarrow (\Q_\ell/\Z_\ell)^{2g}$$ and $$\psi_{F_0s}(h^\vee)=f\circ(1-F_0s)(\ol{f^\vee(h^\vee)})$$ where $\ol{\cdot}$ denotes any lift to $\Q_\ell^g$. 
As $(1-sF_0) = (1-F_0) + (1-s)F_0$, so we see that 
$$(\psi_{sF_0}-\psi_{F_0})(h^\vee) = f\left((1-s)(F_0(\ol{f^{\vee}(h^\vee)})\right).$$
 
 Next, note that $$f^{\vee}(H^\vee[\ell^n])=\ker(q-F_0)\mid_{(\Q_\ell/\Z_\ell)^{2g}[\ell^n] }= V^*[\ell^n],$$ which is therefore killed by $(1-s)$.
 Letting $v=F_0(\ol{f^{\vee}(h^\vee)})$, it follows that $f((1-s_1s_2)(v)) = f((1-s_1))(v) +f\circ s_1( (1-s_2)v) = f((1-s_1)v)+f((1-s_2)v)$, and thus we see that the map 
\begin{align*}
R:\stab(f) &\ra \Hom(H^\vee[\ell^n],H[\ell^n]) \\ 
s &\mapsto \psi_{sF_0}-\psi_{F_0}
\end{align*}
is a group homomorphism. The claim will thus follow if we show that
for large enough $g$, the image of $R$ is of size $|\sym^2 H[\ell^n]|$.

Note that the map $R$ can be thought of as the composition of $R_0:\stab(f)\ra \Hom(V^*,\Z_{\ell}^{2g}/V)^\dagger$ given by $s\ra R_0(s)(v^*)=(1-s)v^*$ with the restriction to
$V^*[\ell^n]$. Here $^{\dagger}$ denotes the self-dual maps.  It is thus sufficient to prove that for large enough $g$,  $R_0$ is surjective.

Now, fix an element $\rho\in \Hom(V^*,\Z_{\ell}^{2g}/V)^{\dagger}$.   We claim that there exists a skew-symplectic element $M$ which is a multiple of $\ell$, such that $MV^*=0$
and moreover $M$ induces the map $\rho$. In fact, this follows from the proof of Theorem \ref{totmommeas}. Now we set $s=e^M$. Note that since $M\Z_\ell^{2g}\subset V$ it follows that
$s$ induces the same map as $M$, which completes the proof.

\end{proof}

\begin{thm} 

The measures $\nu^n_g$ converge to $\mu_g$ as $n\rightarrow\infty$.

\end{thm}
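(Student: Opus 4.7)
The plan is to establish the convergence via a moments-determine-measure argument, exploiting the fact that both the linear and non-linear random matrix models yield the same exact value $1/|\sym^2 H[\ell^n]|$ for the moment $\E \#\sur(*, H^\cdot)$ attached to any BEG $H^\cdot = (H, \omega_H, \psi_H) \in \cC_{\ell,n}$, once the parameter $g$ is large enough relative to $H$. First, I would invoke Lemma \ref{nonlinearmoments} to pin down the $\nu^n_g$-moment as exactly $1/|\sym^2 H[\ell^n]|$ for $g \gg_H 1$. Then, revisiting the proof of Theorem \ref{totmommeas}, the same value is attained by $\E_{\mu_g}\#\sur(*,H^\cdot)$ at every $g$ above an explicit $H$-dependent threshold; the proof there is really a finite-$g$ counting argument whose conclusion stabilizes, not a genuine limit.

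Second, to upgrade moment equality on fixed test BEGs to an actual weak-* statement, I would need uniform tail control for $\nu^n_g$ analogous to Lemma \ref{upperbound}. Such a bound can be obtained by summing Lemma \ref{nonlinearbound} over the possible $\psi$-enhancements of each $(H,\omega_H)$; the compatibility relation \eqref{psiomegacomp} constrains the number of admissible $\psi_H$ by a factor of at most $|\sym^2 H[\ell^n]|$, so the total moment bound is essentially the same shape as in the linear case. Combined with the Fatou-lemma argument used to prove Theorem \ref{univmeas}, this supplies enough uniform integrability to pass to the limit.

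Finally, I would appeal to the moment-uniqueness Lemma \ref{momimpmeas} (applied on $\cC_{\ell,n}$ for each relevant $n$): since $\nu^n_g$ and $\mu_g$ share the same moments on every $H^\cdot$ once $n$ is large relative to the exponent of $H$, they must coincide on the cylinder sets cut out by such $H^\cdot$, which forces convergence. The main obstacle is making the limit $n \to \infty$ well-defined at all, since the ambient category $\cC_{\ell,n}$ itself grows with $n$; to address this one works through the natural truncation functors $\cC_{\ell,n+1} \to \cC_{\ell,n}$ (restricting $\omega$ and $\psi$ to $\ell^n$-torsion) and verifies that both $\mu_g$ and $\nu^n_g$ are compatible with these projections, so that the theorem is really asserting the pointwise identification of the induced systems. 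This compatibility is visible from the explicit appearance of $\ell^n$ in $M + \ell^n$ on the linear side and from $\ell^n \| q-1$ on the non-linear side, which is precisely the linearization heuristic from \S\ref{linearrandommodelmotivation} made rigorous indirectly through the moments.
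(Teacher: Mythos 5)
Your core strategy --- compute the non-linear moments via Lemma \ref{nonlinearmoments}, dominate $\nu^n_g$ by summing the $\psi$-free bound of Lemma \ref{nonlinearbound} over enhancements, pass to the limit with a Fatou-type argument, and finish with the moment-uniqueness Lemma \ref{momimpmeas} --- is exactly the paper's proof. But two of your specific claims do not hold, and they stem from a misreading of the (admittedly misprinted) statement. The limit in question is $g\to\infty$ with $n$ held fixed (equivalently, $q$ fixed in its congruence class mod $\ell^{n+1}$), and the target is the limiting measure $\mu$ of Theorem \ref{univmeas}, not $\mu_g$ at finite $g$: the section's stated goal is that ``the non-linear model converges to the same measure $\mu$.'' So the final paragraph of your proposal, about making an $n\to\infty$ limit meaningful via truncation functors $\cC_{\ell,n+1}\to\cC_{\ell,n}$, is machinery for a problem that is not actually being posed, and it appears nowhere in the argument that is needed.

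More substantively, your claim that $\E_{\mu_g}\#\sur(*,H^\bullet)$ equals $1/|\sym^2 H[\ell^n]|$ exactly once $g$ exceeds an $H$-dependent threshold is false. In the linear model the proportion of surjections $f:\Z_\ell^{2g}\to H$ with $f_*\omega=\omega_H$ is only asymptotically $1/|\wedge^2 H|$ (Lemma \ref{eqdist}): the relevant Fourier coefficients are $\left(|\ker\chi|/|H|\right)^g$, which are nonzero for every finite $g$, so Theorem \ref{totmommeas} is a genuine limit, not a stabilization. You have the roles reversed: it is the \emph{non-linear} moments that are exactly stable for $g\gg_H 1$ (Lemmas \ref{nonlinearbound} and \ref{nonlinearmoments}), while the linear moments only converge. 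Consequently one cannot ``match $\nu^n_g$ with $\mu_g$'' at finite $g$, and the concluding step that the two measures ``coincide on the cylinder sets cut out by such $H^\bullet$'' is not how the moment method works --- finitely many moment equalities do not determine the measure locally. The repair is precisely the paper's argument: use the domination coming from Lemma \ref{nonlinearbound} to see that the moments of any weak-* limit point $\nu$ of $\nu^n_g$ (as $g\to\infty$) are the limits of the moments of $\nu^n_g$, hence equal $1/|\sym^2 H[\ell^n]|$, i.e.\ the moments of $\mu$; then Lemma \ref{momimpmeas} forces $\nu=\mu$, and since every limit point is $\mu$, the sequence converges.
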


\begin{proof}

Let $\nu$ be in the weak-* closure of $\nu^n_g$. By Lemma $\ref{nonlinearbound}$ it follows that $\nu^n_g(G)\ll \mu_g(G)$ and so it follows that the moments of $\nu$ are the limits
of the moments of the $\nu^n_g$, which are the same as the moments of $\mu_g$ by Lemma \ref{nonlinearmoments}. The claim follows by
Lemma \ref{momimpmeas}.

\end{proof}

\subsection{ Quotienting out by an additional $t$ elements}~\\ \label{quotienting}
\medskip

Recall that in Definition \ref{Qtmu} we defined the operation $Q$ that takes a measure on $\mathcal C_{\ell,n}$ to another measure on $\mathcal C_{\ell,n}$ obtained by quotienting each group by a random element, and defined the measure $Q^t \mu$ by iteratively applying $Q$ to $\mu$. In this section, we study $Q^t \mu$.

%We will have use for a generalized version of the measures $\mu$ to accurately generalize Cohen-Lenstra to the case of roots of unity.  We thus define another family of measurs as 
%follows: 
%
%\begin{definition} Let $\mu$ be a measure on triples $(G,\omega_G,\psi_G)$. Define the measure $Q\mu$ as follows:
%
%$$Q\mu(G,\omega_G,\psi_G):= \sum_{(H,\omega_H,\psi_H)} \mu(H,\omega_H,\psi_H)\cdot \frac{\#\{f:\Z_\ell\rightarrow H\mid (H,\omega_H,\psi_H)/\im f\sim (G,\omega_G,\psi_G)\}}{|H|}.$$
%
%Alternatively, to get a  $Q\mu$ -random triple you take a $\mu$-random triple and quotient out by a random element. We define $Q^t\mu$ inductively by $Q^t\mu:=Q(Q^{t-1}\mu)$. 
%It is clear that the total measure of $Q\mu$ is the same as the total measure of $\mu$, and that $Q$ is a continuous operator on measures under the weak-* topology. 
%
%\end{definition}
%We shall be interested in the measures $Q^t\mu$.

We have the following general lemma:

\begin{lemma} \label{Qmom}
If $\mu$ is a measure with finite moments, then so is $Q\mu$ and $$\E_{Q^t\mu}\#\sur(*,(G,\omega_G,\psi_G)) = |G|^{-t} \E_{\mu}\#\sur(*,(G,\omega_G,\psi_G)).$$
\end{lemma}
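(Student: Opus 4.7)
The plan is to prove the statement by induction on $t$, with the essential content being the case $t=1$; induction then immediately gives both finiteness of all moments of $Q^t\mu$ and the claimed formula, since applying $Q$ multiplies the moment by $|G|^{-1}$. So the task reduces to showing
\[ \E_{Q\mu} \#\sur(*, G^\bullet) = |G|^{-1} \, \E_\mu \#\sur(*, G^\bullet) \]
for any fixed $G^\bullet = (G, \omega_G, \psi_G) \in \cC_{\ell,n}$.

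I would start by unfolding the definitions: write the left-hand side as a double sum over $G'^\bullet \in \cC_{\ell,n}$ and $H^\bullet \in \cC_{\ell,n}$, using Definition \ref{Qtmu} to expand $Q\mu(G'^\bullet)$. Then swap the order of summation (this is legitimate once one checks absolute convergence, which follows from the assumed finiteness of moments of $\mu$, since we will in the end realize the sum as a moment of $\mu$ against a surjection count weighted by $|H|/|G|$). After swapping, the inner sum becomes
\[ \sum_{G'^\bullet} \#\{f : \Z_\ell \to H \mid H^\bullet / \im f \cong G'^\bullet \} \cdot \#\sur(G'^\bullet, G^\bullet), \]
which counts pairs $(f, g)$ where $f : \Z_\ell \to H$ is an arbitrary homomorphism and $g$ is a surjection of $\ell^n$-BEGs from $H^\bullet / \im f$ to $G^\bullet$.

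The key observation is that, by the universal property of the quotient in $\cC_{\ell,n}$ (which is well defined because the pushforwards of $\omega_H$ and $\psi_H$ along the quotient map $H^\bullet \to H^\bullet / \im f$ are exactly the induced invariants), such pairs $(f,g)$ are in bijection with pairs $(g, f)$ where $g : H^\bullet \twoheadrightarrow G^\bullet$ is any surjection in $\cC_{\ell,n}$ and $f : \Z_\ell \to H$ is any homomorphism whose image lies in $\ker g$. For fixed $g$, the set of such $f$ is parametrized by $f(1) \in \ker g$, so has exactly $|\ker g| = |H|/|G|$ elements. Substituting back gives
\[ \E_{Q\mu} \#\sur(*, G^\bullet) = \sum_{H^\bullet} \frac{\mu(H^\bullet)}{|H|} \cdot \#\sur(H^\bullet, G^\bullet) \cdot \frac{|H|}{|G|} = \frac{1}{|G|} \E_\mu \#\sur(*, G^\bullet), \]
as desired.

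The only mildly nontrivial point — and what I'd call the main obstacle, though it is more of a bookkeeping check than a real difficulty — is justifying the universal property used above: namely that morphisms $H^\bullet / \im f \to G^\bullet$ in $\cC_{\ell,n}$ are precisely morphisms $H^\bullet \to G^\bullet$ that vanish on $\im f$. This reduces to the definition of morphisms in $\cC_{\ell,n}$ (Definition \ref{begcategory}), since a group homomorphism that vanishes on $\im f$ factors uniquely through $H/\im f$, and the compatibility $g_* \omega_H = \omega_G$, $g_* \psi_H = \psi_G$ on $H$ is equivalent to the same compatibility on the quotient once $\omega$ and $\psi$ on the quotient are defined by pushforward. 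With this in hand, the Fubini-style computation above finishes the argument, and the general $t$ case follows from $Q^t \mu = Q(Q^{t-1}\mu)$ by induction.
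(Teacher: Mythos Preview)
Your proof is correct and follows essentially the same approach as the paper's: both reduce to $t=1$, then identify surjections from a $Q\mu$-random BEG onto $G^\bullet$ with pairs consisting of a surjection $\phi:H^\bullet\twoheadrightarrow G^\bullet$ and a map $f:\Z_\ell\to\ker\phi$, and count the latter as $|H|/|G|$ choices of $f$ per $\phi$. Your version spells out the Fubini swap and the universal-property justification for the quotient in $\cC_{\ell,n}$ more carefully than the paper does, but the argument is the same.
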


\begin{proof}

It suffices to handle the case where $t=1$. Note $Q\mu$ is the quotient of a $\mu$-random group $H$ by the image of a random map $f$. So a a surjection from a $Q\mu$-random
group $(H,\omega_H,\psi_H)$ to $(G,\omega_G,\psi_G)$ is the same as a surjection $\psi$ from a $\mu$-random group $(H,\omega_H,\psi_G)$ together with a map 
$f:\Z_{\ell}\rightarrow\ker\phi$. Noting such a kernel has index $|G|$ we get 

\begin{align*}
&\E_{Q\mu}\#\sur(*,(G,\omega_G,\psi_G))\\
&=  \sum_{(H,\omega_H,\psi_H)} \mu(H,\omega_H,\psi_H)\frac{\#\{\phi:(H,\omega_H,\psi_H)\twoheadrightarrow(G,\omega_G,\psi_G),f:\Z_{\ell}\rightarrow\ker\phi\}}{|H|}\\
&= \sum_{(H,\omega_H,\psi_H)} \mu(H,\omega_H,\psi_H)\frac{\#\sur\left((H,\omega_H,\psi_H)\twoheadrightarrow(G,\omega_G,\psi_G)\right)}{|G|}\\
&=|G|^{-1} \E_{\mu}\#\sur(*,(G,\omega_G,\psi_G))
\end{align*}

as desired.

\end{proof}

Since the moments of $Q^\mu$ are smaller then the moments of $\mu$, the same proof for Lemma \ref{momimpmeas} applies in this setting and gives:

\begin{lemma}\label{Qmomimpmeas}

The $Q^t\mu$ is determined by its moments.

\end{lemma}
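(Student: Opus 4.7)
The plan is to adapt the proof of Lemma \ref{momimpmeas} essentially verbatim, replacing every role of the moment $1/|\sym^2 G[\ell^n]|$ by the corresponding moment of $Q^t\mu$. By Lemma \ref{Qmom}, any measure $\mu'$ sharing the moments of $Q^t\mu$ must satisfy
$$\E_{\mu'}\#\sur(*,(G,\omega_G,\psi_G)) \;=\; \frac{1}{|G|^t \cdot |\sym^2 G[\ell^n]|}.$$
Since $\#\Aut(G,\omega_G,\psi_G)$ surjections from $(G,\omega_G,\psi_G)$ to itself are always present, the same one-line argument as in Lemma \ref{momimpmeas} yields the pointwise bound
$$\mu'(G,\omega_G,\psi_G) \;\leq\; \frac{1}{\#\Aut(G,\omega_G,\psi_G)\cdot |G|^t \cdot |\sym^2 G[\ell^n]|}.$$

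Next I set up a linear system $MU = T$ exactly as in Lemma \ref{momimpmeas}, where $U$ is the column vector with components $\mu'(G,\omega_G,\psi_G)$ (indexed by triples in $\mathcal C_{\ell,n}$), $M$ is the matrix of surjection counts, and now $T$ is the column vector with components $1/(|H|^t \cdot |\sym^2 H[\ell^n]|)$. I take the diagonal matrix $D$ to have entries $D_{G,G} = \#\Aut(G,\omega_G,\psi_G)\cdot |G|^t \cdot |\sym^2 G[\ell^n]|$, so that $MD^{-1}\cdot DU = T$, with both $DU$ and $T$ bounded by $1$ and $1/|\sym^2 H[\ell^n]|$ respectively and hence lying in $L^\infty$.

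The crucial observation is that the entries of this $MD^{-1}$ are obtained from the entries of the matrix in the proof of Lemma \ref{momimpmeas} by multiplying each $(H,G)$-entry by the factor $|G|^{-t} \leq 1$. Consequently every row sum of $MD^{-1}$ is bounded above by the row sum in the $t=0$ case, so the operator $I - MD^{-1}$ remains a contraction on $L^\infty(V)$ with norm strictly less than $1$. Therefore $MD^{-1}$ is invertible on $L^\infty(V)$, and $DU = (MD^{-1})^{-1}T$ is uniquely determined, forcing $\mu' = Q^t\mu$.

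The only point requiring care is the verification that the contraction estimate in the proof of Lemma \ref{momimpmeas} is robust under the insertion of the $|G|^{-t}$ factors. But since these factors only shrink the off-diagonal and diagonal entries uniformly (by a factor $\leq 1$), the bound $c_\ell^{-1} - 1 < 1$ on the relevant row sums is preserved, so no new work is needed. In particular, no explicit knowledge of the normalization constant or support of $Q^t\mu$ is required for the uniqueness argument—these will be extracted separately in the subsequent explicit computation.
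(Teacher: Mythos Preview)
Your proposal is correct and takes essentially the same approach as the paper. The paper's own proof is a single sentence---``Since the moments of $Q^t\mu$ are smaller than the moments of $\mu$, the same proof for Lemma \ref{momimpmeas} applies''---and you have simply unpacked what that sentence means: scale $D$ by the extra factor $|G|^t$, observe that the entries of $MD^{-1}$ shrink, and conclude that the Neumann-series invertibility argument goes through unchanged.
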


We now proceed to compute $Q^t\mu$. Note that while $\mu$ is supposed only where $\psi_G$ is an isomorphism, $Q^t\mu$ could potentially be supported where $\coker\psi_G$ has $\ell$-rank at most $t$.

\begin{thm}\label{quotient-measure-formula}

If $(G,\omega_G,\psi_G)$ is such that the image of $\psi_G(G^\vee[\ell])$ in $G[\ell]$ has codimension $s\leq t$, then

$$Q^t\mu(G,\omega_G,\psi_G) = \frac{1}{|\Aut(G,\omega_G,\psi_G)|\cdot |\sym^2G[\ell^n]|\cdot|G|^{t}}\frac{(\ell^{-1})_t}{(\ell^{-1})_{t-s}}\prod_{i=t+1}^{\infty}(1+\ell^{-i})^{-1}$$

\end{thm}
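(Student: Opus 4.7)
My plan is to derive the formula by extending the linear random matrix model argument from Theorem~\ref{totmommeas} to an augmented model that realizes $Q^t\mu$ directly as a matrix cokernel.

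First, I observe that the operator $Q$ is weak-$*$ continuous on the family of measures with uniformly bounded first moments (the bound of Lemma~\ref{upperbound} applies equally to $Q^t\mu_g$ after an identical Fourier computation). Combined with Theorem~\ref{univmeas} and Lemma~\ref{Qmomimpmeas}, this gives $Q^t\mu = \lim_{g\to\infty} Q^t\mu_g$, and by construction $Q^t\mu_g$ is the pushforward of Haar measure on the space of tuples $(M, v_1, \ldots, v_t)$ --- with $M$ skew-symplectic on $\Z_\ell^{2g}$ and each $v_i \in \Z_\ell^{2g}$ --- under the map sending such a tuple to the BEG whose underlying group is $\coker[\,M + \ell^n \mid v_1 \mid \cdots \mid v_t\,]$ equipped with the pushforward of $(\omega_M, \psi_M)$ from the intermediate cokernel $H := \coker(M+\ell^n)$.

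Second, following the proof of Theorem~\ref{totmommeas}, I fix a BEG $(G, \omega_G, \psi_G)$ with codimension of $\psi_G(G^\vee[\ell])$ in $G[\ell]$ equal to $s \leq t$, and fix a surjection $f: \Z_\ell^{2g} \twoheadrightarrow G$ with $f_\ast \omega = \omega_G$ (there are $(1+o_g(1))|G|^{2g}/|\wedge^2 G|$ such, by Lemma~\ref{eqdist}). The tuples $(M, v_1, \ldots, v_t)$ contributing to $Q^t\mu_g(G,\omega_G,\psi_G)$ with quotient map $f$ are those such that (i) $f\circ(M+\ell^n) = 0$ and $f(v_i) = 0$; (ii) the images $\bar v_i \in H$ together with $(M+\ell^n)\Z_\ell^{2g}$ generate $\ker f$; and (iii) the pushforward of $\psi_M$ along $H \twoheadrightarrow G$ equals $\psi_G$. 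The probability of (i) for the $v_i$'s is $|G|^{-t}$, producing the factor $|G|^{-t}$ in the formula. The matrix analysis of $M$ from Theorem~\ref{totmommeas} carries over, producing $|\Aut(G,\omega_G,\psi_G)|^{-1} \cdot |\sym^2 G[\ell^n]|^{-1}$ together with an infinite-dimensional invertibility factor for a residual symmetric block over $\Z_\ell$, which is now of ``augmented'' rank and yields $\prod_{i=t+1}^\infty (1-\ell^{-(2i-1)})$; using the identity $\prod_{i\geq 1}(1-\ell^{-(2i-1)}) \cdot \prod_{i\geq 1}(1+\ell^{-i}) = 1$ this rewrites as $\prod_{i=t+1}^\infty (1+\ell^{-i})^{-1}$.

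Third, the combinatorial factor $(\ell^{-1};\ell^{-1})_t/(\ell^{-1};\ell^{-1})_{t-s}$ encodes the finite-dimensional count arising from condition (iii). The point is that $\psi_G$ mod $\ell$ has a rank $s$ defect, so the data of a lift to a BEG $(H, \omega_H, \psi_H)$ with $\psi_H$ an isomorphism corresponds, after choosing lifts, to choosing a complement for $\psi_G(G^\vee[\ell])$ inside a rank-$t$ enlargement of $G[\ell]$ so that the enlarged pairing becomes non-degenerate. By elementary linear algebra over $\F_\ell$, the proportion of such lifts equals the proportion of injective $\F_\ell$-linear maps $\F_\ell^s \hookrightarrow \F_\ell^t$, namely $\prod_{j=t-s+1}^{t}(1 - \ell^{-j}) = (\ell^{-1};\ell^{-1})_t/(\ell^{-1};\ell^{-1})_{t-s}$. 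Assembling these factors gives the claimed formula.

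The main obstacle will be step three: rigorously identifying the pushforward condition on $\psi$ with the stated $q$-binomial count, and separating the finite combinatorial contribution from the analytic tail of the symmetric-matrix invertibility probability. A convenient sanity check will be that the formula sums to $1$ over all BEGs of $\coker\psi$-rank at most $t$; equivalently, that its zeroth moment matches $\E_{Q^t\mu}\#\mathrm{Surj}(*,0) = 1$ as predicted by Lemma~\ref{Qmom}, which follows from our formula via the same identity $c_\ell \prod_{i\geq 1}(1+\ell^{-i}) = 1$ together with termwise summation over the Smith type of $G$.
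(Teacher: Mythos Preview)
Your overall strategy—extending the linear model of Theorem~\ref{totmommeas} by $t$ extra random columns $v_1,\dots,v_t$ and computing the limiting measure directly—is exactly the route the paper takes. The block decomposition and the identification of the factors $|G|^{-t}$, $|\Aut(G,\omega_G,\psi_G)|^{-1}$, and $|\sym^2 G[\ell^n]|^{-1}$ are also in line with the paper's argument.

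However, two steps in your proposal are genuinely off. First, in step~2 you assert that the augmented symmetric block contributes $\prod_{i=t+1}^\infty (1-\ell^{-(2i-1)})$ and that this ``rewrites'' as $\prod_{i=t+1}^\infty (1+\ell^{-i})^{-1}$. The infinite-product identity you quote holds only for the full products over $i\geq 1$; the truncated versions are \emph{not} equal (already at $t=1$ one gets $c_\ell/(1-\ell^{-1})$ versus $c_\ell(1+\ell^{-1})$). In the paper this probability is obtained by a genuine computation (Lemma~\ref{symmetricprob}): one sums over the corank $j$ of the symmetric block $X$, multiplies by the probability that $t$ random columns surject onto $\coker X$, and applies the $q$-binomial theorem to collapse the sum to $c_\ell\prod_{i=1}^t(1+\ell^{-i}) = \prod_{i=t+1}^\infty(1+\ell^{-i})^{-1}$. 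There is no shortcut via a shifted odd-exponent product.

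Second, in step~3 you attribute the factor $(\ell^{-1})_t/(\ell^{-1})_{t-s}$ to condition~(iii), the $\psi$-pushforward condition, and describe it in terms of ``lifts to a BEG $(H,\omega_H,\psi_H)$ with $\psi_H$ invertible.'' That is not where the factor arises in the matrix model. Condition~(iii) simply \emph{fixes} the $r\times r$ block $\psi_*$ (determined by $\psi_G$); the combinatorial factor comes from the \emph{surjectivity} condition~(ii). Concretely, after the lower block $(X\mid B_2)$ is surjective with kernel $K\cong\F_\ell^t$, the upper block restricted to $K$ is a uniformly random map $F:\F_\ell^t\to\F_\ell^r$, and surjectivity of the whole matrix is equivalent to $F$ surjecting onto $\coker\psi_*\cong\F_\ell^s$; this has probability $(\ell^{-1})_t/(\ell^{-1})_{t-s}$. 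Your rephrasing via BEG lifts mixes the measure-theoretic definition of $Q$ back into what was supposed to be a pure matrix computation, and the bijection you sketch between such lifts and injective maps $\F_\ell^s\hookrightarrow\F_\ell^t$ is not established.
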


\begin{proof}

Recall that we are looking for the measure of pairs of maps $M:\Z_\ell^{2g}\rightarrow \Z_{\ell}^{2g}, B:\Z_{\ell}^t\rightarrow \Z_{\ell}^{2g}$ so that 
M is skew-symplectic, the cokernel of $(M+\ell^n)\oplus B$ is isomorphic to $G$ and the pushforward of $\psi$ under $M+\ell^n$ is $\psi_G$. 

Fix a surjection $f$ with $f\omega = \omega_G$. By Lemma \ref{eqdist}, there are  $(1-o_g(1))\cdot\frac{|G|^{2g}}{\#\wedge^2G}$ of these.  By Lemma \ref{allsur}, for large enough $g$
there is at least one $M$ satisfying $f \circ (M + \ell^n) = 0.$  The set of all such $M$ form a coset of those matrices $N$ with $f\circ N=0$. 

Identifying $L=\Z_\ell^{2g}$ with $L^\ast$ via the symplectic pairing, $M$ can be viewed as a self-dual map from
$L^\ast$ to $L.$ Upon identifying $L$ with $L^\ast$ via the symplectic form, we identify $\ell^n \cdot 1$ with $\ell^n \cdot$ the identification map.  To ease notation, we continue to refer to this map as $\ell^n$ below.   
Write $$G=\oplus_{i=1}^r \Z/\ell^{m_i} b_i.$$ Pick a basis $e_i$ for $L$ so that $f(e_i)=b_i$ with $b_i=0$ for $i>r$ by convention.   
Let $b_i^\vee$ denote the dual basis for $G^\vee.$  Let $A$ denote the matrix of $M+\ell^n$ with respect to the bases $e_i$ 
and $e_i^*$, so that $Ae_i^*=\sum_j A_{i,j}e_j$, and l. Then the condition that $f\circ (M+\ell^n)=0$ is the condition that $A_{i,j}$ is divisible by $\ell^{m_j}$. Moreover, if we let
$r_i:=\max(0,m_i-n)$ then $\ell^{r_i}b_i$ is a basis for $G[\ell^n]$ and the the map $\psi$ is given by
$$\psi(\ell^{r_i}b_i^\vee)=\sum_j \frac{A_{i,j}}{\ell^{m_i-r_i+r_j}}\ell^{r_j}b_j.$$ 

Now, changing $M$ amounts to adding a symmetric matrix $A'$ to $A$. Clearly, $A'$ must satisfy $\ell^{\max(m_i,m_j)} \mid A'_{i,j}$. The Haar measure
of all such $A'$ is easily computed to be $\frac{\mid\wedge^2G\mid}{|G|^{2g}}$. Moreover, exactly as in the proof of theorem \ref{totmommeas} we can evidently make any
$\psi_G$ which is compatible with $\omega_G$ occur by picking an appropriate $A'$. All such $\psi_G$ occur with equal measure as they are distinct cosets of allowable matrices $A'$. 

Let $C$ be the $2g\times 2g+t$ matrix of $(A+\ell^n,B)$ in the bases $e_i,e_i^\vee$ and an arbitrary basis for the domain $\Z_\ell^t$ of $B$. We need 
the Haar measure of all such $C$ which are surjective onto the kernel of $f$, and which induces the pairing $\psi_G$. The set of those $C$ which map into
the kerne of $f$ and induce $\psi_G$ has Haar meaure $\frac{|\wedge^2G|}{|\sym^2G[\ell^n]|\cdot|G|^{2g+t}}$  by the above. We restrict
to such $C$ from now on. 

Let $C'$ be the matrix $C$ with the $i$'th row divided by $\ell^{m_i}$ for $1\leq i\leq r$, and let $C_0$ be the reduction of $C'$ modulo $\ell$. Note that $C_0$ is of the shape $$\begin{pmatrix}\psi_* & D & B_1 \\ 0 & X=X^t & B_2\end{pmatrix}$$ with $\psi_*$ being determined by $\psi_G$, and $D,B_1,X,B_2$ being Haar-random with the only condtion that $X$ is symmetric.

 For $C$ to map surjectively onto $\ker f$ is equivalent to $C_0$ being surjective, or equivalently for the rank of $C_0$ to be $2g$.  This in particular requires $\begin{pmatrix} X B_2 \end{pmatrix}$ to be surjective. The lemma below follows an unpublished note of Robert Rhoades where he computes the number of symetric matrices having a fixed rank over a field.

\begin{lemma}\label{symmetricprob}

Let $E=\begin{pmatrix} X B_2 \end{pmatrix}$ be a random $m\times m+t$ matrix over $\F_{\ell}$ with $X=X^t$. Then as $m\rightarrow\infty$, the probability that $E$ is surjective tends to $\prod_{i=t+1}^{\infty}(1+\ell^{-i})^{-1}$.

\end{lemma}

\begin{proof}

Define $(x)_j:= \prod_{i=1}^j (1-x^i)$, and ${a\choose b}_x:=\frac{(x)_a}{(x)_b(x)_{a-b}}$. 
Let $c=\prod_{i=1}^{\infty} (1+\ell^i)^{-1}.$ Let $I(n,j)$ be the number of symmetric $n\times n$ matrices which have corank $j$. We claim that $I(n,j)=I(n-j,0)\ell^{j(n-j)}\cdot {n\choose j}_{\ell^{-1}}$.
To see this, note that a an $n\times n$ self-dual map $\phi:L\rightarrow L^\ast$ of rank $n-j$ has kernel a $j$-dimensional subspace $E$, and induces a self-dual map on  the quotient space $L/E$. The number of $j$-dimensional subspaces is $\ell^{j(n-j)}\cdot {n\choose j}_{\ell^{-1}}$ and so the claim follows.
Letting $n\rightarrow\infty$ we see that the probability of a large symmetric matrix having corank $j$ is $\frac{c\ell^{-\frac{j^2+j}{2}}}{(\ell^{-1})_j}$. 

Now, if $X$ has corank $j\leq t$ the probability that $B_2$ surjects from $\F_\ell^t $ onto the cokernel of $X$ is the probability that a random $t\times j$ matrix is surjective which is $\prod_{i=t-j+1}^{t} (1-\ell^{-i}) = \frac{(\ell^{-1})_t}{(\ell^{-1})_{t-j}}$.

Thus, the probability that $E$ is surjective tends to 
$$c\sum_{j=0}^t \frac{\ell^{-\frac{j^2+j}{2}}(\ell^{-1})_t}{(\ell^{-1})_j(\ell^{-1})_{t-j}} = c\sum_{j=0}^t \ell^{-\frac{j^2+j}{2}}{t\choose j}_{\ell^{-1}}=c\prod_{i=0}^{t-1} (1+\ell^{i+1})$$ by the q-binomial theorem. This completes the proof.

\end{proof}

Assuming that $\begin{pmatrix} 0 &  X & B_2 \end{pmatrix}$ is surjective, let $K$ denote its kernel. Then write $F$ for the $r\times t$ matrix representing
the restriction of $\begin{pmatrix} D & B_1 \end{pmatrix}$ to $K$. Then for $C'$ to be surjective it is equivalent for the matrix 
$\begin{pmatrix} \psi_* & F \end{pmatrix}$ to be surjective. 

\begin{lemma}

The corank of $\psi_*$ is equal to the codimension of $\psi_G(G^\vee[\ell])$ in $G[\ell]$.

\end{lemma}

\begin{proof}

Notice that the matrix $\psi_*$ in the bases $b_i^{\vee}, b_i$ represents the reduction of the map $\psi_G^{\vee}:G^\vee/\ell^n\rightarrow G/\ell^n$. Thus its corank is equal to the dimension of $G/\ell\im\psi_G^{\vee}$. Dualizing back gives the result.
\end{proof}

Given the lemma, the only remaining condition is for $F$ to be surjective onto the cokernel of $\psi_*$. The probability that a random map from 
$\F_{\ell}^t$ to $\F_{\ell}^s$ is surjective is $\frac{(\ell^{-1})_t}{(\ell^{-1})_{t-s}}$, which completes the proof.
\end{proof}

\subsection{Proof of the stability theorem}

\begin{thm}[Stability of $\mu_{n,u}$]\label{CLStability}
Fix $n \in \mathbb{Z}_{> 0}$ and $u \in \mathbb{Z}_{\geq 0}.$  Fix $S \subset | \mathcal{C}_n |$ a finite subset.  Fix $\epsilon > 0.$  The measure $\mu_{n,u}$ enjoys the following stabiilty property: there is some $\delta = \delta(S,\epsilon) > 0$ and $T = T(S,\epsilon)$ satisfying: 
\begin{quote}
If $\mu$ is any probability measure on $|\mathcal{C}_n|$ satisfying 
$$\left| \mathbb{E}_{\mu} \left(\# \mathrm{Surj}(\bullet, \Gamma) \right) - \mathbb{E}_{\mu_{n,u}} \left(\# \mathrm{Surj}(\bullet, \Gamma) \right) \right| < \delta \text{ for all } \Gamma \in T,$$
then
$$\left| \mu(\Gamma) - \mu_{n,u}(\Gamma) \right| < \epsilon \text{ for all } \Gamma \in S.$$
\end{quote}
\end{thm}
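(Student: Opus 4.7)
The plan is to quantify the uniqueness argument of Lemma \ref{Qmomimpmeas} by truncating the governing infinite linear system to a finite one and then inverting via a Neumann series. Recall that $\mu_{n,u}$ is characterized by the linear equation $MU=T$, where $M_{\Gamma,G}=\#\mathrm{Surj}(G,\Gamma)$ and $T_\Gamma=|\sym^2\Gamma[\ell^n]|^{-1}|\Gamma|^{-u}$; after conjugating by the diagonal operator $D$ with $D_{G,G}=\#\Aut(G,\omega,\psi)\cdot|\sym^2 G[\ell^n]|\cdot|G|^u$, this becomes $(I+N)(DU)=T$ where $\|N\|_{\ell^\infty\to\ell^\infty}$ is strictly less than one (by the same row-sum computation used in Lemma \ref{momimpmeas}, adapted by the extra $|G|^u$ factor coming from Lemma \ref{Qmom}). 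The task is to upgrade this infinite-dimensional inversion to a finite perturbation bound under the weaker hypothesis that only finitely many moments are known approximately.

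I would first establish tail control for $\mu_{n,u}$: the explicit formula of Theorem \ref{quotient-measure-formula} combined with the convergent-sum estimate of Lemma \ref{part} gives $\mu_{n,u}(\{|\Gamma|>N\})\to 0$ as $N\to\infty$, so I fix $N_0=N_0(\epsilon,S)$ with $\mu_{n,u}(\{|\Gamma|>N_0\})<\epsilon/8$ and $S\subset\{|\Gamma|\leq N_0\}$. Then I take $T:=\{\Gamma\in|\mathcal{C}_n|:|\Gamma|\leq N_1\}$ for some $N_1\gg N_0$ to be chosen later. For any probability measure $\mu$ whose moments on $T$ differ from those of $\mu_{n,u}$ by at most $\delta$, the elementary inequality $\mu(\Gamma)\cdot\#\Aut(\Gamma,\omega,\psi)\leq\E_\mu\#\mathrm{Surj}(\bullet,\Gamma)$ yields a pointwise upper bound on $\mu(\Gamma)$ for each $\Gamma\in T$ in terms of $\delta$ and $\mu_{n,u}(\Gamma)$. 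Summing these bounds and using the total-mass constraint $\sum_\Gamma\mu(\Gamma)=1$ (the moment at the trivial BEG, which lies in $T$), together with Lemma \ref{upperbound}, I would deduce $\mu(\{|\Gamma|>N_0\})<\epsilon/4$ provided $\delta$ is sufficiently small and $N_1$ sufficiently large.

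Finally I would restrict the linear system $(I+N)(DU)=T$ to the finite index set $\{\Gamma:|\Gamma|\leq N_0\}$. The terms thrown away contribute at most $O(\epsilon)$ in $\ell^\infty$ by the tail bounds of the previous step, while the restricted operator $I+N_{\mathrm{trunc}}$ inherits the contraction bound $\|N_{\mathrm{trunc}}\|<1$. The Neumann series then yields $|D_\Gamma\mu(\Gamma)-D_\Gamma\mu_{n,u}(\Gamma)|\leq C(\epsilon+\delta)$ for every $\Gamma\in S$, and dividing by $D_\Gamma$ (finite and bounded on the finite set $S$) gives the desired conclusion upon choosing $\delta$ and $N_1$ appropriately in terms of $S$ and $\epsilon$.

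The central difficulty lies in the second step: converting closeness of finitely many moments into tail control on a competitor $\mu$. A priori, $\mu$ could concentrate mass on BEGs outside $T$, producing nearly-correct moments on $T$ while being far from $\mu_{n,u}$. Resolving this requires combining the crude total-mass bound $\sum\mu\leq 1$ with a bootstrap through the near-contraction $I+N$: one starts with weak tail control from $\mu(\{|\Gamma|>N_1\})\leq 1$, plugs this into the truncated linear system to obtain a sharper tail bound, and iterates. The contraction property ensures that this iteration converges to a bound of the correct quantitative strength, so that the final Neumann-series step can close the argument.
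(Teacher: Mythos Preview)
Your overall architecture---truncate, control the tail of the competitor, then invert a finite Neumann series---mirrors the paper's, and you have correctly isolated the central difficulty as step~2. However, the proposed bootstrap does not close as stated. The truncation error in the $\Gamma$-th row of $(I+N)(DU)=T$ is $\sum_{|G|>N_1}\#\sur(G,\Gamma)\,\mu(G)$, and the bare tail bound $\mu(\{|G|>N_1\})\le 1$ says nothing about this sum because $\#\sur(G,\Gamma)$ is unbounded in $G$. To iterate the contraction you would need $(DU)_G=D_G\,\mu(G)$ uniformly bounded for \emph{all} $G$, which via $\mu(G)\le\E_\mu\#\sur(\bullet,G)/\#\Aut(G)$ reduces to a bound on the $G$-moment of $\mu$---and those moments for $G\notin T$ are precisely what the hypothesis does not furnish. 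The contraction property $\|N\|<1$ controls row sums, not individual entries times unbounded $(DU)_G$, so feeding ``$\mu$-mass $\le 1$'' back into the system gains nothing.

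The paper obtains tail control by a different mechanism that you are missing. It first passes to the exponent-$\ell^e$ quotient $\Phi_e$, and there exploits the elementary comparison
\[
\#\sur(X,A\oplus\mathbb Z/\ell)\ \ge\ \#\sur(X,A)\cdot\bigl(\ell^{\,\mathrm{rk}\,X[\ell]-\mathrm{rk}\,A[\ell]}-1\bigr),
\]
so that for $|X|$ large at bounded exponent (hence large rank) one has $\#\sur(X,A)\le\epsilon'\,\#\sur(X,A')$ with $A'=A\oplus\mathbb Z/\ell$. Integrating against $\nu_i$ and using the (assumed bounded) $A'$-moment gives $\int_{|X|>c}\#\sur(X,A)\,d\nu_i^e<\epsilon$ directly---a single extra moment controls the tail of the moment integral, with no iteration needed. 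The passage to bounded exponent is then justified by a separate bound $\nu_i(\mathcal C_{n,>r})\le(\ell^r(\ell-1))^{-1}$, obtained by observing that every non-$\ell^r$-torsion group surjects onto $\mathbb Z/\ell^{r+1}$ in at least $\ell^r(\ell-1)$ ways and invoking the moment for $\mathbb Z/\ell^{r+1}$. With both pieces in hand, the moment integrals localize to finite sums, pointwise convergence on finite sets does the rest, and the paper closes with Fatou plus Lemma~\ref{momimpmeas}. Your Neumann-series endgame would work once this tail control is in place, but the ``bootstrap through contraction'' you sketch is not a substitute for the $A\mapsto A\oplus\mathbb Z/\ell$ trick.
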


\begin{proof}

The statement is equivalent to the following: 

\begin{quote}\label{limmeaus}
If $\nu_i$ is a sequence of measures such that for all $A^{\cdot}\in |\mathcal{C}_n|$ we have 
$$\mathbb{E}_{\nu_i} \left(\# \mathrm{Surj}(\bullet, A^\cdot) \right) \to \mathbb{E}_{\mu_{n,u}} \left(\# \mathrm{Surj}(\bullet, A^\cdot) \right)$$ then $\nu_i$ weak-* converges to $\mu_{n,u}$ (i.e. converges pointwise on elements of $\mathcal{C}_n$). 
\end{quote}
We set $\nu_i$ to be a sequence as in the statement above. We set $$f_{A^\cdot}(X^\cdot)=\#\sur(X^\cdot,A^\cdot).$$

For a positive integer $e$, we set $\mathcal{C}_{n,e}$ to be the subset of $\mathcal{C}_n$ consisting of triples whose underlying
abelian group is torsion of order $\ell^e$.  Note that there is a natural pushforward functor 
$\Phi_e:\mathcal{C}_n\ra\mathcal{C}_{n,e}$ induced by the map $A\ra A\otimes\Z/\ell^e\Z$. Note that for $X^\cdot\in\mathcal{C}_n$
and $A^\cdot\in\mathcal{C}_{n,e}$ we have $\sur(X^\cdot,A^\cdot)\cong\sur(\Phi_e(X^\cdot),A^\cdot)$. In particular
$$\int_{\mathcal{C}_n} f_{A^\cdot}(\Phi_e(X^\cdot)) d\nu(X^\cdot) =\int_{\mathcal{C}_{n,e}} f_{A^\cdot}(Y^\cdot) d\Phi_e(\nu)(Y^\cdot).$$

We set $\nu^e_i:=\Phi_e(\nu_i)$ for all $e>0$.  First, we shall prove the following proposition:

\begin{prop}\label{tailbound}

For all $e>0,\epsilon>0,A^\cdot\in \mathcal{C}_{n,e}$ there exists an integer $c$ such that 
$$\int_{|Y|>c} f_{A^\cdot}(Y^\cdot) d\nu^e_i(Y^\cdot) <\epsilon$$ for all $i$.

\end{prop}
Let $Ab_e$ be the category of finite abelian groups of exponent dividing  $\ell^e$. There is a natural forgetful map
$F:\mathcal{C}_{n,e}\ra Ab_e$ which satisfies $$\#\sur(A^\cdot,B^\cdot)\leq \#\sur(A,B).$$ It is therefore sufficient to prove the following statement:

\begin{prop}\label{tailboundab}

For all $e>0,A\in Ab_e,\epsilon>0$ there exists an integer $c$ such that 
$$\int_{|X|>c} \#\sur(X,A) dF(\nu^e_i)(X) <\epsilon$$ for all $i$.

\end{prop}

\begin{proof}

Consider $A':=A\oplus \Z/\ell\Z$. For $c>\ell^{eM}$, any $X$ with $|X|>c$ satisfies that the rank of $X[\ell]$ is larger then $M$.  We claim that $$\#\sur(X,A')\geq \#\sur(X,A)\ell^{M-\rk A[\ell]}.$$ To see this, note that for any surjection $f:X\ra A$ we may pick a subgroup $Y\subset X$ which surjects onto $A$ with at most
$\rk A[\ell]$ generators. Now the number of liftings of $f$ to a surjection onto $A'$ is at least $\#\sur(X/Y,\Z/\ell\Z)$ which is at 
least of size $\ell^{M-\rk A[\ell]} -1$, and this can be made arbitrarily large. Thus for any $\epsilon'$ we may fine a 
 sufficiently large $c$ such that we have 
 \begin{align*}
 \int_{|X|>c} \#\sur(X,A) dF(\nu^e_i)(X)&\leq \epsilon'\int_{|X|>c} \#\sur(X,A') dF(\nu^e_i)(X)\\
 &\leq\epsilon'\int_{Ab_e} \#\sur(X,A') dF(\nu^e_i)(X)\\
 &\leq\epsilon'\int_{\mathcal{C}_n}\sum_{A'^\cdot\in F^{-1}(A')}f_{A'^\cdot}(X)d\nu_i(X)\\
 \end{align*}
 
 By assumption $\int_{\mathcal{C}_n}\sum_{A'^\cdot\in F^{-1}(A')}f_{A'^\cdot}(X)d\nu_i(X)$ is absolutely bounded (in a manner depending only on $A$ and the sequence $\nu_i$), so by taking $\epsilon'$ sufficiently
 small we obtain our desired result.

\end{proof}

Let $\nu$ be a measure in the weak-* closure of $\nu_i$.  We will show that $\nu= \mu_{n,m}$. By passing to a subsequence, we may assume that $\nu_i $ converge weak-* to $\nu$.

\begin{lemma}  For any $e>0$,  $ \Phi_e (\nu)$ is in the weak-* closure of $\nu_i^e $ \end{lemma}

\begin{proof} It suffices to prove, for all $G \in \mathcal {C}_{n,e}$, that \[ \lim_{i \ra \infty} \nu_i ( \Phi_e^{-1} ( G)) = \nu ( \Phi_e^{-1}(G)).\] For a natural number $r\geq n$,  let $\mathcal C_{n,>r}$ be the set of elements of $\mathcal C_{n,>r}$ whose underlying finite abelian group is not $\ell^r$-torsion.

We will first prove that $\lim\sup_{i \ra \infty} \nu_i ( \mathcal C_{n,>r} ) = \frac{1}{ \ell^r (\ell-1) }$. To do this, note that the number of elements of $\mathcal C_n$ whose underling abelian group is isomorphic to $\mathbb Z/\ell^{r+1}$ is at most $\ell^n$, because there are at most $\ell^n$ choices of $\psi$ and $\omega$ must vanish since $\wedge^2 \mathbb Z/\ell^{r+1} = 0$. Furthermore each element of $\mathcal C_{n,>r}$, because it is not $\ell^r$-torsion, has at least $\ell^r (\ell-1)$ surjective maps to $\mathbb Z/\ell^{r+1}$. So  

\begin{align*}
\ell^r (\ell -1) \nu_i ( \mathcal C_{n,>r} ) &\leq \int_{ \mathcal C_{n,>r} } \sur (X, \mathbb Z/ \ell^{r+1} ) \nu_i(X) \\
&\leq \int_{ \mathcal C_{n} } \sur (X, \mathbb Z/ \ell^r) \nu_i(X) \\ 
&= \sum_{ \substack {Y \in \mathcal C_n \\ F(Y) \cong \mathbb Z/\ell^{r+1} }}\int_{\mathcal C_n} \sur(X,Y) d\nu_i(X)
\end{align*} 
which is a sum of at most $\ell^{2}$ terms, each of which converges as $i$ goes to $\infty$ to \[ \int_{\mathcal C_n} \sur(X,Y) d \mu_{n,u}(X) = \frac{1}{ \sym^2 \mathbb Z/\ell^{r+1} [\ell^n] }= \frac{1}{\ell^n}.\]  Hence the sum converges as $i$ goes to $\infty $ to $1$, giving the statement.

Using this,

\begin{align*}  
\lim_{i \ra \infty} \nu_i ( \Phi_e^{-1} ( G)) &\leq  \lim\sup_{i \ra \infty} \nu_i ( \Phi_e^{-1} ( G) \cap \mathcal C_{n,>r} ) + \lim\sup_{i \ra \infty} \nu_i ( \Phi_e^{-1} ( G) \cap \mathcal C_{n,r} ) \\
&\leq \lim\sup_{i \ra \infty} \nu_i (  \mathcal C_{n,>r} ) + \lim\sup_{i \ra \infty} \nu_i ( \Phi_e^{-1} ( G) \cap \mathcal C_{n,r} ) \\
&\leq \frac{1}{ \ell^r (\ell-1)} +  \lim\sup_{i \ra \infty} \nu_i ( \Phi_e^{-1} ( G) \cap \mathcal C_{n,r} \\ 
&=\frac{1}{ \ell^r (\ell-1)} + \nu ( \Phi_e^{-1} ( G) \cap \mathcal C_{n,r} ) \leq \frac{1}{ \ell^r (\ell-1) }+ \nu ( \Phi_e^{-1} (G) ),
\end{align*}
with the key step because $\Phi_e^{-1} ( G) \cap \mathcal C_{n,r} $ is finite since groups in it have rank at most the rank of $G$ and torsion bounded by $\ell^r$. \end{proof}

Then using the proposition and the lemma, for all $A^\cdot\in\mathcal{C}_{n,e},\epsilon>0$ we can find $c$ such that we 
have

\begin{align*}
\int_{\mathcal{C}_n} f_{A^\cdot}(X^\cdot) d\nu(X^\cdot)&=\int_{\mathcal{C}_n} f_{A^\cdot}(\Phi_e(X^\cdot)) d\nu(X^\cdot)\\
&=\int_{\mathcal{C}_{n,e}} f_{A^\cdot}(Y^\cdot) d\Phi_e(\nu)(Y^\cdot)\\
&\geq\int_{|Y|<c} f_{A^\cdot}(Y^\cdot) d\Phi_e(\nu)(Y^\cdot)\\
&= \lim_{i\ra\infty}\int_{|Y|<c} f_{A^\cdot}(Y^\cdot) d\nu^e_i(Y^\cdot)\\
&\geq \lim_{i\ra\infty}\int_{\mathcal{C}_{n,e}} f_{A^\cdot}(Y^\cdot) d\nu^e_i(Y^\cdot) - \epsilon\\
&= \lim_{i\ra\infty}\int_{\mathcal{C}_{n,e}} f_{A^\cdot}(\Phi_e(X^\cdot) ) d\nu_i(X^\cdot) - \epsilon\\
&= \int_{\mathcal{C}_n} f_{A^\cdot}(X^\cdot) d\mu_{n,u}(X^\cdot) - \epsilon\\
\end{align*}

Taking $\epsilon$ to 0 we see $\int_{X} f_{A^\cdot}(X^\cdot) d\nu(X^\cdot)\geq\int_{X^\cdot} f_{A^\cdot}(X^\cdot) d\mu_{n,u}(X^\cdot)$ and thus they are equal by Fatou's Lemma. The Theorem then follows by Lemma \ref{momimpmeas}.

\end{proof}

\subsection{Relating $\mu$ to Malle's conjecture for class groups of number fields with $\ell$-power roots of unity}
We define a map $\psi:G^\vee[\ell^n]\rightarrow G[\ell^n]$ to be \emph{allowable} if $\langle \psi(\alpha),\beta\rangle = \langle \psi(\beta), \alpha\rangle $ whenever
$\ell^r\alpha=\ell^s\beta=0, r+s\leq n.\newline$

Let $A_{s,n}(G)$ be the number of maps $\psi:G^\vee[\ell^n]\rightarrow G[\ell^n]$ such that the corank of $\psi(G^\vee[\ell])$ in $G[\ell]$ is $s$.
\begin{lemma}
The universal measure assigns to a group $G$ the value
$$Q^t\mu(G) = \frac{\prod_{i=t+1}^{\infty}(1+\ell^{-i})^{-1}\ell^{R_n(G)}}{|\Aut(G)||\sym^2G[\ell^n]|\cdot|G|^{t}}\sum_{s=0}^t A_{s,n}(G)\frac{(\ell^{-1})_t}{(\ell^{-1})_{t-s}}$$
where $R_n(G)$ is defined by \[ R_n\left( \prod_{i=1}^r (\mathbb Z/\ell^{e_i} \mathbb Z) \right) = \sum_{\substack{ 1\leq i < j \leq r\\ \max(e_i,e_j) \leq n }} \min( e_i, e_j, n-\max(e_i,e_j)).\]

\end{lemma}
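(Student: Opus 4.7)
The plan is to begin with Theorem~\ref{quotient-measure-formula}, which gives $Q^t\mu(G,\omega,\psi)$ for each BEG with $\psi(G^\vee[\ell])$ of codimension $s\leq t$ in $G[\ell]$. I would write $Q^t\mu(G)$ as a sum over isomorphism classes of BEG structures on $G$ and apply orbit--stabilizer for the $\Aut(G)$-action; this is legitimate because the weight $(\ell^{-1})_t/(\ell^{-1})_{t-s(\psi)}$ depends only on the $\Aut(G)$-invariant codimension $s(\psi)$. After factoring out all quantities independent of $(\omega,\psi)$, the problem reduces to evaluating
\[
S(G) \;:=\; \sum_{(\omega,\psi)\text{ BEG on }G}\frac{(\ell^{-1})_t}{(\ell^{-1})_{t-s(\psi)}}.
\]
I would then factor $S(G)$ by first selecting an allowable $\psi$ (the BEG compatibility~\eqref{psiomegacomp} implies $\psi$ is allowable) and then summing over compatible $\omega\in(\wedge^2G)[\ell^n]$.

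The key claim, from which the lemma follows at once on regrouping allowable $\psi$'s by corank $s$ via the definition of $A_{s,n}(G)$, is that for every allowable $\psi$ the number of compatible $\omega$'s equals $\ell^{R_n(G)}$, independent of $\psi$. To establish this, I would first compute the size of the ``pairing map'' $\Phi\colon (\wedge^2G)[\ell^n]\to\prod_{r\geq 0}\Hom_{\mathrm{alt}}(G^\vee[\ell^{n+r}]\otimes G^\vee[\ell^{n+r}],\tfrac{1}{\ell^{n+r}}\Z/\Z)$ sending $\omega\mapsto(\omega_{G,n+r})_{r\geq 0}$. Writing $G=\bigoplus_i\Z/\ell^{e_i}\cdot b_i$ and $\omega=\sum_{i<j}c_{ij}(b_i\wedge b_j)$, the explicit formula $\omega_{G,m}(f_{i,m},f_{j,m})=\ell^{m-\min(m,e_i)-\min(m,e_j)}c_{ij}$ from Lemma~\ref{wedge-from-pairing} decouples the constraints pair by pair, and a short case analysis (comparing $\max(e_i,e_j)$ to $n$) yields $|\ker\Phi|=\ell^{R_n(G)}$: pairs with $\max(e_i,e_j)\geq n$ force $c_{ij}=0$ while pairs with $\max(e_i,e_j)<n$ admit $\ell^{\min(\min(e_i,e_j),n-\max(e_i,e_j))}$ choices.

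Since the set of $\omega$ compatible with a fixed allowable $\psi$ is either empty or a torsor for $\ker\Phi$, the main remaining obstacle is verifying existence: every allowable $\psi$ admits at least one compatible $\omega$. I would do this by producing $\omega$ directly from the candidate pairings $\phi_{n+r}(\alpha,\beta)=\tfrac12(\langle\ell^r\alpha,\beta\rangle-\langle\ell^r\beta,\alpha\rangle)$, which are automatically alternating, $\ell^n$-torsion-valued, and compatible in the sense $\phi_{n+r}(a,\ell b)=\phi_{n+r+1}(a,b)$ (this last identity follows from $\Z_\ell$-linearity of $\psi$). The allowability condition on $\psi$ is precisely what forces $\phi_n$ to vanish on small-order pairs $\alpha\in G^\vee[\ell^r],\beta\in G^\vee[\ell^s]$ with $r+s\leq n$, which is the necessary condition for the system $(\phi_{n+r})_{r\geq 0}$ to extend to a compatible system on all of $G^\vee$ and hence descend from an element of $(\wedge^2G)[\ell^n]$ via Lemma~\ref{wedge-from-pairing}; consistency of the extension, which is the hardest technical point, is verified coefficient by coefficient by choosing each $c_{ij}$ from the strongest relevant $\phi_{n+r}$-constraint and invoking allowability to ensure the choices are consistent across varying $r$. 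With existence in hand, the number of compatible $\omega$'s is $|\ker\Phi|=\ell^{R_n(G)}$, and the lemma follows.
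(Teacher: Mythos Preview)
Your proposal is correct and follows essentially the same route as the paper's proof: orbit--stabilizer to pass from isomorphism classes to all pairs $(\omega_G,\psi_G)$, factoring the count as (allowable $\psi$'s) $\times$ (compatible $\omega$'s), the case analysis on basis coefficients $c_{ij}$ yielding $\ell^{R_n(G)}$, and the observation that compatible $\omega$'s form a coset so the count is independent of $\psi$. The only notable difference is that the paper simply asserts ``every allowable $\psi_G$ has at least one compatible $\omega_G$'' and immediately reduces to $\psi_G=0$, whereas you give a more explicit existence argument by building the candidate pairings $\phi_{n+r}$ from $\psi$ and invoking Lemma~\ref{wedge-from-pairing}; this extra detail is welcome since the paper leaves that step unjustified.
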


\begin{proof}

By the orbit stabilizer theorem, $$Q^t\mu(G):=\sum_{(\omega_G,\psi_G)} \frac{\Aut(G)}{\Aut(G,\omega_G,\psi_G)}Q^t\mu(G,\omega_G,\psi_G)$$ where the sum is over all pairs of $\omega_G,\psi_G$ yielding an $\ell^n$-BEG.
Applying Theorem \ref{quotient-measure-formula} yields

$$Q^t\mu(G)=\frac{\prod_{i=t+1}^{\infty}(1+\ell^{-i})^{-1}}{|\Aut(G)||\sym^2G[\ell^n]|\cdot|G|^{t}}\sum_{s=0}^{\min(r,t)}B{s,n}(G)\frac{(\ell^{-1})_t}{(\ell^{-1})_{t-s}}$$

where $B_{s,n}(G)$ counts the number of pairs of $\psi_G,\omega_G$ which yield an $\ell^n$-BEG and such that the corank of $\psi(G^\vee[\ell])$ in $G[\ell]$ is $s$.

Now, every allowable $\psi_G$ has at least one compatible $\omega_G$, and since the condition on $\omega_G$ given $\psi_G$ is an additive coset condition, the number of 
compatible $\omega_G$ is the same. It therefore remains to prove that the number of $\omega_G$ compatible with $\psi_G=0$ is $\ell^{R_n(G)}$.

To see this, write $G=\oplus\Z/\ell^{e_i}\Z\cdot b_i$ and write $\omega_G=\sum_{i,j}c_{i,j} b_i\wedge b_j$, where $c_{i,j}\in\Z/\ell^{\min(e_i,e_j)}\Z$. Wlog $e_i\leq e_j$. 
\begin{itemize}

\item \emph{Case 1: $e_i\geq n$.} Then 
$$0=\omega_{G,e_i}(\ell^{e_j-e_i}b_j^\vee,b_i^\vee)=c_{i,j}\mod{\ell^{e_i}}$$ which implies $c_{i,j}=0$.

\item \emph{Case 2: $e_j\geq n>e_i$.} Then
$$0=\omega_{G,n}(\ell^{e_j-n}b_j^\vee,b_i^\vee) = c_{i,j}\ell^{n-e_i}\mod\ell^n$$ which implies $c_{i,j}=0$.

\item \emph{Case 3: $e_j<n$}. Then

$$0=\omega_{G,n}(b_j^\vee,b_i^\vee) = c_{i,j}\ell^{2n-e_i-e_j}\mod\ell^n.$$ This implies  $\ell^{e_i+e_j-n}\mid c_{i,j}$. Since $c_{i,j}$ is only defined modulo $\ell^{e_i}$,
this gives $\ell^{\min(e_i,n-e_j)}$ possibilities for $c_{i,j}$.
\end{itemize}

Multiplying over all pairs $(i,j)$ gives the result.
\end{proof}

For the case of $n=1$, Malle\cite{Malle10} conjectured that $G$ should occur with probability

$$\frac{\prod_{i=t+1}^{\infty}(1+\ell^{-i})^{-1}}{|\Aut(G)||G|^t}\cdot \frac{\ell^{{r\choose 2}}(\ell^{-1})_{r+t}}{(\ell^{-1})_{t}}.$$

\begin{lemma}

The above two quantities agree. In other words, our conjecture agrees with Malle's.

\end{lemma}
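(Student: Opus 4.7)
The plan is to specialize the formula for $Q^t\mu(G)$ from the preceding lemma to $n=1$ and then match it to Malle's expression by reducing to a classical counting identity. When $n=1$, the exponent $R_1(G)$ vanishes term by term (the condition $\max(e_i,e_j)\le 1$ forces $e_i=e_j=1$, in which case $\min(e_i,e_j,\,1-\max(e_i,e_j))=\min(1,1,0)=0$); one has $|\sym^2G[\ell]|=\ell^{r(r+1)/2}$ where $r$ is the $\ell$-rank of $G$; and the allowability condition on $\psi\colon G^\vee[\ell]\to G[\ell]$ is vacuous (the quantifiers $\ell^r\alpha=\ell^s\beta=0$ with $r+s\le 1$ force $\alpha$ or $\beta$ to vanish), so $A_{s,1}(G)$ equals the number $N_{r-s}(r)$ of $\mathbb{F}_\ell$-linear endomorphisms of $\mathbb{F}_\ell^r$ whose image has codimension $s$.

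Cancelling the factor $\prod_{i=t+1}^{\infty}(1+\ell^{-i})^{-1}/(|\Aut(G)|\,|G|^t)$ common to both expressions, and clearing the denominators $\ell^{r(r+1)/2}$ and $(\ell^{-1})_t$, the desired equality reduces to
$$\sum_{s=0}^{t} N_{r-s}(r)\,\frac{(\ell^{-1})_t^{\,2}}{(\ell^{-1})_{t-s}} \;=\; \ell^{r^2}(\ell^{-1})_{r+t}. \qquad (\star)$$

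The main step is to establish $(\star)$; I plan to do this by double counting the $\prod_{i=0}^{r-1}(\ell^{r+t}-\ell^{i})$ surjections $\mathbb{F}_\ell^{r+t}\twoheadrightarrow\mathbb{F}_\ell^r$. Writing such a surjection as a block matrix $(A\mid B)$ with $A$ of size $r\times r$ and $B$ of size $r\times t$, surjectivity amounts to the composition $\mathbb{F}_\ell^{t}\xrightarrow{B}\mathbb{F}_\ell^{r}\twoheadrightarrow\coker(A)$ being surjective. Stratifying by $s=\dim\coker(A)$ and counting admissible $B$ as a surjection $\mathbb{F}_\ell^{t}\twoheadrightarrow\mathbb{F}_\ell^{s}$ (of which there are $\prod_{i=0}^{s-1}(\ell^t-\ell^i)$) together with an arbitrary lift in $\Hom(\mathbb{F}_\ell^{t},\im A)\cong\mathbb{F}_\ell^{(r-s)t}$, one obtains
$$\sum_{s=0}^{\min(r,t)}N_{r-s}(r)\cdot\ell^{(r-s)t}\prod_{i=0}^{s-1}(\ell^t-\ell^i) \;=\; \prod_{i=0}^{r-1}(\ell^{r+t}-\ell^i).$$
Converting each Gaussian product via $\prod_{i=0}^{k-1}(\ell^m-\ell^i) = \ell^{mk}(\ell^{-1})_m/(\ell^{-1})_{m-k}$ produces a common factor of $\ell^{rt}$ which cancels, and after multiplying through by $(\ell^{-1})_t$ one recovers $(\star)$.

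The argument is essentially bookkeeping once the double count is in hand; I do not foresee any serious obstacle beyond tracking the Pochhammer-symbol identifications carefully, and the only conceptual point is that for $n=1$ the orbit decomposition collapses so that BEG structures on $G$ are in bijection with $\mathbb{F}_\ell$-linear maps $G^\vee[\ell]\to G[\ell]$.
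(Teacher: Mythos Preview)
Your proof is correct but takes a genuinely different route from the paper's. Both arguments begin identically: for $n=1$ one has $R_1(G)=0$, $|\sym^2 G[\ell]|=\ell^{r(r+1)/2}$, and allowability is vacuous, so the claim reduces to the identity $\sum_{s} A_{s,1}(G)\,(\ell^{-1})_t/(\ell^{-1})_{t-s}=\ell^{r^2}(\ell^{-1})_{r+t}/(\ell^{-1})_t$ (your $(\star)$ up to a factor of $(\ell^{-1})_t$). From there the paper computes $A_{s,1}(G)$ explicitly as $\ell^{s(r-s)}\binom{r}{s}_{\ell^{-1}}\cdot(\ell^{-1})_r/(\ell^{-1})_s$ and, after substitution, recognizes the resulting sum as the $q$-Vandermonde identity $\binom{r+t}{r}_{\ell^{-1}}=\sum_{s}\binom{t}{s}_{\ell^{-1}}\binom{r}{s}_{\ell^{-1}}$. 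You instead leave $N_{r-s}(r)=A_{s,1}(G)$ unevaluated and establish $(\star)$ directly by double-counting surjections $\F_\ell^{r+t}\twoheadrightarrow\F_\ell^r$ via the block decomposition $(A\mid B)$, stratifying by the corank of $A$. Your argument is self-contained and bypasses the need to quote a named $q$-series identity; the paper's version has the advantage of locating the combinatorics within a classical formula. In effect, your double count is a bijective proof of (a rearrangement of) the same $q$-Vandermonde identity.
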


\begin{proof}

Since for $n=1$ all $\psi$ are allowable, it is sufficient to show that 

$$\frac{\ell^{{r\choose 2}}(\ell^{-1})_{r+t}}{(\ell^{-1})_{t}} =\frac{1}{\ell^{{r+1\choose 2}}} \sum_{s=0}^{\min(r,t)}A_{s,n}(G)\frac{(\ell^{-1})_t}{(\ell^{-1})_{t-s}}$$

or slightly more elegantly

$$\frac{\ell^{r^2}(\ell^{-1})_{r+t}}{(\ell^{-1})_{t}} = \sum_{s=0}^{\min(r,t)}A_{s,1}(G)\frac{(\ell^{-1})_t}{(\ell^{-1})_{t-s}}$$

Note that since $A_{s,1}(G)$ depends only on $G[\ell]$, it is sufficient to handle the case where $G=\F_{\ell}^r$, which we henceforth assume. 

The number $A_{s,1}(G)$ of maps from $\F_{\ell}^r\rightarrow \F_{\ell}^r$ with image a subspace of dimension $r-s$ is the number of such subspaces, which is 
$\ell^{s(r-s)}{r\choose s}_{\ell^{-1}}$ multiplied by the number of surjections which is $\frac{(\ell^{-1})_r}{(\ell^{-1})_s}$. Using this and dividing through, 
we see that the above identity reduces to
$${r+t\choose r}_{\ell^{-1}}=\sum_{s=0}^{\min(r,t)}{t\choose s}_{\ell^{-1}}{r\choose s}_{\ell^{-1}}$$ which is a q-vandermonde identity.

\end{proof}

Garton \cite{Garton} gave a very nice formula in the case of $t=0,n=1$ for $\mu(G)$. We can also recover and generalize Garton's result as follows:

\begin{prop}\label{Gartonbign}
Let $G$ be a finite abelian $\ell$-group satisfying $G[\ell^n] = \oplus_{i=1}^n (\Z/\ell^i\Z)^{m_i}$. Then

$$\mu(G)=\frac{\prod_{i=1}^{\infty}(1+\ell^{-i})^{-1}}{|\Aut(G)|}\cdot |\wedge^2G[\ell^n]|\cdot (\ell^{-1};\ell^{-1})_{m_n}\cdot \prod_{j=1}^{n-1} (\ell^{-1};\ell^{-2})_{\lceil m_i/2\rceil}.$$
\end{prop}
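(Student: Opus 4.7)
The plan is to sum the explicit formula of Theorem \ref{measure-exists-unique} over isomorphism classes of BEG structures on $G$ and apply orbit-stabilizer. Since $\mu(G,\omega_G,\psi_G) = c_\ell/(|\Aut(G,\omega_G,\psi_G)|\cdot|\sym^2 G[\ell^n]|)$ when $\psi_G$ is an isomorphism and vanishes otherwise, orbit-stabilizer applied to the natural $\Aut(G)$-action on pairs $(\omega_G,\psi_G)$ will yield
\[
\mu(G) \;=\; \frac{c_\ell\cdot N_{\mathrm{iso}}(G)}{|\Aut(G)|\cdot|\sym^2 G[\ell^n]|},
\]
where $N_{\mathrm{iso}}(G)$ is the total count of pairs $(\omega_G,\psi_G)\in(\wedge^2 G)[\ell^n]\times\Hom(G^\vee[\ell^n],G[\ell^n])$ satisfying \eqref{psiomegacomp} with $\psi_G$ an isomorphism. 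Using the identity $c_\ell = \prod_{i\geq 1}(1+\ell^{-i})^{-1}$ (which follows from the elementary $\prod_i(1+x^i) = \prod_i(1-x^{2i-1})^{-1}$ at $x = \ell^{-1}$), matching the stated formula reduces to proving $N_{\mathrm{iso}}(G) = |(\wedge^2 G)[\ell^n]|\cdot|\sym^2 G[\ell^n]|\cdot A_n(G[\ell^n])$, where $A_n$ denotes the product displayed in the statement.

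Next, I would exploit that \eqref{psiomegacomp} is affine-linear in $\psi_G$: for each fixed $\omega_G$, the set of compatible $\psi_G$ is a coset of the subgroup $S$ of ``symmetric'' $\psi_G$ (those compatible with $\omega_G = 0$). The first-moment computation in the proof of Theorem \ref{totmommeas} identifies $|S| = |\sym^2 G[\ell^n]|$, and Lemma \ref{allsur} combined with that same argument shows every $\omega_G \in (\wedge^2 G)[\ell^n]$ is realized. Letting $h_G$ denote the overall fraction of $\psi_G$ appearing in some BEG pair that are isomorphisms, this gives
\[
N_{\mathrm{iso}}(G) \;=\; |(\wedge^2 G)[\ell^n]|\cdot|\sym^2 G[\ell^n]|\cdot h_G,
\]
so the remaining task is to prove $h_G = A_n(G[\ell^n])$.

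This last identification is the main analytic step and the principal obstacle. The plan is to decompose $G[\ell^n] = \bigoplus_{i=1}^n(\Z/\ell^i)^{m_i}$ and write $\psi_G$ as a block matrix whose $(i,j)$ block lies in $\Hom((\Z/\ell^j)^{m_j,\vee},(\Z/\ell^i)^{m_i})$; the family of relations \eqref{psiomegacomp} at varying $r$ forces $\psi_{ij} = \psi_{ji}^T$ between opposite blocks (once the $\omega_G$-contribution is absorbed into the skew parts). Running the block-diagonal invertibility analysis from the second half of the proof of Theorem \ref{totmommeas}, the reduction of $\psi_G$ modulo $\ell$ is block-upper-triangular, so invertibility of $\psi_G$ reduces to invertibility of each diagonal block over $\F_\ell$. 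For the top-degree block ($i = n$), as $\omega_G$ and the symmetric coset representative vary the $m_n\times m_n$ block ranges over all of $M_{m_n}(\F_\ell)$, contributing the $\GL_{m_n}(\F_\ell)$ density $(\ell^{-1};\ell^{-1})_{m_n}$; for $i < n$, the symmetry relations constrain the $m_i\times m_i$ block to be symmetric over $\F_\ell$, whose invertibility density is $(\ell^{-1};\ell^{-2})_{\lceil m_i/2\rceil}$ (a finite-rank refinement of the count underlying Lemma \ref{symmetricprob}, directly computable from the recursion $I(m,j) = I(m-j,0)\ell^{j(m-j)}\binom{m}{j}_{\ell^{-1}}$ appearing there). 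Multiplying these block contributions yields $h_G = A_n(G[\ell^n])$. The hardest part will be the bookkeeping required to disentangle the independent block-diagonal constraints imposed by \eqref{psiomegacomp} at different values of $r$; once that is done, substitution back into the formula for $\mu(G)$ above delivers the proposition.
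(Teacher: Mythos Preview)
Your proposal is correct and follows essentially the same route as the paper: reduce via orbit--stabilizer and Theorem~\ref{measure-exists-unique} to counting compatible pairs $(\omega_G,\psi_G)$ with $\psi_G$ invertible, then analyze invertibility via the block-diagonal structure of $\psi_G\bmod\ell$. The paper organizes the count slightly differently---it fixes the decomposition $\psi_G=\psi^++\psi^-$ into symmetric and antisymmetric parts, then carries out an explicit case analysis on the allowability of $\psi^-$ (showing in particular that for the $i<n$ diagonal blocks one has $\psi^-\equiv 0\bmod\ell$, forcing those blocks to be symmetric, while for $i=n$ the antisymmetric part is unconstrained)---but this is exactly the ``coset plus block'' picture you describe, and the invertibility densities you quote are the same ones the paper uses.
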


It would be interesting to generalize this to the case of arbitrary $t$ in a simple closed form.

\begin{proof}

We must count the number $A_{0,n}(G)$ of allowable $\psi_G$ which are invertible. Note that each such can be decomposed into $\psi_G=\psi^++\psi^-$  where
$\psi^+$ is symmetric, and $\psi^-$ is antisymmetric. The number of symmetric $\psi^+$ is $\sym^2G[\ell^n]$, and this cancels out in the compatibility condition for $\omega_G$,
so all such $\psi^+$ can occur.

To understand the allowability condition on $\psi^-$, we proceed as above writing $G=\oplus\Z/\ell^{e_i}\Z\cdot b_i$. Set $r_j=\max(0,e_j-n)$. Then a basis for 
$G[\ell^n]$ is $\ell^{r_i} b_i$ and a basis for $G^\vee[\ell^n]$ is $\ell^{r_i}b_i^\vee$. 
Let $c_{i,j}\in\Z/\ell^{\min(e_i,e_j)}\Z$ be defined such that $\psi^-(\ell^{r_i}b_i)^\vee=\sum_j c_{i,j}\ell^{e_j-\min(e_i,e_j)} b_j$ so that 
$c_{i,j}=-\min c_{j,i}$ since $\psi^-$ is anti-symmetric.

Assume wlog $e_j\geq e_i$. The allowability condition on $\psi^-$ gives the following restrictions. We a

\begin{itemize}

\item \emph{Case 1: $e_j\geq n$.} Then the allowability condition is empty, giving $\min(e_i,e_j,n)$ possible values for $c_{i,j}$.

\item \emph{Case 2: $n>e_i+e_j.$} Then again,  the allowability condition is empty, giving $\min(e_i,e_j)$ possible values for $c_{i,j}$.

\item \emph{Case 3: $e_i+e_j\geq n>e_j$}. Then the allowability condition gives that 
$$0=\langle \ell^{e_i+e_j-n}b_j^\vee,b_i^\vee\rangle = c_{i,j}\ell^{e_i+e_j-n+(n-e_i)}=c_{i,j}\ell^{e_j}=0\mod\ell^n.$$
This implies  $\ell^{n-e_j}\mid c_{i,j}$. Since $c_{i,j}$ is only defined modulo $\ell^{e_i}$,
this gives $\ell^{e_i+e_j-n}$ possibilities for $c_{i,j}$.
\end{itemize}

We thus see that 
\begin{align*}
&\#\{\textrm{allowable }\psi^-\}\ell^{R_n(G)} \\
&=\prod_{e_j\geq n}\ell^{\min(e_i,e_j,n)}\prod_{n>e_i+e_j}\ell^{\min(e_i,e_j,n)}\prod_{e_i+e_j>n>\max(e_i,e_j)}\ell^{(e_i+e_j-n)+(n-\max(e_i,e_j))}\\
&=\prod_{i<j}\ell^{\min(e_i,e_j,n)}\\
&=\mid\wedge^2G[\ell^n]\mid\\
\end{align*}

It remains to restrict to those $\psi$ which are invertible, which is equivalent to $\psi\mod\ell$ being invertible. Note that in the natural basis, $\psi\mod\ell$ is a block diagonal 
matrix whose blocks are of size $m_1,m_2,\dots,m_n$, so 
it is necessary and sufficient for the blocks to all be invertible. Moreover, by the computation above ,for all but the last 
$m_n\times m_n$ block the blocks are all symmetric (since we are working modulo $\ell$, whereas for the $m_n\times m_n$ block there is no restriction. Since the probability 
that a random $r\times r$ symmetric matrix over $\F_\ell$ is invertible is $ (\ell^{-1};\ell^{-2})_{\lceil r/2\rceil}$, the claim follows.
\end{proof}

\section{Proof of Theorem \ref{ffmain}} \label{functionfieldproof}

The goal of this section is to prove Theorem \ref{ffmain}. To do so, we express the moments for our intermediate measures $\mu^q_g$ in terms of point counts for certain
moduli spaces. These spaces are very similar to the ones studied in \cite{EVW}, and we use their results on cohomological stability to obtain our main theorem.

Fix an element $(G,\omega_*,\psi_* )\in\cC_{\ell,n}$. We will estimate, for a random hyperelliptic curve of genus $g$ over $\mathbb F_q$, where $\mathbb F_q$ is a finite field containing the $\ell^n$th roots of unity, the average number of maps from the class group to $G$ such that the $\omega$ and $\psi$-invariants induced by the map are $\omega_*$ and $\psi_*$ respectively. The error term will be independent of $g$ for $g$ sufficiently large, but depend on $G$, and go to zero with $q$.

To that end, let $N$ be a natural number with $\lfloor \frac{N-1}{2} \rfloor = g$. Let $Y_N$ be the Hurwitz space parameterizing degree $2$ covers of $\mathbb P^1$ over $\mathbb F_q$ ramified at $N$ points in $\mathbb A^1$ and also ramified at $\infty$ if $N$ is odd. Then $Y_N $ parameterizes a family of smooth, complete, hyperelliptic  curves of genus $g$. Thus we have a map $a: \pi_1(Y_N) \to \mathrm{GSp}_{2g}^{\ell^n} (\mathbb Z_\ell)$, where $ \mathrm{GSp}_{2g}^{\ell^n} (\mathbb Z_\ell)$ is the subgroup of $\mathrm{GSp}_{2g}( \mathbb Z_\ell)$ whose similitude character is $1$ mod $\ell^n$. (Note that, in defining this map, we have fixed a symplectic form on $\mathbb Z_\ell^{2g}$.) Furthermore, there is a structure map $d: \pi_1(Y_N) \to \widehat{\mathbb Z}$.

Let $\omega_*^o = \frac{ 2 \ell^n}{q-1} \omega_*$. 

Fix a surjection $\phi: \mathbb Z_\ell^{2g} \to G$ such that the pushforward of the fixed symplectic form on $\mathbb Z_\ell^{2g}$ is $\omega_*^o$. Let $H \subseteq \mathrm{GSp}_{2g}^{\ell^n} (\mathbb Z_\ell)$ be the subgroup of matrices which fix $\phi$.

For $F \in H$, let $\psi_\phi(F) \in \Hom( G^\vee[\ell^n], G[\ell^n]) $ be the map induced by the image of $F$ in $ \mathrm{GSp}_{2g}^{\ell^n} (\mathbb Z_\ell)$ defined in subsection \ref{abomegapsi}

%Corresponding to the map $\phi$ one obtains a map $\psi:G^\vee[\ell^n]\ra G[\ell^n]$
%
%Let us emulate the earlier definition of $\psi$ in subsection \ref{abomegapsi} in this more general setting:
%
%For $F \in H$, define $\psi_\phi(F) \in \Hom( G^\vee[\ell^n], G[\ell^n])$ as follows, which is the same as our abstract definition. Let $k$ be any natural number $\geq n$ such that the map $\phi: \mathbb Z_\ell^{2g} \to G$ factors through $(\mathbb Z/ \ell^{k})^{2g}$. Given $\alpha \in G^\vee [\ell^n]$, apply the map $\phi^\vee : G^\vee [\ell^n] \to (\mathbb Z/ \ell^n \mathbb Z)^{2g}$, let $\alpha_0$ be a lift of $\phi^\vee$ to $(\mathbb Z/ \ell^{n+k} \mathbb Z)^{2g}$. Then $(1-F) \alpha_0$ is $0$ modulo $\ell^n$ and hence corresponds to an element of $\ell^n (\mathbb Z/\ell^r \mathbb Z)^{2g}$. Let $\psi(\alpha) = \phi ( \frac{ (1-F) \alpha_0}{\ell^n})$. This is well-defined as $\alpha_0$ is well-defined up to an element of $\ell^n (\mathbb Z/ \ell^{k})^{2g}$ and by assumption $\phi \circ (1-F)=0$. It produces an $\ell^n$-torsion element  because $\phi( (1-F) \alpha_0)=0$. \textcolor{green}{Now that we have a definition of $\psi$ for a symplectic similitude, maybe we don't need this?}

\begin{lemma} The function $\psi_\phi: H \to  \operatorname{Hom}( G^\vee[\ell^n]\ra G[\ell^n])$ is a homomorphism. \end{lemma}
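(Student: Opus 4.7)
The plan is to unpack the definition of $\psi_\phi(F)$ via the explicit formula \eqref{unravelpsi} and then exploit the elementary identity $(F_1F_2 - 1) = (F_1 - 1) + F_1(F_2 - 1)$ together with the defining property $\phi F = \phi$ for $F \in H$.

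First, I would make the map $\psi_\phi(F)$ concrete. Given $F \in H$, the surjection $\phi$ factors through $H_F := T/(1-F)T$ because $\phi F = \phi$, giving $\bar\phi : H_F \twoheadrightarrow G$. The induced map on dual torsion $\phi^\ast: G^\vee[\ell^n] \hookrightarrow H_F^\vee[\ell^n] \subset \Hom(T/\ell^n, \frac{1}{\ell^n}\mathbb{Z}_\ell/\mathbb{Z}_\ell)$ lands, via the Weil pairing identification, in classes of the form $\frac{1}{\ell^n}\omega(\bullet, s) \mod \mathbb{Z}_\ell$ for $s \in T$ uniquely determined mod $\ell^n T$. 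The condition that $\phi^\ast \alpha$ annihilates $(1-F)T$ translates, using the similitude property and $q \equiv 1 \bmod \ell^n$, to $(1-F)s \in \ell^n T$. By definition of $\psi_\phi$ and \eqref{unravelpsi},
\[ \psi_\phi(F)(\alpha) \;=\; \bar\phi\!\left( \tfrac{1}{\ell^n}(F-1)s \bmod (F-1)T \right) \;=\; \phi\!\left( \tfrac{1}{\ell^n}(F-1)s \right) \in G[\ell^n]. \]

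Next, fix $F_1, F_2 \in H$ and $\alpha \in G^\vee[\ell^n]$. Choose a single $s \in T$ representing $\phi^\ast\alpha$. Since both $F_1$ and $F_2$ lie in $H$, the preceding argument shows that $(1 - F_i)s \in \ell^n T$ for $i = 1,2$; from $(1 - F_1F_2)s = (1 - F_1)s + F_1(1-F_2)s$ we also get $(1 - F_1F_2)s \in \ell^n T$, so the same $s$ computes $\psi_\phi(F_1F_2)(\alpha)$. Dividing the identity by $\ell^n$ inside $T$ (which is legal since each summand is in $\ell^n T$) and applying $\phi$ gives
\[ \psi_\phi(F_1F_2)(\alpha) \;=\; \phi\!\left( \tfrac{1}{\ell^n}(F_1-1)s \right) + \phi\!\left( F_1 \cdot \tfrac{1}{\ell^n}(F_2-1)s \right). \]

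The first term is $\psi_\phi(F_1)(\alpha)$ by definition. For the second term, because $F_1 \in H$ we have $\phi \circ F_1 = \phi$ as maps $T \to G$, so
\[ \phi\!\left( F_1 \cdot \tfrac{1}{\ell^n}(F_2-1)s \right) \;=\; \phi\!\left( \tfrac{1}{\ell^n}(F_2-1)s \right) \;=\; \psi_\phi(F_2)(\alpha). \]
Combining, $\psi_\phi(F_1F_2) = \psi_\phi(F_1) + \psi_\phi(F_2)$, as desired. There is no real obstacle: the only subtle point is checking that a single representative $s$ can be used simultaneously for $F_1$, $F_2$, and $F_1F_2$, which is handled by the closure remark above.
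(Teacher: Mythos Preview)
Your proof is correct and follows essentially the same strategy as the paper: unravel $\psi_\phi(F)$ via the explicit formula \eqref{unravelpsi}, then use an algebraic decomposition of $F_1F_2 - 1$ together with $\phi \circ F_i = \phi$. The only cosmetic difference is that you use the identity $(F_1F_2 - 1) = (F_1 - 1) + F_1(F_2 - 1)$ and invoke $\phi\circ F_1=\phi$ directly on the second summand, whereas the paper writes the three-term decomposition $(1-F_1F_2) = (1-F_1) + (1-F_2) \pm (1-F_1)(1-F_2)$ and shows the cross term lands in $(1-F_1)T \subset \ker\phi$; your version is arguably a bit cleaner but the content is identical.
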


\begin{proof}Fix $F_1,F_2\in H$.

  Let $\alpha$ be an element in $H^\vee [\ell^n]$. Let $\alpha_0$ be an element of $V$ that maps to $\phi^\vee (\alpha) \in A[\ell^n] \subset V/T$.  By \eqref{unravelpsi}, $\psi_\phi (F_1) ( \alpha)$ is $\phi ( (1- F_1) \alpha_0) $, and similarly for $F_2$ and $F_1 F_2$. 
  
  \[ (1- F_1F_2) \alpha_0 = (1-F_1) \alpha_0 + (1-F_2) \alpha_0 + (1-F_1) (1-F_2) \alpha_0 \] and since $F_2$ fixes $\phi$, \[ F_2 \phi^\vee(\alpha) =\phi^\vee(\alpha) \] so \[ (1-F_2) \phi^\vee (\alpha) =0 \] so \[ (1-F_2) \alpha_0 \in T\] and thus \[ (1-F_1)(1-F_2) \alpha_0 \in (1-F_1) T \] so \[\phi ( (1-F_1) (1-F_2) \alpha_0 )=0 .\]

%For $F_1, F_2 \in H$, let $\alpha \in G^{\vee}[\ell^n]$. Let $\alpha_0  \in (\mathbb Z / \ell^{k+n} \mathbb Z)^{2g}$ satisfy $\ell^r \alpha_0 = \phi^\vee(\alpha)$. Then we have    \[ \phi( (1- F_1 F_2) \alpha_0) = \phi ( (1-F_2) \alpha_0 ) + \phi ( ( 1- F_1)  F_2 \alpha_0)\]
%
%Because $F_2$ preserves $f$, $F_2 \alpha_0$ also satisfies $\ell^r F_2 \alpha_0 = \phi^\vee(\alpha)$ and may be used to define the $\psi$-invariant of $F_2$ just as well, hence \[ \psi_\phi(F_1F_2) =\psi_\phi(F_1) + \psi_\phi(F_2).\]
%
\end{proof}

Let $Y_N^{ G, \omega_*^o, \psi_*}$ be the finite \'{e}tale covering space of $Y_N$ corresponding to the subgroup $H^*$ of $\pi_1(Y_N)$  consisting of elements $\sigma$ with $a(\sigma) \in H$ and $\psi_\phi (a(\sigma) ) = d (\sigma)\cdot \psi_*$.

%Let $Y_N^{G,\omega_*}$ be the finite \'{e}tale covering space of $Y_n$ corresponding to the subgroup of $\pi_1(Y_N)$ consisting of elements $\sigma$ with $a(\sigma) \in H$. 

\begin{lemma} The image of $H \cap Sp_{2g} (\mathbb Z_\ell)$ under $\psi_\phi$ is the set of homomorphisms whose induced pairing on $G^{\vee}[\ell^n]$ is symmetric, which has cardinality $|\sym^2(G) [ \ell^n]| $. 
\end{lemma}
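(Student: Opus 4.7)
The plan is to prove the two inclusions $\psi_\phi(H \cap \Sp_{2g}(\mathbb{Z}_\ell)) \subseteq \{\text{symmetric maps}\}$ and the reverse separately, using that $\psi_\phi$ is a group homomorphism (by the preceding lemma) and invoking Lemma \ref{compatibilitypsiomega}. First, the set of $\psi \in \Hom(G^\vee[\ell^n], G[\ell^n])$ whose induced pairing $(\alpha,\beta) \mapsto \beta(\psi(\alpha))$ on $G^\vee[\ell^n]$ is symmetric forms a subgroup of cardinality $|\sym^2 G[\ell^n]|$: such pairings correspond to $\Hom(\sym^2 G^\vee[\ell^n], \tfrac{1}{\ell^n}\mathbb{Z}_\ell/\mathbb{Z}_\ell) \cong \sym^2 G[\ell^n]$, using the abstract isomorphism $G^\vee[\ell^n] \cong G[\ell^n]$.

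For containment, let $F \in H \cap \Sp_{2g}(\mathbb{Z}_\ell)$, which has similitude factor $q = 1$. By construction, $\psi_\phi(F) = \phi \circ \psi_{T/(1-F)T} \circ \phi^\ast$, so the bilinear form induced on $G^\vee[\ell^n]$ by $\psi_\phi(F)$ is the $\phi^\ast$-pullback of $\langle \cdot,\cdot \rangle_{T/(1-F)T}$ from $\S$\ref{omegapsisymplecticsimilitude}. Applying Lemma \ref{compatibilitypsiomega} with $r = 0$ yields $\langle \alpha, \beta \rangle_H - \langle \beta, \alpha \rangle_H = \tfrac{q-1}{\ell^n}\omega^o_{H,n}(\alpha,\beta) = 0$, giving the desired symmetry.

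For the reverse inclusion, I would adapt the surjectivity construction at the end of the proof of Lemma \ref{nonlinearmoments}. Let $V = \ker\phi$ and let $V^{*,n} = \{s \in T : \omega(v,s) \in \ell^n \mathbb{Z}_\ell \text{ for all } v \in V\}$, so that the Weil pairing induces isomorphisms $V^{*,n}/\ell^n V^{*,n} \cong G^\vee[\ell^n]$ and $T/V \cong G$. Given any symmetric pairing $P$ on $G^\vee[\ell^n]$, the proof of Theorem \ref{totmommeas} produces a skew-symplectic $M \in \ell\mathfrak{sp}_{2g}(\mathbb{Z}_\ell)$ satisfying $MT \subseteq V$ (equivalently $\phi M = 0$) and $MV^{*,n} \subseteq \ell^n T$, such that the pairing $(s_*,t_*) \mapsto \omega(Ms_*,t_*)/\ell^{2n}$ realizes $P$. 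Setting $F := \exp(M) \in \Sp_{2g}(\mathbb{Z}_\ell)$, we obtain $\phi F = \phi$ (since $\phi M^k = 0$ for all $k \geq 1$), so $F \in H \cap \Sp_{2g}(\mathbb{Z}_\ell)$. The main technical point is then the clean unravelling $\psi_\phi(F)(\alpha_G) = \phi(Ms_*/\ell^n)$: the higher-order terms vanish because $Ms_* \in \ell^n T$ together with $MT \subseteq V$ gives $M^k s_* \in \ell^n V$ for all $k \geq 2$, so each $M^k s_*/\ell^n \in V = \ker\phi$ and contributes nothing to $\phi$. Combined with the containment above and the matching cardinality $|\sym^2 G[\ell^n]|$, this completes the proof.
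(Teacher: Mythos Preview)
Your proof is correct and follows essentially the same route as the paper. The paper's own proof is simply a pointer to the end of Lemma~\ref{nonlinearmoments}, where the map you call $\psi_\phi|_{H\cap\Sp_{2g}}$ appears as the homomorphism $R$ and its image is computed to have size $|\sym^2 G[\ell^n]|$ via exactly the exponential construction $s=e^M$ with $M$ skew-symplectic, $M\in\ell\mathfrak{sp}_{2g}$, and $MT\subset V$; you have unpacked that argument in more detail, including the verification that the higher-order terms of $\exp(M)$ die under $\phi$.

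One small correction: the Weil pairing identifies $G^\vee[\ell^n]$ with $V^{*,n}/\ell^n T$, not $V^{*,n}/\ell^n V^{*,n}$ (two elements $s,s'\in V^{*,n}$ give the same functional on $G$ exactly when $\omega(T,s-s')\subset\ell^n\mathbb{Z}_\ell$, i.e.\ $s-s'\in\ell^n T$). This does not affect the rest of your argument. Note also that your condition $MV^{*,n}\subset\ell^n T$ is automatic from $MT\subset V$ and skew-symplecticity, since for $s\in V^{*,n}$ and $t\in T$ one has $\omega(t,Ms)=-\omega(Mt,s)\in\omega(V,s)\subset\ell^n\mathbb{Z}_\ell$; so the only genuine input needed from the proof of Theorem~\ref{totmommeas} is the existence of a skew-symplectic $M\in\ell\mathfrak{sp}_{2g}$ with $MT\subset V$ realizing a prescribed symmetric form, which that proof indeed supplies.
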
 

\begin{proof}

This is exactly what is proven in the end of Lemma \ref{nonlinearmoments}, where this map is referred to as $R$.

\end{proof}

\begin{lemma}\label{ffieldcompare} Let $(G,\omega_*,\psi_*)\in\cC_{\ell,n}$.

Then $\left| Y_N^{ G, \omega_*^o, \psi_*}( \mathbb F_q) \right| $ is equal to $|\sym^2(G) [ \ell^n]| $ times the sum over points in $Y_N(\mathbb F_q)$ of the number of surjections from the class group of the corresponding function field to $G$  with $\omega_C^o= \omega_*^o$ and $\psi_C = \psi_*$. \end{lemma}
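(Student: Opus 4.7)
\medskip

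My plan is to unwind both sides as parameterizations of the same underlying data, namely surjections from $H_y := \mathrm{Pic}^0(C)(\mathbb{F}_q)_\ell$ to $G$ with the prescribed $\omega$ and $\psi$, and then account for the multiplicity on the covering-space side. First, by the standard dictionary between finite \'etale covers and $\pi_1$-sets, I would write
$$ |Y_N^{G,\omega_*^o,\psi_*}(\mathbb{F}_q)| = \sum_{y \in Y_N(\mathbb{F}_q)} |\{\, gH^* \in \pi_1(Y_N)/H^* : g^{-1}\mathrm{Frob}_y g \in H^*\,\}|. $$
For each coset I would associate the surjection $\phi' := \phi \circ a(g)^{-1}:\mathbb{Z}_\ell^{2g} \twoheadrightarrow G$, which depends only on the left $H_0$-coset of $g$, where $H_0 := \{\sigma : a(\sigma) \in H\} \supseteq H^*$. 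Setting $F_y = a(\mathrm{Frob}_y)$, the condition $a(g)^{-1}F_y a(g) \in H$ translates into $F_y$ fixing $\phi'$, which is equivalent to $\phi'$ factoring as a surjection $\bar\phi':H_y \twoheadrightarrow G$; moreover since $a(g)$ has similitude factor $q^{d(g)} \equiv 1\pmod{\ell^n}$ and $\omega_*^o \in (\wedge^2 G)[\ell^n]$, the pushforward of the symplectic form under $\phi'$ is automatically $\omega_*^o$.

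Next, I would analyze the remaining condition $\psi_\phi(a(g)^{-1}F_y a(g)) = d(g^{-1}\mathrm{Frob}_y g)\psi_* = \psi_*$, which unravelling the formula from \eqref{unravelpsi} becomes exactly $\psi_C(\bar\phi') = \psi_*$; this is a condition on $\phi'$ alone (not on $g$ within its $H_0$-coset). Combining Witt's extension theorem with Yu's monodromy theorem (\cite{Yu}), which gives $\mathrm{Sp}_{2g}(\mathbb{Z}_\ell) \subseteq a(\pi_1(Y_{N,\overline{\mathbb{F}_q}}))$, I conclude that every surjection $\bar\phi':H_y \twoheadrightarrow G$ pushing $\omega_{H_y}^o$ to $\omega_*^o$ and $\psi_{H_y}$ to $\psi_*$ arises from some $g$, and each lifts uniquely to a $\phi'$. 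So the count rewrites as
$$ |Y_N^{G,\omega_*^o,\psi_*}(\mathbb{F}_q)| = \frac{|H_0|}{|H^*|}\cdot \sum_{y \in Y_N(\mathbb{F}_q)} \#\{\bar\phi':H_y \twoheadrightarrow G \text{ with correct } \omega_C^o,\psi_C\}. $$

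It remains to identify $[H_0:H^*]$ with $|\mathrm{Sym}^2 G[\ell^n]|$, which will be the main computational step. The map $\Psi:H_0 \to \mathrm{Hom}(G^\vee[\ell^n],G[\ell^n])$ defined by $\sigma \mapsto \psi_\phi(a(\sigma)) - d(\sigma)\psi_*$ is a group homomorphism (since $d$ is additive and $\psi_\phi$ is a homomorphism by the lemma preceding Lemma~\ref{ffieldcompare}), with kernel exactly $H^*$. I would verify that the image lies in $\mathrm{Sym}^2 G[\ell^n]$ by computing antisymmetric parts: the compatibility \eqref{psiomegacomp} for the triple associated to $a(\sigma)$ (which has similitude factor $q^{d(\sigma)}$) forces the antisymmetric part of $\psi_\phi(a(\sigma))$ to be $\frac{q^{d(\sigma)}-1}{\ell^n}\omega_*^o$, while the BEG compatibility for $(G,\omega_*,\psi_*)$ and the relation $\omega_*^o = \frac{2\ell^n}{q-1}\omega_*$ imply the antisymmetric part of $d(\sigma)\psi_*$ is $\frac{2d(\sigma)(q-1)}{q-1}\omega_{*,n}$; the congruence $\frac{q^k-1}{q-1}\equiv k\pmod{\ell^n}$ then yields equality of the two antisymmetric parts in $\ell^n$-torsion. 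Surjectivity of $\Psi$ onto $\mathrm{Sym}^2 G[\ell^n]$ follows by restricting to the geometric fundamental group, where $d=0$ reduces $\Psi$ to $\psi_\phi$, and then invoking the preceding lemma together with Yu's monodromy result. The main obstacle is this careful compatibility check; once it is in hand, $[H_0:H^*] = |\mathrm{Sym}^2 G[\ell^n]|$ and the lemma follows.
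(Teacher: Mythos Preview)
Your proof is correct and follows essentially the same approach as the paper: translate the fiber count into cosets $gH^*$ with $g^{-1}\mathrm{Frob}_y g\in H^*$, reinterpret via $\phi'=\phi\circ a(g)^{-1}$ as surjections with the prescribed $\omega^o$ and $\psi$, and then compute the multiplicity $[H_0:H^*]$ as the image of the homomorphism $\sigma\mapsto\psi_\phi(a(\sigma))-d(\sigma)\psi_*$, using the compatibility relation and the preceding lemma to identify this image with $\mathrm{Sym}^2 G[\ell^n]$. One cosmetic remark: write the multiplicity as the index $[H_0:H^*]$ rather than $|H_0|/|H^*|$, since $H_0$ and $H^*$ are infinite.
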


\begin{proof} For an element $y \in Y_N(\mathbb F_q)$, with Frobenius element $F \in \pi_1(Y_N)$, the number of points in $Y_N^{ G, \omega_*^o, \psi_*}[ \mathbb F_q]$ lying over $y$ is equal to  \[ | \{ g\in \pi_1(Y_N)/ H^* |  g^{-1} F g \in H^* \} | \] \[  = | \{ g\in \pi_1(Y_N)/ H^* |  a(g)^{-1} a(F) a(g) \in H,  \psi_\phi ( a(g)^{-1} a(F) a(g) ) = \psi_*  \} | \] because $d (g^{-1} F g) = d(g) =1 $. 

Now $a(g)^{-1} a(F) a(g) \in H$ if and only if $\phi \circ a(g)^{-1} a(F) a(g) = \phi$, which occurs if and only if $(\phi \circ a(g)^{-1}) \circ a(F) = (\phi \circ a(g){-1})$. Similarly, we have $ \psi_\phi ( a(g)^{-1} a(F) a(g) )  = \psi_{\phi \circ a(g)^{-1}} ( a(F))$.  So this count is equal to \[ \sum_{ \substack{ \phi' : \mathbb Z_\ell^{2g} \to G \\  \phi' \circ a(F) = \phi' \\ \psi_{\phi'}(a(F)) = \psi_*\\  \omega_{\phi'} = \omega_*  }}| \{ g \in \pi_1(Y_N)/ H^* |  \phi \circ a(g)^{-1} = \phi' \} |  \] with the condition on $\omega_{\phi'}$ following from the existence of a $g$ with $\phi \circ a(g)^{-1} = \phi' $

Because $Sp_{2g}$ acts transitively on the surjections to $G$ with a given value of $\omega^o$, and the image of $\pi_1$ under $a$ contains $Sp_{2g}$, the cardinality $| \{ g \in \pi_1(Y_N)/ H^* |  \phi \circ a(g)^{-1} = \phi' \} |$ is independent of the choice of $\phi'$. So we may assume $\phi' = \phi$, in which case the first condition is equivalent to $a(g) \in H$, and so the number of possibilities is equal to the cardinality of the image of the homomorphism \[ \pi:\sigma \mapsto  \psi( a(\sigma)) - d(\sigma)\cdot\psi_*. \]

By the previous lemma, $\im\pi$ contains all the elements whose pairing is symmetric. On the other hand, $a(\sigma)$ necessarily satisfies the compatibility condition \ref{psiomegacomp} with respect to $\frac{ q^{ d(\sigma)}-1 }{2 \ell^n } \omega_*^o $. 

Because  $\frac{ q^{ d(\sigma)}-1 }{2 \ell^n }$ is congruent modulo $\ell^n$ to $d(\sigma) \frac{q-1}{ \ell^n } $, and $ \ell^n \omega_*^o =0$, we have
\[ \frac{ q^{ d(\sigma)}-1 }{2 \ell^n } \omega_*^o = d(\sigma) \frac{q-1}{\ell^n} \omega_*^o.\] 

It follows that $a(\sigma)$ satisfies the compatibility condition \ref{psiomegacomp} with respect to $d(\sigma) \frac{q-1}{\ell^n} \omega_*^o= d(\sigma) \omega_* $. Because also satisfies the compatibility condition \ref{psiomegacomp} with respect to $d(\sigma)\cdot \psi_*$, so it follows that $\psi( a(\sigma)) - d(\sigma)\cdot\psi_*$ defines a symmetric pairing on $G^\vee [\ell^n]$ for any $\sigma$, and so the image of $\pi$ consists exactly of those elements that are symmetric.

\end{proof}

\begin{lemma}\label{ffieldcover} Let $G'$ be any group with a surjection $\pi: G' \to G$ such that $\pi(G'[\ell^n]) = 0$ inside $G[\ell^n]$. Then there is a finite etale covering from some component of $\mathbf{H}_{G', n, \overline{\mathbb F}_q}$ to $Y_N^{G , \omega_*^o, \psi_*, \overline{\mathbb F}_q}$, where $\mathbf{H}_{G',n}$ is the Hurwitz space defined in \cite[\S7.1]{EVW}. \end{lemma}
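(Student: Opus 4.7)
The plan is to construct a natural morphism from a suitably chosen component of $\mathbf{H}_{G',n,\overline{\mathbb{F}}_q}$ to $Y_N^{G,\omega_*^o,\psi_*,\overline{\mathbb{F}}_q}$ and then verify finiteness and étaleness.

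First I would use the description of EVW's Hurwitz space $\mathbf{H}_{G',n}$ as parameterizing tame $G'$-Galois covers of $\mathbb{P}^1$ branched at $n$ points (with specified local monodromy data), together with the observation that the relevant variant for our setting carries a hyperelliptic structure: a distinguished involution in the total Galois group whose fixed curve is a double cover $C \to \mathbb{P}^1$ branched at $N$ points, so that the residual cover $D \to C$ is unramified with Galois group $G'$. By class field theory, this unramified $G'$-cover corresponds to a surjection $\text{Cl}(C) \twoheadrightarrow G'$, and composition with $\pi$ yields a surjection $\text{Cl}(C) \twoheadrightarrow G$, which in turn defines a $\phi' : \mathbb{Z}_\ell^{2g} \twoheadrightarrow G$ via the identification of the class group as a quotient of the Tate module. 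Forgetting the $G'$-structure gives the natural map to $Y_N$, and the induced $G$-surjection is the extra data we use to land in $Y_N^{G,\omega_*^o,\psi_*}$.

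Next I would analyze how the invariants $\omega$ and $\psi$ associated to this $G$-surjection depend on the $G'$-cover. Since $\omega$ and $\psi$ push forward functorially along $\pi$, the $G$-level invariants $\phi'_*\omega_C,\phi'_*\psi_C$ are computed by applying $\pi$ to the $G'$-level invariants, which take values in $(\wedge^2 G')[\ell^n]$ and $\text{Hom}(G'^\vee[\ell^n],G'[\ell^n])$ respectively. The hypothesis $\pi(G'[\ell^n])=0$ enters crucially here: it kills all of $G'[\ell^n]$ in the codomain of $\psi$, so the pushed-forward pairing $\pi \circ \psi_{G'} \circ \pi^\vee$ vanishes on the image of $\pi^\vee$, and comparing with the construction of $\psi_\phi$ in \S\ref{omegapsisymplecticsimilitude} shows that the $G$-level invariants are determined purely by the discrete component of $\mathbf{H}_{G',n}$ (together with the hyperelliptic twist), not by the continuous moduli of the cover. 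This allows me to select the unique component of $\mathbf{H}_{G',n}$ whose associated $G$-invariants are $(\omega_*^o,\psi_*)$.

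Finally, finiteness and étaleness of the resulting map of $\overline{\mathbb{F}}_q$-schemes will follow from the corresponding properties for the forgetful map $\mathbf{H}_{G',n} \to Y_N$ (the standard Hurwitz-configuration space fiber description), together with the fact that on the target $Y_N^{G,\omega_*^o,\psi_*} \to Y_N$ is finite étale by its construction as the cover of $Y_N$ corresponding to an open subgroup $H^*$ of $\pi_1(Y_N)$. The fibers are torsors for the stabilizer in $\pi_1$ of the additional $G'$-structure refining the given $G$-structure.

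The main obstacle I expect is the explicit verification of the invariant-matching in the second paragraph: one must carefully trace the snake-lemma construction of $\psi_\phi$ through the composition $\text{Cl}(C) \twoheadrightarrow G' \xrightarrow{\pi} G$ and verify that the hypothesis $\pi(G'[\ell^n])=0$ makes the resulting $\psi$-pairing independent of the continuous moduli of the $G'$-cover. This is a book-keeping calculation combining the definitions from \S\ref{omegapsisymplecticsimilitude}, the formula \eqref{unravelpsi}, and the relation between the Weil pairing on $T_\ell(\text{Jac}(C))$ and the class-field-theoretic presentation of $G'$-covers of $C$ given in Section \ref{s-NT-FF-compatible}.
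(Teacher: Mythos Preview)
Your core insight is right—the hypothesis $\pi(G'[\ell^n])=0$ is what forces the pushed-forward $\psi$-invariant to vanish—but the argument as written has a gap and is more circuitous than necessary.

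The gap is in ``select the unique component of $\mathbf{H}_{G',n}$ whose associated $G$-invariants are $(\omega_*^o,\psi_*)$.'' You have not established that such a component exists, and in fact the $\psi$-invariant you compute is $0$, not $\psi_*$, so you would be selecting a component with invariants $(\omega_*^o,0)$, which does not obviously coincide with the target $Y_N^{G,\omega_*^o,\psi_*}$. The resolution is that after base change to $\overline{\mathbb F}_q$ the defining condition $\psi_\phi(a(\sigma))=d(\sigma)\cdot\psi_*$ becomes $\psi_\phi(a(\sigma))=0$, because the degree homomorphism $d$ vanishes on the geometric fundamental group. You never use this, and without it the matching of $\psi$-data cannot close.

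The paper's proof bypasses the moduli-theoretic framing entirely and works directly with subgroups of $\pi_1(Y_N)$. Since both $\mathbf{H}_{G',n,\overline{\mathbb F}_q}$ and $Y_N^{G,\omega_*^o,\psi_*,\overline{\mathbb F}_q}$ are finite \'etale over $Y_{N,\overline{\mathbb F}_q}$, it suffices to exhibit a component of the former whose geometric fundamental group is contained in that of the latter. One simply \emph{chooses} a surjection $\phi':\mathbb Z_\ell^{2g}\twoheadrightarrow G'$ with $\pi\circ\phi'=\phi$; the corresponding component has fundamental group $\{\sigma:a(\sigma)\text{ fixes }\phi'\}$. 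Any such $\sigma$ fixes $\phi$ (so $a(\sigma)\in H$), and the functoriality of $\psi$ under $\pi$ gives the factorization $G^\vee[\ell^n]\to G'^\vee[\ell^n]\to G'[\ell^n]\to G[\ell^n]$, whose last arrow is zero by hypothesis. This is exactly your second-paragraph calculation, but placed in the correct logical position: it verifies the containment of fundamental groups rather than attempting to match a nonzero $\psi_*$.
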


\begin{proof} Both spaces are finite \'{e}tale coverings of $Y_N$. It is sufficient to find a component of $\mathbf{H}_{G',n}$ such that its geometric fundamental group, viewed as a subgroup of $\pi_1(Y_N)$, is contained in the geometric fundamental group of $Y_N^{G, \omega_*,\psi_*}$. Because the $d$ homomorphism is trivial on the geometric fundamental group, the geometric fundamental group of $Y_N^{G, \omega_*,\psi_*}$ is simply the subgroup of $\pi_1^{\mathrm{geom}}(Y_N)$ consisting of $\sigma$ with $a(\sigma) \in H$, $\psi(a(\sigma))=0$.

 We can choose a surjection $\phi': \mathbb Z_\ell^{2g} \to G'$ such that $ \pi \circ \pi'= \phi$. Then some component of $\mathbf{H}_{G',n}$ has fundamental group consisting of those $\sigma$ in $\pi_1^{\mathrm{geom}}(Y_N)$ such that $a(\sigma)$ fixes $\phi'$.  Clearly this implies that $a(\sigma)$ fixes $\phi$ and thus lies in $H$, so it remains to check that $\psi_\phi(\sigma) $ vanishes for these $\sigma$. This is because $\psi_\phi$ is compatible with surjections of groups, so $\psi_\phi(\sigma)$ is given by
 \[ G^\vee [\ell^n] \to G^{' \vee}[\ell^n] \to G' [\ell^n] \to G[\ell^n]  \] with the middle arrow $\psi_{\phi'}(\sigma)$, but by our construction of $G$ the map $ G' [\ell^n] \to G[\ell^n] $ vanishes, so indeed $\psi_{\phi}(\sigma)$ vanishes for all such $\sigma$.\end{proof} 

\begin{lemma}\label{ffieldcount} The number of connected components of $Y_N^{ G, \omega_*^o, \psi_*}$ defined over $\mathbb F_q$ is one. \end{lemma}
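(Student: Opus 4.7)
The plan is to identify the geometric connected components of $Y_N^{G, \omega_*^o, \psi_*}$ with the set $\widehat{\mathbb{Z}}/d(H^*)$ on which Frobenius acts by translation by $1$, and then to show using Yu's monodromy theorem together with the linear-algebraic analysis from the proof of Lemma \ref{nonlinearmoments} that $d(H^*) = \widehat{\mathbb{Z}}$; this will force $Y_N^{G,\omega_*^o,\psi_*}$ to be geometrically connected, which in turn is equivalent to the asserted count of components defined over $\mathbb{F}_q$ being equal to one.

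More precisely, the first step is the following reduction. The geometric connected components of $Y_N^{G,\omega_*^o,\psi_*}$ are the $\pi_1^{\mathrm{geom}}(Y_N)$-orbits on the fiber $\pi_1(Y_N)/H^*$. Using the short exact sequence
\[ 1 \to \pi_1^{\mathrm{geom}}(Y_N) \to \pi_1(Y_N) \xrightarrow{d} \widehat{\mathbb{Z}} \to 1, \]
this orbit set identifies with $\widehat{\mathbb{Z}}/d(H^*)$, on which (arithmetic) Frobenius acts by translation by $1$. A geometric component is Frobenius-stable, and therefore defined over $\mathbb{F}_q$, precisely when $1 \in d(H^*)$, in which case $d(H^*)=\widehat{\mathbb{Z}}$ and there is a single geometric component (automatically Frobenius-stable). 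Everything therefore reduces to producing an element $\sigma \in H^*$ with $d(\sigma)=1$.

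Next, I fix any $\tau_0 \in \pi_1(Y_N)$ with $d(\tau_0)=1$; the image $a(\tau_0)$ then lies in $\mathrm{GSp}_{2g}^{(q)}(\mathbb{Z}_\ell)$ because arithmetic Frobenius acts on the symplectic Tate module with similitude factor $q$. By Yu's theorem \cite{Yu}, $a$ surjects $\pi_1^{\mathrm{geom}}(Y_N)$ onto $\mathrm{Sp}_{2g}(\mathbb{Z}_\ell)$, so as $\eta$ ranges over $\pi_1^{\mathrm{geom}}(Y_N)$ the matrix $a(\tau_0 \eta)=a(\tau_0)a(\eta)$ sweeps out the entire coset $\mathrm{GSp}_{2g}^{(q)}(\mathbb{Z}_\ell)$. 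It therefore suffices to produce an element $F$ of this coset satisfying the two conditions imposed by $H^*$: namely, $F \in H$ (that is, $F$ fixes $\phi$) and $\psi_\phi(F)=\psi_*$.

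To exhibit such $F$, I use Lemma \ref{allsur} (in the guise employed at the end of Lemma \ref{nonlinearmoments}) to produce, for $g$ sufficiently large in terms of $|G|$, some $F_0 \in H \cap \mathrm{GSp}_{2g}^{(q)}(\mathbb{Z}_\ell)$. Every other element of that intersection is of the form $F_0 s$ with $s \in \mathrm{Sp}_{2g}(\mathbb{Z}_\ell) \cap \mathrm{stab}(\phi)$, and the map $R : s \mapsto \psi_\phi(F_0 s) - \psi_\phi(F_0)$ is a group homomorphism whose image, by the calculation carried out in the proof of Lemma \ref{nonlinearmoments}, consists of precisely those $\rho : G^\vee[\ell^n] \to G[\ell^n]$ whose associated pairing on $G^\vee[\ell^n]$ is symmetric. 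By Lemma \ref{compatibilitypsiomega}, both $\psi_*$ and $\psi_\phi(F_0)$ satisfy the compatibility relation \eqref{psiomegacomp} with respect to $\omega_*$, so their difference induces a symmetric pairing and hence lies in the image of $R$. Choosing $s$ with $R(s) = \psi_* - \psi_\phi(F_0)$ yields $F := F_0 s$ with the required properties; lifting $a(\tau_0)^{-1} F \in \mathrm{Sp}_{2g}(\mathbb{Z}_\ell)$ through Yu's surjection to an $\eta \in \pi_1^{\mathrm{geom}}(Y_N)$, the element $\sigma := \tau_0 \eta$ satisfies $d(\sigma)=1$ and $\sigma \in H^*$, completing the reduction.

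The main obstacle is not conceptual but organizational: one must cleanly combine Yu's geometric monodromy result with the linear-algebraic analysis of $\psi_\phi$ on cosets of $\mathrm{GSp}_{2g}^{(q)}(\mathbb{Z}_\ell)$ that fix $\phi$. Both ingredients are already in hand — the former from \cite{Yu}, the latter from the end of the proof of Lemma \ref{nonlinearmoments} — so the argument amounts to stitching them together while bookkeeping the compatibility relation between $\omega_*$ and $\psi_*$ carefully enough to ensure that $\psi_* - \psi_\phi(F_0)$ actually lands in the image of $R$.
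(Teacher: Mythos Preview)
Your proposal is correct and in fact supplies the justification that the paper's one-line proof (``This follows from the fact that $\pi_1^{\mathrm{geom}}(Y_N)$ acts transitively on $\pi_1^{\mathrm{geom}}(Y_N)/H^*$'') leaves to the reader. The approach is the same: both argue via transitivity of the geometric monodromy action on the fiber $\pi_1(Y_N)/H^*$, which amounts to showing $d(H^*)=\widehat{\mathbb{Z}}$. You make explicit the two ingredients this requires --- Yu's surjectivity $a(\pi_1^{\mathrm{geom}}(Y_N))=\Sp_{2g}(\mathbb{Z}_\ell)$, and the fact (from the end of the proof of Lemma~\ref{nonlinearmoments}) that within the coset $\mathrm{GSp}_{2g}^{(q)}\cap H$ the invariant $\psi_\phi$ attains every value compatible with $\omega_*$ --- and you correctly observe that the compatibility relation \eqref{psiomegacomp} forces $\psi_*-\psi_\phi(F_0)$ to be symmetric and hence hit by $R$. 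One minor remark: your argument, like the paper's setup (which already presumes the existence of $\phi$), implicitly requires $g$ large relative to $|G|$; this is harmless since the lemma is only invoked in that regime. Note also that what you actually establish is geometric irreducibility, which is precisely what the subsequent theorem uses when it asserts $\dim H^{2N}_c=1$.
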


\begin{proof} This follows from the fact that $\pi_1^{\mathrm{geom}}(Y_N)$ acts transitively on $\pi_1^{\mathrm{geom}}(Y_N)/ H^*$.  \end{proof}

\begin{thm} For $q$ sufficiently large with respect to $|G|$, the number of pairs of a degree $2$ cover of $\mathbb P^1_{\mathbb F_q}$, ramified at a divisor of degree $N$ in $\mathbb A^1_{\mathbb F_q}$, plus $\infty$ if $N$ is odd, and a quotient $G$ of the $\ell$-class group with $\omega_C^o = \omega_*^o$ and $\psi_C=\psi_* $ is $\frac{q^N}{ |\sym^2(G[\ell^n])| } +{ O (q^{N-1/2})}$. \end{thm}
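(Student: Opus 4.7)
The plan is to estimate $|Y_N^{G,\omega_*^o,\psi_*}(\mathbb{F}_q)|$ via the Grothendieck--Lefschetz trace formula and then divide by $|\sym^2 G[\ell^n]|$ using Lemma \ref{ffieldcompare}. Since $Y_N$ is (a finite \'{e}tale quotient of) the configuration space of unordered $N$-tuples in $\mathbb{A}^1$ and is smooth of dimension $N$, and $Y_N^{G,\omega_*^o,\psi_*} \to Y_N$ is finite \'{e}tale, the cover is smooth of pure dimension $N$. Writing
$$|Y_N^{G,\omega_*^o,\psi_*}(\mathbb{F}_q)| = \sum_{i=0}^{2N} (-1)^i \tr(\Frob_q \mid H^i_c(Y_{N,\overline{\mathbb{F}}_q}^{G,\omega_*^o,\psi_*}, \mathbb{Q}_\ell)),$$
I first identify the main term. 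The top cohomology $H^{2N}_c$ is $\mathbb{Q}_\ell(-N)^{c_{\mathrm{geom}}}$, with Frobenius acting by $q^N$ on each summand and permuting the summands according to the Galois action on geometric components; its trace thus equals $q^N$ times the number of geometric components defined over $\mathbb{F}_q$, which is $1$ by Lemma \ref{ffieldcount}. This produces the main term $q^N$.

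For $i < 2N$, Deligne's weight bounds yield
$$\Big| \sum_{i=0}^{2N-1} (-1)^i \tr(\Frob_q \mid H^i_c) \Big| \leq \sum_{i=0}^{2N-1} \dim H^i_c \cdot q^{i/2}.$$
Using Poincar\'e duality $\dim H^i_c = \dim H^{2N-i}$ (by smoothness), this sum equals $q^{N-1/2} \sum_{j=1}^{2N} \dim H^j(Y_N^{G,\omega_*^o,\psi_*}) \cdot q^{-(j-1)/2}$. It thus suffices to bound the Betti numbers $\dim H^j(Y_N^{G,\omega_*^o,\psi_*})$ by some quantity $A(G) \cdot B(G)^j$ independently of $N$.

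To produce such a bound, I appeal to Lemma \ref{ffieldcover}: for a suitable $G'$ admitting a surjection $\pi: G' \twoheadrightarrow G$ with $\pi(G'[\ell^n]) = 0$ (e.g.\ taking $G = \oplus_i \mathbb{Z}/\ell^{e_i}$ and $G' = \oplus_i \mathbb{Z}/\ell^{e_i+n}$ with the obvious projection), a connected component of the EVW Hurwitz space $\mathbf{H}_{G',n}$ is a finite \'{e}tale cover of $Y_{N,\overline{\mathbb{F}}_q}^{G,\omega_*^o,\psi_*}$. Since a finite \'{e}tale cover $f:\widetilde{X} \to X$ of degree $d$ exhibits $H^*(X)$ as a direct summand of $H^*(\widetilde{X})$ (via $\tfrac{1}{d} f_* f^*$), I obtain $\dim H^j(Y_N^{G,\omega_*^o,\psi_*}) \leq \dim H^j(\mathbf{H}_{G',n,\mathrm{comp}})$. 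The cohomological stability machinery of \cite{EVW} then provides exponential bounds $\dim H^j(\mathbf{H}_{G',n}) \leq A \cdot B^j$ with $A,B$ depending only on $G'$, hence only on $G$. Taking $q > B(G)^2$---this is what ``$q$ sufficiently large with respect to $|G|$'' means---makes the geometric series $\sum_{j \geq 1} A B^j q^{-(j-1)/2}$ converge to a constant $C(G)$, and combining with the main-term calculation yields $|Y_N^{G,\omega_*^o,\psi_*}(\mathbb{F}_q)| = q^N + O_G(q^{N-1/2})$. Dividing by $|\sym^2 G[\ell^n]|$ via Lemma \ref{ffieldcompare} gives the stated asymptotic.

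The principal obstacle is verifying that the particular component of $\mathbf{H}_{G',n}$ supplied by Lemma \ref{ffieldcover} lies in the regime to which the cohomological stability theorem of \cite{EVW} applies, so that one actually obtains a uniform-in-$N$ exponential Betti bound: this requires matching our monodromy/marked-data setup to their combinatorial framework for components of Hurwitz spaces. Once the Betti-number estimate is in place, everything else is a routine trace-formula calculation.
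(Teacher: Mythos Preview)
Your proposal is correct and follows essentially the same route as the paper: Lefschetz trace formula, main term from geometric irreducibility (Lemma~\ref{ffieldcount}), Deligne's weight bounds plus Poincar\'e duality for the error, and the Betti number bound imported from \cite{EVW} via the covering of Lemma~\ref{ffieldcover}. One small technical point: the paper works with $\mathbb{Q}_{\tilde\ell}$-coefficients for an auxiliary prime $\tilde\ell$ that can be taken sufficiently large (as required for \cite[Proposition 7.8]{EVW}), rather than $\mathbb{Q}_\ell$ itself; and your ``principal obstacle'' is dispatched in the paper simply by citing that proposition, which applies to any component of $\mathbf{H}_{G',n}$.
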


\begin{proof} By Lemma \ref{ffieldcompare}, this is the same as $\frac{1}{ \sym^2(G[\ell^n])}$ times the number of $\mathbb F_q$-points of the space $Y_N^{ G, \omega_*^o, \psi_*}$.  Because $Y_N^{ G, \omega_*^o, \psi_*}$ is a finite \'{e}tale cover of $Y_N$, it has dimension $N$.  Let $\tilde{\ell}$ be a prime other than the characteristic of $\mathbb F_q$

Thus by the Lefschetz fixed point formula, for a sufficiently large prime 

We have \[ \left| Y_N^{ G, \omega_*^o, \psi_*}(\mathbb F_q)\right|  = \sum_{i}=0^{2N} (-1)^i  \operatorname{tr} \left( \Frob_q | \; H^i_c \left( Y_N^{ G, \omega_*^o, \psi_*, \overline{\mathbb F_q}} , \mathbb Q_{\tilde{\ell}} \right) \right) .\]

Because $Y_N^{ G, \omega_*^o, \psi_*}$ is geometrically irreducible, $H^{2n}( Y_N^{ G, \omega_*^o, \psi_*, \overline{\mathbb F_q}} , \mathbb Q_{\tilde{\ell}})$ is one-dimensional, with Frobenius action multiplication by $q^N$. Thus 

\begin{align*} 
\left|  \left| Y_N^{ G, \omega_*^o, \psi_*}(\mathbb F_q)\right| - q^N \right| &\leq \sum_{i=0}^{2N-1} \left|  \operatorname{tr} \left(\Frob_q | \; H^i_c \left( Y_N^{ G, \omega_*^o, \psi_*, \overline{\mathbb F_q}} , \mathbb Q_{\tilde{\ell}} \right) \right) \right| \\ 
&\leq \sum_{i=0}^{2N-1} q^{i/2} \dim H^i_c \left( Y_N^{ G, \omega_*^o, \psi_*, \overline{\mathbb F_q}} , \mathbb Q_{\tilde{\ell}} \right) \\
&\leq \sum_{i=0}^{2N-1} q^{i/2} \dim H^i_c \left( Y_N^{ G'} , \mathbb Q_{\tilde{\ell}} \right)
\end{align*}
where $Y_N^{G'}$ is a $\mathbf{H}_{G',n}$, by Deligne's Riemann hypothesis and because $Y_N^{G'}$ is a finite \'{e}tale cover of $Y_N^{ G, \omega_*, \psi_*, \overline{\mathbb F_q}}$. Now by Po\'{i}ncare duality,

\begin{align*} 
\sum_{i=0}^{2N-1} q^{i/2} \dim H^i_c \left( Y_N^{ G'} , \mathbb Q_{\tilde{\ell}} \right) &= \sum_{i=1}^{2N} q^{N- i/2} \dim H^i \left( Y_N^{ G'} , \mathbb Q_{\tilde{\ell}} \right) \\ 
&\leq \sum_{i=1}^{2N} q^{N- i/2} C(G' \rtimes \mathbb Z/2, (0,1))^{i+1} \\
&\leq  q^N \frac{  C(G' \rtimes \mathbb Z/2, (0,1))^{2}}{ \sqrt{q} }  \frac{1}{1- \frac{ C(G' \rtimes \mathbb Z/2, (0,1))}{\sqrt{q}}} \\  
&= O(q^{N-1/2})
\end{align*}

by \cite[Proposition 7.8]{EVW}, as long as $\tilde{\ell}$ is sufficiently large (which we may freely assume). \end{proof}

\begin{proof}[Proof of Theorem \ref{ffmain}]: Lemma \ref{ffieldcompare} says that $\frac{\#Y_N^{ G, \omega_*^o, \psi_*}}{q^N}$ is exactly equal to the moment
$\E_{\mu^q_g}\#\sur(*,(G,\omega_*,\psi_*))$. The first part of Theorem \ref{ffmain} is then exactly the statement of \ref{ffieldcount}. 

For the second part, note that if $g,q\ra\infty$ then the moments of $\mu^q_g$ converge to the moments of $\mu$. The statement then follows from Theorem \ref{CLStability}.

\end{proof}

\section{Data} \label{data}
%Number of ``first quadrant" quadratic extensions of $\Q(\mu_3)$ tabulated:  3105738
%Number of ``fourth quadrant" quadratic extensions of $\Q(\mu_3)$ tabulated: 3104475

Below, we present computational data for the case of $K = \Q(\mu_3), \ell = 3$ so that $t = 1.$  Let $\zeta$ be a fixed primitive third root of unity in $K.$  We tabulated fields of the form $K(\sqrt{z})$ for squarefree $z = m \cdot 1 + n \cdot \alpha$ for $0 \leq m,n \leq 2000$ for $\alpha = - \zeta^2$.  This amounted to 3105738 fields.%and 3104475 fields for $\alpha = -\zeta.$ 
We remark that it is sufficient to look only in this ``first sextant" range of $z$ for the following reasons: 

First, Since $\zeta,\zeta^2$ are both squares it is sufficient to restrict to those $z$'s with argument between $-\pi/3$ and $\pi/3$. Second, the field $K(\sqrt{z})$ is abstractly isomorphic to $K(\sqrt{\bar{z}})$, and this isomorphism acts on $\mu_3$ by inversion.  So, if the 3-part of the class group of $K(\sqrt{z})$ and its $\psi$-invariant equal $G$ and $\psi_G$ respectively, then the 3-part of the class group of $K(\sqrt{\bar{z}})$ and its $\psi$-invariant equal $G$ and $-\psi_G$ respectively.  Thus, including also those $z$'s with argument between $-\pi/3$ and $0$ would have the effect of making the totals for $(G,\psi_G)$ and $(G,-\psi_G)$ identical for reasons of symmetry.  

Since $t = 1,\psi_G$ determines $\omega_G$; $\omega_G$ is the ``antisymmetric part" of $\psi_G.$  For this reason, we only record data pertaining to $\psi_G$ in the table below.  

In the below table, 
\begin{itemize}
\item
The column ``observed proportion" records the proportion of fields \emph{with fixed class group isomorphism type} observed to have the value given in the column ``$\psi_G$." 

\item
The column ``expected proportion" records $\frac{Q^t \mu(G,\psi)}{Q^t \mu(G)}.$

\item
The homomorphism $\psi:\Hom( G, \mu_3) \ra G[3]$ is recorded is as follows: Suppose 
\begin{align*}
G &=\oplus_{i=1}^n \frac{\Z}{3^{n_i}\Z}\cdot e_i, \\
\Hom(G,\mu_3) &=\oplus_{i=1}^n\frac{\Z}{3\Z}\cdot f_i
\end{align*} 
so that $f_i(e_j)=\zeta^{\delta_{i,j}}$. Then we write down
the matrix of $\psi$ with resect to the bases $f_i,3^{n_i-1}e_i$. 

\item
In the ``$\psi_G$" column, we list a complete set of representatives for the isomorphism classes of $\psi_G,$ per the convention outlined in the previous bullet point, for $G \cong \ZZ / 3, \ZZ/9, \ZZ/27, \ZZ/81, \ZZ/243,$ and $\ZZ / 3 \oplus \ZZ / 3.$ There were 1258 instances of other groups
that occured but due to the large number of isomorphism classes of $\psi$ that occur in these cases we chose not to include this in the table below.

\end{itemize}

\begin{tabu}{| c | c | c |[2pt] c | c |}
\hline
class group $G$  & $\psi_G$ & total tabulated & observed proportion & conjectured proportion\\
\hline
trivial & () & 2698000 & 1.0 & 1.0\\
\tabucline[2pt]{-}  
$\ZZ / 3$ & (0) & 89565 & 0.2516 & 0.25 \\
\hline
$\ZZ / 3$ & (1) & 132764 & 0.3730 & 0.375 \\
\hline
$\ZZ / 3$ & (2) & 133622 & 0.3754 & 0.375 \\
\tabucline[2pt]{-}
$\ZZ / 9$ & (0) & 9186 & 0.2468 & 0.25 \\
\hline
$\ZZ / 9$ & (1) & 13866 & 0.3726 & 0.375 \\
\hline
$\ZZ / 9$ & (2) & 14161 & 0.3805 & 0.375 \\
\tabucline[2pt]{-}
$\ZZ / 27$ & (0) & 819 & 0.2495 & 0.25 \\
\hline
$\ZZ / 27$ & (1) & 1240 & 0.3778 & 0.375\\
\hline
$\ZZ / 27$ & (2) & 1223 & 0.3726 & 0.375\\
\tabucline[2pt]{-}
$\ZZ / 81$ & (0) & 31 & 0.2095 & 0.25 \\
\hline
$\ZZ / 81$ & (1) & 58 & 0.3919 & 0.375 \\
\hline
$\ZZ / 81$ & (2) & 59 & 0.3986 & 0.375 \\
\tabucline[2pt]{-}
$\ZZ / 243$ & (0) & 2 & 0.6667 & 0.25 \\
\hline
$\ZZ / 243$ & (1) & 0 & 0.0 & 0.375 \\
\hline
$\ZZ / 243$ & (2) & 1 & 0.3333 & 0.375 \\
\tabucline[2pt]{-}
$\ZZ / 3 \oplus \ZZ / 3$ & $\left(  \begin{array}{cc}  0 & 0 \\ 0 & 0 \end{array} \right)$ & 0 & 0.0 & 0.0 \\
\hline
$\ZZ / 3 \oplus \ZZ / 3$ & $\left(  \begin{array}{cc}  1 & 0 \\ 0 & 0 \end{array} \right)$ & 317 & 0.0321 & 0.0385\\
\hline
$\ZZ / 3 \oplus \ZZ / 3$ & $\left(  \begin{array}{cc}  2 & 0 \\ 0 & 0 \end{array} \right)$ & 315 & 0.0319 & 0.0385\\
\hline
$\ZZ / 3 \oplus \ZZ / 3$ & $\left(  \begin{array}{cc}  0 & 1 \\ 0 & 0 \end{array} \right)$ & 2008 & 0.2538 & 0.2308 \\
\hline
$\ZZ / 3 \oplus \ZZ / 3$ & $\left(  \begin{array}{cc}  0 & 1 \\ 1 & 0 \end{array} \right)$ & 1602 & 0.1621 & 0.1731 \\
\hline
$\ZZ / 3 \oplus \ZZ / 3$ & $\left(  \begin{array}{cc}  0 & 2 \\ 1 & 0 \end{array} \right)$ & 317 & 0.0321 & 0.0289 \\
\hline
$\ZZ / 3 \oplus \ZZ / 3$ & $\left(  \begin{array}{cc}  1 & 2 \\ 1 & 0 \end{array} \right)$ & 1188 & 0.1202 & 0.1154 \\
\hline
$\ZZ / 3 \oplus \ZZ / 3$ & $\left(  \begin{array}{cc}  2 & 2 \\ 1 & 0 \end{array} \right)$ & 1171 & 0.1185 & 0.1154 \\
\hline
$\ZZ / 3 \oplus \ZZ / 3$ & $\left(  \begin{array}{cc}  1 & 0 \\ 0 & 1 \end{array} \right)$ & 727 & 0.0736 & 0.0865 \\
\hline
$\ZZ / 3 \oplus \ZZ / 3$ &$\left(  \begin{array}{cc}  2 & 1 \\ 0 & 1 \end{array} \right)$ & 1737 & 0.1758 & 0.1731\\
\hline
\end{tabu}

\subsection{How computations were done} \label{computationmethod}

The above data was all tabulated in SAGE.  We used the Hilbert symbol method, described in \S \ref{psihilbertsymbol}, for computing the homomorphism $\psi_{L/K}$ for quadratic extensions $L / K = \mathbb{Q}(\mu_3).$  By the results of that section, $\psi$ is the composition:
\begin{equation} \label{psiascomposition}
\mathrm{Hom}(\mathrm{Cl}(K),\mu_3) \xrightarrow{\iota^{-1}} L^\times \cap O / (L^\times)^3 \xrightarrow{\text{inclusion}} L^\times \cap V / (L^\times)^3 \xrightarrow{\pi} \mathrm{Cl}(L)[3],
\end{equation}  

where 
\begin{itemize}
\item
$L^\times \cap O$ denotes the subset of $L^\times$ pairing trivially with $O_{L_v}^\times$ under the 3-Hilbert symbol at $v$ for all finite places $v$
\item
$L^\times \cap V$ denotes those elements of $L^\times$ having valuation divisible by $3$ for all finite places $v.$  
\end{itemize}

The isomorphism $\iota^{-1}$ is the inverse of the Hilbert symbol map described in \S \ref{psihilbertsymbol}:
\begin{align} \label{sumoflocalhilbertsymbols}
\iota: L^\times \cap O / (L^\times)^3 &\rightarrow \mathrm{Hom}(\mathrm{Cl}(L),\mu_3) \nonumber \\
b &\mapsto \left( f_b: x \in L^\times \backslash \mathbb{A}_{L,\mathrm{fin}}^\times / \prod O_{L_v}^\times  \mapsto \sum_{v \text{ finite}} \langle x,b \rangle_{3,v} \right),
\end{align}
where $\langle \rangle_{3,v}$ denotes the 3-Hilbert symbol at the place $v.$ 

The surjection $\pi$ arises from the bottom row of the commutative diagram
$$\begin{CD}
0 @>>> O_L^\times / (O_L^\times)^3 @>>> H^1(O_L, \mu_3) @>>> H^1(O_L,\mathbb{G}_m)[3] @>>> 0   \\ 
@. @V{=}VV @V{\sim}VV @VVV @.\\
0 @>>> O_L^\times / (O_L^\times)^3 @>{j}>> L^\times \cap V / (L^\times)^3 @>{\pi}>> H^1(O_L, \mathbb{G}_m)[3] = \mathrm{Cl}(L)[3]  @>>> 0
\end{CD}$$

where the top row is the Kummer exact sequence, the middle isomorphism is explicated in \S \ref{psihilbertsymbol}, $\pi(b) := \prod_v v^{\mathrm{val}_v(b)/3},$ and $j$ is the obvious inclusion map.

To compute the composition from \eqref{psiascomposition}, we work backwards as follows:

\begin{itemize}
\item[(a)]
Represent every element of $\mathrm{Cl}(L)[3]$ using a fractional ideal.  

\item[(b)]
For every representative ideal $I$ from (a), find $\alpha_I$ for which $I^3 = (\alpha_I).$  Also, compute representatives for the global unit group $O_L^\times / (O_L^\times)^3.$  

By the bottom row of the above Kummer sequence commutative diagram, every element of $L^\times \cap V / (L^\times)^3$ may be represented uniquely as $u \cdot \alpha_I$ as $I$ ranges through representative ideals and $u$ ranges over representatives for $O_L^\times / (O_L^\times)^3.$

\item[(c)]
$L^\times \cap O / (L^\times)^3$ consists of those $b \in L^\times / (L^\times)^3$ for which $L(b^{1/3}) / L$ is everywhere unramified. \footnote{Recall that $O_{L_v}^\times \subset L_v^\times$ corresponds to the inertia subgroup of $G_{L_v}$ by local class field theory.  Pairing trivially with $O_{L_v}^\times$ under the local $3$-Hilbert symbol for all places $v$ is thus equivalent to $L_v( (u \cdot \alpha)^{1/3} ) / L_v$ being unramified for all $v,$ i.e. $L((u\cdot \alpha)^{1/3}) / L$ being everywhere unramified.}   Identify such $b$ among the representatives $u \cdot \alpha_I$ found in (b).

\item[(d)]
For elements $b := u \cdot \alpha_I$ determined in (c), compute the homomorphism $f_b$ from \eqref{sumoflocalhilbertsymbols}.

\item[(e)]
Do linear algebra to represent each  `standard basis element' $f$ (depending on a choice of 3rd root of unity in $K$) as $f = f_b$ for $b = \prod_{i = 1}^k (u_i \cdot \alpha_{I_i})^{n_i}$ and integers $n_i.$

The desired homomorphism $\psi$ is uniquely determined by
\begin{align*}
\psi: \mathrm{Hom}(\mathrm{Cl}(L),\mu_3) &\rightarrow \mathrm{Cl}(L)[3] \\
f &\mapsto \text{ ideal class of } \prod_{i = 1}^k I_i^{n_i}.
\end{align*}

\end{itemize}

\bigskip

All infrastructure for manipulations with linear algebra, ideals, class groups, and unit groups was readily available through SAGE.  However, we were unable to find pre-existing code in SAGE to compute local Hilbert symbols (for $\ell \neq 2$) needed in step (d), so we coded this ourselves.  For (d), we used global methods:  
\begin{itemize}
\item
For $b \in L^\times \cap O / (L^\times)^3$ and $x \in L_v,$ the Hilbert symbol $\langle x,u \rangle_{3,v}$ equals $\langle x',b \rangle_{3,v}$ for any $x' \in L$ with $\mathrm{val}_v(x') = \mathrm{val}_v(x).$  For appropriately chosen $x',$ we used Hilbert reciprocity to express $\langle x',b \rangle_{3,v}$ as a corresponding product of local symbols ``at favorable places" which we managed to calculate directly.
\end{itemize}  

\bigskip

We refer the interested reader to our annotated code, available at 

\texttt{https://sites.google.com/site/michaellipnowski/}

for further details.

\end{document}